\newtheorem{theorem}[subsection]{Theorem}
\newtheorem{proposition}[subsection]{Proposition}
\newtheorem{corollary}[subsection]{Corollary}
\newtheorem{lemma}[subsection]{Lemma}
\newtheorem{conjecture}[subsection]{Conjecture}
\theoremstyle{definition}
\newtheorem{definition}[subsection]{Definition}
\newtheorem{remark}[subsection]{Remark}
\numberwithin{equation}{subsection}
\begin{document}

\title[Cohomological Milnor formula  and Saito's conjecture]{Cohomological Milnor formula and Saito's conjecture on characteristic classes }

\author[Enlin Yang]{Enlin Yang${}^{\dag}$}
\thanks{${}^{\dag}$Enlin Yang is the corresponding author.}
\thanks{${}^{\dag}$yangenlin@math.pku.edu.cn}
\thanks{${}^\dag$School of Mathematical Sciences, Peking University, No.5 Yiheyuan Road Haidian District.,
	Beijing, 100871, P.R. China.}

\author[Yigeng Zhao]{Yigeng Zhao${}^\ddag$}
\thanks{${}^\ddag$zhaoyigeng@westlake.edu.cn}
\thanks{${}^\ddag$Institute for Theoretical Sciences, Westlake University, No.600 Dunyu Road, Xihu District, Hangzhou, Zhejiang, 310040, P.R. China.}

\date{\today}
\begin{abstract}
We confirm the quasi-projective case of Saito's conjecture [Invent.Math.207,597–695 (2017)], namely that the cohomological characteristic classes defined by Abbes and Saito can be computed in terms of the characteristic cycles.
	
We construct a  cohomological characteristic class supported on the  
non-acyclicity locus of a separated morphism relatively to a constructible sheaf.
As applications of the functorial properties of this class, we prove cohomological analogs of the Milnor formula and the conductor formula for constructible sheaves on 
(not necessarily smooth) varieties. 
\end{abstract}

\keywords{characteristic class, Milnor formula, conductor formula, locally acyclic, transversality}
\subjclass[2010]{Primary 14F20; Secondary 14C17, 11S15.}
\maketitle
\tableofcontents

\section{Introduction}
\subsection{}
Let $h:X\to {\rm Spec}k$ be a separated morphism of finite type over a perfect field $k$. Let $\Lambda$ be a finite local ring such that the characteristic of the residue field is invertible in $k$.
Let $\mathcal K_{X/k}=Rh^!\Lambda$.
For a constructible complex $\mathcal F$ of $\Lambda$-modules of finite tor-dimension on $X$, the cohomological characteristic class $C_{X/k}(\mathcal F)\in H^0(X,\mathcal K_{X/k})$ is introduced by Abbes and Saito in \cite{AS07} by using Verdier pairing (cf. \cite{Gro77})\footnote{See also \cite{KS90} for the transcendental setting.}.
Using ramification theory, Abbes and Saito calculate the cohomological characteristic classes for rank 1 sheaves under certain ramification conditions in \cite{AS07}. However, the calculation for general constructible \'etale sheaves remains an outstanding question in ramification theory.
Later, using the singular support defined by Beilinson \cite{Bei16}, 
Saito \cite{Sai17a} constructs the characteristic cycles of constructible \'etale sheaves and further proposes the following conjecture.
\begin{conjecture}[Saito, {\cite[Conjecture 6.8.1]{Sai17a}}]\label{introconj:Scc}
	Let $X$ be a closed sub-scheme of a smooth scheme over a perfect field $k$. 
	Let $\mathcal F$ be a constructible complex of $\Lambda$-modules of finite tor-dimension on $X$.
	Let	 $cc_{X}(\mathcal F):=cc_{X,0}(\mathcal F)\in {\rm CH}_0(X)$ be the characteristic class defined in \cite[Definition 6.7.2]{Sai17a} via the characteristic cycle of $\mathcal F$. 
	Then we have
	\begin{align}\label{introSJeq}
		C_{X/k}(\mathcal F)={\rm cl}(cc_{X}(\mathcal F))\quad{\rm in}\quad H^0(X,\mathcal K_{X/k}),
	\end{align}
	where ${\rm cl}: {\rm CH}_0(X)\to H^0(X,\mathcal K_{X/k})$ is the cycle class map.
\end{conjecture}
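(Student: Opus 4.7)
The plan is to leverage the refined cohomological characteristic class supported on the non-acyclicity locus, together with the cohomological Milnor and conductor formulas announced in the abstract, in order to reduce Saito's conjecture to a local verification at isolated critical points of a generic pencil on a smooth projective ambient variety. The quasi-projectivity hypothesis is crucial: it provides both a smooth projective ambient compactification and enough ample line bundles to supply $SS$-transversal test pencils.

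First, using quasi-projectivity, fix a closed immersion $i\colon X \hookrightarrow P$ into a smooth quasi-projective $k$-scheme $P$ together with an open immersion $j\colon P \hookrightarrow \bar{P}$ into a smooth projective compactification (obtained by embedding $X$ in some $\mathbb{P}^N$ and taking $P$ to be a smooth open neighborhood). The refined characteristic class enjoys proper pushforward compatibility along $i$, and via its non-acyclicity support, an appropriate compatibility along $j_!$ as well; the analogous pushforward formulas for Saito's $cc_X$ under $i_*$ and $j_!$ follow from the corresponding pushforward formulas for characteristic cycles. Taken together, these reduce \eqref{introSJeq} for $\mathcal F$ on $X$ to the analogous identity for $j_! i_* \mathcal F$ on the smooth projective variety $\bar{P}$.

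Next, I would exhibit a generic pencil $f\colon \bar{P} \dashrightarrow \mathbb{P}^1$ arising from two sections of a sufficiently ample line bundle, chosen so that $f$ is $SS(j_! i_* \mathcal F)$-transversal outside a finite critical set $\Sigma$; the existence of such pencils relies on Bertini-type arguments applied to an ample line bundle on $\bar{P}$. Applying the cohomological Milnor formula at each point of $\Sigma$ identifies the local contribution of $C_{\bar{P}/k}(j_! i_* \mathcal F)$ with the local intersection number of the characteristic cycle $CC(j_! i_* \mathcal F)$ against the graph of $df$. Summing these contributions and invoking the cohomological conductor formula (the index identity for $Rf_* j_! i_* \mathcal F$ over $\mathbb{P}^1$) produces the identity \eqref{introSJeq} on $\bar{P}$, which then descends to $X$ via the functorialities of the previous step.

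The main obstacle, in my view, is twofold. First, establishing proper and open pushforward compatibility of the refined non-acyclicity characteristic class in a form strong enough to be compatible with Saito's $cc_X$ construction is a delicate matter mixing Verdier pairings with the Beilinson--Saito singular-support formalism; this compatibility is really the heart of the argument and is what the bulk of the paper must construct. Second, producing genuinely $SS$-transversal test pencils on $\bar{P}$ is exactly where quasi-projectivity intervenes in an essential way, and this is what prevents the argument from extending to the general proper case. Finally, at critical points of $f$ that lie inside the singular locus of $X$, one must use the non-acyclicity refinement of the Milnor formula rather than its classical smooth version, which is precisely why the refined class constructed in this paper is indispensable.
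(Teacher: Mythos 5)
Your proposal assembles several of the right ingredients (ample pencils supplying $SS$-transversality, the cohomological Milnor formula at isolated characteristic points, proper push-forward functoriality), but it has two genuine gaps. The first is structural: you treat the pencil argument as if it reduced the whole identity to isolated local contributions plus the conductor formula. It does not. The fibration formula \eqref{eq:SJ:milclassepsilon-intro} produces, besides the sum of local Milnor terms, the global term $c_1(f^\ast\Omega^{1,\vee}_{Y/k})\cap C_{X/Y}(\mathcal F)$; expanding $c_1$ by a rational $1$-form $\omega$ turns this into $-\sum_{v}\operatorname{ord}_v(\omega)\,C_{X_v/v}(\mathcal F|_{X_v})$, a combination of cohomological characteristic classes of the $(d-1)$-dimensional fibers (see \eqref{eq:conj:Milnorclass2}). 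To match this against the identical expansion of $cc_X(\mathcal F)$ one must already know the conjecture for the fibers $X_v$ — hence the paper's induction on dimension, with the curve case (Corollary \ref{cor:GOSend}) as the base. The conductor formula \eqref{introeq:thm:MF1} only controls the push-forward to the base curve, i.e.\ numerical (Euler-characteristic) information; it cannot pin down the zero-cycle class in $H^0(X,\mathcal K_{X/k})$ itself.

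The second gap is the reduction to a smooth projective compactification via $j_!i_*$. Proving the identity for $j_!i_*\mathcal F$ on $\bar P$ does not descend to $X$: the push-forward map $H^0(X,\mathcal K_{X/k})\to H^0(\bar P,\mathcal K_{\bar P/k})$ is in general far from injective, so equality of the images says nothing about equality on $X$. The paper avoids this entirely by working directly on the smooth quasi-projective $X$. The remaining technical steps you omit are also needed: a good fibration over $\mathbb P^1$ only exists after a finite Galois extension of $k$ of degree invertible in $\Lambda$ (handled by a transfer argument using $\sigma_\ast\sigma^\ast\mathcal F=\mathcal F^{\oplus\deg(k'/k)}$) and after blowing up a codimension-$2$ axis, which is compensated by the blow-up formula \eqref{eq:blowupC}--\eqref{eq:blowupcc} of Lemma \ref{lem:Bup}. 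With these corrections your outline essentially becomes the paper's proof of Theorem \ref{thm:sConj}.
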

Saito's conjecture says that the cohomological characteristic class can be computed in terms of the characteristic cycle.
Note that the two involved ramification invariants in Conjecture \ref{introconj:Scc} are defined in quite different ways. The characteristic cycle is characterized by the Milnor formula  \cite[Theorem 5.9]{Sai17a}, while the cohomological characteristic class in some sense is defined via the categorical trace (cf. Section \ref{sec:CCC}).
In the characteristic zero case, the equality \eqref{introSJeq} on a complex manifold is the microlocal index formula proved by Kashiwara and Schapira \cite[9.5.1]{KS90}. However, we don't know such a microlocal description for characteristic cycles in positive characteristic\footnote{Abe \cite{Abe21} obtains certain microlocal descriptions for characteristic cycles using $\infty$-categories.}.
One of our main goals of this paper is to show the quasi-projective case of Saito's conjecture.
\begin{theorem}[Theorem \ref{thm:sConj}]\label{thm:sConj-inIntro}
	Conjecture \ref{introconj:Scc} holds for  any smooth and quasi-projective scheme $X$ over a perfect field $k$ of characteristic $p>0$.
\end{theorem}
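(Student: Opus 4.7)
The plan is to show that $C_{X/k}(\mathcal F)$ and ${\rm cl}(cc_X(\mathcal F))$ satisfy the same cohomological Milnor formula, and then to use quasi-projectivity to produce enough test pencils to separate classes in $H^0(X,\mathcal K_{X/k})$.

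First, I would exploit the relative class supported on the non-$\mathcal F$-acyclicity locus of a separated morphism $f\colon X\to S$, constructed earlier in the paper; this specializes to $C_{X/k}(\mathcal F)$ when $S={\rm Spec}\,k$. Its functoriality under proper pushforward implies that for a proper morphism $f\colon X\to C$ to a smooth curve whose $f$-non-acyclicity locus $Z$ for $\mathcal F$ is $0$-dimensional, the pushforward $f_*C_{X/k}(\mathcal F)$ localizes on the finite image $f(Z)$, with local contributions expressible as traces of specialization on the nearby/vanishing cycles of $\mathcal F$ relative to $f$.

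Second, I would invoke the cohomological Milnor formula proved in the paper to identify these local traces with the intersection number between Saito's characteristic cycle and the section $df$ appearing in the classical Milnor formula \cite[Theorem 5.9]{Sai17a}. Together with compatibility of the cycle class map with proper pushforward, this should yield $f_*(C_{X/k}(\mathcal F)-{\rm cl}(cc_X(\mathcal F)))=0$ in $H^0(C,\mathcal K_{C/k})$ for every such test pencil.

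Third, to upgrade this to the equality of the original classes in $H^0(X,\mathcal K_{X/k})$, I would use quasi-projectivity to construct, via a Bertini-type argument applied to a projective embedding $X\hookrightarrow\mathbb P^N$, a family of linear pencils $f\colon X\to\mathbb P^1$ whose characteristic loci relative to $\mathcal F$ are $0$-dimensional and jointly cover any prescribed closed point of $X$. Since both classes are built from $0$-dimensional local data at such non-acyclicity loci, this family of tests is sufficient to detect the difference $C_{X/k}(\mathcal F)-{\rm cl}(cc_X(\mathcal F))$, forcing it to vanish.

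The main obstacle lies in Step 2: matching the local term produced by the cohomological Milnor formula to the exact intersection number in Saito's definition of $cc_X(\mathcal F)$, with the correct sign conventions, and verifying that this identification is compatible with the cycle class map ${\rm cl}$. This requires a careful reconciliation of Verdier duality (on the $C_{X/k}$ side) with the intersection theory on the cotangent bundle of $X$ (on the $cc_X$ side). A secondary difficulty is producing enough transversal pencils in the quasi-projective case; this is precisely where the hypothesis of quasi-projectivity enters essentially, via projections from suitable linear subspaces of $\mathbb P^N$.
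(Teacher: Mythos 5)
Your Steps 1--2 are essentially the content of Theorem \ref{thm:SJ:MFforTclass} and Theorem \ref{thm:MF} (with one imprecision: $f_*C_{X/k}(\mathcal F)=C_{\mathbb P^1/k}(Rf_*\mathcal F)$ does \emph{not} localize on $f(Z)$; what localizes is the non-acyclicity class $\widetilde C^Z_{X/\mathbb P^1/k}(\mathcal F)$, i.e.\ the difference between $C_{X/k}(\mathcal F)$ and the transversal term $c_1(f^*\Omega^{1,\vee}_{\mathbb P^1/k})\cap C_{X/\mathbb P^1}(\mathcal F)$). The genuine gap is Step 3. Proving $f_*\bigl(C_{X/k}(\mathcal F)-{\rm cl}(cc_X(\mathcal F))\bigr)=0$ in $H^0(\mathbb P^1,\mathcal K_{\mathbb P^1/k})$ for every pencil $f$ does not force the difference to vanish, because the family of pushforward maps $\{f_*\}$ is very far from jointly injective on $H^0(X,\mathcal K_{X/k})$. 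For $X$ projective and smooth over a finite field this group is $\pi_1^{\rm ab}(X)/\ell^m$ (as recalled after Theorem \ref{thm:sConj-inIntro}), while $H^0(\mathbb P^1,\mathcal K_{\mathbb P^1/k})\simeq \Lambda$; the composite of the cycle class map with any $f_*$ factors through the degree, so the entire degree-zero part of ${\rm CH}_0(X)/\ell^m$ — which is generally nonzero — lies in the common kernel of all your tests. No Bertini argument can repair this: the obstruction is not a shortage of pencils but the fact that pushing forward to a curve destroys exactly the information Saito's conjecture is about.

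The paper avoids this by never pushing forward. The fibration formula \eqref{eq:conj:Milnorclass2} is an identity \emph{in $H^0(X,\mathcal K_{X/k})$ itself}: it writes $C_{X/k}(\mathcal F)$ as $-\sum_v {\rm ord}_v(\omega)\cdot C_{X_v/v}(\mathcal F|_{X_v})$ (classes of the \emph{fibers}, pushed into $X$ along the closed immersions $X_v\hookrightarrow X$) minus the Milnor numbers at the isolated characteristic points. Since the fibers have dimension $\dim X-1$ and the identical formula \eqref{eq:fibforcc} holds for $cc_X$, one concludes by induction on dimension, with the curve case settled by Corollary \ref{cor:GOSend} and Grothendieck--Ogg--Shafarevich. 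Two further ingredients you would also need and have not addressed: a pencil with only isolated characteristic points need not exist on $X$ itself, so one first blows up a codimension-$2$ center (after a finite extension of $k$ of degree invertible in $\Lambda$) and descends using the blow-up formula of Lemma \ref{lem:Bup} for both $C_{X/k}$ and $cc_X$, together with a Galois-averaging argument for the base change.
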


When $X$ is projective and smooth over a finite field $k$ of characteristic $p$, the cohomology group $H^0(X,\mathcal K_{X/k})$ is highly non-trivial. For example, if $\Lambda=\mathbb Z/\ell^m$ with $\ell\neq p$, then we have
$H^0(X,\mathcal K_{X/k})\simeq H^1(X,\mathbb Z/\ell^m)^\vee \simeq \pi_1^{\rm ab}(X)/\ell^m$ (cf. \cite[Theorem 1.12]{Ras95}).

\subsection{}
Our approach to Saito's conjecture is the fibration method, which leans on the construction of the relative version of cohomological characteristic classes over a general base scheme. 
To achieve this, it is necessary to impose additional transversality conditions on the structure morphism. Let $S$ be a Noetherian scheme and ${\rm Sch}_{S}$ the category of separated schemes of finite type over $S$. 
Let $\Lambda$ be a Noetherian ring such that $m\Lambda=0$ for some integer $m$ invertible on $S$. 
Let $h: X\to S$ be a separated morphism of finite type, $\mathcal K_{X/S}=Rh^!\Lambda$ and $\mathcal F\in D_{\rm ctf}(X,\Lambda)$.
In fact, under certain smooth and transversality conditions on $h$, we  introduce the relative (cohomological) characteristic class $C_{X/S}(\mathcal F)\in H^0(X,\mathcal K_{X/S})$ in \cite[Definition 3.6]{YZ18}. 
It is further generalized
to any separated morphism $h:X\to S$ which is (universally) locally acyclic relatively to $\mathcal F$ by using categorical traces \cite[2.20]{LZ22}.
We also define relative characteristic classes in a more general case that only assumes local acyclicity away from small closed subsets. Indeed, if $Z\subseteq X$ is a closed subscheme 
such that $H^0(Z,\mathcal K_{Z/{S}})=H^1(Z,\mathcal K_{Z/{S}})=0$, and 
if $X\setminus Z\to S$ is (universally) locally acyclic relatively to $\mathcal F|_{X\setminus Z}$, then the relative characteristic class $C_{X/S}(\mathcal F)\in H^0(X,\mathcal K_{X/S})$ remains well-defined (cf. Definition \ref{def:rcccCodBig}). 
We prove the following fibration formula for cohomological characteristic classes.
\begin{theorem}[Theorem \ref{thm:SJ:MFforTclass}]\label{thm:SJ:MFforTclass-intro}
	Let $Y$ be a smooth connected curve over a perfect field $k$ of  characteristic $p>0$.  
	Let $\Lambda$ be a finite local ring such that the characteristic of the residue field is invertible in $k$. 
	Let $f:X\to Y$ be a separated morphism of finite type and $Z\subseteq |X|$ be a finite set of closed points. 
	Let $\mathcal F\in D_{\rm ctf}(X,\Lambda)$ such that $f|_{X\setminus Z}$ is universally locally acyclic relatively to $\mathcal F|_{X\setminus Z}$. 
	Then we have 
	\begin{align}
		\label{eq:SJ:milclassepsilon-intro}
		C_{X/k}(\mathcal F)&=c_1(f^\ast\Omega_{Y/k}^{1,\vee})\cap C_{X/Y}(\mathcal F)-\sum_{x\in Z}{\rm dimtot} R\Phi_{\bar{x}}(\mathcal F,f)\cdot [x]\quad{\rm in}\quad H^0(X,\mathcal K_{X/k}),
	\end{align}
	where $R\Phi(\mathcal F,f)$ is  the complex of vanishing cycles and ${\rm dimtot}={\rm dim}+{\rm Sw}$ is the total dimension. 
\end{theorem}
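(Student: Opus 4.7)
The plan is to isolate the failure of transitivity for cohomological characteristic classes along the composition $X \xrightarrow{f} Y \xrightarrow{g} \mathrm{Spec}\,k$ to the non-acyclicity locus $Z$, and then to identify each local contribution at $x\in Z$ with the vanishing-cycle invariant $-\mathrm{dimtot}\,R\Phi_{\bar x}(\mathcal F,f)$.

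First I would treat the base case $Z=\emptyset$, where $f$ is universally locally acyclic relatively to $\mathcal F$ throughout $X$. In that case both $C_{X/Y}(\mathcal F)\in H^0(X,\mathcal K_{X/Y})$ and $C_{Y/k}(\Lambda_Y)\in H^0(Y,\mathcal K_{Y/k})$ are defined via Verdier pairings / categorical traces. The key input is the composability of such traces under composition of morphisms: applied to $X\to Y\to \mathrm{Spec}\,k$, together with the canonical isomorphism $\mathcal K_{X/k}\simeq f^!\mathcal K_{Y/k}$, it yields the transitivity identity
\begin{align*}
C_{X/k}(\mathcal F) \;=\; f^{\ast}\!\bigl(C_{Y/k}(\Lambda_Y)\bigr)\cap C_{X/Y}(\mathcal F)
\end{align*}
in $H^0(X,\mathcal K_{X/k})$. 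Since $Y$ is a smooth curve over $k$, $C_{Y/k}(\Lambda_Y)$ is the cycle class of $c_1(\Omega^{1,\vee}_{Y/k})\cap[Y]$, which after pullback along $f$ produces the first summand on the right-hand side of \eqref{eq:SJ:milclassepsilon-intro}.

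Next, for general $Z$, I would apply the base case on $U:=X\setminus Z$, where $f|_U$ is universally locally acyclic with respect to $\mathcal F|_U$, to conclude that the difference $D:= C_{X/k}(\mathcal F) - c_1(f^{\ast}\Omega^{1,\vee}_{Y/k})\cap C_{X/Y}(\mathcal F)$ restricts to zero on $U$; note that $C_{X/Y}(\mathcal F)$ on the right is well-defined by the extended definition recalled before the statement, since $Z$ has codimension $\geq 1$ in $X$ lying over points of the smooth curve $Y$, so that $H^0(Z,\mathcal K_{Z/Y})=H^1(Z,\mathcal K_{Z/Y})=0$. The localization exact sequence for the open-closed pair $(U,Z)$ with coefficients $\mathcal K_{X/k}$ then forces $D$ to lie in the image of $H^0(Z,\mathcal K_{Z/k})$, and since $Z$ is a finite set of closed points over $k$, this gives $D=\sum_{x\in Z} n_x[x]$ for certain integers $n_x$ determined by the restriction of $(\mathcal F,f)$ to the henselization at $x$. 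By construction of the extended relative class as a categorical trace and its compatibility with base change to a henselian trait along $f$, I would identify $n_x$ with minus the trace of the identity on the vanishing cycle complex $R\Phi_{\bar x}(\mathcal F,f)$. The standard Euler--Poincar\'e / Swan conductor computation on a henselian trait then evaluates this trace as $-\mathrm{dimtot}\,R\Phi_{\bar x}(\mathcal F,f)$, yielding the second summand.

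The main obstacle is the transitivity identity in the base case: it is morally a K\"unneth-type compatibility, but making it precise requires a careful comparison of two layers of Verdier pairings (one over $Y$, one over $\mathrm{Spec}\,k$) at the level of derived categories, and not merely at the level of $H^0$. Once this is in place, the reduction to a sum over $Z$ is essentially formal from the localization sequence and the vanishing $H^i(Z,\mathcal K_{Z/Y})=0$ for $i\le 1$, while the local evaluation of $n_x$ follows from the well-known trace analysis of vanishing cycles along a map to a smooth curve.
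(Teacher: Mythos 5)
Your overall architecture (prove the $Z=\emptyset$ transitivity formula, localize the defect to $Z$, evaluate the local terms) is the same as the paper's, and the base case is essentially Theorem \ref{thm:changebaseHomo}. However, there are two genuine gaps at exactly the points where the paper has to work hardest. First, the localization step: the exact sequence $H^{-1}(U,\mathcal K_{U/k})\to H^0(Z,\mathcal K_{Z/k})\xrightarrow{\tau_\ast}H^0(X,\mathcal K_{X/k})\to H^0(U,\mathcal K_{U/k})$ only tells you that $D$ lies in the image of $\tau_\ast$; it does not give you well-defined coefficients $n_x\in\Lambda$, because $\tau_\ast$ need not be injective, and a fortiori it does not tell you that any choice of lift is ``determined by the restriction of $(\mathcal F,f)$ to the henselization at $x$.'' Producing a \emph{canonical} lift supported on $Z$ that is compatible with pull-back, proper push-forward, \'etale localization and specialization is precisely the content of the non-acyclicity class $\widetilde{C}^Z_{X/Y/S}(\mathcal F)$ in Section \ref{sec:CCC2}, and the paper needs an $\infty$-categorical coherence argument (the triangular-prism lifting in \ref{subsec:LiftCofiber} and Proposition \ref{lem:TisWellDefined}) to pin the lift down; the authors explicitly state they do not know how to do this at the level of triangulated categories.

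Second, and more seriously, the local evaluation $n_x=-\mathrm{dimtot}\,R\Phi_{\bar x}(\mathcal F,f)$ is not a ``standard Euler--Poincar\'e / Swan conductor computation.'' A categorical trace of the identity on $R\Phi_{\bar x}$ would produce a rank-type invariant, not the total dimension: the Swan conductor term is invisible to any purely formal trace manipulation, which is why the Milnor formula was a deep theorem of Saito rather than a formality. The paper's proof of Theorem \ref{thm:MF} obtains the total dimension by a genuinely non-formal detour: it twists by the Artin--Schreier sheaf $\mathcal L_!(ft')$ over a compactified dual line $\mathbb P^1_{t'}$, uses the pull-back and specialization compatibilities of the non-acyclicity class to move the computation from the fiber at $0'$ to the fiber at $\infty'$, and then invokes Laumon's local Fourier transform (stationary phase) to identify $\dim\Psi_{\mathrm{pr}_2}(\mathrm{pr}_1^\ast\mathcal F\otimes^L\mathcal L_!(t/t'))$ with $-\mathrm{dimtot}\,R\Phi$. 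Without this (or an equivalent input of comparable depth), your final step assumes the conclusion.
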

As a consequence, we have the following cohomological version of Grothendieck-Ogg-Shafarevich formula:
\begin{corollary}[{Corollary \ref{cor:GOSend}}]\label{introCorGOS}
	Let $X$ be a smooth connected curve over  a perfect field $k$ of  characteristic $p>0$. 
	Let $\mathcal F\in D_{\rm ctf}(X,\Lambda)$ and $Z\subseteq X$  a finite set of closed points such that the cohomology sheaves of $\mathcal F|_{X\setminus Z}$ are locally constant. Then we have
	\begin{align}
		C_{X/k}(\mathcal F)={\rm rank}\mathcal F\cdot c_1(\Omega_{X/k}^{1,\vee})-\sum_{x\in Z}{ a}_x(\mathcal F)\cdot[x]\quad {\rm in}\quad H^0(X,\mathcal K_{X/k}),
	\end{align}
	where $a_x(\mathcal F)={\rm rank}\mathcal F|_{\bar\eta}-{\rm rank}\mathcal F_{\bar x}+{\rm Sw}_x\mathcal F$ is the Artin conductor and $\eta$ is the generic point of $X$.
\end{corollary}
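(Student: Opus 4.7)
The plan is to apply Theorem \ref{thm:SJ:MFforTclass-intro} to the identity morphism $f:={\rm id}_X$, viewing $Y:=X$ itself as the smooth target curve. The universal local acyclicity hypothesis on $f|_{X\setminus Z}$ relative to $\mathcal F|_{X\setminus Z}$ is automatic: for an identity morphism, local acyclicity relative to a constructible sheaf is equivalent to that sheaf having locally constant cohomology sheaves, which is precisely our assumption on $X\setminus Z$. Substituting into \eqref{eq:SJ:milclassepsilon-intro} yields
\begin{align*}
C_{X/k}(\mathcal F)=c_1(\Omega_{X/k}^{1,\vee})\cap C_{X/X}(\mathcal F)-\sum_{x\in Z}{\rm dimtot}\,R\Phi_{\bar x}(\mathcal F,{\rm id}_X)\cdot[x],
\end{align*}
reducing the corollary to the two identifications $C_{X/X}(\mathcal F)={\rm rank}\,\mathcal F$ and ${\rm dimtot}\,R\Phi_{\bar x}(\mathcal F,{\rm id}_X)=a_x(\mathcal F)$ for $x\in Z$.

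For the first identification, $\mathcal K_{X/X}=\Lambda$, so $C_{X/X}(\mathcal F)\in H^0(X,\Lambda)\simeq\Lambda$ by connectedness of $X$. The relative characteristic class is constructed as a categorical trace of ${\rm id}_{\mathcal F}$ in Section \ref{sec:CCC} (cf.\ also \cite{LZ22}); over the identity base, the relevant structural maps become tautological and this trace should degenerate to the ordinary rank function on stalks. I would derive this either by base-changing along a geometric point of $X$ and invoking compatibility of $C_{-/-}$ with base change, or by appealing to a dedicated lemma recorded earlier in the paper.

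For the second identification, lisseness of $\mathcal F|_{X\setminus Z}$ implies that $R\Phi(\mathcal F,{\rm id}_X)$ is supported on $Z$. At $x\in Z$, passage to the strict henselization at $x$ identifies $R\Psi_{\bar x}(\mathcal F,{\rm id}_X)$ with $\mathcal F_{\bar\eta}$ as a complex of continuous inertia representations at $\bar x$, so $R\Phi_{\bar x}(\mathcal F,{\rm id}_X)$ is the cofiber of the specialization map $\mathcal F_{\bar x}\to\mathcal F_{\bar\eta}$. Additivity of total dimension in distinguished triangles, together with the decomposition ${\rm dimtot}={\rm dim}+{\rm Sw}$, then gives ${\rm dimtot}\,R\Phi_{\bar x}(\mathcal F,{\rm id}_X)={\rm rank}\,\mathcal F|_{\bar\eta}-{\rm rank}\,\mathcal F_{\bar x}+{\rm Sw}_x\mathcal F=a_x(\mathcal F)$, matching the Artin conductor.

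The main obstacle is the trace identification $C_{X/X}(\mathcal F)={\rm rank}\,\mathcal F$. While morally obvious, it requires carefully unfolding the abstract categorical-trace construction of $C_{X/S}$ and verifying that it collapses to the naive rank in the identity-base case; the vanishing-cycle computation, by contrast, is a standard local calculation on a trait.
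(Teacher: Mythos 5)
Your proposal is correct and follows exactly the paper's own route: apply Theorem \ref{thm:SJ:MFforTclass} with $f={\rm id}_X$ and $Y=X$, identify $C_{X/X}(\mathcal F)={\rm rank}\,\mathcal F$ (the paper supplies this via Lemma \ref{lem:twsit} and the remark following Theorem \ref{thm:changebaseHomo}, so the "main obstacle" you flag is already discharged earlier in the text), and use ${\rm dimtot}\,R\Phi_{\bar x}(\mathcal F,{\rm id})=a_x(\mathcal F)$. No gaps.
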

Corollary \ref{introCorGOS} confirms Saito's conjecture \ref{introconj:Scc} in the curve case.
The quasi-projective case of Saito's conjecture \ref{introconj:Scc} follows from Theorem \ref{thm:SJ:MFforTclass-intro} by using the fibration method.
\subsection{}For the proof of Theorem \ref{thm:SJ:MFforTclass-intro}, the key observation is that  
$C_{X/k}(\mathcal F)-c_1(f^\ast\Omega_{Y/k}^{1,\vee})\cap C_{X/Y}(\mathcal F)$ is zero on $X\setminus Z$ (cf. Theorem \ref{thm:changebaseHomo}). Hence the difference comes from a cohomology class in $H^0(Z,\mathcal K_{Z/k})$. In this paper we construct a canonical lifting of $C_{X/k}(\mathcal F)-c_1(f^\ast\Omega_{Y/k}^{1,\vee})\cap C_{X/Y}(\mathcal F)$  in $H^0(Z,\mathcal K_{Z/k})$, where $Z$ contains the non-acyclicity locus of $f:X\to Y$ relatively to $\mathcal F$. 
 Later for proving a cohomological version of the Milnor formula \cite[Theorem 5.9]{Sai17a}, we need this lifting in a relative version as follows.
\subsection{}
Consider a commutative diagram in ${\rm Sch}_S$:
\begin{align}\label{intro:deltadiag}
	\begin{gathered}
		\xymatrix{
			Z\ar@{^(->}[r]^-\tau&X\ar[rr]^-f\ar[rd]_-h&&Y\ar[ld]^-g\\
			&&S&
		},
	\end{gathered}
\end{align}
where $\tau: Z\to X$ is a closed immersion and $g$ is a smooth morphism of relative dimension $r$.
We define an object $\mathcal K_{X/Y/S}$ on $X$ sitting in a distinguished triangle (cf. \eqref{eq:distriK})
\begin{align}
	\mathcal K_{X/Y}\to\mathcal K_{X/S}\to \mathcal K_{X/Y/S}\xrightarrow{+1}.
\end{align}
Let $\mathcal F\in D_{\rm ctf}(X,\Lambda)$ such that $X\setminus Z\to Y$ is universally locally acyclic relatively 
to $\mathcal F|_{X\setminus Z}$ and that $X\to S$ is universally locally acyclic relatively to $\mathcal F$.
We introduce a new cohomological characteristic class  $\widetilde{C}_{X/Y/S}^{Z}(\mathcal F)\in H_Z^0(X,\mathcal K_{X/Y/S})$ supported on $Z$  in Definition \ref{def:milclass}, 
which measures the difference 
$C_{X/S}(\mathcal F)- c_{r}(f^*\Omega_{Y/S}^{1,\vee})\cap C_{X/Y}(\mathcal F)$. 
If $H^0(Z,\mathcal K_{Z/{Y}})=H^1(Z,\mathcal K_{Z/{Y}})=0$, then 
$H^0(Z,\mathcal K_{Z/S})\simeq H^0_Z(X,\mathcal K_{X/Y/S})$ and
$\widetilde{C}_{X/Y/S}^{Z}(\mathcal F)$ defines a class in $H^0(Z,\mathcal K_{Z/S})$, which is denoted by $C_{X/Y/S}^{Z}(\mathcal F)$. 
We have the following formula:
\begin{theorem}[Fibration formula, Theorem \ref{conj:milclass}]\label{introconj1}
	If $H^0(Z,\mathcal K_{Z/{Y}})=H^1(Z,\mathcal K_{Z/{Y}})=0$,
	then we have
	\begin{align}\label{eq:intro2}
		C_{X/S}(\mathcal F)= c_{r}(f^*\Omega_{Y/S}^{1,\vee})\cap C_{X/Y}(\mathcal F)+
		C_{X/Y/S}^{Z}(\mathcal F) \quad {\rm in }\quad H^{0}(X,\mathcal K_{X/S}).
	\end{align}
\end{theorem}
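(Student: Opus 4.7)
The plan is to reduce the identity \eqref{eq:intro2} in $H^{0}(X,\mathcal{K}_{X/S})$ to an identity of classes supported on $Z$, and then to use the vanishing of $H^{0}(Z,\mathcal{K}_{Z/Y})$ and $H^{1}(Z,\mathcal{K}_{Z/Y})$ to promote $\widetilde{C}^{Z}_{X/Y/S}(\mathcal{F})$ to a well-defined class $C^{Z}_{X/Y/S}(\mathcal{F})\in H^{0}(Z,\mathcal{K}_{Z/S})$ realising the difference.

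First I would apply Theorem \ref{thm:changebaseHomo} on the open complement $U:=X\setminus Z$, where by hypothesis $f|_{U}$ is universally locally acyclic relatively to $\mathcal{F}|_{U}$. This yields $C_{U/S}(\mathcal{F}|_{U})=c_{r}(f^{*}\Omega^{1,\vee}_{Y/S})|_{U}\cap C_{U/Y}(\mathcal{F}|_{U})$. Combined with the compatibility of the cohomological characteristic class and of Chern-class cap products with restriction to opens, this shows that the difference
\[
D:=C_{X/S}(\mathcal{F})-c_{r}(f^{*}\Omega^{1,\vee}_{Y/S})\cap C_{X/Y}(\mathcal{F})\in H^{0}(X,\mathcal{K}_{X/S})
\]
restricts to zero on $U$, so $D$ lies in the image of $H^{0}_{Z}(X,\mathcal{K}_{X/S})\to H^{0}(X,\mathcal{K}_{X/S})$.

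Next, apply $R\Gamma_{Z}(X,-)$ to the defining triangle $\mathcal{K}_{X/Y}\to\mathcal{K}_{X/S}\to\mathcal{K}_{X/Y/S}\xrightarrow{+1}$ and use $\tau^{!}\mathcal{K}_{X/Y}\simeq\mathcal{K}_{Z/Y}$ together with $\tau^{!}\mathcal{K}_{X/S}\simeq\mathcal{K}_{Z/S}$. Under the vanishing $H^{0}(Z,\mathcal{K}_{Z/Y})=H^{1}(Z,\mathcal{K}_{Z/Y})=0$, the long exact sequence of cohomology with supports gives the canonical chain of isomorphisms
\[
H^{0}(Z,\mathcal{K}_{Z/S})\xrightarrow{\sim} H^{0}_{Z}(X,\mathcal{K}_{X/S})\xrightarrow{\sim} H^{0}_{Z}(X,\mathcal{K}_{X/Y/S}).
\]
Then $C^{Z}_{X/Y/S}(\mathcal{F})\in H^{0}(Z,\mathcal{K}_{Z/S})$ is, by Definition \ref{def:milclass}, the preimage of $\widetilde{C}^{Z}_{X/Y/S}(\mathcal{F})$ under this composition, and the formula \eqref{eq:intro2} reduces to the assertion that the canonical lift of $D$ to $H^{0}_{Z}(X,\mathcal{K}_{X/S})$ maps to $\widetilde{C}^{Z}_{X/Y/S}(\mathcal{F})$ in $H^{0}_{Z}(X,\mathcal{K}_{X/Y/S})$.

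The main obstacle lies in this last identification. I would unwind all three classes as Verdier pairings in the spirit of \cite{LZ22}: $C_{X/S}(\mathcal{F})$ as the trace of $\mathrm{id}_{\mathcal{F}}$ over $S$; $c_{r}(f^{*}\Omega^{1,\vee}_{Y/S})\cap C_{X/Y}(\mathcal{F})$ as the trace of $\mathrm{id}_{\mathcal{F}}$ over $Y$ pushed along the map $\mathcal{K}_{X/Y}\to\mathcal{K}_{X/S}$ given by cup product with $c_{r}(f^{*}\Omega^{1,\vee}_{Y/S})$ (coming from $g^{!}\Lambda\simeq\Lambda(r)[2r]$); and $\widetilde{C}^{Z}_{X/Y/S}(\mathcal{F})$ as the trace of a localised endomorphism landing in the cofiber $\mathcal{K}_{X/Y/S}$. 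Assembling these into a single diagram of Verdier pairings, the required compatibility will reduce to the definition of $\mathcal{K}_{X/Y/S}$ as the cofiber of $\mathcal{K}_{X/Y}\to\mathcal{K}_{X/S}$ together with the standard functorialities of the six operations and of the trace pairing. Once this compatibility is established, the formula \eqref{eq:intro2} follows tautologically.
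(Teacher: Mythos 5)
Your reduction is the right starting point and matches the paper's: Theorem \ref{thm:changebaseHomo} applied over $U=X\setminus Z$ shows that the difference $D=C_{X/S}(\mathcal F)-c_r(f^*\Omega^{1,\vee}_{Y/S})\cap C_{X/Y}(\mathcal F)$ dies on $U$, and the hypothesis on $Z$ gives $H^0(Z,\mathcal K_{Z/S})\simeq H^0_Z(X,\mathcal K_{X/S})\simeq H^0_Z(X,\mathcal K_{X/Y/S})$. But there is a genuine gap where you invoke ``the canonical lift of $D$ to $H^0_Z(X,\mathcal K_{X/S})$'': no such canonical lift exists. The localization sequence
\begin{align*}
H^{-1}(U,\mathcal K_{U/S})\to H^0_Z(X,\mathcal K_{X/S})\to H^0(X,\mathcal K_{X/S})\to H^0(U,\mathcal K_{U/S})
\end{align*}
only tells you that $D$ lies in the image of the middle map, whose kernel is the image of $H^{-1}(U,\mathcal K_{U/S})$ and is nonzero in general. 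Producing a distinguished preimage requires a specified null-homotopy of the composite $\Lambda\xrightarrow{D}\mathcal K_{X/S}\to j_*\mathcal K_{U/S}$, not merely the vanishing of its class in $H^0$; and one must then prove that this particular preimage is sent to $\widetilde C^Z_{X/Y/S}(\mathcal F)$ under $H^0_Z(X,\mathcal K_{X/S})\to H^0_Z(X,\mathcal K_{X/Y/S})$.

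Your final step --- assembling the three classes into a single diagram of Verdier pairings so that the compatibility ``reduces to the definition of $\mathcal K_{X/Y/S}$ as the cofiber together with the standard functorialities'' --- is exactly the point at which this cannot be carried out in the derived category. Because cones are not functorial, even after choosing fillers between the relevant cofiber sequences the induced triangle of fillers need not commute; this failure is spelled out at the end of \ref{subsec:LiftCofiber}. The paper's proof of Theorem \ref{conj:milclass} goes through Proposition \ref{lem:TisWellDefined}: it lifts the whole construction to a stable $\infty$-category with a six-functor formalism, builds an intermediate class $L^Z_{X/Y/S}(\mathcal F)$ by lifting along the trivial Kan fibration from cofiber sequences to arrows, and establishes the coherence of a cube of base-change maps (Lemma \ref{lem:cubicdiagram}, proved with cocartesian and cartesian edges in the category of correspondences) in order to obtain the commutative diagram \eqref{eq:CTDiffExplain}. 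That coherent diagram is what simultaneously singles out a preferred lift of $D$ and identifies it with $C^Z_{X/Y/S}(\mathcal F)$; the authors state explicitly that they do not know how to produce it by triangulated-category functorialities alone. Your proposal as written omits this entire layer, which is the actual content of the proof.
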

We call $C_{X/Y/S}^{Z}(\mathcal F) $ (resp. $\widetilde{C}_{X/Y/S}^{Z}(\mathcal F)$)  the non-acyclicity class of $\mathcal F$. We expect that this class on (not necessarily smooth) varieties could play a role as the characteristic cycles on smooth varieties. 

If $Z$ is empty, we prove \eqref{eq:intro2} in  Theorem \ref{thm:changebaseHomo}.
In the case where $f={\rm id}$ and $S={\rm Spec}k$,  the cohomology sheaves of $\mathcal F|_{X\setminus Z}$ are locally constant on $X\setminus Z$ since ${\rm id}:X\setminus Z\to X\setminus Z$ is universally locally acyclic  relatively to $\mathcal F|_{X\setminus Z}$. Then \eqref{eq:intro2} is Abbes-Saito's localization formula  \cite[Proposition 5.2.3]{AS07}.

\subsection{} 
Now we summarize the functorial properties for the non-acyclicity classes.
For simplicity, we denote such a diagram \eqref{intro:deltadiag} by $\Delta=\Delta^Z_{X/Y/S}$ 
and  $\widetilde{C}_{X/Y/S}^{Z}(\mathcal F)$ by $C_{\Delta}(\mathcal F)$.

\begin{proposition}[Proposition \ref{prop:LocBC} and Proposition \ref{thm:ppForNTclass}]\label{introthmforfuncna}
	Let $\mathcal F\in D_{\rm ctf}(X,\Lambda)$. 
	Assume that $Y\to S$ is smooth, $X\setminus Z\to Y$ is universally locally acyclic relatively to $\mathcal F|_{X\setminus Z}$ and that $X\to S$ is universally locally acyclic relatively to $\mathcal F$.
	\begin{itemize}
		\item[(1)]
		Let $b: S'\to S$ be a morphism of Noetherian schemes. Let $\Delta'=\Delta_{X'/Y'/S'}^{Z'}$ be the base change of $\Delta=\Delta^Z_{X/Y/S}$ by $b: S'\to S$. Let $b_X:X'=X\times_SS'\to X$  be the base change of $b$ by $X\to S$. Then
		we have
		\begin{align}
			b_{X}^\ast C_{\Delta}(\mathcal F)=C_{\Delta'}(b_X^\ast\mathcal F)\quad{\rm in}\quad H^0_{Z'}(X',\mathcal K_{X'/Y'/S'}),
		\end{align}
		where $b_{X}^\ast:H^0_{Z}(X,\mathcal K_{X/Y/S})\to H^0_{Z'}(X',\mathcal K_{X'/Y'/S'})$ is the induced pull-back morphism.
		\item[(2)]
		Consider a  diagram $\Delta'=\Delta^{Z'}_{X'/Y/S}$. Let $s:X\to X'$ be a proper morphism over $Y$ such that $Z\subseteq s^{-1}(Z')$.
		Then
		we have
		\begin{align}\label{introeqpushforward}
			s_\ast (C_{\Delta}(\mathcal F))=C_{\Delta'}(Rs_\ast\mathcal F)\quad{\rm in}\quad H^0_{Z'}(X',\mathcal K_{X'/Y/S}),
		\end{align}
		where $s_\ast:H^0_{Z}(X,\mathcal K_{X/Y/S})\to H^0_{Z'}(X',\mathcal K_{X'/Y/S})$ is the induced  push-forward morphism.
	\end{itemize}
\end{proposition}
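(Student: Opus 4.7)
The strategy is to reduce both functoriality statements to the corresponding ones for the relative characteristic classes $C_{X/S}(\mathcal F)$ and $C_{X/Y}(\mathcal F)$, together with compatibility of the defining distinguished triangle $\mathcal K_{X/Y}\to \mathcal K_{X/S}\to \mathcal K_{X/Y/S}\xrightarrow{+1}$ and of the Chern classes entering the fibration formula of Theorem \ref{introconj1}. Recall that $\widetilde{C}_{X/Y/S}^{Z}(\mathcal F)\in H^0_Z(X,\mathcal K_{X/Y/S})$ is defined as the canonical lifting of the image of $C_{X/S}(\mathcal F)-c_r(f^\ast \Omega_{Y/S}^{1,\vee})\cap C_{X/Y}(\mathcal F)$ under $\mathcal K_{X/S}\to \mathcal K_{X/Y/S}$; such a lifting exists because this difference vanishes on $X\setminus Z$ by Theorem \ref{thm:changebaseHomo}.

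For part (1), base change preserves every hypothesis entering $\Delta$: universal local acyclicity is stable under $b$, smoothness of $g$ with relative dimension $r$ is unchanged, and the closed immersion $\tau$ stays a closed immersion, so $\Delta'$ is a valid diagram and $\mathcal K_{X/S}$, $\mathcal K_{X/Y}$ commute with $b_X^\ast$ through the $Rf^!$-formalism under smoothness. Hence the defining triangle is compatible with pull-back. Combining the base-change identities
\begin{equation*}
b_X^\ast C_{X/S}(\mathcal F)=C_{X'/S'}(b_X^\ast \mathcal F),\qquad b_X^\ast C_{X/Y}(\mathcal F)=C_{X'/Y'}(b_X^\ast \mathcal F),
\end{equation*}
which are built into the categorical-trace formalism of \cite[2.20]{LZ22}, with $b_X^\ast c_r(f^\ast \Omega_{Y/S}^{1,\vee})=c_r(f'^\ast \Omega_{Y'/S'}^{1,\vee})$ and the naturality of the support-lift under pull-back along $b_X$, one obtains the claimed equality $b_X^\ast C_\Delta(\mathcal F)=C_{\Delta'}(b_X^\ast \mathcal F)$.

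For part (2), since $s$ is proper over $Y$ (hence over $S$), the trace morphism $Rs_\ast Rs^!\to \mathrm{id}$ together with the canonical identifications $Rs^!\mathcal K_{X'/S}\simeq \mathcal K_{X/S}$ and $Rs^!\mathcal K_{X'/Y}\simeq \mathcal K_{X/Y}$ induces a morphism of distinguished triangles from $\mathcal K_{X/Y}\to\mathcal K_{X/S}\to\mathcal K_{X/Y/S}$ to its counterpart on $X'$, and in particular a push-forward $s_\ast:H^0_Z(X,\mathcal K_{X/Y/S})\to H^0_{Z'}(X',\mathcal K_{X'/Y/S})$ using $s(Z)\subseteq Z'$. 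I would then invoke the proper push-forward formula for the relative characteristic classes, $s_\ast C_{X/S}(\mathcal F)=C_{X'/S}(Rs_\ast \mathcal F)$ and $s_\ast C_{X/Y}(\mathcal F)=C_{X'/Y}(Rs_\ast \mathcal F)$, combined with the projection formula $s_\ast(s^\ast c\cap \alpha)=c\cap s_\ast \alpha$ applied to $c=c_r(f'^\ast\Omega_{Y/S}^{1,\vee})$, to conclude that $s_\ast \widetilde{C}^{Z}_\Delta(\mathcal F)$ and $\widetilde{C}^{Z'}_{\Delta'}(Rs_\ast \mathcal F)$ are canonical $Z'$-supported lifts of the same class in $H^0(X',\mathcal K_{X'/Y/S})$, hence coincide.

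The principal obstacle will be the compatibility of the lifting $\widetilde{C}$ with $s_\ast$, rather than the global identity itself. Whereas pull-back commutes transparently with the defining triangle and its long exact sequence with supports, the push-forward requires checking that $Rs_\ast Rs^!\to \mathrm{id}$ fits into a morphism of triangles compatibly with the smoothness identification $\mathcal K_{X/S}\simeq \mathcal K_{X/Y}(r)[2r]$ that realizes the map $\mathcal K_{X/Y}\to\mathcal K_{X/S}$ as cup product with the Euler class $c_r(f^\ast\Omega_{Y/S}^{1,\vee})$. This amounts to tracking the natural isomorphisms from relative purity and proper base change and verifying their coherence with the trace; once this is in place, uniqueness of the canonical supported lift completes the argument.
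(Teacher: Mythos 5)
There is a genuine gap. Your entire argument rests on characterizing $\widetilde{C}^{Z}_{X/Y/S}(\mathcal F)$ as ``the canonical supported lift'' of $C_{X/S}(\mathcal F)-c_r(f^\ast\Omega^{1,\vee}_{Y/S})\cap C_{X/Y}(\mathcal F)$ and then concluding equality of two classes in $H^0_{Z'}(X',\mathcal K_{X'/Y/S})$ because they are both such lifts. But the paper explicitly points out (in the discussion preceding \ref{subsec:LiftCofiber}) that a lift of this difference to the supported cohomology group is \emph{not} unique: the relevant restriction map $H^0_Z(X,\mathcal K_{X/Y/S})\to H^0(X,\mathcal K_{X/Y/S})$ has no injectivity in general, and overcoming exactly this non-uniqueness is why the fibration formula (Theorem \ref{conj:milclass}) requires the $\infty$-categorical coherence argument of Proposition \ref{lem:TisWellDefined}. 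So ``hence coincide'' does not follow. A second, independent problem is that the fibration formula is only available under the extra hypothesis (C2) ($H^0(Z,\mathcal K_{Z/Y})=H^1(Z,\mathcal K_{Z/Y})=0$), which is \emph{not} assumed in Proposition \ref{introthmforfuncna}; the statement you are proving concerns $\widetilde{C}$ in $H^0_Z(X,\mathcal K_{X/Y/S})$ in full generality, where $C_{X/Y}(\mathcal F)$ itself need not even be defined (Definition \ref{def:rcccCodBig} requires the vanishing of those cohomology groups). Your reduction therefore cannot establish the proposition as stated.

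The paper's actual proofs avoid the fibration formula entirely: the non-acyclicity class is the explicit composition $\Lambda\to\delta_1^\ast i^!(\mathcal F\boxtimes^L_S D_{X/S}(\mathcal F))\to\delta_1^\ast\delta^\Delta(\mathcal F\boxtimes^L_S D_{X/S}(\mathcal F))\to\mathcal K_{X/Y/S}$, where the middle term is supported on $Z$, and Propositions \ref{prop:LocBC} and \ref{thm:ppForNTclass} are proved by exhibiting commutative diagrams (\eqref{bcdiag4}, \eqref{eq:pf:ppForNTclass6}) that compare this defining composition before and after base change, respectively proper push-forward, using the diagrams underlying Propositions \ref{cor:pullbackccc} and \ref{prop:pushccc} together with the factorization \eqref{bcdiag3-factorization} and the compatibility of the triangle \eqref{eq:distriK} with $b_X^\ast$ and $s_\ast$. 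The ingredients you list (compatibility of $\mathcal K_{X/Y}\to\mathcal K_{X/S}\to\mathcal K_{X/Y/S}$ with pull-back and push-forward, the push-forward of $C_{X/S}$) are all genuinely needed, but they must be assembled at the level of the defining morphisms with supports, not at the level of the images in unsupported cohomology.
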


If $Z$ is a finite set of closed points, the non-acyclicity class can be calculated as follows.
\begin{theorem}[Theorem \ref{thm:MF}]\label{introconj2}
	Let $Y$ be a smooth curve over a perfect field $k$ of  characteristic $p>0$.
	Let $\Lambda$ be a finite local ring such that the characteristic of the residue field is invertible in $k$.   Let    $f: X\to Y$ be a separated morphism of finite type and $x\in|X|$ a closed point. Let $\mathcal F\in D_{\rm ctf}(X,\Lambda)$ such that $f|_{X\setminus\{x\}}$ is universally locally acyclic relatively to $\mathcal F|_{X\setminus\{x\}}$. 
	Then we have 
	\begin{align}\label{introMFeq}
		C_{X/Y/k}^{\{x\}}(\mathcal F)=-{\rm dimtot} R\Phi_{\bar{x}}(\mathcal F,f)\quad {\rm in}\quad  \Lambda=H^0_{x}(X,\mathcal K_{X/k}).
	\end{align}
\end{theorem}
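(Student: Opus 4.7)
Since $x$ is a closed point of a separated finite-type scheme over the perfect field $k$, purity gives $H^0_x(X,\mathcal K_{X/k})\cong\Lambda$, so \eqref{introMFeq} is an equality of elements of $\Lambda$. My plan is to reduce the statement to a local trace computation on the vanishing-cycle complex $R\Phi_{\bar x}(\mathcal F,f)$ and then invoke a local Grothendieck-Ogg-Shafarevich (GOS) type formula.

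First, I would exploit the base change compatibility of the non-acyclicity class (Proposition \ref{introthmforfuncna}(1)) along the map from the strict henselization of $Y$ at the geometric point $\overline{f(x)}$ to $Y$, followed by henselization at $x$. This reduces the problem to the case where $Y$ is a strictly henselian trait $T$, $X$ is henselian local at $x$, and $R\Phi_{\bar x}(\mathcal F,f)$ is a bounded complex of $\Lambda$-modules carrying a continuous action of the absolute Galois group of the fraction field of $T$. In this localized setting, all contributions to $C_{X/Y/k}^{\{x\}}(\mathcal F)$ are manifestly concentrated at $x$.

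Next I would unpack Definition \ref{def:milclass}: $\widetilde C_{X/Y/k}^{\{x\}}(\mathcal F)\in H^0_{\{x\}}(X,\mathcal K_{X/Y/k})$ is built from the identity endomorphism of $\mathcal F$ via a Verdier-type pairing together with the distinguished triangle $\mathcal K_{X/Y}\to\mathcal K_{X/k}\to\mathcal K_{X/Y/k}\to$. Because $f$ is universally locally acyclic relatively to $\mathcal F$ away from $x$, the connecting morphism of this triangle identifies the localization at $x$ with a canonical invariant built from $R\Phi_{\bar x}(\mathcal F,f)$. The core step is to translate this into a Lefschetz-type trace attached to the identity endomorphism of $R\Phi_{\bar x}(\mathcal F,f)$, using base change compatibility of nearby cycles and biduality on the strictly local trait. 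Once this translation is in place, the local GOS formula evaluates the trace as $-\mathrm{dimtot}\,R\Phi_{\bar x}(\mathcal F,f)$, the sign matching the one in the curve case of Corollary \ref{introCorGOS}.

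The principal obstacle is this translation step: making precise the link between the abstract Verdier-pairing description of $\widetilde C_{X/Y/k}^{\{x\}}(\mathcal F)$ and the trace on $R\Phi_{\bar x}(\mathcal F,f)$. It will require careful bookkeeping of the bivariant dualizing data through the triangle defining $\mathcal K_{X/Y/k}$, together with compatibility of the nearby-cycle adjunction with the Verdier pairing; morally it is the cohomological analog of Saito's microlocal computation in \cite[\S5]{Sai17a}. The remaining ingredients (additivity in $\mathcal F$, reduction to the henselian setting, and the classical local GOS formula on a trait) should follow readily from the functorial properties already established earlier in the paper.
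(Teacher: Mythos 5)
There is a genuine gap, and it sits exactly where you locate your ``principal obstacle'': the passage from the non-acyclicity class to a trace on $R\Phi_{\bar x}(\mathcal F,f)$. A Lefschetz--Verdier-type trace of the identity endomorphism can only ever produce a rank or an Euler characteristic; it cannot see the Swan conductor. The quantity ${\rm dimtot}={\rm dim}+{\rm Sw}$ contains the wild part ${\rm Sw}$, which is not the trace of any endomorphism of $R\Phi_{\bar x}(\mathcal F,f)$, and there is no ``local GOS formula'' available to you at that point: the identification of the localized class with the Artin conductor \emph{is} the content of the theorem, not an input to it. (Note also that the base-change property of Proposition \ref{introthmforfuncna}(1) concerns base change of $S$, which here is ${\rm Spec}\,k$; localizing $Y$ at $f(x)$ is handled instead by the \'etale invariance of Proposition \ref{prop:invariantUnderEtale}, and passing all the way to a henselian trait destroys the global geometry that the actual proof needs.)

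The paper's proof supplies precisely the missing idea, following Abe: one deforms the situation over a second projective line $\mathbb P^1_{t'}$ by twisting with the Artin--Schreier sheaf $\mathcal L_!(ft')$, extends the twisted object to a sheaf $\widetilde{\mathcal F}$ on $X\times\mathbb P'$ that is universally locally acyclic over a finite cover $\mathbb P'\to\mathbb P^1_{t'}$, and then uses the pull-back (Proposition \ref{prop:LocBC}) and specialization (Proposition \ref{prop:spOfNA}) invariance of the non-acyclicity class to equate the fiber at $0'$ (which is $C^{\{x\}}_{X/Y/k}(\mathcal F)$) with the fiber at $\infty'$. At $\infty'$ the nearby cycle $\Psi_{{\rm pr}'_2}({\rm pr}_1^{\prime\ast}\mathcal F\otimes^L\mathcal L_!(ft'))$ is a skyscraper at $x$, so its non-acyclicity class is just its dimension, and Laumon's local Fourier transform theorem (Theorem \ref{thm:Lau}) converts that dimension into $-{\rm dimtot}\,R\Phi_{\bar x}(\mathcal F,f)$. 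It is this stationary-phase deformation that trades the total dimension for an honest rank; without it, your plan reduces to asserting the theorem in the last step.
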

The above theorem gives a cohomological analog of the Milnor formula proved by Saito in \cite[Theorem 5.9]{Sai17a}, and does not involve the smoothness assumption on $X$.
Its proof is based on the pull-back property of the non-acyclicity classes (cf. Proposition \ref{introthmforfuncna}.(1)) together with the method in  \cite{Abe22}.  
 Theorem \ref{thm:SJ:MFforTclass-intro} follows from \eqref{eq:intro2} together with the cohomological Milnor formula  \eqref{introMFeq}.
The construction of non-acyclicity classes in this paper can be applied to motivic categories 
with six-functor formalism.
If there is a generalization of the argument in \cite{Abe22} to the $p$-adic cohomology theory (cf. \cite[3.2]{Abe22}), we could expect that the construction will  lead to a solution of Deligne's conjecture on the Milnor formula in the mixed characteristic case \cite[Conjecture 1.9]{Del72}.

\subsection{}
As an application of Proposition \ref{introthmforfuncna}.(2), we prove a cohomological version of the conductor formula \cite[Theorem 2.2.3]{Sai21} (also of Bloch's conjecture on the conductor formula for constant sheaves \cite{Blo87}).
\begin{theorem}[Theorem \ref{cor:conductorF}]\label{introCor:conductorF}
	Let $Y$ be a smooth connected curve over a perfect field $k$ of  characteristic $p>0$.
	Let $\Lambda$ be a finite local ring such that the characteristic of the residue field is invertible in $k$. 
	Let    $f: X\to Y$ be a proper morphism over $k$ and $y\in|Y|$ a closed point. Let $\mathcal F\in D_{\rm ctf}(X,\Lambda)$. Assume that $f|_{X\setminus f^{-1}(y)}$ is  universally locally acyclic relatively to $\mathcal F|_{X\setminus f^{-1}(y)}$.
	Then we have 
	\begin{align}\label{introeq:thm:MF1}
		f_\ast \widetilde{C}_{X/Y/k}^{f^{-1}(y)}(\mathcal F)=-{ a}_y(R f_\ast\mathcal F)\quad {\rm in}\quad  \Lambda=H^0_{y}(Y,\mathcal K_{Y/k}).
	\end{align}
\end{theorem}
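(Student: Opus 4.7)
The plan is to use the proper pushforward property of the non-acyclicity class (Proposition \ref{introthmforfuncna}.(2)) to transfer the computation from $X$ to $Y$, and then invoke the cohomological Milnor formula (Theorem \ref{introconj2}) applied to the identity morphism $\mathrm{id}_Y$, to which the pushforward sheaf $Rf_*\mathcal{F}$ is singular only at $y$.

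First, I would apply Proposition \ref{introthmforfuncna}.(2) to the proper morphism $s := f: X \to Y$, viewed as a morphism over $Y$ via $f$ itself and $\mathrm{id}_Y$, with $\Delta = \Delta^{f^{-1}(y)}_{X/Y/k}$ and $\Delta' = \Delta^{\{y\}}_{Y/Y/k}$. The containment $f^{-1}(y) \subseteq s^{-1}(\{y\})$ is trivial, and all hypotheses are satisfied: $Y \to \mathrm{Spec}(k)$ is smooth, $f|_{X \setminus f^{-1}(y)}$ is universally locally acyclic relative to $\mathcal{F}|_{X \setminus f^{-1}(y)}$ by assumption, and $X \to \mathrm{Spec}(k)$ is automatically universally locally acyclic relative to $\mathcal{F}$ because the base is a single point. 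This produces the identity
\[
	f_* \widetilde{C}_{X/Y/k}^{f^{-1}(y)}(\mathcal{F}) = \widetilde{C}_{Y/Y/k}^{\{y\}}(Rf_*\mathcal{F}) \quad \mathrm{in} \quad H^0_y(Y, \mathcal{K}_{Y/Y/k}).
\]

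Next, I would verify that the cohomological Milnor formula applies to the pair $(\mathrm{id}_Y, Rf_*\mathcal{F})$ at the closed point $y$. Since $f$ is proper, $Rf_*\mathcal{F}$ lies in $D_{\mathrm{ctf}}(Y,\Lambda)$; and since $f|_{X \setminus f^{-1}(y)}$ is proper and universally locally acyclic relative to $\mathcal{F}|_{X \setminus f^{-1}(y)}$, the cohomology sheaves of $Rf_*\mathcal{F}$ are locally constant on $Y \setminus \{y\}$, which is precisely the condition that $\mathrm{id}_Y|_{Y \setminus \{y\}}$ is universally locally acyclic relative to $(Rf_*\mathcal{F})|_{Y \setminus \{y\}}$. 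Moreover, because $\{y\}$ is a reduced closed point with residue field finite separable over the perfect field $k$, absolute purity gives $\mathcal{K}_{\{y\}/Y} \simeq \Lambda(-1)[-2]$, hence $H^0(\{y\}, \mathcal{K}_{\{y\}/Y}) = H^1(\{y\}, \mathcal{K}_{\{y\}/Y}) = 0$. Therefore $\widetilde{C}_{Y/Y/k}^{\{y\}}(Rf_*\mathcal{F})$ descends to the class $C_{Y/Y/k}^{\{y\}}(Rf_*\mathcal{F})$ in $H^0(\{y\}, \mathcal{K}_{\{y\}/k}) \cong \Lambda$, and Theorem \ref{introconj2} applied to $\mathrm{id}_Y$ yields
\[
	C_{Y/Y/k}^{\{y\}}(Rf_*\mathcal{F}) = -\mathrm{dimtot}\, R\Phi_{\bar y}(Rf_*\mathcal{F}, \mathrm{id}_Y) \quad \mathrm{in} \quad \Lambda.
\]

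The last step is the identification $\mathrm{dimtot}\, R\Phi_{\bar y}(Rf_*\mathcal{F}, \mathrm{id}_Y) = a_y(Rf_*\mathcal{F})$, which amounts to unravelling the classical definition of the Artin conductor. For the identity morphism $\mathrm{id}_Y$, the vanishing cycle complex $R\Phi_{\bar y}(\mathcal{G}, \mathrm{id}_Y)$ is the cone of the specialization map $\mathcal{G}_{\bar y} \to \mathcal{G}_{\bar\eta_y}$ from the geometric point $\bar y$ to the geometric generic point of the strict henselization at $\bar y$; its generic rank is $\mathrm{rank}\,\mathcal{G}_{\bar\eta} - \mathrm{rank}\,\mathcal{G}_{\bar y}$ and its Swan conductor at $y$ equals $\mathrm{Sw}_y\mathcal{G}$, so the total dimension is $\mathrm{rank}\,\mathcal{G}_{\bar\eta} - \mathrm{rank}\,\mathcal{G}_{\bar y} + \mathrm{Sw}_y\mathcal{G} = a_y(\mathcal{G})$, exactly as recalled in Corollary \ref{introCorGOS}. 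Combining the three displays produces \eqref{introeq:thm:MF1}. The main technical work is already packaged in Proposition \ref{introthmforfuncna}.(2) and Theorem \ref{introconj2}; the remaining obstacle is therefore only careful bookkeeping — in particular verifying the purity-based vanishings $H^{i}(\{y\}, \mathcal{K}_{\{y\}/Y}) = 0$ for $i=0,1$ and identifying the isomorphism $H^0_y(Y, \mathcal{K}_{Y/Y/k}) \cong \Lambda$ — so that the signs and normalizations on both sides of the cohomological Milnor formula match those used to define the Artin conductor.
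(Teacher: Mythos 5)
Your proposal is correct and follows essentially the same route as the paper's proof: apply the proper push-forward property (Proposition \ref{thm:ppForNTclass}) with $s=f$, $\Delta=\Delta^{f^{-1}(y)}_{X/Y/k}$ and $\Delta'=\Delta^{\{y\}}_{Y/Y/k}$, observe that $Rf_*\mathcal F$ has locally constant cohomology sheaves on $Y\setminus\{y\}$, and conclude by the curve case of the cohomological Milnor formula together with the identity ${\rm dimtot}\,R\Phi_{\bar y}(Rf_*\mathcal F,{\rm id})=a_y(Rf_*\mathcal F)$. Your additional verifications (the purity vanishings for $\mathcal K_{\{y\}/Y}$ and the identification of the support groups) are correct bookkeeping that the paper leaves implicit.
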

In \cite[Theorem 2.2.3]{Sai21}, Saito assumes that $f$ is projective since the global index formula for the characteristic cycle is only known  for projective schemes\footnote{Recently, Abe \cite{Abe21} proves the index formula for proper schemes by using $\infty$-categories.}. However, using the proper push-forward property for cohomological characteristic classes, we could drop this assumption in Theorem \ref{introCor:conductorF}. This is one of the advantages by using cohomological arguments.

\subsection{}We derive the cohomological Milnor formula  from the pull-back property and a deformation property of constructible \'etale sheaves together with a simple fact that $C_{X/X/k}^{\{x\}}(\mathcal G_{x})={\rm rank}\mathcal G_{x}$ for any smooth scheme $X$ of dimension $\geq 1$ and any constructible sheaf $\mathcal G_x$ supported on a closed point $x\in|X|$.  The cohomological conductor formula follows from the proper push-forward property. It would be interesting to see whether the non-acyclicity classes are characterized by Proposition \ref{introthmforfuncna} and the above formula for skyscraper constructible sheaves.

\subsection{}This article is organized as follows. 
In Section \ref{sec:TC}, we introduce the definition of transversality conditions, which is crucial for defining the non-acyclicity classes.  
In Section \ref{sec:CCC}, we review the construction of the relative characteristic class and its functorial properties. Then we  prove \eqref{eq:intro2} under the condition $Z=\emptyset$ in Theorem \ref{thm:changebaseHomo}.
In Section \ref{sec:CCC2}, we  construct the non-acyclicity classes and 
 study their functorial properties. Then we prove the fibration formula \eqref{eq:intro2} 
 in Section \ref{sec:fib}.
In Section \ref{sec:applications}, we show the cohomological Milnor formula \eqref{introMFeq} and deduce the cohomological conductor formula \eqref{introeq:thm:MF1}. As an application of the cohomological Milnor formula, we establish an induction formula for the cohomological characteristic classes and then prove the quasi-projective case of Saito's conjecture in Theorem \ref{thm:sConj}.

\subsection*{Acknowledgments} 
The authors would like to express their sincere gratitude to Professor Takeshi Saito for his careful reading and many improvement suggestions and thank him for suggesting the terminology ``non-acyclicity class".  The authors would like to thank Professor Tomoyuki Abe for the helpful discussion on the cohomological Milnor formula \eqref{introMFeq}, which was stated as a conjecture in a draft version of this paper. 
He kindly suggests the authors to study the pull-back formula for non-acyclicity classes and then use a similar argument \cite{Abe22} to conclude. 
The authors would like to thank Professor Georg Tamme for the helpful discussion and suggestions on $\infty$-categories.
The authors thank Denis-Charles Cisinski, Haoyu Hu, Fangzhou Jin, Moritz Kerz,  Ruochuang Liu, Guozhen Wang, Jiangnan Xiong and Weizhe Zheng for their comments. The authors appreciate the anonymous referees for careful reading and numerous valuable comments and suggestions, which have greatly improved the manuscript.
This work was partially supported by the National Key R{\&}D Program of China (Grant No.2021YFA1001400), NSFC Grant No.11901008, NSFC Grant  No.12271006 and the New Cornerstone Science Foundation.

\subsection*{Notation and Conventions}
\begin{enumerate}
\item Let $S$ be a Noetherian scheme and ${\rm Sch}_{S}$ the category of separated schemes of finite type over $S$. 
 Let $\Lambda$ be a Noetherian ring such that $m\Lambda=0$ for some integer $m$ invertible on $S$ unless otherwise stated explicitly.
\item For any scheme $X\in {\rm Sch}_S$, 
we denote by $D(X,\Lambda)$ the derived category of \'etale sheaves of $\Lambda$-modules on $X$ and by
$D_{\rm ctf}(X,\Lambda)$ the derived category of complexes of \'etale sheaves of $\Lambda$-modules of finite tor-dimension with constructible cohomology groups on $X$.

\item For any morphism $f:X\to Y$ in ${\rm Sch}_S$, we denote by $D_{\rm ctf}(X/Y,\Lambda)\subseteq D_{\rm ctf}(X,\Lambda)$  the full subcategory consisting of objects $\mathcal F$ such that $f$ is universally locally acyclic relatively to $\mathcal F$ (also say $\mathcal F$ is universally locally acyclic over $Y$). We define $K_0(X/Y,\Lambda)$ to be the Grothendieck group of $D_{\rm ctf}(X/Y,\Lambda)$.
\item  For any separated morphism $f:X\to Y$ in ${\rm Sch}_S$, we use the following notation
\begin{align}\nonumber
\mathcal K_{X/Y}=Rf^!\Lambda,\quad D_{X/Y}(-)=R\mathcal Hom(-, \mathcal K_{X/Y}).
\end{align}

\item For  $\mathcal F\in D_{\rm ctf}(X,\Lambda)$ and $\mathcal G\in D_{\rm ctf}(Y,\Lambda)$ on $S$-schemes $X$ and $Y$  respectively, $\mathcal F\boxtimes_S^L\mathcal G$ denotes ${\rm pr}_1^\ast\mathcal F\otimes^L{\rm pr}_2^\ast\mathcal G$ on $X\times_SY$.
\item To simplify our notation, we omit to write $R$ or $L$ to denote the derived functors unless otherwise stated explicitly or for $R\mathcal Hom$.
\end{enumerate}

\section{Transversality condition}\label{sec:TC}

\subsection{}\label{def:gtrans}
In this section, we introduce a generalized version of the transversality condition studied 
in \cite{Sai17a} and investigate its relation with universal local acyclicity. This generalized version is crucial for defining the non-acyclicity classes.  
Consider the following cartesian diagram in ${\rm Sch}_S$:
\begin{align}\label{eq:carGtrans}
\begin{gathered}
\xymatrix{
X\ar@{}[rd]|\Box\ar[d]_p\ar[r]^-i&Y\ar[d]^-f\\
W\ar[r]^-{\delta}&T.
}
\end{gathered}
\end{align}
Let $\mathcal F\in D_{\rm ctf}(Y,\Lambda)$ and $\mathcal G\in D_{\rm ctf}(T,\Lambda)$. 
Recall that there is  a  base change morphism
\begin{equation}\label{eq:pullbackforanyGdefTrans}
	p^\ast\delta^! \mathcal G\xrightarrow{}i^!f^\ast \mathcal G,
\end{equation}	
which is adjoint to
$	i_! p^\ast \delta^!\mathcal G \xrightarrow[\simeq]{\rm b.c}f^\ast \delta_!\delta^!\mathcal G\xrightarrow{{\rm adj}}f^\ast\mathcal G$.
We define a morphism $c_{\delta,f,\mathcal F,\mathcal G}$
to be the  composition 
\begin{align}\label{eq:carGtrans1}
	\begin{split}
		c_{\delta,f,\mathcal F,\mathcal G}:i^\ast \mathcal F\otimes^L p^\ast \delta^!\mathcal G&\xrightarrow{id\otimes \eqref{eq:pullbackforanyGdefTrans}}i^\ast \mathcal F\otimes^L i^! f^\ast\mathcal G \xrightarrow{ (a)}i^!(\mathcal F\otimes^L f^\ast\mathcal G),
	\end{split}
\end{align}
where $(a)$ is adjoint to the following composition (cf. \cite[(8.8)]{Sai17a})
\begin{equation}\label{eq:defsaitoCFG}
	i_!(i^\ast\mathcal F\otimes^L i^!f^\ast \mathcal G)\xrightarrow[\simeq]{\rm proj.formula}\mathcal F\otimes^L i_!i^! f^\ast \mathcal G\xrightarrow{\rm adj}\mathcal F\otimes^L f^\ast \mathcal G.
\end{equation}
Note that if $i={\rm id}$ and $\delta$ is a closed immersion, then $c_{\delta,f,\mathcal F,\mathcal G}$ is induced by  (cf. definition of \eqref{eq:pullbackforanyGdefTrans})
\begin{align}\label{delta!ast}
	\delta^!\simeq \delta^\ast \delta_! \delta^!\xrightarrow{\rm adj} \delta^\ast.
\end{align}
We also put $c_{\delta,f,\mathcal F}:=c_{\delta,f,\mathcal F,\Lambda}: i^\ast\mathcal F\otimes^L p^\ast\delta^!\Lambda\to i^!\mathcal F$.
If $c_{\delta,f,\mathcal F}$ is an isomorphism, then we say the cartesian diagram \eqref{eq:carGtrans} is {\it $\mathcal F$-transversal} (and also say that the morphism $\delta$ is {\it $\mathcal F$-transversal}).
For the special case where $f={\rm id}$ in \eqref{eq:carGtrans}, it agrees with Saito's definition \cite[Definition 8.5]{Sai17a}.
The basic properties of $c_{\delta,f,\mathcal F,\mathcal G}$ are summarized in the following two lemmas.
\begin{lemma}\label{lem:TransAforC}
Consider the following commutative diagram in ${\rm Sch}_S$:
	\begin{align}\label{eq:TransAforC1}
		\begin{gathered}
			\xymatrix{
				X'\ar[d]_{p'}\ar[r]^s\ar@{}[rd]|\Box &X \ar[d]_p\ar[r]^i \ar@{}[rd]|\Box& Y\ar[d]^f\\
				W'\ar[r]^-r& W\ar[r]^-\delta  & T,
			}
		\end{gathered}
	\end{align}
where squares are cartesian. For any $\mathcal F\in D_{\rm ctf}(Y,\Lambda)$ and $\mathcal G\in D_{\rm ctf}(T,\Lambda)$, we have a commutative diagram
	\begin{align}\label{eq:TransAforC2}
		\begin{gathered}
			\xymatrix@C=2cm{
				(is)^*\mathcal{F}\otimes^L{p'}^*(\delta r)^!\mathcal{G}\ar[rr]^{c_{\delta r,f,\mathcal{F},\mathcal{G}}}&&(is)^!(\mathcal{F}\otimes^Lf^*\mathcal{G})\\
				s^*i^*\mathcal{F}\otimes^L{p'}^*r^!\delta^!\mathcal{G} \ar[u]_{\simeq}\ar[r]^{c_{r,p,i^\ast\mathcal{F},\delta^!\mathcal{G}}}&s^!(i^*\mathcal{F}\otimes^Lp^*\delta^!\mathcal{G})\ar[r]^{s^!(c_{\delta,f,\mathcal{F},\mathcal{G}})} &s^!i^!(\mathcal{F}\otimes^Lf^*\mathcal{G}),\ar[u]_-{\simeq}
			}
		\end{gathered}
	\end{align}
where the vertical arrows are canonical isomorphisms by \cite[(3.1.13.1)]{Del73}.
\end{lemma}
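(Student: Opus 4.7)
The plan is to decompose the morphism $c_{\delta,f,\mathcal F,\mathcal G}$ of \eqref{eq:carGtrans1} into its two building blocks --- the base change $p^*\delta^!\mathcal G \to i^! f^*\mathcal G$ of \eqref{eq:pullbackforanyGdefTrans} and the projection-type morphism $(a)\colon i^*\mathcal F \otimes^L i^! f^*\mathcal G \to i^!(\mathcal F\otimes^L f^*\mathcal G)$ adjoint to \eqref{eq:defsaitoCFG} --- and reduce the commutativity of \eqref{eq:TransAforC2} to two standard coherences of the six-functor formalism associated with the vertical composition of the two cartesian squares in \eqref{eq:TransAforC1}.

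First I would establish that, under the identifications $(p')^*(\delta r)^! \simeq (p')^* r^! \delta^!$ and $(is)^! \simeq s^! i^!$ from \cite[(3.1.13.1)]{Del73}, the base change map for the outer rectangle factors as
\[
(p')^* r^! \delta^!\mathcal G \xrightarrow{\text{b.c.\ left}} s^! p^* \delta^!\mathcal G \xrightarrow{s^!(\text{b.c.\ right})} s^! i^! f^*\mathcal G.
\]
Passing to the right adjoints $i_!, s_!$, this reduces to the well-known compatibility of the base change isomorphism $f^* \delta_! \xrightarrow{\simeq} i_! p^*$ with vertical stacking of cartesian squares. Second I would verify the analogous compatibility for the morphism $(a)$: under $(is)^! \simeq s^! i^!$, the morphism $(is)^*\mathcal F \otimes^L (is)^! f^*\mathcal G \to (is)^!(\mathcal F\otimes^L f^*\mathcal G)$ coincides with the composite
\[
s^*(i^*\mathcal F) \otimes^L s^!(i^! f^*\mathcal G) \xrightarrow{(a)_s} s^!(i^*\mathcal F \otimes^L i^! f^*\mathcal G) \xrightarrow{s^!((a)_i)} s^!(i^!(\mathcal F\otimes^L f^*\mathcal G)),
\]
where $(a)_s$ and $(a)_i$ denote the morphisms of type $(a)$ for $s$ and $i$ respectively. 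Passing to the adjoints, this is a direct comparison of the projection formula for the composite $is$ with the composition of the projection formulas for $i$ and $s$.

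Combining the two factorizations, the top composite of \eqref{eq:TransAforC2} becomes a sequence of four elementary morphisms --- b.c.\ left, $s^!$ of b.c.\ right, $(a)_s$, and $s^!((a)_i)$ --- while the bottom composite is the same four morphisms but with the middle two interchanged. These two orderings coincide by the naturality of $(a)_s\colon s^*(i^*\mathcal F)\otimes^L s^!(-) \to s^!(i^*\mathcal F\otimes^L(-))$ in its second variable, applied to the base change morphism $p^*\delta^!\mathcal G \to i^! f^*\mathcal G$. The main obstacle is purely bookkeeping: one must correctly track, for each of the two building blocks of $c$, which adjunction and which base change is being invoked, and ensure that the canonical isomorphisms of \cite[(3.1.13.1)]{Del73} are inserted consistently across the four elementary steps.
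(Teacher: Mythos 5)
Your proposal is correct and follows essentially the same route as the paper: the paper's proof decomposes $c_{\delta r,f,\mathcal F,\mathcal G}$ into the base change morphism \eqref{eq:pullbackforanyGdefTrans} and the type-$(a)$ morphism, and verifies exactly your three compatibilities — functoriality of the base change under vertical stacking (your first step, the paper's square (3)), compatibility of the type-$(a)$ morphism for $is$ with the composite $s^!((a)_i)\circ (a)_s$ (your second step, the paper's square (1), cited from \cite[Lemma 1.1.2.(2)]{Sai22} rather than re-proved via the projection formula), and naturality of $c_{s,{\rm id},i^*\mathcal F,-}$ applied to $p^*\delta^!\mathcal G\to i^!f^*\mathcal G$ (your final interchange step, the paper's square (2)). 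The only quibble is terminological: $i_!$ and $s_!$ are left adjoints of $i^!$ and $s^!$, not right adjoints, but this does not affect the argument.
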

\begin{proof}
By the definition of $c_{\delta,f,\mathcal F,\mathcal G}$, 
it suffices to show the following diagram is commutative:
\begin{align}\label{eq:TransAforC3}
	\begin{gathered}
		\xymatrix@C=2.2cm{
			(is)^*\mathcal{F}\otimes^L{p'}^*(\delta r)^!\mathcal{G} \ar[r]^-{{id}\otimes \eqref{eq:pullbackforanyGdefTrans}} &(is)^*\mathcal{F}\otimes^L(is)^!f^*\mathcal{G} \ar[r]^{c_{is ,id,\mathcal{F},f^*\mathcal{G}}}\ar@{}|{(1)}[ddr]&(is)^!(\mathcal{F}\otimes^Lf^*\mathcal{G})\\
		s^*i^*\mathcal{F}\otimes^L{p'}^*(\delta r)^!\mathcal{G}\ar[u]_{\simeq} \ar[r]^-{{id}\otimes \eqref{eq:pullbackforanyGdefTrans}}	\ar@{}|{(3)}[rdd]&s^*i^*\mathcal{F}\otimes^L (is)^!f^*\mathcal{G}\ar[u]_{\simeq}&s^!i^!(\mathcal{F}\otimes^Lf^*\mathcal{G}) \ar[u]_{\simeq}\\
			&s^*i^*\mathcal{F}\otimes^Ls^!i^!f^*\mathcal{G} \ar[u]_{\simeq}\ar[r]^{c_{s,id,i^*\mathcal{F},i^!f^*\mathcal{G}}}\ar@{}|{(2)}[dr]&s^!(i^*\mathcal{F}\otimes^Li^!f^*\mathcal{G})\ar[u]_-{s^!(c_{i,id,\mathcal{F},f^*\mathcal{G}})}\\
			s^*i^*\mathcal{F}\otimes^L{p'}^*r^!\delta^!\mathcal{G} \ar[r]^-{{id}\otimes \eqref{eq:pullbackforanyGdefTrans}}\ar[uu]_{\simeq}&s^*i^*\mathcal{F}\otimes^Ls^!p^*\delta^!\mathcal{G} \ar[u]^-{{id}\otimes s^!\eqref{eq:pullbackforanyGdefTrans}} \ar[r]^{c_{s,id,i^*\mathcal{F},p^*\delta^!\mathcal{G}}}&s^!(i^*\mathcal{F}\otimes^Lp^*\delta^!\mathcal{G}).\ar[u]_-{s^!({id}\otimes \eqref{eq:pullbackforanyGdefTrans})}
		}
	\end{gathered}
\end{align}
By \cite[Lemma 1.1.2.(2)]{Sai22} (and its proof), the diagram (1)  is commutative. 
Applying the functor $c_{s,id,i^\ast\mathcal F,-}$ to the base change morphism $p^\ast \delta^!\mathcal G\to i^!f^\ast \mathcal G$, we get the commutativity of the diagram (2).
The diagram (3)  is commutative by the functoriality of the base change morphism \eqref{eq:pullbackforanyGdefTrans}.
\end{proof}
\begin{lemma}\label{lem:TransBforC}
	Consider the following commutative diagram in ${\rm Sch}_S$:
	\begin{align}\label{eq:TransBforC1}
		\begin{gathered}
			\xymatrix{
				X'\ar[d]_-h\ar[r]^-{i'}\ar@{}|\Box[rd]&Y'\ar[d]^-g\\
				X\ar[r]^-i\ar[d]_-p\ar@{}|\Box[rd]&Y\ar[d]^-f\\
				W\ar[r]^-\delta&T,
			}
		\end{gathered}
	\end{align}
where squares are cartesian. 
\begin{itemize}
	\item[(1)] 
For any $\mathcal F\in D_{\rm ctf}(Y',\Lambda)$ and any $\mathcal G\in D_{\rm ctf}(T,\Lambda)$, we have a commutative diagram
\begin{align}\label{eq:TransBforC1-2}
	\begin{gathered}
		\xymatrix@C=2.2cm{
			i^{\prime\ast}\mathcal F\otimes^L h^\ast p^\ast \delta^!\mathcal G\ar[r]^-{c_{\delta,fg,\mathcal F,\mathcal G}}\ar[d]_-{id\otimes h^\ast \eqref{eq:pullbackforanyGdefTrans}}&i^{\prime!}(\mathcal F\otimes^L g^\ast f^\ast \mathcal G)\ar@{=}[d]\\
			i^{\prime\ast}\mathcal F\otimes^L h^\ast i^! f^\ast\mathcal G\ar[r]^-{c_{i,g,\mathcal F,f^\ast\mathcal G}}&i^{\prime!}(\mathcal F\otimes^L g^\ast f^\ast \mathcal G).
		}
	\end{gathered}
\end{align}

\item[(2)]For any $\mathcal H\in D_{\rm ctf}(Y,\Lambda)$ and any $\mathcal G\in D_{\rm ctf}(T,\Lambda)$, we have a commutative diagram
\begin{align}\label{eq:pullbackCF}
	\begin{gathered}
		\xymatrix@C=2cm{
			h^\ast(i^\ast\mathcal H\otimes^L p^\ast\delta^!\mathcal G)\ar[d]_-{\simeq}\ar[r]^-{h^\ast c_{\delta,f,\mathcal H,\mathcal G}}&h^\ast i^!(\mathcal H\otimes^L f^\ast\mathcal G)\ar[d]^-{\eqref{eq:pullbackforanyGdefTrans}}\\
			i^{\prime\ast}g^\ast\mathcal H\otimes^L h^\ast p^\ast\delta^!\mathcal G\ar[r]^-{c_{\delta,fg,g^\ast\mathcal H,\mathcal G}}&i^{\prime !}g^\ast(\mathcal H\otimes^L f^\ast\mathcal G).
		}
	\end{gathered}
\end{align}

\item[(3)]
For any $\mathcal F\in D_{\rm ctf}(Y',\Lambda)$ and any $\mathcal G\in D_{\rm ctf}(T,\Lambda)$, we have a commutative diagram
	\begin{align}\label{eq:TransBforC2}
	\begin{gathered}
		\xymatrix@C=2.2cm{
	i^\ast g_\ast\mathcal F\otimes^L p^\ast\delta^!\mathcal G\ar[d]\ar[r]^-{c_{\delta,f,g_\ast\mathcal F,\mathcal G}}&i^!(g_\ast\mathcal F\otimes^L f^\ast\mathcal G)\ar[d]\\
	h_\ast(i^{\prime\ast}\mathcal F\otimes^L h^\ast p^\ast \delta^!\mathcal G)\ar[r]^-{h_\ast(c_{\delta,fg,\mathcal F,\mathcal G})}&h_\ast i^{\prime!}(\mathcal F\otimes^L g^\ast f^\ast\mathcal G),	
	}
	\end{gathered}
	\end{align}
where the left vertical map is the composition of $i^\ast g_\ast\mathcal F\otimes^L p^\ast \delta^!\Lambda\xrightarrow{(b.c)\otimes id}h_\ast i^{\prime \ast}\mathcal F\otimes^L p^\ast\delta^!\Lambda$ with the morphism $($cf. \cite[(1-1)]{Sai22}$)$
	\begin{align}\label{eq:TransBforC3}
		h_\ast i^{\prime \ast}\mathcal F\otimes^L p^\ast\delta^!\Lambda \rightarrow h_\ast(i^{\prime\ast}\mathcal F\otimes^L h^\ast p^\ast \delta^!\Lambda)
	\end{align}
induced by the adjunction $h^\ast h_\ast i^{\prime \ast}\mathcal F\to i^{\prime \ast}\mathcal F$, and 
 the right vertical map of \eqref{eq:TransBforC2} is the composition
\begin{align}\label{eq:TransBforC3-R}
	i^!(g_\ast\mathcal F\otimes^L f^\ast\mathcal G) \xrightarrow{\eqref{eq:TransBforC3}} i^!g_\ast (\mathcal F\otimes^L g^\ast f^\ast\mathcal G)\xrightarrow[\simeq]{\rm b.c}h_\ast i^{\prime!}(\mathcal F\otimes^L g^\ast f^\ast\mathcal G).
\end{align}

\end{itemize}
\end{lemma}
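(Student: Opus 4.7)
The plan is to prove all three parts by unwinding the definition \eqref{eq:carGtrans1} of $c_{\delta,f,\mathcal F,\mathcal G}$ into its two building blocks: the base change morphism \eqref{eq:pullbackforanyGdefTrans} and the projection-formula-type map $(a)$, which itself is adjoint to \eqref{eq:defsaitoCFG}. Each claimed commutative diagram will then decompose into subdiagrams expressing (i) the transitivity/functoriality of \eqref{eq:pullbackforanyGdefTrans} under vertical or horizontal composition of cartesian squares, (ii) naturality of the projection formula, and (iii) functoriality of the unit and counit of the relevant adjunctions. The argument is parallel in spirit to the proof of Lemma \ref{lem:TransAforC} above, where the outer diagram \eqref{eq:TransAforC3} is broken into three standard compatibility cells.

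For part (1), I would stack the two cartesian squares in \eqref{eq:TransBforC1} vertically and invoke the standard fact that the base change morphism for the outer cartesian rectangle (with bottom arrow $\delta$ and right arrow $fg$) factors as $h^* p^* \delta^! \mathcal G \xrightarrow{h^*\mathrm{b.c.}} h^* i^! f^* \mathcal G \xrightarrow{\mathrm{b.c.}} i'^! g^* f^* \mathcal G$. Tensoring with $i'^*\mathcal F$ and composing with $(a)$ for the composite square then reduces \eqref{eq:TransBforC1-2} to the naturality of $(a)$ along the morphism $h^* i^! f^* \mathcal G \to i'^! g^* f^* \mathcal G$, which itself is an instance of the functoriality already exploited in Lemma \ref{lem:TransAforC}.

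For part (2), the observation is that pulling \eqref{eq:carGtrans1} back along $h$ produces, after identifying $h^* p^* = h^* p^*$ and $h^* i^! \simeq i'^! g^*$ via \eqref{eq:pullbackforanyGdefTrans}, exactly the composition that defines $c_{\delta,fg,g^*\mathcal H,\mathcal G}$, provided that one checks the naturality of the projection-formula map under the canonical isomorphism $g^*(\mathcal H \otimes^L f^* \mathcal G) \simeq g^*\mathcal H \otimes^L g^* f^* \mathcal G$. Both ingredients are standard six-functor compatibilities, and this is essentially a special case of part (1) combined with Lemma \ref{lem:TransAforC}.

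Part (3) will be the main obstacle. The two vertical arrows of \eqref{eq:TransBforC2} are not simply base change morphisms: each bundles together a base change (for $g_*$ past $i^*$, respectively $g_*$ past $i^!$) with the map \eqref{eq:TransBforC3}, which is itself obtained by transposing the $h^* \dashv h_*$ adjunction. The strategy is to factor both vertical arrows into these primitive pieces and verify commutativity cell by cell: the base change cells are handled by the compatibility of smooth/proper base change with the six-functor formalism, while the cells involving \eqref{eq:TransBforC3} reduce, after passing to the $h^* \dashv h_*$ or $i_! \dashv i^!$ adjoint, to the compatibility between the projection formulas for $i_!$ and for $g_*$ together with naturality of the base change \eqref{eq:pullbackforanyGdefTrans} in the $\mathcal G$-variable. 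The bookkeeping is somewhat lengthy, but each individual cell is a standard compatibility of the type established in \cite{Sai22} and used in Lemma \ref{lem:TransAforC}, so no new conceptual input is needed.
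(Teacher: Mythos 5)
Your outline for parts (1) and (2) matches the paper's proof: part (1) is exactly the transitivity of the base change morphism \eqref{eq:pullbackforanyGdefTrans} under vertical composition of cartesian squares, after which both sides of \eqref{eq:TransBforC1-2} literally end in the same map $c_{i',{\rm id},\mathcal F,g^\ast f^\ast\mathcal G}$ (no separate ``naturality of $(a)$'' step is needed), and part (2) is, as you say, the functoriality of \eqref{eq:pullbackforanyGdefTrans} glued to an instance of part (1). For part (3) your plan is workable but more laborious than necessary, and your description of the residual cells (``compatibility between the projection formulas for $i_!$ and for $g_*$'') is not where the content actually sits. The paper's route, which you should adopt: transpose the entire square \eqref{eq:TransBforC2} across the adjunction $h^\ast\dashv h_\ast$ (both vertical arrows were built from \eqref{eq:TransBforC3}, i.e.\ from units of that adjunction, so they transpose cleanly); the transposed diagram is then precisely the square of part (2) applied to $\mathcal H=g_\ast\mathcal F$, stacked on top of the naturality square of $c_{\delta,fg,-,\mathcal G}$ at the counit $g^\ast g_\ast\mathcal F\to\mathcal F$. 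This replaces the cell-by-cell bookkeeping with two already-established commutativities and avoids having to re-verify any projection-formula compatibilities from scratch.
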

\begin{proof} 
\begin{itemize}
\item[(1)]This follows from the following commutative diagram
\begin{align}\label{eq:TransBforC1-2-pf}
	\begin{gathered}
		\xymatrix@C=2.2cm{
			i^{\prime\ast}\mathcal F\otimes^L h^\ast p^\ast \delta^!\mathcal G\ar[r]^-{id\otimes  \eqref{eq:pullbackforanyGdefTrans}}\ar[d]_-{id\otimes h^\ast \eqref{eq:pullbackforanyGdefTrans}}&i^{\prime\ast}\mathcal F\otimes^L i^{\prime !}g^\ast f^\ast\mathcal G\ar[r]^-{c_{i',id,\mathcal F,g^\ast f^\ast\mathcal G}}\ar@{=}[d]&i^{\prime!}(\mathcal F\otimes^L g^\ast f^\ast \mathcal G)\ar@{=}[d]\\
			i^{\prime\ast}\mathcal F\otimes^L h^\ast i^! f^\ast\mathcal G\ar[r]^-{id\otimes  \eqref{eq:pullbackforanyGdefTrans}}&i^{\prime\ast}\mathcal F\otimes^L i^{\prime !}g^\ast f^\ast\mathcal G\ar[r]^-{c_{i',id,\mathcal F,g^\ast f^\ast\mathcal G}}&i^{\prime!}(\mathcal F\otimes^L g^\ast f^\ast \mathcal G).
		}
	\end{gathered}
\end{align}

\item[(2)] This is a consequence of the following commutative diagram
\begin{align}\label{eq:pullbackCF-pf1}
	\begin{gathered}
		\xymatrix@C=2cm{
			h^\ast i^\ast\mathcal H\otimes^L h^\ast p^\ast\delta^!\mathcal G\ar[d]_-{\simeq}\ar[r]^-{id\otimes  \eqref{eq:pullbackforanyGdefTrans}}&h^\ast i^\ast\mathcal H\otimes^L h^\ast i^{!} f^\ast \mathcal G\ar[r]^-{h^\ast c_{i,id,\mathcal H,f^\ast\mathcal G}}\ar[rd]^-{c_{i,g,g^\ast\mathcal H,f^\ast\mathcal G}}\ar[d]^-{id\otimes  \eqref{eq:pullbackforanyGdefTrans}}&h^\ast i^!(\mathcal H\otimes^L f^\ast\mathcal G)\ar[d]^-{\eqref{eq:pullbackforanyGdefTrans}}\\
			i^{\prime\ast}g^\ast\mathcal H\otimes^L h^\ast p^\ast\delta^!\mathcal G\ar[r]^-{id\otimes  \eqref{eq:pullbackforanyGdefTrans}}&i^{\prime\ast}g^\ast\mathcal H\otimes^L i^{\prime !}g^\ast f^\ast\mathcal G\ar[r]^-{c_{i^\prime,id,g^\ast\mathcal H,g^\ast f^\ast \mathcal G}}&i^{\prime !}g^\ast(\mathcal H\otimes^L f^\ast\mathcal G),
		}
	\end{gathered}
\end{align}
where the left diagram is commutative by the functoriality of the base change morphism \eqref{eq:pullbackforanyGdefTrans}, the right diagram is commutative by \eqref{eq:TransBforC1-2}.

\item[(3)]
By taking adjoint, we obtain the assertion from the following commutative diagram
\begin{align}\label{eq:TransBforC2-pf1}
\begin{gathered}
	\xymatrix@C=2.2cm{
	h^\ast (i^\ast g_\ast\mathcal F\otimes^L p^\ast\delta^!\mathcal G)\ar[d]\ar[r]^-{h^\ast c_{\delta,f,g_\ast\mathcal F,\mathcal G}}&h^\ast i^!(g_\ast\mathcal F\otimes^L f^\ast\mathcal G)\ar[d]\\
	i^{\prime\ast}g^\ast g_\ast \mathcal F\otimes^L h^\ast p^\ast \delta^!\mathcal G\ar[r]^-{c_{\delta,fg,g^\ast g_\ast \mathcal F,\mathcal G}}\ar[d]^-{g^\ast g_\ast\mathcal F\to \mathcal F}& i^{\prime!}g^\ast (g_\ast \mathcal F\otimes^L  f^\ast\mathcal G)\ar[d]\\
	i^{\prime\ast}\mathcal F\otimes^L h^\ast p^\ast \delta^!\mathcal G\ar[r]^-{c_{\delta,fg,\mathcal F,\mathcal G}}& i^{\prime!}(\mathcal F\otimes^L g^\ast f^\ast\mathcal G),	
}
\end{gathered}
\end{align}
where the upper diagram is commutative by applying \eqref{eq:pullbackCF} to $\mathcal H=g_\ast\mathcal F$, the lower diagram is commutative by applying $c_{\delta,fg,-,\mathcal G}$ to 
$g^\ast g_\ast\mathcal F\to \mathcal F$.
\end{itemize}
\end{proof}

The following proposition generalizes \cite[Lemma 8.6.1 and Corollary 8.10]{Sai17a}.
\begin{proposition}\label{prop:closedimmtran}
Consider the cartesian diagram \eqref{eq:carGtrans}. Let $\mathcal F\in D_{\rm ctf}(Y,\Lambda)$ and $\mathcal G\in D_{\rm ctf}(T,\Lambda)$.
\begin{itemize}
	\item[(1)] Assume that $\delta$ is a smooth morphism of relative dimension $d$. 
	Consider the following canonical isomorphism
\begin{align}\label{eq:traceAndPoincareDual}
	 t_{i,\mathcal F}:i^\ast\mathcal F\otimes ^L\Lambda(d)[2d]\to i^!\mathcal F
\end{align}
 	 defined by the Poincar\'e duality \cite[Th\'eor\`eme 3.2.5]{Del73} $($cf. \cite[(8.3)]{Sai17a}$)$. Then we have a commutative diagram
	\begin{align}\label{eq:cAndPoincareDual}
		\begin{gathered}
			\xymatrix@C=2cm{
				i^\ast\mathcal F\otimes ^L i^\ast f^\ast\mathcal G\otimes^L\Lambda(d)[2d]\ar[rr]_-{\simeq}^-{t_{i,\mathcal F\otimes^Lf^\ast\mathcal G}}\ar[d]_-{\simeq}&&i^!(\mathcal F\otimes^L f^\ast\mathcal G)\ar@{=}[d]\\
				i^\ast\mathcal F\otimes ^L p^\ast \delta^\ast\mathcal G\otimes^L\Lambda(d)[2d]\ar[r]_-{\simeq}^-{id\otimes p^\ast t_{\delta,\mathcal G}}&i^\ast\mathcal F\otimes^L p^\ast\delta^!\mathcal G\ar[r]^-{c_{\delta,f,\mathcal F,\mathcal G}}&i^!(\mathcal F\otimes^L f^\ast\mathcal G).
			}
		\end{gathered}
	\end{align}
	It implies that $c_{\delta,f,\mathcal F,\mathcal G}$ is an isomorphism. In particular, $\delta$ is $\mathcal F$-transversal.
	\item[(2)] Assume that $f:Y\to T$ is universally locally acyclic relatively to $\mathcal F$. Then $c_{\delta,f,\mathcal F,\mathcal G}$ is an isomorphism. In particular, $\delta$ is $\mathcal F$-transversal.
\end{itemize}
\end{proposition}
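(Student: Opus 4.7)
The plan is to prove (1) by identifying $c_{\delta,f,\mathcal F,\mathcal G}$ with the Poincaré duality isomorphism $t_{i,\mathcal F\otimes^L f^\ast\mathcal G}$ under the evident identifications, and to prove (2) by reducing to the case $\mathcal G=\Lambda$, which is the standard transversality characterization of universal local acyclicity (ULA).

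For (1), since $\delta$ is smooth of relative dimension $d$, the base change $i:X\to Y$ is smooth of the same relative dimension, and the Poincaré duality isomorphism $t_{\delta,-}:\delta^\ast(-)\otimes^L\Lambda(d)[2d]\xrightarrow{\sim}\delta^!(-)$ is compatible with base change. Under the Poincaré identifications for $\delta$ and $i$, the base change arrow $p^\ast\delta^!\mathcal G\to i^!f^\ast\mathcal G$ corresponds to the canonical isomorphism $p^\ast\delta^\ast\mathcal G\simeq i^\ast f^\ast\mathcal G$, so the first map in \eqref{eq:carGtrans1} is an isomorphism. I would then unwind the definition of $(a)$ via its adjoint \eqref{eq:defsaitoCFG}; using $i^!\simeq i^\ast(d)[2d]$, this adjoint becomes the standard trace map for the smooth morphism $i$, so $(a)$ realizes the projection-formula isomorphism implicit in $t_{i,\mathcal F\otimes^L f^\ast\mathcal G}$. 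Concatenating, the total composition in \eqref{eq:carGtrans1} equals $t_{i,\mathcal F\otimes^L f^\ast\mathcal G}$, which verifies the commutativity of \eqref{eq:cAndPoincareDual} and shows $c_{\delta,f,\mathcal F,\mathcal G}$ is an isomorphism.

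For (2), the key special case $\mathcal G=\Lambda$ is essentially one of the equivalent characterizations of $f$ being universally locally acyclic relative to $\mathcal F$ (cf.\ \cite[Corollary 8.10]{Sai17a}): for every cartesian diagram as in \eqref{eq:carGtrans}, the map $c_{\delta,f,\mathcal F}$ is an isomorphism. To extend to general $\mathcal G$, I would use that ULA is preserved under tensoring with the $f$-pullback of a constructible sheaf on $T$, so $f$ is also ULA relative to $\mathcal F':=\mathcal F\otimes^L f^\ast\mathcal G$; applying the $\Lambda$-case to $\mathcal F'$ yields an isomorphism
\[
i^\ast\mathcal F'\otimes^L p^\ast\delta^!\Lambda\xrightarrow{\sim}i^!\mathcal F'.
\]
By proper base change $i^\ast\mathcal F'\simeq i^\ast\mathcal F\otimes^L p^\ast\delta^\ast\mathcal G$, so the left side reads $i^\ast\mathcal F\otimes^L p^\ast(\delta^\ast\mathcal G\otimes^L\delta^!\Lambda)$, while the right side is $i^!(\mathcal F\otimes^L f^\ast\mathcal G)$.

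The main obstacle is the identification $\delta^\ast\mathcal G\otimes^L\delta^!\Lambda\xrightarrow{\sim}\delta^!\mathcal G$, which is not valid for arbitrary $\mathcal G$. A clean workaround is devissage on $\mathcal G$: stratify $T$ so that the cohomology sheaves of $\mathcal G$ are locally constant constructible on each stratum, where the identification holds for flat-pullback reasons, and then propagate by the long exact sequence argument. Combined with the functorial coherences of Lemmas \ref{lem:TransAforC} and \ref{lem:TransBforC}, which ensure compatibility with the definition \eqref{eq:carGtrans1} of $c_{\delta,f,\mathcal F,\mathcal G}$, this yields the desired isomorphism.
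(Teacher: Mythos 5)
Your treatment of (1) follows the paper's own route (identifying the composite \eqref{eq:carGtrans1} with the Poincar\'e duality isomorphism via the compatibility of the trace map with base change), so I focus on (2), where there is a genuine gap. Your bootstrap applies the $\mathcal G=\Lambda$ case to $\mathcal F'=\mathcal F\otimes^L f^\ast\mathcal G$ and thus produces an isomorphism whose source is $i^\ast\mathcal F\otimes^L p^\ast(\delta^\ast\mathcal G\otimes^L\delta^!\Lambda)$; you correctly flag that matching this with the actual source $i^\ast\mathcal F\otimes^L p^\ast\delta^!\mathcal G$ of $c_{\delta,f,\mathcal F,\mathcal G}$ requires $\delta^\ast\mathcal G\otimes^L\delta^!\Lambda\to\delta^!\mathcal G$ to be an isomorphism, i.e.\ that $\delta$ be $\mathcal G$-transversal. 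The proposed d\'evissage cannot supply this, because the statement is false for general $\delta$ and $\mathcal G$, and in particular for the pieces your stratification produces: take $T=\mathbb A^2$, $\delta$ the inclusion of the origin, and $\mathcal G=j_!\Lambda$ with $j$ the inclusion of the complement of a line $L$ through the origin. Then $\delta^\ast\mathcal G=0$, so $\delta^\ast\mathcal G\otimes^L\delta^!\Lambda=0$, while the triangle $\delta^!j_!\Lambda\to\delta^!\Lambda_T\to\delta^!\Lambda_L\xrightarrow{+1}$ together with absolute purity shows $\delta^!\mathcal G\neq 0$. The extension by zero of a lisse sheaf on a stratum is not locally constant on $T$, so the ``flat-pullback'' base case (\cite[Lemma 8.6.2]{Sai17a}) does not apply to it, and no long-exact-sequence propagation can rescue a false base case. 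Note also that your argument uses the ULA hypothesis only through the $\mathcal G=\Lambda$ case, whereas the conclusion for general $\mathcal G$ genuinely depends on it.

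The paper's proof of (2) proceeds differently and never needs $\delta$ to be $\mathcal G$-transversal. Using (1) and Lemma \ref{lem:TransAforC} it reduces to the case where $\delta$ is a closed immersion; setting $V=T\setminus W$, it compares the localization triangle $i_\ast i^!(\mathcal F\otimes^L f^\ast\mathcal G)\to\mathcal F\otimes^L f^\ast\mathcal G\to j'_\ast j'^\ast(\mathcal F\otimes^L f^\ast\mathcal G)$ with the triangle obtained by applying $\mathcal F\otimes^L f^\ast(-)$ to $\delta_!\delta^!\mathcal G\to\mathcal G\to j_\ast j^\ast\mathcal G$. The ULA hypothesis enters through \cite[Proposition 8.9]{Sai17a}: strong local acyclicity of $f$ relative to $\mathcal F$ gives $\mathcal F\otimes^L f^\ast j_\ast j^\ast\mathcal G\xrightarrow{\simeq}j'_\ast j'^\ast(\mathcal F\otimes^L f^\ast\mathcal G)$, after which the comparison of triangles shows that $i_\ast(c_{\delta,f,\mathcal F,\mathcal G})$, hence $c_{\delta,f,\mathcal F,\mathcal G}$, is an isomorphism. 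If you wish to retain your reduction-to-$\Lambda$ structure, you would still need an input of this strength to pass to general $\mathcal G$.
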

\begin{proof}
\begin{itemize}
	\item[(1)] 
	By the proof of \cite[Lemma 8.6.1]{Sai17a}, we have a commutative diagram
		\begin{align}\label{eq:cAndPoincareDual-1}
		\begin{gathered}
			\xymatrix@C=2.2cm{
				i^\ast\mathcal F\otimes ^L\Lambda(d)[2d]\ar[r]_-{\simeq}^-{t_{i,\mathcal F}}\ar[d]^-{\simeq}_-{id\otimes  t_{i,\Lambda}}&i^!\mathcal F\ar@{=}[d]\\
				i^\ast\mathcal F\otimes^L i^!\Lambda\ar[r]^-{c_{i,id,\mathcal F}}&i^!\mathcal F.
			}
		\end{gathered}
	\end{align}
	By \eqref{eq:cAndPoincareDual-1} and the definition of $c_{\delta,f,\mathcal F,\mathcal G}$ (cf. \eqref{eq:carGtrans1}), the assertion  follows from  the  following commutative diagram 
		\begin{align}\label{eq:cAndPoincareDual-2}
		\begin{gathered}
			\xymatrix@C=1.5cm{
				i^\ast\mathcal F\otimes ^L i^\ast f^\ast\mathcal G\otimes^L\Lambda(d)[2d]\ar@{=}[r]\ar[d]^-{\simeq}_-{id\otimes p^\ast t_{\delta,\Lambda}}\ar@{}|{(a)}[dr]&i^\ast\mathcal F\otimes ^L i^\ast f^\ast\mathcal G\otimes^L\Lambda(d)[2d]\ar[r]_-{\simeq}^-{t_{i,\mathcal F\otimes^Lf^\ast\mathcal G}}\ar[d]^-{\simeq}_-{id\otimes  t_{i,\Lambda}}\ar@{}|{(b)}[dr]&i^!(\mathcal F\otimes^L f^\ast\mathcal G)\ar@{=}[d]\\
				i^\ast\mathcal F\otimes^L p^\ast(\delta^\ast\mathcal G\otimes^L\delta^!\Lambda)\ar[d]_-{id\otimes p^\ast c_{\delta,id,\mathcal G}}\ar[r]^-{id\otimes \eqref{eq:pullbackforanyGdefTrans}}\ar@{}|{(c)}[dr]&i^\ast\mathcal F\otimes^L i^\ast f^\ast\mathcal G\otimes^L i^!\Lambda\ar[d]_-{id\otimes c_{i,id,f^\ast\mathcal G}}\ar[r]^-{c_{i,id,\mathcal F\otimes^L f^\ast\mathcal G}}\ar@{}|{(d)}[dr]&i^!(\mathcal F\otimes^L f^\ast\mathcal G)\ar@{=}[d]\\
				i^\ast\mathcal F\otimes^L p^\ast\delta^!\mathcal G\ar[r]^-{id\otimes\eqref{eq:pullbackforanyGdefTrans}}&i^\ast\mathcal F\otimes^L i^! f^\ast\mathcal G\ar[r]^-{c_{i,id,\mathcal F,f^\ast\mathcal G}}&i^!(\mathcal F\otimes^L f^\ast\mathcal G),
			}
		\end{gathered}
	\end{align}
where $(a)$ is the base change for the trace map, $(b)$ is commutative by \eqref{eq:cAndPoincareDual-1}, $(c)$ is commutative by Lemma \ref{lem:TransBforC}.(3) and the fact that $c_{i,id,f^\ast\mathcal G}\circ\eqref{eq:pullbackforanyGdefTrans}=c_{\delta,f,f^\ast\mathcal G}$,
$(d)$ is commutative by \cite[Lemma 1.1.2.(1)]{Sai22}.

\item[(2)]
By (1) and Lemma \ref{lem:TransAforC},
we may assume that $\delta$ is a closed immersion. We put $V=T\setminus W$. Consider the following cartesian diagram
\begin{align}\label{pfeq:carGtrans}
\begin{gathered}
\xymatrix{
X\ar@{}[rd]|\Box\ar[d]_p\ar[r]^-i&Y\ar[d]^-f\ar@{}[rd]|\Box&U\ar[l]_-{j'}\ar[d]^-{f_V}\\
W\ar[r]^-{\delta}&T&V\ar[l]_-j.
}
\end{gathered}
\end{align}
Since $f$ is universally locally acyclic relatively to $\mathcal F$, $f$ is also strongly  locally acyclic  relatively to $\mathcal F$.
By \cite[Proposition 8.9]{Sai17a}, we obtain an isomorphism $\mathcal F\otimes^L f^\ast j_\ast j^\ast\mathcal G\xrightarrow{\simeq} j'_\ast j'^\ast(\mathcal F\otimes^L f^\ast \mathcal G)$. 
Note that $i_\ast(i^\ast \mathcal F\otimes^L p^\ast\delta^!\mathcal G)\simeq \mathcal F\otimes^L i_\ast p^\ast\delta^!\mathcal G\simeq \mathcal F\otimes^L f^\ast \delta_\ast\delta^!\mathcal G$.
Now we consider the following diagram between distinguished triangles:
\begin{align}\label{pfeq:carGtrans2}
\begin{gathered}
\xymatrix{
i_\ast i^!(\mathcal F\otimes^L f^\ast \mathcal G)\ar[r]&\mathcal F\otimes^L f^\ast \mathcal G\ar[r]&j'_\ast j'^\ast(\mathcal F\otimes^L f^\ast \mathcal G)\ar[r]^-{+1}&\\
i_\ast(i^\ast \mathcal F\otimes^L p^\ast\delta^!\mathcal G)\ar[u]^-{i_\ast(c_{\delta,f,\mathcal F,\mathcal G})}&&\\
\mathcal F\otimes^L f^\ast \delta_\ast\delta^!\mathcal G\ar[r]\ar[u]^-{\simeq}&\mathcal F\otimes^L f^\ast \mathcal G\ar[r]\ar@{=}[uu]&\mathcal F\otimes^L f^\ast j_\ast j^\ast\mathcal G\ar[r]^-{+1}\ar[uu]_-{\simeq}&.
}
\end{gathered}
\end{align}
The commutativity of the above diagram can be verified by using the proof of \cite[Lemma 1.1.3]{Sai22}.
Thus $i_\ast(c_{\delta,f,\mathcal F,\mathcal G})$ is an isomorphism.
Since $i$ is a closed immersion, the morphism $c_{\delta,f,\mathcal F,\mathcal G}$ is also an isomorphism.
\end{itemize}
\end{proof}
The following proposition gives  a converse   of Proposition \ref{prop:closedimmtran}, 
which will not be used in this paper.
\begin{proposition}[{Saito, \cite[Proposition 8.11]{Sai17a}}]\label{prop:TranstoULA}
Let $f: Y\to T$ be a morphism of schemes of finite type over a field $k$ 
of characteristic $p$ and let $\mathcal F\in D_{\rm ctf}(Y,\Lambda)$. The morphism $f$ is locally acyclic relatively to $\mathcal F$ if the following condition is satisfied:

Let $T'$ and $W$ be smooth schemes over a finite extension of $k$, and
\begin{align}\label{eq:prop:TranstoULA}
\begin{gathered}
\xymatrix{
X\ar@{}[rd]|\Box\ar[d]_p\ar[r]^-i&Y'\ar[d]^-{f'}\ar[r]^-{b'}&Y\ar[d]^-f\\
W\ar[r]^-{\delta}&T'\ar[r]^-b&T
}
\end{gathered}
\end{align}
be a cartesian diagram of schemes where $b:T'\to T$ is proper and generically finite and
$\delta: W\to T'$ is a closed immersion. Then $\delta$ is $b'^\ast\mathcal F$-transversal.
\end{proposition}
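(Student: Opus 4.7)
The plan is to reduce local acyclicity of $f$ relative to $\mathcal{F}$ to a curve-wise check on a smooth alteration of $T$, and then translate the hypothesis of $b'^\ast\mathcal{F}$-transversality into the required vanishing of nearby cycles. The overall strategy proceeds in three stages: reduction to traits by devissage, application of de Jong's alteration theorem to reach the smooth case, and conversion of the transversality isomorphism into local acyclicity followed by descent.

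First, I would reformulate local acyclicity of $f$ at a geometric point $\bar{y}\in Y$ along a specialization $\bar{t}\leadsto\bar{s}$ in $T$ as the statement that the cospecialization map $\mathcal{F}_{\bar{y}} \to R\Gamma\bigl((Y_{(\bar{y})})_{\bar{t}},\mathcal{F}\bigr)$ is an isomorphism, and reduce, by replacing $T$ with its strict henselization at $\bar{s}$ and then with a trait dominating the specialization, to the case of a strictly henselian trait. Second, I would invoke de Jong's alteration theorem to produce a proper generically finite morphism $b:T'\to T$ with $T'$ smooth over a finite extension of $k$, such that the specialization lifts to $T'$ along a smooth curve $W\subseteq T'$. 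The hypothesis of the proposition then supplies the isomorphism $c_{\delta,f',b'^\ast\mathcal{F}}$ for the closed immersion $\delta:W\hookrightarrow T'$. Third, using purity for the regular closed immersion of smooth schemes (so that $\delta^!\Lambda$ is a Tate shift-twist of $\Lambda$), this isomorphism encodes a local comparison between $i^\ast(b'^\ast\mathcal{F})$ and $i^!(b'^\ast\mathcal{F})$ which, by a standard nearby-cycles computation on the trait $W\times_{T'}\mathrm{Spec}\,\mathcal{O}_{T',\bar{s}'}^{h}$, is equivalent to the vanishing of nearby cycles of $b'^\ast\mathcal{F}$ along the specialization. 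Proper base change together with a trace argument applied to the proper generically finite $b$ then transfers this local acyclicity back to $\mathcal{F}$ and $f$, and running the argument over all geometric specializations covers all of $T$.

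The main obstacle I anticipate is step three: making precise the passage from the existence of transversality isomorphisms $c_{\delta,f',b'^\ast\mathcal{F}}$ for all smooth closed immersions $\delta$ to genuine vanishing of nearby cycles of $f'$ along every specialization in $T'$; this is essentially a converse to Proposition \ref{prop:closedimmtran}(2) and requires careful curve-cutting arguments together with Deligne's generic local acyclicity. Descending local acyclicity from the alteration $T'$ back to $T$ along the proper generically finite morphism $b$ is equally subtle in positive characteristic because of potential wild ramification: one must use proper base change and keep track of Swan conductors to guarantee that no ramification data is lost in the descent, which is where the bulk of the technical work lies.
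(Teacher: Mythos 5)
The paper does not actually prove this proposition: it is quoted verbatim from \cite[Proposition 8.11]{Sai17a}, introduced as a converse to Proposition \ref{prop:closedimmtran} ``which will not be used in this paper,'' and no argument is given. So the only meaningful comparison is with Saito's original proof, whose general shape (reduction to traits, de Jong alterations, descent along the proper generically finite $b$) your outline does echo.

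That said, your proposal has a genuine gap precisely where the content of the proposition lies, and you flag it yourself as ``the main obstacle'': the passage from $b'^\ast\mathcal F$-transversality of the closed immersions $\delta$ to local acyclicity. The ``standard nearby-cycles computation'' you invoke does not deliver this. For $\delta$ the inclusion of a closed point $s$ of a smooth curve $W$ (strictly henselized at $s$, with generic point $\eta$ and $i,j$ the inclusions of the special and generic fibres), transversality identifies $i^!\mathcal F'$ with $i^\ast\mathcal F'(-1)[-2]$ and hence controls the cone of $i^!\mathcal F'\to i^\ast\mathcal F'$, which by the localization triangle is $i^\ast Rj_\ast j^\ast\mathcal F'$ --- a nearby-cycles--type object (inertia cohomology of $R\Psi$) --- whereas local acyclicity is the vanishing of $R\Phi=\mathrm{cone}\bigl(i^\ast\mathcal F'\to R\Psi\mathcal F'\bigr)$. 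These are genuinely different: for a lisse sheaf on a smooth fibration one has $R\Phi=0$ while the cone of $i^!\to i^\ast$ is all of $R\Psi\neq 0$. So transversality at points of curves is strictly weaker than the vanishing you assert, and the actual proof must exploit the hypothesis for immersions $W\hookrightarrow T'$ of all dimensions together with a curve-cutting induction and generic local acyclicity; none of that is supplied. (Two smaller points: your concern about Swan conductors in the descent step is a non-issue, since local acyclicity descends along proper surjective base change of the target by the valuative criterion together with the invariance of vanishing cycles under finite extensions of traits; and you repeatedly write ``vanishing of nearby cycles'' where what is needed is vanishing of the vanishing cycles $R\Phi$.)
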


\subsection{}For any separated morphism $g:Y\to S$ of finite type, we
consider the following diagram
\begin{align}
\begin{gathered}
\xymatrix{
Y\ar@{=}[rd]\ar@{^(->}[r]^-\delta&Y\times_SY\ar[r]^-{\rm pr_1}\ar[d]^-{\rm pr_2}\ar@{}|\Box[rd]&Y\ar[d]^-g\\
&Y\ar[r]^-g&S.
}
\end{gathered}
\end{align}
There is a canonical morphism 
\begin{align}\label{eq:KYSLambda}
\mathcal K_{Y/S}\otimes^L\delta^!\Lambda\to \Lambda
\end{align}
defined by the following composition:
\begin{align}
\mathcal K_{Y/S}\otimes^L\delta^!\Lambda=\delta^\ast {\rm pr}_1^\ast g^!\Lambda\otimes\delta^!\Lambda\xrightarrow{c_{\delta,{\rm id},{\rm pr}_1^\ast g^!\Lambda}}\delta^!({\rm pr}_1^\ast g^!\Lambda)\xrightarrow{\delta^!(\eqref{eq:pullbackforanyGdefTrans})} \delta^!({\rm pr}_2^!g^\ast\Lambda)=\Lambda.
\end{align}
If $g$ is a smooth morphism, then $\mathcal K_{Y/S}\otimes^L\delta^!\Lambda\to \Lambda$ is an isomorphism by the ${\rm pr}_1^\ast \mathcal K_{Y/S}$-transversality of $\delta$ (cf. \cite[Lemma 8.6.2]{Sai17a}).

\begin{proposition}\label{prop:Gysim}
Consider a commutative diagram in ${\rm Sch}_S$:
\begin{align}\label{xyd:prop:Gysim:fgh}
\begin{gathered}
\xymatrix{
X\ar[rr]^-f\ar[rd]_-h&&Y.\ar[ld]^-g\\
&S&
}
\end{gathered}
\end{align}
Let $i: X\times_Y X\to X\times_S X$ be the base change of the diagonal morphism $\delta:Y\to Y\times_SY$:
\begin{align}\label{xyd:prop:Gysim:diagonaldeltai}
\begin{gathered}
\xymatrix{
X\ar@/_2pc/[dd]_-{f}\ar@{_(->}[d]_-{\delta_1}\ar@{}[rd]|\Box\ar@{=}[r]^-{\rm id}&X\ar@{_(->}[d]^-{\delta_0}\\
X\times_Y X\ar[r]^-{i}\ar[d]_-p \ar[r] & X\times_S X  \ar[d]^-{f\times f} \\
Y\ar[r]^-\delta&Y\times_SY,\ar@{}|\Box[ul]
}
\end{gathered}
\end{align}
where $\delta_0$ and $\delta_1$ are the diagonal morphisms.
 Assume that 
$\mathcal K_{Y/S}=g^!\Lambda$ is locally constant and that $\mathcal K_{Y/S}\otimes^L \delta^!\Lambda\simeq \Lambda$ $($cf. \eqref{eq:KYSLambda}$)$. Then we have 
\begin{itemize}
\item[(1)] 
The morphism $f$ is $\mathcal K_{Y/S}$-transversal, i.e., $\mathcal K_{X/Y}\otimes^L f^\ast \mathcal K_{Y/S}=f^!\Lambda\otimes^L f^\ast\mathcal K_{Y/S}\xrightarrow{\simeq}f^! \mathcal K_{Y/S} = \mathcal K_{X/S}$ is an isomorphism.
\item[(2)] 
The composition 
\begin{align}\label{eq:KXYSfdelta}
\mathcal K_{X/S}\otimes^L f^\ast \delta^!\Lambda =f^!\mathcal K_{Y/S} \otimes^L f^\ast \delta^!\Lambda \xrightarrow[\simeq]{c_{f,{\rm id}, \delta^!\Lambda,\mathcal K_{Y/S}}} f^!(\mathcal K_{Y/S}\otimes^L \delta^!\Lambda)\xrightarrow[\simeq]{\eqref{eq:KYSLambda}} f^!\Lambda=\mathcal K_{X/Y}
\end{align}
is an isomorphism.

\item[(3)]If  $\mathcal F\in D_{\rm ctf}(X/Y,\Lambda)$ and  $\mathcal G\in D_{\rm ctf}(X/Y,\Lambda)$, then $\delta$ is 
$\mathcal F\boxtimes_S^LD_{X/S}(\mathcal G)$-transversal, i.e.,
\begin{align}\label{lem:eq:Gysim2}
\mathcal F\boxtimes_Y^LD_{X/Y}(\mathcal G)\overset{(a)}{\simeq}
i^\ast(\mathcal F\boxtimes_S^LD_{X/S}(\mathcal G))\otimes^L p^\ast\delta^!\Lambda
\to i^!(\mathcal F\boxtimes_S^LD_{X/S}(\mathcal G))
\end{align}
is an isomorphism, where $(a)$ is induced by the following composition:
\begin{align}\label{eq:GysimforRHOM}
\begin{split}
D_{X/Y}(\mathcal G)=R\mathcal Hom(\mathcal G, \mathcal K_{X/Y})&\xleftarrow[\simeq]{\eqref{eq:KXYSfdelta}}
R\mathcal Hom(\mathcal G, \mathcal K_{X/S}\otimes f^\ast\delta^!\Lambda)\\
&\simeq R\mathcal Hom(\mathcal G, \mathcal K_{X/S})\otimes f^\ast\delta^!\Lambda
=D_{X/S}(\mathcal G)\otimes^L f^\ast \delta^!\Lambda.
 \end{split}
\end{align}
\end{itemize}
\end{proposition}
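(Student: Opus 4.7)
My plan is to establish (1), (2), (3) in order, using only the local constancy of $\mathcal{K}_{Y/S}$, the hypothesis $\mathcal{K}_{Y/S}\otimes^L\delta^!\Lambda\simeq\Lambda$, and Proposition \ref{prop:closedimmtran}(2), together with Künneth-type preservation of universal local acyclicity for (3).

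For (1), I would invoke the standard projection-formula fact that for any locally constant constructible $\Lambda$-complex $\mathcal{L}$ of finite tor-dimension on $Y$ and any morphism of finite type $f\colon X\to Y$, the natural map $f^!\Lambda\otimes^L f^\ast\mathcal{L}\to f^!\mathcal{L}$ is an isomorphism; this is verified \'etale-locally after trivialising $\mathcal{L}$. Taking $\mathcal{L}=\mathcal{K}_{Y/S}$ gives (1) immediately.

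For (2), I first observe that $\delta^!\Lambda$ is itself a locally constant constructible complex of finite tor-dimension on $Y$, since the hypothesis $\mathcal{K}_{Y/S}\otimes^L\delta^!\Lambda\simeq\Lambda$ exhibits $\delta^!\Lambda$ as a tensor inverse of the locally constant invertible complex $\mathcal{K}_{Y/S}$. I then apply Proposition \ref{prop:closedimmtran}(2) to the cartesian square
\[
\xymatrix@C=1.5cm{
X \ar[r]^-{f} \ar[d]_-{\mathrm{id}_X} \ar@{}|\Box[rd] & Y \ar[d]^-{\mathrm{id}_Y} \\
X \ar[r]^-{f} & Y
}
\]
with $\mathcal{F}=\delta^!\Lambda$ on $Y$ and $\mathcal{G}=\mathcal{K}_{Y/S}$: the right vertical $\mathrm{id}_Y$ is trivially universally locally acyclic relative to the locally constant $\delta^!\Lambda$, so the resulting morphism $c_{f,\mathrm{id},\delta^!\Lambda,\mathcal{K}_{Y/S}}\colon f^\ast\delta^!\Lambda\otimes^L f^!\mathcal{K}_{Y/S}\to f^!(\delta^!\Lambda\otimes^L\mathcal{K}_{Y/S})$ is an isomorphism. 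Composing with the hypothesised $\mathcal{K}_{Y/S}\otimes^L\delta^!\Lambda\simeq\Lambda$ yields the composition \eqref{eq:KXYSfdelta}.

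For (3), the identification (a) is direct from the construction of $\boxtimes$ and from \eqref{eq:GysimforRHOM}: using $\mathrm{pr}_j\circ i=\mathrm{pr}_j^Y$ for the projections $\mathrm{pr}_j^Y\colon X\times_Y X\to X$ and $f\circ\mathrm{pr}_2^Y=p$, one pulls back via $i$ and then shifts $D_{X/S}(\mathcal{G})$ to $D_{X/Y}(\mathcal{G})$ in the second tensor factor at the cost of an extra tensor factor $\otimes^L p^\ast\delta^!\Lambda$. It then remains to show that $\delta$ is $\mathcal{F}\boxtimes_S^L D_{X/S}(\mathcal{G})$-transversal for the square \eqref{xyd:prop:Gysim:diagonaldeltai}. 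By Proposition \ref{prop:closedimmtran}(2), this reduces to showing that $f\times f$ is universally locally acyclic relative to $\mathcal{F}\boxtimes_S^L D_{X/S}(\mathcal{G})$; for this I invoke the standard K\"unneth-type preservation of universal local acyclicity, which further reduces the problem to showing that both $\mathcal{F}$ and $D_{X/S}(\mathcal{G})$ are ULA over $Y$ via $f$. The first is the hypothesis; for the second, $D_{X/Y}(\mathcal{G})$ is ULA since duality preserves ULA in the ULA setting, and $D_{X/S}(\mathcal{G})$ differs from $D_{X/Y}(\mathcal{G})$ only by the tensor factor $f^\ast\delta^!\Lambda$ (via \eqref{eq:GysimforRHOM} and (2)), a locally constant invertible complex, tensoring with which preserves ULA.

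The main obstacle I expect lies in (3): first, cleanly invoking the K\"unneth-type preservation of ULA under $\boxtimes_S^L$; and second, checking that the composition of (a) with the transversality map $c_{\delta,f\times f,\mathcal{F}\boxtimes_S^L D_{X/S}(\mathcal{G})}$ genuinely matches the map \eqref{lem:eq:Gysim2} stated in the proposition. The latter compatibility should follow from careful but routine diagram chases using Lemmas \ref{lem:TransAforC} and \ref{lem:TransBforC} together with the functoriality of the base change \eqref{eq:pullbackforanyGdefTrans}.
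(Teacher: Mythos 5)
Your proposal is correct and follows essentially the same route as the paper: part (1) is the standard transversality of locally constant complexes (the paper cites \cite[Lemma 8.6.2]{Sai17a} for the fact you re-derive by \'etale-local trivialisation), part (2) is exactly the paper's one-line argument that $\delta^!\Lambda$ is locally constant made explicit via Proposition \ref{prop:closedimmtran}.(2), and part (3) reduces, as in the paper, to the universal local acyclicity of $f\times f$ relative to $\mathcal F\boxtimes_S^L D_{X/S}(\mathcal G)$ via duality preserving ULA (\cite[Corollary 2.18]{LZ22}), twisting by a locally constant invertible complex, and the K\"unneth statement \cite[Corollary 2.5]{Ill17}, followed by Proposition \ref{prop:closedimmtran}.(2). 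The only cosmetic difference is that you identify $D_{X/S}(\mathcal G)$ with $D_{X/Y}(\mathcal G)$ up to the twist $f^\ast\delta^!\Lambda$ using (2), whereas the paper uses the equivalent twist by $f^\ast\mathcal K_{Y/S}$ coming from (1).
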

\begin{proof}Since  $\mathcal K_{Y/S}$ is locally constant, the assertion (1) follows from the proof of \cite[Lemma 8.6.2]{Sai17a}.
The second claim follows from the fact that $\delta^!\Lambda$ is locally constant.
Now we prove (3). Since $\mathcal G\in D_{\rm ctf}(X/Y,\Lambda)$, we have $D_{X/Y}(\mathcal G)\in D_{\rm ctf}(X/Y,\Lambda)$ by \cite[Corollary 2.18]{LZ22}. Since 
$D_{X/S}(\mathcal G)\simeq D_{X/Y}(\mathcal G)\otimes^L f^\ast\mathcal K_{Y/S}$, the morphism $f:X\to Y$ is universally locally acyclic relatively to $D_{X/S}(\mathcal G)$. By \cite[Corollary 2.5]{Ill17},
the morphism $f\times f: X\times_SX\to Y\times_SY$ is universally locally acyclic relatively to $\mathcal F\boxtimes_S^LD_{X/S}(\mathcal G)$. By Proposition \ref{prop:closedimmtran}.(2), $\delta$ is $\mathcal F\boxtimes_S^LD_{X/S}(\mathcal G)$-transversal.
\end{proof}

\subsection{}Now we compare the morphism induced by $c_{\delta,f, \mathcal F}$ on cohomology groups with the Gysin map. Before that, let us
briefly recall the cycle class defined by a regular immersion.
Let $\eta: P\hookrightarrow Q$ be a  regular immersion of codimension $c$ in ${\rm Sch}_S$. By \cite[Definition 1.1.2]{Fuj02} and \cite[Expos\'e XVI, D\'efinition 2.3.1]{ILO14}, there is a cycle class ${\rm Cl}_\eta\in H_P^{2c}(Q,\Lambda(c))$ which refines the  
$c$-th Chern class $c_c(\mathcal N_{P/Q}^\vee)\in H^{2c}(P,\Lambda(c))$, where $\mathcal N_{P/Q}$ is the conormal sheaf associated to the closed immersion $\eta$.  We view the cycle class ${\rm Cl}_\eta$ as a morphism ${\rm Cl}_\eta:\Lambda\to \eta^!\Lambda(c)[2c]$. 

\subsection{}\label{sub:chernandCdelta}
	Consider a cartesian diagram in ${\rm Sch}_S$
		\begin{align}\label{pfeq:propchangebasehomo:S1}
		\begin{gathered}
			\xymatrix{
				X\ar[d]_-{p}\ar@{}[rd]|\Box\ar@{=}[r]&X\ar[d]^-{f}\\
				W\ar[r]^-\delta&T,
			}
		\end{gathered}
	\end{align}
where $\delta:W\to T$ is a regular immersion of codimension $r$.  
For any $\mathcal F\in D_{\rm ctf}(X,\Lambda)$, the map $c_{\delta,f,\mathcal F}: \mathcal F\otimes^L p^\ast\delta^!\Lambda\to \mathcal F$ induces a morphism
\begin{align}\label{eq:cherncupS1}
	\delta^!: H^0(X,\mathcal F\otimes^L p^\ast\delta^!\Lambda)\to H^0(X,\mathcal F),
\end{align}
which is also induced by $\delta^!\xrightarrow{\eqref{delta!ast}} \delta^\ast$.
Now assume that ${\rm Cl}_\delta:\Lambda\rightarrow\delta^!\Lambda(r)[2r]$ is an isomorphism. 
We view the top Chern class $c_{r}(p^*\mathcal N_{W/T}^{\vee})$ as
an element of $H^{2r}(X,\Lambda(r))$ (cf. \cite[Expos\'e XVI, Th\'eor\`eme 1.3]{ILO14}).
The cup product with this element induces a map
\begin{align}\label{eq:cherncupS2}
	 H^0(X,\mathcal F\otimes^L p^\ast\delta^!\Lambda)\xrightarrow{ c_{r}(p^*\mathcal N_{W/T}^{\vee})} H^0(X,\mathcal F\otimes^L p^\ast\delta^!\Lambda(r)[2r])\xrightarrow[\simeq]{id\otimes p^\ast {\rm Cl}_{\delta}^{-1}} H^0(X,\mathcal F),
\end{align}
which is again denoted by $c_{r}(p^*\mathcal N_{W/T}^{\vee})$.
\begin{lemma}\label{lem:sameChernMapS}
	Consider the cartesian diagram  \eqref{pfeq:propchangebasehomo:S1}. 
\begin{itemize}
\item[(1)]We have a commutative diagram
	\begin{align}\label{eq:ChernEqualCdelta}
		\begin{gathered}
			\xymatrix@C=2cm{
			\Lambda\ar[d]_-{ p^\ast{\rm Cl}_\delta}\ar[r]^-{ c_r(p^\ast\mathcal N^\vee_{W/T})}&\Lambda(r)[2r]\ar@{=}[d]\\
			p^\ast\delta^!\Lambda(r)[2r]\ar[r]^-{ \eqref{eq:pullbackforanyGdefTrans}}& f^\ast\Lambda(r)[2r].
		}
		\end{gathered}
	\end{align}
\item[(2)]	Assume that ${\rm Cl}_\delta:\Lambda\rightarrow\delta^!\Lambda(r)[2r]$ is an isomorphism. Then the two maps \eqref{eq:cherncupS1} and \eqref{eq:cherncupS2} are the same, i.e.,
\begin{align}\label{eq:cherncupS}
	\delta^!=c_{r}(p^*\mathcal N_{W/T}^{\vee}): H^0(X,\mathcal F\otimes^L p^\ast\delta^!\Lambda)\to H^0(X,\mathcal F).
\end{align}
\end{itemize}
\end{lemma}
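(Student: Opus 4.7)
The proof reduces to a single identity at the level of morphisms in $D(X)$. Since $i={\rm id}$ in the cartesian diagram \eqref{pfeq:propchangebasehomo:S1}, the morphism $c_{\delta,f,\mathcal F}$ coincides with ${\rm id}_{\mathcal F}\otimes\eqref{eq:pullbackforanyGdefTrans}$ (the transfer map $(a)$ of \eqref{eq:carGtrans1} reduces to the identity in this special case). Hence the commutativity of \eqref{eq:ChernEqualCdelta} reduces, upon factoring out ${\rm id}_{\mathcal F}$, to the single equality
\begin{equation*}
\eqref{eq:pullbackforanyGdefTrans} \circ p^{\ast}{\rm Cl}_{\delta} \;=\; c_r(p^{\ast}\mathcal N_{W/T}^{\vee})
\end{equation*}
of morphisms $\Lambda \to \Lambda(r)[2r]$ in $D(X)$.

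To establish this key identity, I would factor the cartesian square \eqref{pfeq:propchangebasehomo:S1} vertically through $W$ and invoke Lemma \ref{lem:TransAforC} to express the base change morphism \eqref{eq:pullbackforanyGdefTrans} as $p^{\ast}$ applied to the base change morphism $\delta^{!}\Lambda \to \Lambda$ coming from the self-intersection cartesian square
\begin{equation*}
\xymatrix{W\ar@{=}[r]\ar@{=}[d] & W\ar[d]^-{\delta}\\ W\ar[r]^-{\delta} & T.}
\end{equation*}
It then suffices to verify, on $W$ itself, that the composition $\Lambda \xrightarrow{{\rm Cl}_\delta} \delta^{!}\Lambda(r)[2r] \to \Lambda(r)[2r]$ equals $c_r(\mathcal N_{W/T}^{\vee}) \in H^{2r}(W,\Lambda(r))$. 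This is the defining characterization of the refined cycle class of a regular immersion in \cite[D\'efinition 1.1.2]{Fuj02} (equivalently \cite[Expos\'e XVI, D\'efinition 2.3.1]{ILO14}): ${\rm Cl}_\delta$ refines $c_r(\mathcal N_{W/T}^{\vee})$ precisely in the sense that its image under the natural restriction map $\delta^{!}\Lambda \to \Lambda$ recovers the top Chern class of the conormal bundle. The pull-back compatibility $p^{\ast}c_r(\mathcal N_{W/T}^{\vee}) = c_r(p^{\ast}\mathcal N_{W/T}^{\vee})$ then completes part (1).

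Part (2) I would derive from part (1) by taking $H^0(X,-)$ and using the invertibility of $p^{\ast}{\rm Cl}_\delta$. Given $\alpha \in H^0(X, \mathcal F\otimes^L p^{\ast}\delta^{!}\Lambda)$, viewed as a morphism $\Lambda \to \mathcal F\otimes^L p^{\ast}\delta^{!}\Lambda$ in $D(X)$, unwinding the definitions gives $\delta^{!}(\alpha) = ({\rm id}_{\mathcal F}\otimes\eqref{eq:pullbackforanyGdefTrans})\circ\alpha$ for \eqref{eq:cherncupS1}, while \eqref{eq:cherncupS2} sends $\alpha$ to $({\rm id}_{\mathcal F}\otimes p^{\ast}{\rm Cl}_\delta^{-1})\circ({\rm id}_{\mathcal F}\otimes({\rm id}\otimes c_r))\circ\alpha$. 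After cancelling ${\rm id}_{\mathcal F}$, their equality is equivalent to the identity $p^{\ast}{\rm Cl}_\delta^{-1}\circ({\rm id}\otimes c_r) = \eqref{eq:pullbackforanyGdefTrans}$ on $p^{\ast}\delta^{!}\Lambda$, which follows from part (1) together with the naturality of \eqref{eq:pullbackforanyGdefTrans} under Tate twists.

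The main obstacle is the compatibility step: one must verify that the base change morphism \eqref{eq:pullbackforanyGdefTrans} for the self-intersection square of $\delta$ coincides precisely with the ``restriction to $W$'' map used to axiomatize ${\rm Cl}_\delta$ in \cite{Fuj02,ILO14}. This relates two a priori distinct constructions (our formula \eqref{eq:pullbackforanyGdefTrans} built from the base change and counit, versus the localization-triangle framework used to define the cycle class), and while essentially formal once both sides are unwound, it requires careful bookkeeping to match the normalizations.
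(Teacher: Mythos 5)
Your proposal is correct and establishes the same key identity as the paper, namely $\eqref{eq:pullbackforanyGdefTrans}\circ p^{\ast}{\rm Cl}_{\delta}=c_r(p^{\ast}\mathcal N_{W/T}^{\vee})$ as maps $\Lambda\to\Lambda(r)[2r]$, from which (1) follows by tensoring with $\mathcal F$ and (2) by unwinding the two maps on a class $\alpha$, exactly as in the paper. The only divergence is in how that identity is obtained: you factor the square vertically through $W$, use transitivity of base change morphisms to identify $\eqref{eq:pullbackforanyGdefTrans}$ with $p^{\ast}$ of the map $\delta^{!}\Lambda\to\Lambda$, and then invoke the self-intersection formula on $W$ before pulling back; the paper instead observes that the excess conormal sheaf of the square \eqref{pfeq:propchangebasehomo:S1} is $p^{\ast}\mathcal N_{W/T}^{\vee}$ and applies the excess intersection formula \cite[Expos\'e XVI, Proposition 2.3.2]{ILO14} directly on $X$, which absorbs your pull-back step. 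Both arguments rest on the same input from \cite{ILO14} (yours uses the special case $p={\rm id}$), so neither is more general, but two of your citations should be adjusted: the vertical transitivity of base change morphisms you need is not Lemma \ref{lem:TransAforC} (which composes squares horizontally along the base) but the compatibility underlying Lemma \ref{lem:TransBforC}.(1), i.e.\ the left square of \eqref{eq:TransBforC1-2-pf}; and the statement that the restriction of ${\rm Cl}_\delta$ to $W$ recovers $c_r(\mathcal N_{W/T}^{\vee})$ is a property of the cycle class (the self-intersection case of the excess formula) rather than its definition in \cite[D\'efinition 1.1.2]{Fuj02}. Neither point affects the validity of the argument.
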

\begin{proof}
(1)
	Note that the excess normal sheaf of the cartesian diagram \eqref{pfeq:propchangebasehomo:S1}
	equals to $p^*\mathcal N_{W/T}^{\vee}$. 
	By  \cite[Expos\'e XVI, Proposition 2.3.2]{ILO14}, the image of ${\rm Cl}_{\delta}: \Lambda\to \delta^!\Lambda(r)[2r]$ under the map
	\begin{align}\label{eq:fuppS1}
		H^{2r}_{W}(T, \Lambda(r))\xrightarrow{p^\ast}H^{2r}_X(X,\Lambda(r))=H^{2r}(X,\Lambda(r))
	\end{align}
	equals to $c_{r}(p^*\mathcal N_{W/T}^{\vee})\in H^{2r}(X,\Lambda(r))$. By definition of \eqref{eq:fuppS1},
	the class $c_{r}(p^*\mathcal N_{W/T}^{\vee})$ equals to the composition 
	\begin{align}\label{eq:fuppS2}
		\Lambda=p^\ast\Lambda\xrightarrow[\simeq]{p^\ast {\rm Cl}_{\delta}}p^\ast\delta^!\Lambda(r)[2r]\xrightarrow{\eqref{eq:pullbackforanyGdefTrans}}f^\ast\Lambda(r)[2r]=\Lambda(r)[2r].
	\end{align}
This proves the commutativity of \eqref{eq:ChernEqualCdelta}.

(2) For any  $\alpha\in H^0(X,\mathcal F\otimes^L p^\ast\delta^!\Lambda)$, we write it as $\alpha:\Lambda\to \mathcal F\otimes^L p^\ast\delta^!\Lambda$. The image of $\alpha$ under the map \eqref{eq:cherncupS2} equals to
\begin{align}\label{eq:fuppS3}
	\Lambda\xrightarrow{\alpha}\mathcal F\otimes
	p^\ast \delta^!\Lambda\xrightarrow{id\otimes c_{r}(p^*\mathcal N_{W/T}^{\vee})}\mathcal F\otimes
	p^\ast \delta^!\Lambda(r)[2r]\xrightarrow{id\otimes p^\ast {\rm Cl}_\delta^{-1}} \mathcal F.
\end{align}
By definition, the class $\delta^!(\alpha)$ is the composition $\Lambda\xrightarrow{\alpha}\mathcal F\otimes^L p^\ast \delta^!\Lambda\xrightarrow{id\otimes\eqref{eq:pullbackforanyGdefTrans}}\mathcal F$.
We need to show the following diagram
	\begin{align}\label{eq:fuppS4-NPF}
	\begin{gathered}
		\xymatrix@C=2cm{
	\mathcal F\otimes^L p^\ast \delta^!\Lambda\ar[r]^-{id\otimes c_{r}(p^*\mathcal N_{W/T}^{\vee})}\ar[rd]_-{id\otimes \eqref{eq:pullbackforanyGdefTrans}}&\mathcal F\otimes^L p^\ast \delta^!\Lambda(r)[2r]\\
	&\mathcal F\otimes^L \Lambda\ar[u]_-{id\otimes p^\ast{\rm Cl}_\delta}
}
	\end{gathered}
\end{align}
is commutative.
By \eqref{eq:ChernEqualCdelta}, the result follows from the following commutative diagram
\begin{align}\label{eq:fuppS4}
	\begin{gathered}
		\xymatrix@C=2cm{
\mathcal F\otimes^L p^\ast \delta^!\Lambda\otimes^L\Lambda\ar[r]^-{id\otimes id \otimes p^\ast{\rm Cl}_\delta}\ar[d]^-{id\otimes \eqref{eq:pullbackforanyGdefTrans}}&\mathcal F\otimes^L p^\ast\delta^!\Lambda\otimes p^\ast \delta^!\Lambda(r)[2r]\ar[d]^-{id\otimes \eqref{eq:pullbackforanyGdefTrans}\otimes id}\\
\mathcal F\otimes^L\Lambda\ar[r]^-{id\otimes p^\ast{\rm Cl}_\delta}&\mathcal F\otimes^L p^\ast\delta^!\Lambda(r)[2r].
		}
	\end{gathered}
\end{align}
\end{proof}

\subsection{}\label{subsec:defdeltaDelta}
Consider the notation in \ref{def:gtrans}.
If $\delta$ is a closed immersion, we can imitate the constructions of \cite[5.1]{AS07} and \cite[Lemma 3.5]{Tsu11} to define a functor
$\delta^\Delta: D_{\rm ctf}(Y,\Lambda)\to D_{\rm ctf}(X,\Lambda)$ such that for any $\mathcal F\in D_{\rm ctf}(Y,\Lambda)$, we have a distinguished triangle 
\begin{align}\label{eq:Dchf}
i^\ast \mathcal F\otimes^L p^\ast \delta^!\Lambda\xrightarrow{c_{\delta,f,\mathcal F}} i^!\mathcal F\to \delta^\Delta\mathcal F\xrightarrow{+1}.
\end{align} 
Indeed, let $j:T\setminus W\to T$ be the open immersion and we put 
\begin{align}\label{eq:DefineDeltaDelta}
	\delta^\Delta\mathcal F:= i^!(\mathcal F\otimes^L f^\ast j_\ast\Lambda).
\end{align}
Applying $c_{\delta,f,\mathcal F,-}$
to $\delta_!\delta^!\Lambda\xrightarrow{\rm adj} \Lambda$, we get a commutative diagram
\begin{align}\label{eq:distdeltaj2}
\begin{gathered}	
\xymatrix@C=1.5cm{
i^\ast\mathcal F\otimes^L p^\ast \delta^!\delta_!\delta^!\Lambda\ar[r]^-{c_{\delta,f,\mathcal F,\delta_!\delta^!\Lambda}}_-\simeq\ar[d]_-{\rm adj}^-\simeq&i^!(\mathcal F\otimes^L f^\ast \delta_!\delta^!\Lambda)\ar[d]^-{\rm adj}\\
i^\ast\mathcal F\otimes^L p^\ast \delta^!\Lambda\ar[r]^-{c_{\delta,f,\mathcal F}}&i^!\mathcal F,
}
\end{gathered}
\end{align}
where $c_{\delta,f,\mathcal F,\delta_!\delta^!\Lambda}$ is an isomorphism since it is the composition of isomorphisms:
\begin{align}\small
	\begin{gathered}
		\xymatrix{
			i^\ast\mathcal F\otimes^L p^\ast\delta^!\Lambda\ar[r]^-{\simeq}&i^!i_!(i^\ast\mathcal F\otimes^L p^\ast \delta^!\Lambda)\quad\ar[r]^-{\rm proj.formula}_-{\simeq}&\quad
			i^!(\mathcal F\otimes^L i_! p^\ast \delta^!\Lambda)\ar[r]^-{\rm b.c}_-{\simeq}&i^!(\mathcal F\otimes^L f^\ast \delta_!\delta^!\Lambda).
		}
	\end{gathered}
\end{align}
Applying $i^!(\mathcal F\otimes^L f^\ast -)$  to $\delta_!\delta^!\Lambda\xrightarrow{\rm adj} \Lambda\to  j_\ast\Lambda\xrightarrow{+1}$, we get
 the following distinguished triangle
\begin{align}\label{eq:distdeltaj}
	 i^!(\mathcal F\otimes^L f^\ast\delta_!\delta^!\Lambda)\xrightarrow{\rm adj} i^!\mathcal F\to  \delta^\Delta\mathcal F\xrightarrow{+1}.
\end{align} 
Then the  required distinguished triangle \eqref{eq:Dchf} follows from \eqref{eq:distdeltaj2} and \eqref{eq:distdeltaj}.

Consider the cartesian diagram \eqref{pfeq:carGtrans}. There is a canonical morphism
\begin{align}\label{eq:Dchf-twoMap}
\delta^\Delta\mathcal F=i^!(\mathcal F\otimes^L f^\ast j_\ast\Lambda)\xrightarrow{\rm b.c}
i^!(\mathcal F\otimes^L j'_\ast f_V^\ast \Lambda)=i^\Delta\mathcal F,
\end{align}
which induces a commutative diagram
\begin{align}\label{eq:Dchf-two}
\begin{gathered}
\xymatrix{
i^\ast \mathcal F\otimes^L p^\ast \delta^!\Lambda\ar[r]^-{c_{\delta,f,\mathcal F}} \ar[d]_-{\eqref{eq:pullbackforanyGdefTrans}}&i^!\mathcal F\ar[r] \ar@{=}[d]&\delta^\Delta\mathcal F\ar[d]^-{\eqref{eq:Dchf-twoMap}}\ar[r]^-{+1}&\\
i^\ast \mathcal F\otimes^L i^!\Lambda\ar[r]^-{c_{i,{\rm id},\mathcal F}} &i^!\mathcal F\ar[r] &i^\Delta\mathcal F\ar[r]^-{+1}&.
}
\end{gathered}
\end{align} 
For a general $\delta$, one could use $\infty$-enhancement of the \'etale cohomology theory and define $\delta^\Delta$ to be the  cofiber of $i^\ast(-)\otimes^L p^\ast\delta^!\Lambda \to i^!(-)$. Then $\delta^\Delta: D_{\rm ctf}(Y,\Lambda)\to D_{\rm ctf}(X,\Lambda)$ is a functor fitting in the distinguished triangle \eqref{eq:Dchf}. In this paper, we essentially only need the case where $\delta$ is a closed immersion.

\begin{lemma}\label{lem:DeltaFunctor}
Consider the cartesian diagram \eqref{eq:carGtrans}.
For any $\mathcal F\in D_{\rm ctf}(Y,\Lambda)$,  the morphism $\delta$ is $\mathcal F$-transversal if and only if the complex $ \delta^\Delta\mathcal F$ is acyclic on $X$.
\end{lemma}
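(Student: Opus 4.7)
The plan is to exploit the defining distinguished triangle \eqref{eq:Dchf}, namely
\begin{equation*}
i^\ast \mathcal F\otimes^L p^\ast \delta^!\Lambda\xrightarrow{c_{\delta,f,\mathcal F}} i^!\mathcal F\to \delta^\Delta\mathcal F\xrightarrow{+1},
\end{equation*}
together with the standard fact that in a distinguished triangle $A\xrightarrow{u}B\to C\xrightarrow{+1}$ in a triangulated category, the morphism $u$ is an isomorphism if and only if the third term $C$ is a zero object (equivalently, acyclic as a complex). This triangle was already established in the excerpt via \eqref{eq:distdeltaj}, \eqref{eq:distdeltaj2}, \eqref{eq:distdeltaj3}, so nothing further needs to be checked about its existence.

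Concretely, I would proceed in two short steps. First, assume $\delta$ is $\mathcal F$-transversal, i.e. $c_{\delta,f,\mathcal F}$ is an isomorphism in $D_{\rm ctf}(X,\Lambda)$. Rotating the triangle and using that the cone on an isomorphism is acyclic, one obtains $\delta^\Delta\mathcal F\simeq 0$, hence it is acyclic on $X$. Conversely, assume $\delta^\Delta\mathcal F$ is acyclic, i.e.\ $\delta^\Delta\mathcal F\simeq 0$ in $D_{\rm ctf}(X,\Lambda)$. Then the triangle degenerates, forcing $c_{\delta,f,\mathcal F}$ to be an isomorphism; equivalently, applying the long exact sequence of cohomology sheaves to \eqref{eq:Dchf} shows that $c_{\delta,f,\mathcal F}$ induces isomorphisms on all $\mathcal H^n$, and is therefore a quasi-isomorphism. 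This means $\delta$ is $\mathcal F$-transversal by definition.

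There is no genuine obstacle here; the lemma is essentially a reformulation of the definition of $\delta^\Delta\mathcal F$ as the cone of $c_{\delta,f,\mathcal F}$. The only subtlety worth mentioning is to be explicit about which triangulated category we work in: since $\mathcal F\in D_{\rm ctf}(Y,\Lambda)$ and both $i^\ast(-)\otimes^L p^\ast\delta^!\Lambda$ and $i^!(-)$ preserve $D_{\rm ctf}$, all three vertices of \eqref{eq:Dchf} lie in $D_{\rm ctf}(X,\Lambda)$, so the equivalence of ``$c_{\delta,f,\mathcal F}$ is an isomorphism'' with ``$\delta^\Delta\mathcal F$ is acyclic'' is the usual one.
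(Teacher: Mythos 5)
Your proof is correct and matches the paper's own argument, which likewise deduces the equivalence directly from the defining distinguished triangle \eqref{eq:Dchf} and the standard fact that a morphism in a triangulated category is an isomorphism if and only if its cone vanishes. The paper simply states this in one sentence; your elaboration adds nothing problematic.
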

\begin{proof}
	It follows from the definition of  $\mathcal F$-transversality and
    the distinguished triangle \eqref{eq:Dchf}.
\end{proof}

\section{Cohomological characteristic class}\label{sec:CCC}

\subsection{}\label{subsec:defC}
In this section, 
we summarize the construction and functorial properties of relative cohomological characteristic classes.
Consider the following cartesian diagram in ${\rm Sch}_S$
\begin{align}\label{diagXSY}
	\begin{gathered}
		\xymatrix{
	X\times_SY\ar[d]_-{\rm pr_2}\ar[r]^-{\rm pr_1}\ar@{}|\Box[rd]&X\ar[d]^-h\\
	Y\ar[r]^-g&S,	
	}
	\end{gathered}
\end{align}
where ${\rm pr}_1$ and ${\rm pr}_2$ are the projections.
For any $\mathcal F\in D_{\rm ctf}(X,\Lambda)$ and $\mathcal G\in D_{\rm ctf}(Y,\Lambda)$, 
we have canonical morphisms
\begin{align}
	\label{eq:cForFBoxK}&\mathcal F\boxtimes_S^L \mathcal K_{Y/S}={\rm pr}^\ast_1\mathcal F\otimes^L{\rm pr}_2^\ast g^!\Lambda\xrightarrow{c_{g,h,\mathcal F}}{\rm pr}_1^!\mathcal F,\\
	\label{eq:Kunethiso}&
	\mathcal F\boxtimes_S^{L}D_{Y/S}(\mathcal G)\rightarrow R\mathcal Hom_{X\times_SY}({\rm pr}^*_2\mathcal G, {\rm pr}^!_1\mathcal F), 
\end{align}
where \eqref{eq:Kunethiso} is adjoint to 
\begin{align}\label{Def-eq:Kunethiso}
	\mathcal F\boxtimes_S^{L}(D_{Y/S}(\mathcal G)\otimes^L \mathcal G)\xrightarrow{id\boxtimes{\rm ev}} \mathcal F\boxtimes_S^L \mathcal K_{Y/S}\xrightarrow{\eqref{eq:cForFBoxK}}{\rm pr}_1^!\mathcal F.
\end{align}
Note that \eqref{eq:cForFBoxK} is also a special case of \eqref{eq:Kunethiso} by taking $\mathcal G=\Lambda$. 
If moreover $\mathcal F\in D_{\rm ctf}(X/S,\Lambda)$, then
\eqref{eq:Kunethiso} is an isomorphism by \cite[Proposition 2.5]{LZ22}(see also \cite[Corollary 3.1.5]{YZ18}).

Recall that a {\it correspondence} over $S$ is a pair of morphisms $X\xleftarrow{c_1} C\xrightarrow{c_2} Y$ over $S$ (or equivalently, a morphism $c=(c_1,c_2):C\to X\times_S Y$).
For such a $c$, we have a  canonical isomorphism by \cite[Corollaire 3.1.12.2]{Del73}
\begin{align}
	\label{eq:keyisoCshrinkRHOM}
	R\mathcal Hom(c^*_2\mathcal G, c^!_1\mathcal F) \xrightarrow{\simeq}c^!R\mathcal Hom({\rm pr}^*_2\mathcal G, {\rm pr}^!_1\mathcal F),
\end{align}
which is adjoint to the composition (cf. \cite[1.5]{AS07})
\begin{align}
	\label{Def-eq:keyisoCshrinkRHOM}
	\begin{split}
		{\rm pr}^*_2\mathcal G\otimes^L c_!R\mathcal Hom(c^*_2\mathcal G, c^!_1\mathcal F) &\rightarrow c_!(c^\ast {\rm pr}_2^\ast\mathcal G\otimes^L R\mathcal Hom(c^*_2\mathcal G, c^!_1\mathcal F))\\
		&\simeq c_!(c_2^\ast\mathcal G\otimes^L R\mathcal Hom(c^*_2\mathcal G, c^!_1\mathcal F))\\
		&\xrightarrow{\rm ev}c_!c^!_1\mathcal F\simeq c_!c^!{\rm pr}_1^!\mathcal F\xrightarrow{\rm adj}  {\rm pr}^!_1\mathcal F.
	\end{split}	
\end{align}
Now we take $X=Y$ and define $P$ to be the pull-back of $C$ along the diagonal map $\delta\colon X\to X\times_SX$:
\begin{align}\label{eq:fixpoint.scheme}
\begin{gathered}
\xymatrix{
P\ar[r]^-{i}\ar[d]_-{p} \ar[r] & C  \ar[d]^c \\
X\ar[r]^-{\delta} &X\times_SX.\ar@{}|\Box[ul]
}
\end{gathered}
\end{align}
For any $\mathcal F\in D_{\rm ctf}(X/S,\Lambda)$ and $\mathcal G\in D_{\rm ctf}(X,\Lambda)$, 
we have canonical isomorphisms
\begin{align}
\label{eq:keyisoKu}
c^!(\mathcal F\boxtimes_S^{L}D_{X/S}(\mathcal G)) \xrightarrow[\simeq]{\eqref{eq:Kunethiso}} c^!R\mathcal Hom({\rm pr}^*_2\mathcal G, {\rm pr}^!_1\mathcal F) \xleftarrow[\simeq]{\eqref{eq:keyisoCshrinkRHOM}} R\mathcal Hom(c^*_2\mathcal G, c^!_1\mathcal F).
\end{align}
Combining the inverse of \eqref{eq:keyisoKu} with the canonical map
\begin{equation}\label{eq:evaMap}
c^!(\mathcal F\boxtimes_S^{L}D_{X/S}(\mathcal F))\xrightarrow{{\rm id}\to \delta_*\delta^*} c^!\delta_*(\mathcal F\otimes^LD_{X/S}(\mathcal{F})) \xrightarrow{{\rm ev}} c^!\delta_*\mathcal K_{X/S} \xrightarrow[\simeq]{\rm b.c} i_*\mathcal K_{P/S},
\end{equation}
we obtain
\begin{align}
\label{eq:Tracemap1}&R\mathcal Hom(c_2^*\mathcal F, c_1^!\mathcal F)\to i_*\mathcal K_{P/S},\\
\label{eq:Tracemap2}&{\rm Tr}: {\rm Hom}(c_2^*\mathcal F, c_1^!\mathcal F)\to {\rm Hom}(\Lambda_{C}, i_*\mathcal K_{P/S})=H^0(P, \mathcal K_{P/S}).
\end{align}
In particular, taking the special correspondence $C=X$ and $c=\delta$,  we have  canonical morphisms
\begin{align}
\label{eq:traceHomsheaf}&R\mathcal Hom(\mathcal F, \mathcal F)\to \mathcal K_{X/S},\\
\label{eq:traceHom}
&{\rm Tr}: {\rm Hom}(\mathcal F, \mathcal F)\to H^0(X, \mathcal K_{X/S}).
\end{align}
The {\it relative (cohomological) characteristic class} of $\mathcal F$ is defined to be (cf. \cite[Definition 3.6]{YZ18} and \cite[2.20]{LZ22})
\begin{align}\label{eq:RCCCCDEF}
C_{X/S}(\mathcal F)\coloneq {\rm Tr}({\rm id}_{\mathcal F})\quad{\rm in}\quad H^0(X, \mathcal K_{X/S}).
\end{align}
The map $\mathcal F\mapsto C_{X/S}(\mathcal F)$ defines a group homomorphism
\begin{align}\label{eq:CTrmap}
C_{X/S}: K_0(X/S,\Lambda)\to H^0(X, \mathcal K_{X/S}).
\end{align}
Note that if $S$ is the spectrum of a field $k$, then $C_{X/k}(\mathcal F)$ is the  characteristic class introduced by Abbes and Saito (cf. \cite[Definition 2.1.1]{AS07}).
The following lemma computes the characteristic classes of locally constant sheaves.
\begin{lemma}\label{lem:twsit}
Let $X$ be a connected  scheme in ${\rm Sch}_S$ and $\mathcal L$ a locally constant constructible flat sheaf of $\Lambda$-modules on $X$. Then for any $\mathcal F\in D_{\rm ctf}(X/S,\Lambda)$, we have
\begin{align}\label{eq:twist}
C_{X/S}(\mathcal F\otimes\mathcal L)={\rm rank}\mathcal L\cdot C_{X/S}(\mathcal F).
\end{align}
In particular, $C_{X/S}(\mathcal L)={\rm rank}\mathcal L\cdot C_{X/S}(\Lambda)$.
\end{lemma}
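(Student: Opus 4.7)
The plan is to exploit the dualizability of $\mathcal L$ in the symmetric monoidal category $D_{\rm ctf}(X,\Lambda)$, so that the trace defining $C_{X/S}(\mathcal F\otimes^L\mathcal L)$ decomposes as a product. Since $\mathcal L$ is locally constant constructible flat, it admits a dual $\mathcal L^\vee=R\mathcal Hom(\mathcal L,\Lambda)$, again locally constant flat, with an evaluation pairing ${\rm ev}:\mathcal L\otimes^L\mathcal L^\vee\to\Lambda$ and coevaluation ${\rm coev}:\Lambda\to\mathcal L^\vee\otimes^L\mathcal L$ satisfying the usual triangle identities. In particular, tensoring with a locally constant flat sheaf preserves universal local acyclicity, so $\mathcal F\otimes^L\mathcal L\in D_{\rm ctf}(X/S,\Lambda)$, and there is a canonical identification $D_{X/S}(\mathcal F\otimes^L\mathcal L)\simeq D_{X/S}(\mathcal F)\otimes^L\mathcal L^\vee$.

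The first step is to show that the local trace morphism $R\mathcal Hom(\mathcal F\otimes^L\mathcal L,\mathcal F\otimes^L\mathcal L)\to\mathcal K_{X/S}$ from \eqref{eq:traceHomsheaf} factors as
\[
R\mathcal Hom(\mathcal F,\mathcal F)\otimes^L(\mathcal L\otimes^L\mathcal L^\vee)\xrightarrow{\ {\rm tr}_\mathcal F\otimes^L{\rm ev}\ }\mathcal K_{X/S}\otimes^L\Lambda=\mathcal K_{X/S}.
\]
This is a compatibility assertion for the chain \eqref{eq:Kunethiso}--\eqref{eq:evaMap}: the Künneth isomorphism splits $(\mathcal F\otimes^L\mathcal L)\boxtimes_S^L D_{X/S}(\mathcal F\otimes^L\mathcal L)$ as $(\mathcal F\boxtimes_S^L D_{X/S}(\mathcal F))\otimes^L(\mathcal L\boxtimes_S^L\mathcal L^\vee)$, and on the diagonal $\delta^\ast(\mathcal L\boxtimes_S^L\mathcal L^\vee)=\mathcal L\otimes^L\mathcal L^\vee$ is then paired into $\Lambda$ via ${\rm ev}$ before being tensored against $\mathcal K_{X/S}$ through the $\mathcal F$-factor.

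Granting this factorization, under the identification ${\rm Hom}(\mathcal F\otimes^L\mathcal L,\mathcal F\otimes^L\mathcal L)\simeq H^0(X,R\mathcal Hom(\mathcal F,\mathcal F)\otimes^L\mathcal L\otimes^L\mathcal L^\vee)$, the identity ${\rm id}_{\mathcal F\otimes^L\mathcal L}$ corresponds to ${\rm id}_\mathcal F\otimes^L{\rm coev}$. Applying the factored trace yields
\[
C_{X/S}(\mathcal F\otimes^L\mathcal L)={\rm Tr}({\rm id}_\mathcal F)\cdot({\rm ev}\circ{\rm coev})=C_{X/S}(\mathcal F)\cdot{\rm tr}({\rm id}_\mathcal L),
\]
where ${\rm tr}({\rm id}_\mathcal L)\in H^0(X,\Lambda)=\Lambda$ (using the connectedness of $X$) is the categorical trace of the identity on $\mathcal L$. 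A Zariski-local reduction to the trivial case $\mathcal L=\Lambda^{\oplus r}$ identifies this trace with ${\rm rank}\mathcal L\in\Lambda$, giving \eqref{eq:twist}; the case $\mathcal F=\Lambda$ is then immediate.

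The main obstacle is the bookkeeping in the factorization step: the trace map \eqref{eq:Tracemap2} is built from several Künneth and base-change isomorphisms \eqref{eq:keyisoKu}--\eqref{eq:evaMap}, and one must check that each is compatible with the additional dualizable tensor factor $\mathcal L\otimes^L\mathcal L^\vee$. Since $\mathcal L$ and $\mathcal L^\vee$ are locally constant flat, they are transversal to every six-functor operation in sight (cf.\ Proposition \ref{prop:closedimmtran}), so the verification ultimately reduces to the standard rigidity identities for a dualizable object in a symmetric monoidal triangulated category; nonetheless, unwinding the diagrams is the technical heart of the argument.
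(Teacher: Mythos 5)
Your proposal is correct and follows essentially the same route as the paper's proof: both factor the trace of ${\rm id}_{\mathcal F\otimes\mathcal L}$ through $R\mathcal Hom(\mathcal F,\mathcal F)\otimes^L\mathcal L\otimes^L\mathcal L^\vee$, split the evaluation as ${\rm ev}_{\mathcal F}\otimes{\rm ev}_{\mathcal L}$, and identify the $\mathcal L$-contribution ${\rm ev}\circ{\rm coev}=C_{X/X}(\mathcal L)$ with ${\rm rank}\,\mathcal L\in\Lambda$. The only quibble is that the trivialization of $\mathcal L$ is \'etale-local rather than Zariski-local, which does not affect the computation of the trace.
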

\begin{proof}In the following, we put $\mathcal L^\vee=R\mathcal Hom(\mathcal L,\Lambda)=D_{X/X}(\mathcal L)$. By definition,
the class $C_{X/X}(\mathcal L)={\rm rank}\mathcal L\in H^0(X,\mathcal K_{X/X})=\Lambda$ is the composition 
\begin{align}
\Lambda\xrightarrow{1_{\mathcal L}}R\mathcal Hom(\mathcal L,\mathcal L)\simeq \mathcal L\otimes\mathcal L^\vee\xrightarrow{{\rm ev}}\Lambda,
\end{align}
where ${\rm ev}$ is the evaluation map.
We have a commutative diagram
\begin{align}\small
\begin{gathered}
\xymatrix@C=3.5pc{
\Lambda\ar@{=}[dd]\ar[r]^-{1_{\mathcal F\otimes\mathcal L}}&R\mathcal Hom(\mathcal F\otimes\mathcal L,\mathcal F\otimes\mathcal L)\ar[d]^-{\simeq}
\ar[r]^-{{\eqref{eq:keyisoKu}\&\eqref{eq:evaMap}}}&(\mathcal F\otimes\mathcal L)\otimes^LD_{X/S}(\mathcal F\otimes\mathcal L)\ar[r]^-{\rm ev}\ar[d]^-{\simeq}&\mathcal K_{X/S}\ar@{=}[d]\\
&R\mathcal Hom(\mathcal F,\mathcal F)\otimes\mathcal L\otimes \mathcal L^\vee\ar[r]^-{{\eqref{eq:keyisoKu}\&\eqref{eq:evaMap}}}&\mathcal F\otimes^LD_{X/S}(\mathcal F)\otimes\mathcal L\otimes \mathcal L^\vee\ar[r]^-{{\rm ev}\otimes{\rm ev}}&\mathcal K_{X/S}\otimes\Lambda\ar@{=}[d]\\
\Lambda\ar[r]^-{1_{\mathcal F}}&R\mathcal Hom(\mathcal F,\mathcal F)\ar[u]_-{{\rm id}\otimes 1_{\mathcal L}}\ar[r]^-{{\eqref{eq:keyisoKu}\&\eqref{eq:evaMap}}}&\mathcal F\otimes^L D_{X/S}(\mathcal F)\ar[u]_-{{\rm id}\otimes 1_{\mathcal L}}\ar[r]^-{{\rm ev}\otimes C_{X/X}(\mathcal L)}&\mathcal K_{X/S}\otimes\Lambda.
}
\end{gathered}
\end{align}
The composition of the first row is $C_{X/S}(\mathcal F\otimes\mathcal L)$, while the bottom row gives ${\rm rank}\mathcal L\cdot C_{X/S}(\mathcal F)$. Thus $C_{X/S}(\mathcal F\otimes\mathcal L)={\rm rank}\mathcal L\cdot C_{X/S}(\mathcal F)$.
\end{proof}

We could also define relative characteristic classes in a more general case that only assumes local acyclicity away from small closed subsets. We first note the following lemma.
\begin{lemma}\label{lem:rcccCodBig}
Let $X\in{\rm Sch}_S$ and $Z\subseteq X$ a closed subscheme. Let $U=X\setminus Z$.
Assume that $
H^0(Z,\mathcal K_{Z/{S}})=0$ and $H^1(Z,\mathcal K_{Z/{S}})=0$.
Then we have
\begin{align}
H^0(X,\mathcal K_{X/S})\simeq H^0(U,\mathcal K_{U/S}).
\end{align}
\end{lemma}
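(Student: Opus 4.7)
The plan is to apply the standard localization (excision) distinguished triangle for the closed-open decomposition $Z \xhookrightarrow{i} X \xhookleftarrow{j} U$ and read off the desired isomorphism from the resulting long exact sequence. Concretely, starting from the triangle
\begin{equation*}
    i_{*}i^{!}\mathcal{K}_{X/S} \to \mathcal{K}_{X/S} \to Rj_{*}j^{*}\mathcal{K}_{X/S} \xrightarrow{+1},
\end{equation*}
I would identify the outer terms. Writing $h:X\to S$ for the structure morphism, one has $i^{!}\mathcal{K}_{X/S}=i^{!}h^{!}\Lambda=(hi)^{!}\Lambda=\mathcal{K}_{Z/S}$, and since $j$ is an open immersion $j^{*}=j^{!}$, so $j^{*}\mathcal{K}_{X/S}=(hj)^{!}\Lambda=\mathcal{K}_{U/S}$.

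Next I would take hypercohomology $H^{\bullet}(X,-)$ of the resulting triangle
\begin{equation*}
    i_{*}\mathcal{K}_{Z/S} \to \mathcal{K}_{X/S} \to Rj_{*}\mathcal{K}_{U/S} \xrightarrow{+1}.
\end{equation*}
Since $i$ is a closed immersion, $i_{*}$ is exact and $H^{n}(X,i_{*}\mathcal{K}_{Z/S})=H^{n}(Z,\mathcal{K}_{Z/S})$; likewise $H^{n}(X,Rj_{*}\mathcal{K}_{U/S})=H^{n}(U,\mathcal{K}_{U/S})$. The associated long exact sequence contains the segment
\begin{equation*}
    H^{0}(Z,\mathcal{K}_{Z/S}) \to H^{0}(X,\mathcal{K}_{X/S}) \to H^{0}(U,\mathcal{K}_{U/S}) \to H^{1}(Z,\mathcal{K}_{Z/S}).
\end{equation*}
Under the hypothesis that the two flanking groups vanish, the middle arrow is an isomorphism, which is exactly the conclusion.

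There is really no genuine obstacle here: the only point that requires a little care is the identification $j^{*}\mathcal{K}_{X/S}\simeq\mathcal{K}_{U/S}$ (which is automatic because $j^{*}=j^{!}$ for open immersions) and the fact that $i_{*}$ need not be derived for a closed immersion of schemes. Everything else is a routine application of excision and the long exact sequence.
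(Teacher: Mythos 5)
Your proof is correct and follows exactly the same route as the paper: the localization triangle $\tau_*\tau^!\mathcal K_{X/S}\to\mathcal K_{X/S}\to Rj_*\mathcal K_{U/S}\xrightarrow{+1}$, the identifications $\tau^!\mathcal K_{X/S}=\mathcal K_{Z/S}$ and $j^*\mathcal K_{X/S}=\mathcal K_{U/S}$, and the resulting four-term exact sequence. Nothing to add.
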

\begin{proof}Let $\tau: Z\hookrightarrow X$ be the closed immersion and $j: U\hookrightarrow X$ the open immersion. The assertion follows from the following long exact sequence
\begin{align}
H^0(Z,\mathcal K_{Z/S})\to H^0(X,\mathcal K_{X/S})\to H^0(U,\mathcal K_{U/S})\to H^1(Z,\mathcal K_{Z/S}),
\end{align}
which is induced from the distinguished triangle $\tau_\ast \mathcal K_{Z/S}=\tau_\ast \tau^!\mathcal K_{X/S}\to \mathcal K_{X/S}\to j_\ast \mathcal K_{U/S}\xrightarrow{+1}$.
\end{proof}
\begin{remark}
	Assume that $X$ and $S$ are smooth schemes over a field and that $X\to S$ is of relative dimension $r$.
	If $Z\hookrightarrow X$ is a closed immersion of codimension $>r$, then we have $H^0(Z,\mathcal K_{Z/{S}})=0$ and $H^1(Z,\mathcal K_{Z/{S}})=0$ by semi-purity (cf. \cite[\S 8]{Fuj02}).
\end{remark}
\begin{definition}\label{def:rcccCodBig}
Under the assumptions in Lemma \ref{lem:rcccCodBig}.
Let $\mathcal F\in D_{\rm ctf}(X,\Lambda)$ such that $\mathcal F|_U\in D_{\rm ctf}(U/S,\Lambda)$.
We define the relative (cohomological) characteristic class of $\mathcal F$ to be
\begin{align}\label{eq:RCCCCDEF3}
C_{X/S}(\mathcal F)\coloneqq C_{U/S}(\mathcal F|_U) \quad{\rm in}\quad H^0(X,\mathcal K_{X/S}).
\end{align}
\end{definition}

\subsection{}\label{subsec:CCor}
For any $\mathcal F\in D_{\rm ctf}(X/S,\Lambda)$ and $\mathcal G\in D_{\rm ctf}(X,\Lambda)$,
we call elements of ${\rm Hom}(c_2^*\mathcal G, c_1^!\mathcal F)$   {\it cohomological correspondences} from $\mathcal G$  to $\mathcal F$. By the isomorphism \eqref{eq:keyisoKu}, a cohomological correspondence $u: c_2^*\mathcal G\to c_1^!\mathcal F$ is equivalent to a map $\Lambda\to  c^!R\mathcal Hom({\rm pr}^*_2\mathcal G, {\rm pr}^!_1\mathcal F)$ (resp. $\Lambda\to  c^!(\mathcal F\boxtimes_S^L D_{X/S}(\mathcal F))$). By adjunction, this is also equivalent to a map
$c_!\Lambda\to R\mathcal Hom({\rm pr}^*_2\mathcal G, {\rm pr}^!_1\mathcal F)$ (resp. $c_!\Lambda\to  \mathcal F\boxtimes_S^L D_{X/S}(\mathcal F)$).
In the following, we will identify these descriptions freely. 
\subsection{}\label{subsec:equiDefOfChar} 
We give an equivalent description of the relative (cohomological) characteristic classes.
Let $X\in{\rm Sch}_S$ and $\delta: X\to X\times_SX$ the diagonal morphism.
Let $\mathcal F\in D_{\rm ctf}(X/S, \Lambda)$.
The canonical isomorphism
$R\mathcal Hom_{X\times_SX}({\rm pr}^*_2\mathcal F, {\rm pr}^!_1\mathcal F)\xleftarrow{\simeq}\mathcal F\boxtimes_S^{L}D_{X/S}(\mathcal F)$
induces an isomorphism
\[
\delta^\ast R\mathcal Hom_{X\times_SX}({\rm pr}^*_2\mathcal F, {\rm pr}^!_1\mathcal F)\xleftarrow{\simeq}\mathcal F\otimes^{L}D_{X/S}(\mathcal F). 
\]
Thus the evaluation map $\mathcal F\otimes^{L}D_{X/S}(\mathcal F)\to \mathcal K_{X/S}$ induces a map
\begin{align}\label{eq:keyisoKu3}
v_{\mathcal F}: \delta^\ast R\mathcal Hom_{X\times_SX}({\rm pr}^*_2\mathcal F, {\rm pr}^!_1\mathcal F)
\to \mathcal K_{X/S}.
\end{align}
Let $u: \delta_!\Lambda\to R\mathcal Hom({\rm pr}_2^\ast\mathcal F,{\rm pr}^!_1\mathcal F)$ be a cohomological
correspondence. Then 
${\rm Tr}(u)\in H^0(X,\mathcal K_{X/S})$  equals  the cohomology class 
of the following composition 
\begin{align}\label{eq:traceEquiDef}
\Lambda=\delta^\ast\delta_!\Lambda\xrightarrow{\delta^\ast(u)}\delta^\ast R\mathcal Hom_{X\times_SX}({\rm pr}^*_2\mathcal F, {\rm pr}^!_1\mathcal F)\xrightarrow{v_{\mathcal F}}\mathcal K_{X/S}.
\end{align}
We also  write the class ${\rm Tr}(u)$ in the form
\begin{align}\label{eq:traceEquiDefadj}
	\delta_!\Lambda\xrightarrow{u} R\mathcal Hom_{X\times_SX}({\rm pr}^*_2\mathcal F, {\rm pr}^!_1\mathcal F)\xrightarrow{v_{\mathcal F}}\delta_\ast\mathcal K_{X/S}.
\end{align}
Taking $u={\rm id}_{\mathcal F}$ to be the 
cohomological correspondence defined by the identity of $\mathcal F$, we get that $C_{X/S}(\mathcal F)={\rm Tr}({\rm id}_{\mathcal F})$ equals the cohomology class of  the composition
\begin{align}\label{eq:traceEquiforcharclass}
	\delta_!\Lambda\xrightarrow{{\rm id}_{\mathcal F}} R\mathcal Hom_{X\times_SX}({\rm pr}^*_2\mathcal F, {\rm pr}^!_1\mathcal F)\xrightarrow{v_{\mathcal F}}\delta_\ast\mathcal K_{X/S}.
\end{align}

\subsection{}We first review the pull-back property of the  relative characteristic classes.
Consider a cartesian diagram of schemes
\begin{align}\label{eq:3pullbackFixDia}
\begin{gathered}
 \xymatrix{ X'\ar[d]_-{f'} \ar[r]^-{b_X} & X  \ar[d]^-f \\
S'\ar[r]^-{b} &S,\ar@{}|\Box[ul]
}
\end{gathered}
\end{align}
where $f$ is a separated morphism of finite type, and $b:S'\to S$ is a morphism between Noetherian schemes.
For any $\mathcal{F}\in D_{\rm ctf}(X/S,\Lambda)$, the object $b_X^\ast\mathcal F$ on $X'$ is also universally locally acyclic over $S'$. 
Thus we have a  well-defined functor
\begin{align}\label{eq:pullbackWelldefined}
b_X^*\colon &D_{\rm ctf}(X/S,\Lambda)\to D_{\rm ctf}(X'/S',\Lambda),
\end{align}
and a well-defined map
\begin{align}
\label{eq:pullbackWelldefined2} b_X^*\colon &K_0(X/S,\Lambda)\to K_0(X'/S',\Lambda).
\end{align}
Note that the following base change morphism
\begin{equation}\label{eq:3pullbackK}
	b_X^\ast \mathcal K_{X/S}=b_X^\ast f^!\Lambda\xrightarrow{\rm b.c}f^{\prime !} b^\ast\Lambda=\mathcal K_{X'/S'}
\end{equation}
induces a pull-back morphism on cohomology groups
\begin{equation}\label{eq:pullbackCG}
b_X^*\colon H^0(X,\mathcal K_{X/S}) \to H^0(X', \mathcal K_{X'/S'}).
\end{equation}
\begin{proposition}\label{cor:pullbackccc}
Given the cartesian diagram \eqref{eq:3pullbackFixDia},
the following diagram commutes
\begin{align}
\begin{gathered}
\xymatrix@C=3.5pc{
K_0(X/S,\Lambda)\ar[d]_{b_X^\ast}^{\eqref{eq:pullbackWelldefined2}}\ar[r]^-{~C_{X/S}~}&H^0(X, \mathcal K_{X/S})\ar[d]_{b_X^*}^-{\eqref{eq:pullbackCG}}\\
K_0(X'/S',\Lambda)\ar[r]^-{~C_{X'/S'}~}&H^0(X', \mathcal K_{X'/S'}).
}
\end{gathered}
\end{align}
\end{proposition}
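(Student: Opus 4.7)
The plan is to unwind the equivalent description of $C_{X/S}(\mathcal F)$ given in \eqref{eq:traceEquiforcharclass} and show that every ingredient of the construction is compatible with the pullback functor induced by $b_X$. Since universal local acyclicity is preserved under arbitrary base change, we already know that $b_X^\ast\mathcal F\in D_{\rm ctf}(X'/S',\Lambda)$, so $C_{X'/S'}(b_X^\ast\mathcal F)$ is defined and is the cohomology class of the composition
\begin{align*}
	\delta'_!\Lambda\xrightarrow{{\rm id}_{b_X^\ast\mathcal F}}R\mathcal Hom({\rm pr}'^{\,\ast}_2b_X^\ast\mathcal F,{\rm pr}'^{\,!}_1b_X^\ast\mathcal F)\xrightarrow{v_{b_X^\ast\mathcal F}}\delta'_\ast\mathcal K_{X'/S'},
\end{align*}
where $\delta':X'\to X'\times_{S'}X'$ and ${\rm pr}'_i$ denote the analogues of $\delta$ and ${\rm pr}_i$ after base change. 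The goal is to identify this, via the canonical maps of \eqref{eq:pullbackforanyGdefTrans} and \eqref{eq:3pullbackK}, with the pullback along $b_X$ of \eqref{eq:traceEquiforcharclass}.

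First, I would establish that the Künneth morphism \eqref{eq:Kunethiso} commutes with base change. This reduces to showing that the transversality map $c_{g,h,\mathcal F}$ of \eqref{eq:cForFBoxK} is compatible with the base change $b_X\times b_X:X'\times_{S'}X'\to X\times_SX$. This commutativity is a special case of Lemma \ref{lem:TransBforC}.(2) applied to the cartesian square obtained by pulling back the diagonal datum \eqref{diagXSY} along $b$; the point is that the formation of $c_{\delta,f,\mathcal F,\mathcal G}$ is functorial in the whole diagram. A parallel argument using Lemma \ref{lem:TransBforC}.(1) shows the same for the evaluation morphism $v_\mathcal F$ in \eqref{eq:keyisoKu3}, since $v_\mathcal F$ is manufactured from the diagonal transversality map and the tautological pairing $\mathcal F\otimes^L D_{X/S}(\mathcal F)\to\mathcal K_{X/S}$, both of which are functorial under pullback.

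Next, I would identify the pullback of the identity cohomological correspondence with that of $b_X^\ast\mathcal F$. Unwinding the correspondence picture of \ref{subsec:CCor}, the identity ${\rm id}_\mathcal F$ corresponds to a map $\delta_!\Lambda\to\mathcal F\boxtimes_S^L D_{X/S}(\mathcal F)$; under $(b_X\times b_X)^\ast$ this maps to the analogous arrow $\delta'_!\Lambda\to b_X^\ast\mathcal F\boxtimes_{S'}^L D_{X'/S'}(b_X^\ast\mathcal F)$ representing ${\rm id}_{b_X^\ast\mathcal F}$, where we use the canonical base change $b_X^\ast D_{X/S}(\mathcal F)\to D_{X'/S'}(b_X^\ast\mathcal F)$ (which is an isomorphism by \cite[Corollary 2.18]{LZ22} since $\mathcal F$ is universally locally acyclic). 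Combined with the compatibilities of the previous paragraph, this gives a commutative diagram whose pullback along $\delta'^{\,\ast}$ matches the two constructions of $C_{X'/S'}(b_X^\ast\mathcal F)$ and $b_X^\ast C_{X/S}(\mathcal F)$.

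The main obstacle is the bookkeeping: the trace construction involves the Künneth morphism, the reduction-to-the-diagonal isomorphism \eqref{eq:keyisoCshrinkRHOM}, the evaluation pairing, and a base change isomorphism $c^!\delta_\ast\xrightarrow{\simeq}i_\ast p^!$, so verifying the full compatibility amounts to chasing a rather large commutative diagram obtained by applying $(b_X\times b_X)^\ast$ to the cube built from these four pieces. Once the compatibilities in the previous two paragraphs are in hand, this diagram chase is mechanical, and the proposition follows.
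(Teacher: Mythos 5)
Your proposal is correct and follows essentially the same route as the paper's own (sketched) proof: both unwind the characteristic class as the composition $\Lambda\to\delta^!\mathcal H_S\xleftarrow{\simeq}\delta^!\mathcal T_S\to\delta^\ast\mathcal T_S\xrightarrow{\rm ev}\mathcal K_{X/S}$ and check that each canonical arrow commutes with the base change morphisms induced by $b_X$, deferring the detailed diagram chase (the paper records this as the commutative diagram \eqref{eq:pullCproof}, and also points to the categorical alternative of \cite{LZ22}, \cite{Abe22}). One small caveat: Lemma \ref{lem:TransBforC}.(2) concerns a tower over a \emph{fixed} $\delta:W\to T$ rather than a base change of $\delta$ itself, so the compatibility of $c_{g,h,\mathcal F}$ with $b$ is really a separate (routine) functoriality of the maps \eqref{eq:pullbackforanyGdefTrans} and the projection formula, which the paper likewise asserts without detailed proof.
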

In a previous version of this paper,  Proposition \ref{cor:pullbackccc} is proved by verifying the commutativity of certain diagrams. Weizhe Zheng remarked that the categorical method in \cite{LZ22} could also be used here in an email to the authors and Tomoyuki Abe. We refer to \cite[1.3]{Abe22} for that proof.
\begin{proof}
Here we give a sketched proof.
Let $\mathcal F\in D_{\rm ctf}(X/S,\Lambda)$ and  $\mathcal F'=b_X^\ast\mathcal F$.
We put 
\begin{align*}
	\mathcal H_S&=R\mathcal Hom_{X\times_SX}({\rm pr}_2^\ast \mathcal F,{\rm pr}_1^!\mathcal F),\;\;\qquad\mathcal T_S=\mathcal F\boxtimes^L_S D_{X/S}(\mathcal F),\\
	\mathcal H_{S'}&=R\mathcal Hom_{X'\times_{S'}X'}({\rm pr}_2^\ast \mathcal F',{\rm pr}_1^!\mathcal F'),\quad \mathcal T_{S'}=\mathcal F'\boxtimes^L_{S'} D_{X'/S'}(\mathcal F').
\end{align*}
Consider the following cartesian diagram
\begin{align}\label{eq:pullCproofhelp1}
\begin{gathered}
\xymatrix@R=0.6cm@C=4pc{
X'\ar[d]_-{\delta'}\ar[r]^-{b_X}\ar@{}|\Box[rd]&X\ar[d]^-{\delta}\\
X'\times_{S'}X'\ar[r]^-{b_X\times b_X}&X\times_SX,
}
\end{gathered}
\end{align}
where $\delta$ and $\delta'$ are the diagonal morphisms.
The characteristic class $C_{X/S}(\mathcal F)$ is given by
the composition $\Lambda\xrightarrow{}\delta^! \mathcal H_S\xleftarrow[\simeq]{\eqref{eq:Kunethiso}}\delta^!\mathcal T_S
\xrightarrow{}\delta^\ast \mathcal T_S \xrightarrow{\rm ev} \mathcal K_{X/S}$ (cf. \eqref{eq:evaMap}-\eqref{eq:RCCCCDEF}).
Similar for $C_{X'/S'}(\mathcal F')$.
There is a commutative diagram 
\begin{align}\label{eq:pullCproof}
\begin{gathered}
\xymatrix@R=0.6cm{
\Lambda\ar[r]&\delta^{\prime!}\mathcal H_{S'}&\delta^{\prime!}\mathcal T_{S'}\ar[l]_-\simeq\ar[r]&\delta^{\prime\ast}\mathcal T_{S'}\ar[r]^-{\rm ev}&\mathcal K_{X'/S'}\\
b_X^\ast\Lambda\ar[r]\ar@{=}[u]&b_X^\ast\delta^{!}\mathcal H_S\ar[u]_-{(a)}&b_X^\ast \delta^!\mathcal T_S\ar[r]\ar[u]_-{(b)}\ar[l]_-\simeq &b_X^\ast\delta^\ast \mathcal T_S\ar[r]^-{\rm ev}\ar[u]_-{(c)}&b_X^\ast\mathcal K_{X/S}\ar[u]_-{\eqref{eq:3pullbackK}},
}
\end{gathered}
\end{align}
where $(a)$ is given by
	$b_X^\ast\delta^!\mathcal H_S\xrightarrow[\eqref{eq:pullbackforanyGdefTrans}]{\rm b.c}\delta^{\prime!}(b_X\times b_X)^\ast\mathcal H_S\xrightarrow{\rm b.c}\delta^{\prime!}\mathcal H_{S'}$,
the morphism $(b)$ is given by
	$b_X^\ast \delta^!\mathcal T_S \xrightarrow[\eqref{eq:pullbackforanyGdefTrans}]{\rm b.c} \delta^{\prime!}(b_X\times b_X)^\ast \mathcal T_S \xrightarrow{\rm b.c}\delta^{\prime!}\mathcal T_{S'}$,
and $(c)$ is given by $b_X^\ast\delta^{\ast}\mathcal T_S\xrightarrow{\simeq}\delta^{\prime\ast}(b_X\times b_X)^\ast\mathcal T_S\xrightarrow{\rm b.c} \delta^{\prime\ast}\mathcal T_{S'}$.
By the above commutative diagram \eqref{eq:pullCproof}, we get $b_X^\ast C_{X/S}(\mathcal F)=C_{X'/S'}(b_X^\ast\mathcal F)$.
\end{proof}
\subsection{}Now we recall the proper push-forward property of the relative characteristic classes.
Let $s: X\to X'$ be a proper morphism  in ${\rm Sch}_S$. 
For any $\mathcal F\in D_{\rm ctf}(X/S,\Lambda)$, then 
$s_\ast \mathcal F$ is universally locally acyclic over $S$ by \cite[Theorem 2.16 and Proposition 2.23]{LZ22}.
Thus we have a well-defined functor
\begin{align}
s_\ast\colon D_{\rm ctf}(X/S,\Lambda)\to D_{\rm ctf}(X'/S,\Lambda),
\end{align}
and a well-defined map
\begin{align}
s_\ast\colon K_0(X/S,\Lambda)\to K_0(X'/S,\Lambda).
\end{align}

\begin{proposition}\label{prop:pushccc}
If $s\colon X\to X'$ is a proper morphism in ${\rm Sch}_S$, then
 the following diagram commutes
\begin{align}\label{eq:cor:pushccc2}
\begin{gathered}
\xymatrix@C=3pc{
K_0(X/S,\Lambda)\ar[d]_{s_\ast}\ar[r]^-{~C_{X/S}~}&H^0(X, \mathcal K_{X/S})\ar[d]^{s_*}\\
K_0(X'/S,\Lambda)\ar[r]^-{~C_{X'/S}~}&H^0(X', \mathcal K_{X'/S}),
}
\end{gathered}
\end{align}
where $s_\ast: H^0(X,\mathcal K_{X/S})\to H^0(X',\mathcal K_{X'/S})$ is induced by $s_\ast\mathcal K_{X/S}=s_! s^!\mathcal K_{X'/S}\xrightarrow{\rm adj} \mathcal K_{X'/S}$.
\end{proposition}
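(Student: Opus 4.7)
The plan is to adapt the argument of Proposition~\ref{cor:pullbackccc}, replacing pull-back by proper push-forward. Fix $\mathcal F\in D_{\rm ctf}(X/S,\Lambda)$ and set $\mathcal F':=s_*\mathcal F$, which lies in $D_{\rm ctf}(X'/S,\Lambda)$ by \cite[Theorem 2.16, Proposition 2.23]{LZ22}. I would work with the cartesian square
\[
\xymatrix{
X\ar[r]^-s\ar[d]_-\delta\ar@{}|\Box[rd]&X'\ar[d]^-{\delta'}\\
X\times_SX\ar[r]^-{s\times s}&X'\times_SX'
}
\]
in which both horizontal maps are proper. The goal is to show $s_*C_{X/S}(\mathcal F)=C_{X'/S}(\mathcal F')$ in $H^0(X',\mathcal K_{X'/S})$ by proving that the trace formulation \eqref{eq:traceEquiforcharclass} is natural under proper push-forward.

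The three key ingredients are: (i) the K\"unneth isomorphism for proper push-forward, $(s\times s)_*(\mathcal F\boxtimes_S^L\mathcal G)\simeq s_*\mathcal F\boxtimes_S^L s_*\mathcal G$; (ii) the duality isomorphism $s_*D_{X/S}(\mathcal F)\simeq D_{X'/S}(\mathcal F')$, which follows from $s_!=s_*$ (as $s$ is proper) together with the adjunction $s_!\dashv s^!$; and (iii) the base change $\delta'^!(s\times s)_*\simeq s_*\delta^!$ along the above cartesian square, combined with the counit $s_*\mathcal K_{X/S}=s_!s^!\mathcal K_{X'/S}\to\mathcal K_{X'/S}$ that defines the push-forward on cohomology. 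Combining (i) and (ii) produces an isomorphism $(s\times s)_*\mathcal T_S\simeq\mathcal T'_S$, where $\mathcal T_S=\mathcal F\boxtimes_S^LD_{X/S}(\mathcal F)$ and $\mathcal T'_S=\mathcal F'\boxtimes_S^LD_{X'/S}(\mathcal F')$; composed with the isomorphism \eqref{eq:Kunethiso}, this yields a canonical morphism $(s\times s)_*\mathcal H_S\to\mathcal H'_S$ sending the cohomological correspondence ${\rm id}_\mathcal F$ to ${\rm id}_{\mathcal F'}$.

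With these compatibilities in hand, I would assemble a commutative diagram analogous to \eqref{eq:pullCproof}. Its bottom row is obtained by applying $(s\times s)_*$ to the composition \eqref{eq:traceEquiforcharclass} for $C_{X/S}(\mathcal F)$ and postcomposing with the counit $s_*\mathcal K_{X/S}\to\mathcal K_{X'/S}$, and its top row is the corresponding composition computing $C_{X'/S}(\mathcal F')$; the vertical arrows come from K\"unneth plus duality, from base change for $\delta^!$, and from the counit. Under the identification $\delta'^!(s\times s)_*\simeq s_*\delta^!$, the bottom row evaluates to $s_*C_{X/S}(\mathcal F)$ while the top row evaluates to $C_{X'/S}(\mathcal F')$, so the commutativity of the diagram yields the proposition.

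The main obstacle is the commutativity of the rightmost square, i.e.\ the compatibility of the evaluation map $\mathcal F\otimes^LD_{X/S}(\mathcal F)\to\mathcal K_{X/S}$ with proper push-forward. Unwinding the definitions, this reduces to checking that the natural morphism $s_*R\mathcal Hom(\mathcal F,\mathcal F)\to R\mathcal Hom(\mathcal F',\mathcal F')$ sends ${\rm id}_\mathcal F$ to ${\rm id}_{\mathcal F'}$ compatibly with the trace pairings, which amounts to a diagram chase threading through the adjunction units and counits that enter the K\"unneth and duality isomorphisms. Alternatively, as suggested by Zheng in the remark after Proposition~\ref{cor:pullbackccc} and carried out by Abe in \cite[1.3]{Abe22} for the pull-back case, one may invoke the categorical trace formalism of \cite{LZ22}, under which the functoriality of the trace under proper push-forward becomes essentially tautological.
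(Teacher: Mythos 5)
Your overall architecture (K\"unneth for $(s\times s)_*=(s\times s)_!$, Grothendieck--Verdier duality $s_*D_{X/S}(\mathcal F)\simeq D_{X'/S}(s_*\mathcal F)$, compatibility of the evaluation map with the counit $s_!s^!\mathcal K_{X'/S}\to\mathcal K_{X'/S}$) is the same as the paper's sketch in \eqref{eq:pushCproof}--\eqref{eq:pushCproofhelp2}, and your fallback via the categorical trace of \cite{LZ22} is exactly the reference the paper gives. However, your ingredient (iii) contains a genuine error: the square with vertical arrows $\delta\colon X\to X\times_SX$ and $\delta'\colon X'\to X'\times_SX'$ and horizontal arrows $s$ and $s\times s$ is \emph{not} cartesian unless $s$ is a monomorphism. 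The fibre product $X'\times_{X'\times_SX'}(X\times_SX)$ is $X\times_{X'}X$, not $X$. Consequently the base-change identification $\delta'^!(s\times s)_*\simeq s_*\delta^!$ you want to invoke does not exist in that form; the correct base change along the genuinely cartesian square $X\times_{X'}X\to X'$, $X\times_SX\to X'\times_SX'$ gives $\delta'^*(s\times s)_*\simeq\alpha_*\beta^*$ (resp.\ $\delta'^!(s\times s)_*\simeq\alpha_*\beta^!$), where $\alpha\colon X\times_{X'}X\to X'$ and $\beta\colon X\times_{X'}X\to X\times_SX$ are the projections, and neither of these is $s_*\delta^*$ or $s_*\delta^!$.

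The missing step, which is precisely the content of the paper's map $(a)$ in \eqref{eq:pushCproofhelp2}, is to factor the diagonal as $X\xrightarrow{\delta_0}X\times_{X'}X\xrightarrow{\beta}X\times_SX$ and use the adjunction unit $\mathrm{id}\to\delta_{0*}\delta_0^*$ to produce $s_*\delta^*\mathcal T_S\simeq\alpha_*\delta_{0*}\delta_0^*\beta^*\mathcal T_S\leftarrow\alpha_*\beta^*\mathcal T_S\simeq\delta'^*(s\times s)_*\mathcal T_S$, with the arrow pointing the wrong way for your purposes but entering the comparison diagram in the correct direction (one maps $\alpha_*\beta^*\to\alpha_*\delta_{0*}\delta_0^*\beta^*$, not conversely); this is why the paper works with the $\delta^*$-formulation \eqref{eq:traceEquiforcharclass} of the trace rather than a $\delta^!$-base-change identity. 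Without introducing $X\times_{X'}X$ and this extra adjunction, the comparison of the two rows in your diagram cannot be carried out, so as written the argument would fail at exactly the square you identify as the ``main obstacle.'' Once this is repaired, the rest of your plan goes through and coincides with the paper's proof.
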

\begin{proof}
For a proof of this proposition, we refer to  \cite[Corollary 2.22]{LZ22} (see also \cite[Corollary 3.3.4]{YZ18}).
Here we give a sketched proof. Let $\mathcal F\in D_{\rm ctf}(X/S,\Lambda)$ and  $\mathcal F'=s_\ast\mathcal F$.
Consider the following commutative diagram
\begin{align}\label{eq:pushCproofhelp1}
\begin{gathered}
\xymatrix@C=1.5cm{
X\ar@/^1pc/[rr]^-s\ar[r]_-(0.7){\delta_0}\ar[rd]_-\delta&X\times_{X'}X\ar[d]^-\beta\ar[r]_-\alpha\ar@{}[rd]|{\Box}&X'\ar[d]^-{\delta'}\\
&X\times_SX\ar[r]^-{s\times s}&X'\times_SX',
}
\end{gathered}
\end{align}
where $\delta$, $\delta_0$ and $\delta'$ are the diagonal morphisms.
There is a commutative diagram 
\begin{align}\label{eq:pushCproof}
\begin{gathered}
\xymatrix@R=0.6cm{
\Lambda\ar[r]\ar[d]&\delta^{\prime\ast}(\mathcal F'\boxtimes^L_S D_{X'/S}(\mathcal F'))\ar[r]^-\simeq&\mathcal F'\otimes^L D_{X'/S}(\mathcal F')\ar[r]^-{\rm ev}&\mathcal K_{X'/S}\\
s_\ast\Lambda\ar[r]&s_\ast\delta^{\ast}(\mathcal F\boxtimes^L_S D_{X/S}(\mathcal F))\ar[r]^-\simeq\ar[u]_-{(a)}&s_\ast(\mathcal F\otimes^L D_{X/S}(\mathcal F))\ar[r]^-{\rm ev}&s_\ast\mathcal K_{X/S},\ar[u].
}
\end{gathered}
\end{align}
where $(a)$ is given by
\begin{align}\label{eq:pushCproofhelp2}
\begin{split}
s_\ast\delta^{\ast}(\mathcal F\boxtimes^L_S D_{X/S}(\mathcal F))
&\simeq \alpha_\ast\delta_{0\ast}\delta_0^\ast \beta^{\ast}(\mathcal F\boxtimes^L_S D_{X/S}(\mathcal F))\xrightarrow{\rm adj}\alpha_\ast \beta^{\ast}(\mathcal F\boxtimes^L_S D_{X/S}(\mathcal F))\\
&\xrightarrow[\simeq]{\rm proper~ b.c}\delta^{\prime\ast}(s\times s)_\ast(\mathcal F\boxtimes^L_S D_{X/S}(\mathcal F))\to \delta^{\prime\ast}(\mathcal F'\boxtimes^L_S D_{X'/S}(\mathcal F')).
\end{split}
\end{align}
By the commutativity of \eqref{eq:pushCproof} and an equivalent description \eqref{eq:traceEquiforcharclass} of the characteristic classes, we get $s_\ast C_{X/S}(\mathcal F)=C_{X'/S}(s_\ast\mathcal F)$.
\end{proof}

\subsection{}\label{sub:notaionformainBC}
We are now ready to prove Theorem \ref{introconj1} under the condition $Z=\emptyset$. 
Consider a commutative diagram in ${\rm Sch}_S$:
\begin{align}\label{eq:xys-thm}
	\begin{gathered}
		\xymatrix{
			X\ar[rr]^-f\ar[rd]_-h&&Y.\ar[ld]^-g\\
			&S&
		}
	\end{gathered}
\end{align}
Assume that $\mathcal K_{Y/S}=g^!\Lambda$ is locally constant and that $\mathcal K_{Y/S}\otimes^L \delta^!\Lambda\simeq \Lambda$ (cf. \eqref{eq:KYSLambda}). 
Consider the commutative diagram \eqref{xyd:prop:Gysim:diagonaldeltai}.
By Lemma \ref{lem:TransBforC}.(2), we get a commutative diagram
\begin{align}\label{eq:defDistrik1-section3}
	\begin{gathered}
		\xymatrix@C=2.5cm{
			\mathcal K_{X/Y}&\mathcal K_{X/{S}}\otimes^L f^\ast\delta^!\Lambda\ar[r]^-{c_{\delta,(f\times f)\delta_0,\mathcal K_{X/S}}}\ar[l]^-\simeq_-{\rm Prop.\ref{prop:Gysim}.(2)}& \mathcal K_{X/S}\\
			&\delta_1^\ast (i^\ast\delta_{0\ast}\mathcal K_{X/{S}}\otimes p^\ast \delta^!\Lambda)\ar[r]^-{\delta_1^\ast(c_{\delta,f\times f, \delta_{0\ast}\mathcal K_{X/S}})}\ar[u]_-{\simeq}&
			\delta_1^\ast i^!\delta_{0\ast}\mathcal K_{X/{S}}\ar[u]_-{\simeq},
		}
	\end{gathered}
\end{align}
which induces a homomorphism
\begin{align}\label{eq:cherncup1}
	\delta^!:H^{0}(X,\mathcal K_{X/Y})\xrightarrow{}  H^{0}(X,\mathcal K_{X/S}).
\end{align}
When $g: Y \to S$ is a smooth morphism, for any $\mathcal F\in D_{\rm ctf}(X/Y,\Lambda)$,
we have  $\mathcal F\in D_{\rm ctf}(X/S,\Lambda)$
by \cite[Appendice to Th.finitude, Corollaire 2.7]{SGA4h}. In this case, there is a canonical homomorphism
\begin{align}\label{eq:K2S}
	K_0(X/Y,\Lambda)\to K_0(X/S,\Lambda);\quad \mathcal F\mapsto \mathcal F.
\end{align}
\begin{theorem}[Fibration formula: transversal case] \label{thm:changebaseHomo}
	Consider the notation and assumptions in \ref{sub:notaionformainBC}. 
For any $\mathcal F\in D_{\rm ctf}(X,\Lambda)$ such that $f: X\to Y$ and $h: X\to S$ are universally locally acyclic relatively to  $\mathcal F$,  we have a commutative diagram
	\begin{equation}\label{eq:propchangebasehomo:4}
		\begin{gathered}
			\xymatrix{
				{\rm Hom}(\mathcal F, \mathcal F) \ar[r]^-{\rm Tr}\ar@{=}[d] & H^0(X,\mathcal K_{X/Y}) \ar[d]_-{\delta^!}^-{{\eqref{eq:cherncup1}}}\\
				{\rm Hom}(\mathcal F, \mathcal F) \ar[r]^-{\rm Tr}& H^0(X, \mathcal K_{X/S}).
			}
		\end{gathered}
	\end{equation}
If $g: Y\to S$ is smooth of relative dimension $r$,
then we have 
\begin{align}\label{delta!Cherneq}
	\delta^!=c_{r}(f^*\Omega_{Y/S}^{1,\vee}): H^0(X,\mathcal K_{X/Y})\to H^0(X, \mathcal K_{X/S})
\end{align} 
and the following commutative diagram  
	\begin{equation}\label{eq:propchangebasehomo:4-1}
		\begin{gathered}
			\xymatrix{
				K_0(X/Y, \Lambda) \ar[r]^-{\quad C_{X/Y}\quad}\ar[d]_{\eqref{eq:K2S}} & H^0(X,\mathcal K_{X/Y}) \ar[d]^-{c_{r}(f^*\Omega_{Y/S}^{1,\vee})}\\
				K_0(X/S, \Lambda) \ar[r]^-{\quad C_{X/S}\quad}& H^0(X, \mathcal K_{X/S}).
			}
		\end{gathered}
	\end{equation}
	
\end{theorem}
If  $X=Y$ is  smooth over $S$ and connected, then any $\mathcal F\in D_{\rm ctf}(X/X,\Lambda)$ has locally constant cohomology sheaves and $C_{X/S}(\mathcal F)={\rm rank}\mathcal F\cdot c_r(\Omega_{X/S}^{1,\vee})$. 

\begin{proof}
	First note that for any closed immersion $d$, there is a canonical morphism  $d^!\to d^\ast$ defined by
	$d^!\simeq d^\ast d_! d^!\xrightarrow{\rm adj} d^\ast$.
	Consider the commutative diagram \eqref{xyd:prop:Gysim:diagonaldeltai}.
	In the following, we put 
\begin{align}
	\mathcal H_S=R\mathcal Hom_{X\times_SX}({\rm pr}_2^\ast \mathcal F,{\rm pr}_1^!\mathcal F),&\quad \mathcal T_S=\mathcal F\boxtimes^L_S D_{X/S}(\mathcal F),\\
	\mathcal H_Y=R\mathcal Hom_{X\times_YX}({\rm pr}_2^\ast \mathcal F,{\rm pr}_1^!\mathcal F),&\quad \mathcal T_Y=\mathcal F\boxtimes^L_Y D_{X/Y}(\mathcal F).
\end{align}
	For any $\alpha:\mathcal F\to\mathcal F$ which is given by a morphism
	$\Lambda\to R\mathcal Hom(\mathcal F,\mathcal F)=\delta_0^! \mathcal H_S$, the trace
	$C_{X/S}(\alpha):={\rm Tr}(\alpha)\in H^0(X,\mathcal K_{X/S})$ is given by
	the composition (cf. \eqref{eq:evaMap}-\eqref{eq:RCCCCDEF})
	\begin{align}\label{eq:propchangebasehomo:5}
			\Lambda\xrightarrow{\alpha}\delta_0^! \mathcal H_S\xleftarrow[\simeq]{\eqref{eq:Kunethiso}}\delta_0^!\mathcal T_S
			\xrightarrow{}\delta_0^\ast \mathcal T_S \xrightarrow{\rm ev} \mathcal K_{X/S},
	\end{align}
	and the trace $C_{X/Y}(\alpha):={\rm Tr}(\alpha)\in H^0(X,\mathcal K_{X/Y})$ is given by
	the composition
	\begin{align}\label{eq:propchangebasehomo:6}
			\Lambda\xrightarrow{\alpha}\delta_1^! \mathcal H_Y\xleftarrow[\simeq]{\eqref{eq:Kunethiso}}\delta_1^!\mathcal T_Y
			\xrightarrow{}\delta_1^\ast \mathcal T_Y\xrightarrow{\rm ev} \mathcal K_{X/Y}.
	\end{align}
The morphism $\mathcal K_{X/Y}\xrightarrow{\eqref{eq:defDistrik1-section3}} \mathcal K_{X/S}$  induces a map $\mathcal T_Y\to i^\ast\mathcal T_S$.
There is a commutative diagram 
\begin{align}\small\label{eq:propchangebasehom:TS1}
	\begin{gathered}
		\xymatrix@R=0.3cm@C=1.8cm{
			\mathcal H_Y\ar[dd]_-{\eqref{eq:keyisoCshrinkRHOM}}^-\simeq&\mathcal T_Y\ar[dd]_-{\eqref{lem:eq:Gysim2}}^-\simeq\ar[l]_-{\eqref{eq:Kunethiso}}^-{\simeq}\ar@/^1.5pc/[rrd]^-{\eqref{eq:defDistrik1-section3}}\\
			&&i^\ast\mathcal T_S\otimes^L p^\ast\delta^!\Lambda\ar[r]^(0.5){\eqref{eq:pullbackforanyGdefTrans}}_(0.5){\rm b.c}\ar[lu]_{\eqref{eq:KXYSfdelta}}^{\simeq}\ar[ld]^{\eqref{eq:carGtrans1}}_{\simeq}&i^\ast\mathcal T_S,\\
			i^!\mathcal H_S&i^!\mathcal T_S\ar[l]_-{\eqref{eq:Kunethiso}}^-{\simeq}\ar@/_1.5pc/[rru]^-{i^!\to i^\ast }
		}
	\end{gathered}
\end{align}
where the left pentagon is commutative by Lemma \ref{lem:iShrinkRhom} below, the upper right triangle is commutative by the definition of $\mathcal T_Y\to i^\ast\mathcal T_S$ induced by \eqref{eq:defDistrik1-section3}, and the lower right triangle is commutative by \eqref{eq:Dchf-two}.
Applying $\delta_1^!$ to \eqref{eq:propchangebasehom:TS1} and noting that $\delta_0^!=\delta_1^!i^!$, we get a commutative diagram
	\begin{align}\label{eq:propchangebasehom:TS2}
	\begin{gathered}
		\xymatrix{
			\Lambda\ar[r]\ar@{=}[d]&\delta_1^!\mathcal H_Y\ar[d]_-{\eqref{eq:keyisoCshrinkRHOM}}^-\simeq &\delta_1^!\mathcal T_Y \ar[l]_-{\eqref{eq:Kunethiso}}^-{\simeq} \ar[r]^-{\delta_1^!\to\delta_1^\ast }\ar[d]_-{\eqref{lem:eq:Gysim2}}^-\simeq& \delta_1^\ast \mathcal T_Y\ar[d]_-{\eqref{lem:eq:Gysim2}}^-\simeq\ar[rd]^-{\eqref{eq:defDistrik1-section3}}\ar[rr]^-{\rm ev}&&\mathcal K_{X/Y}\ar[d]_-{\eqref{eq:defDistrik1-section3}}\\
			\Lambda\ar[r]&\delta_0^!\mathcal H_S&\delta_0^!\mathcal T_S=\delta_1^! i^!\mathcal T_S\ar[l]_-{\eqref{eq:Kunethiso}}^-{\simeq} \ar[r]_-{\delta_1^!\to\delta_1^\ast }&\delta_1^\ast i^!\mathcal T_S\ar[r]_-{i^!\to i^\ast }&\delta_1^\ast i^\ast\mathcal T_S=\delta_0^\ast\mathcal T_S\ar[r]^-{\rm ev}&\mathcal K_{X/S}. 
		}
	\end{gathered}
\end{align}
From this we get $C_{X/S}(\alpha)=\delta^!(C_{X/Y}(\alpha))$, which shows the commutativity of \eqref{eq:propchangebasehomo:4}.

If $g:Y\to S$ is smooth, the equality \eqref{delta!Cherneq} is obtained by applying Lemma \ref{lem:sameChernMapS} to the cartesian diagram \eqref{xyd:prop:Gysim:diagonaldeltai}.
\end{proof}

\begin{lemma}\label{lem:iShrinkRhom}
	Consider the following commutative diagrams in ${\rm Sch}_S$	
	\begin{align}\small
		\begin{gathered}
			\xymatrix{
				&X' \ar[d]_{f'}\ar[dr]^{h'}&          &X'\ar[rr]^(0.4){f'}\ar@{}|\Box[rrd]&&Y\\
				X\ar[r]^f \ar@/_1pc/[rr]_-{h}&Y\ar[r]^g &S &X\times_Y X'\ar@{}|\Box[rd]\ar@/^1pc/[rr]^(0.4){q_1}\ar[u]^-{q_2}\ar[r]_-i \ar[d]_-p&X\times_SX'\ar[r]_-{p_1}\ar[d]_-{f\times f'}\ar[rd]_-{p_2}\ar@{}|\Box[rrd]&X\ar[u]_-f\ar[rd]^-h\\
				&&&Y\ar[r]^-\delta&Y\times_SY&X'\ar[r]^-{h'}&S,
			}
		\end{gathered}
	\end{align}
where $\delta$ is the diagonal morphism, $i$ is the base change of $\delta$,
$p_i={\rm pr}_i$ and $q_j={\rm pr}_j$ are the projections, and $p=f'q_2$.
Let $\mathcal{F}\in D_{\rm{ctf}}(X,\Lambda)$ and $ \mathcal{G}\in D_{\rm{ctf}}(X',\Lambda)$.
Assume that $\mathcal K_{Y/S}=g^!\Lambda$ is locally constant and that $\mathcal K_{Y/S}\otimes^L \delta^!\Lambda\simeq \Lambda$ $($cf. \eqref{eq:KYSLambda}$)$. 
Then we have a commutative diagram
\begin{align}\label{eq:lem:iShrinkRhom-2}
	\begin{gathered}\small
		\xymatrix{
			i^\ast(\mathcal{F}\boxtimes^L_{S}D_{X'/S}(\mathcal{G}))\otimes p^\ast \delta^!\Lambda\ar[r]_-{\simeq}^-{\eqref{eq:KXYSfdelta}}\ar[rd]_{c_{\delta,f\times f',\mathcal F\boxtimes_S^L D_{X'/S}(\mathcal G)}}&\mathcal{F}\boxtimes^L_YD_{X'/Y}(\mathcal{G}) \ar[r]^-{\eqref{eq:Kunethiso}}\ar[d]& R\mathcal Hom_{X\times_{Y}X'}({q}_2^*\mathcal{G},{q}^!_1\mathcal{F}) \ar[d]_{}^-{\eqref{eq:keyisoCshrinkRHOM}}\\
			&i^!(\mathcal{F}\boxtimes^L_{S}D_{X'/S}(\mathcal{G})) \ar[r]^-{i^!\eqref{eq:Kunethiso}}&i^!R\mathcal Hom_{X\times_{S}X'}({p}_2^*\mathcal{G},{p}^!_1\mathcal{F}).
		}
	\end{gathered}
\end{align}
\end{lemma}

\begin{proof}In the following, we identify $\mathcal K_{X'/Y}$ with  $\mathcal K_{X'/S}\otimes^L f^{\prime\ast}\delta^!\Lambda$ by using the isomorphism \eqref{eq:KXYSfdelta}.
The composition 
$\mathcal{F}\boxtimes^L_YD_{X'/Y}(\mathcal{G}) \xrightarrow{\eqref{eq:Kunethiso}}R\mathcal Hom({q}_2^*\mathcal{G},{q}^!_1\mathcal{F})\xrightarrow{}i^!R\mathcal Hom({p}_2^*\mathcal{G},{p}^!_1\mathcal{F})$ is adjoint 
to  the following composition (cf. \eqref{Def-eq:Kunethiso} and \eqref{Def-eq:keyisoCshrinkRHOM})
\begin{align}
	i_!(\mathcal{F}\boxtimes^L_YD_{X'/Y}(\mathcal{G})\otimes^L {q}_2^\ast\mathcal G)\xrightarrow{\rm ev}i_!(\mathcal F\boxtimes^L_Y\mathcal K_{X'/Y})\xrightarrow{i_!c_{f',f,\mathcal F}}i_!{q}_1^!\mathcal F\simeq i_!i^!{p}_1^!\mathcal F\xrightarrow{\rm adj}{p}_1^!\mathcal F,
\end{align}
and 
$\mathcal{F}\boxtimes^L_YD_{X'/Y}(\mathcal{G}) \xrightarrow{c_{\delta,f\times f'}}i^!(\mathcal{F}\boxtimes^L_{S}D_{X'/S}(\mathcal{G}))\xrightarrow{}i^!R\mathcal Hom({p}_2^*\mathcal{G},{p}^!_1\mathcal{F})$ is adjoint to
	\begin{align}
		\begin{split}
		i_!(\mathcal{F}\boxtimes^L_YD_{X'/Y}(\mathcal{G})\otimes {q}_2^\ast\mathcal G)\xrightarrow{c_{\delta,f\times f'}}\mathcal F\boxtimes^L_S D_{X'/S}(\mathcal G)\otimes {q}_2^\ast\mathcal G\xrightarrow{\rm ev}\mathcal F\boxtimes^L_S\mathcal K_{X'/S}\xrightarrow{c_{h',h,\mathcal F}} {p}_1^!\mathcal F.
		\end{split}
	\end{align}
Now the result follows from the following commutative diagram
\begin{align}\label{eq:finalCheck}
	\begin{gathered}
		\xymatrix@C=1.5cm{
			\mathcal{F}\boxtimes^L_YD_{X'/Y}(\mathcal{G})\otimes^L{q}_2^\ast\mathcal G \ar[r]^-{\rm ev}\ar[d]_{c_{\delta,f\times f',\mathcal F\boxtimes_S^L D_{X'/S}(\mathcal G)}}&\mathcal F\boxtimes_Y^L\mathcal K_{X'/Y}\ar[r]^-{c_{f',f,\mathcal F}}\ar[d]_-{c_{\delta,f\times f',\mathcal F\boxtimes^L\mathcal K_{X'/S}}}& {q}_1^!\mathcal F \ar[d]^-{\simeq}\\
			i^!(\mathcal{F}\boxtimes^L_{S}D_{X'/S}(\mathcal{G})\otimes^L{p}_2^\ast\mathcal G) \ar[r]^-{\rm ev}&i^!(\mathcal F\boxtimes_S^L\mathcal K_{X'/S})\ar[r]^-{i^!c_{h',h,\mathcal F}}&i^!{p}_1^!\mathcal F,
		}
	\end{gathered}
\end{align}
where the left diagram is commutative by applying $c_{\delta,f\times f',-}$ to the  map $\mathcal F\boxtimes_S^L D_{X'/S}(\mathcal G)\xrightarrow{\rm ev}\mathcal F\boxtimes^L_S\mathcal K_{X'/Y}$, the right diagram is commutative since the two compositions are both adjoint to
\begin{align}
\begin{split}
	q_{1!}(q_1^\ast\mathcal F\otimes^L q_2^\ast\mathcal K_{X'/Y})&\xrightarrow{\rm proj.formula}\mathcal F\otimes^L q_{1!}q_2^\ast \mathcal K_{X'/Y}=\mathcal F\otimes^L q_{1!}q_2^\ast f^{\prime !}\Lambda\\
	&\xrightarrow{\eqref{eq:pullbackforanyGdefTrans}}\mathcal F\otimes^L q_{1!}q_1^!f^\ast \Lambda \xrightarrow{\rm adj}\mathcal F\otimes^L f^\ast\Lambda=\mathcal F.
\end{split}
\end{align}
\end{proof}

For later convenience, we note the following blow-up formula for characteristic classes.
\begin{lemma}\label{lem:Bup}
	Let $X$ and $Y$ be smooth connected schemes over a perfect field $k$ and let $i\colon Y\hookrightarrow X$ be a closed immersion of codimension $r\geq 1$.
	Let $\pi\colon \widetilde{X}\to X$ be the blow-up of $X$ along $Y$ and $\mathcal F\in D_{\rm ctf}(X,\Lambda)$.
	Then we have:
	\begin{align}
		\label{eq:blowupC}\pi_\ast(C_{\widetilde{X}/k}(\pi^\ast\mathcal F))&=C_{X/k}(\mathcal F)+ (r-1)\cdot i_\ast(C_{Y/k}(i^\ast\mathcal F))\quad{\rm in}\quad H^0(X,\mathcal K_{X/k}),\\
		\label{eq:blowupcc}
		\pi_\ast(cc_{\widetilde{X}}(\pi^\ast\mathcal F))&=cc_X(\mathcal F)+ (r-1)\cdot i_\ast(cc_Y(i^\ast\mathcal F))\qquad{\rm in}\quad CH_0(X),
	\end{align}
where $cc_{X}:=cc_{X,0}: K_0(X,\Lambda)\to {\rm CH}_0(X)$ is the morphism defined in \cite[Definition 6.7.2]{Sai17a} via the characteristic cycles of constructible \'etale sheaves.
\end{lemma}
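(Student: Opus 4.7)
The plan is to derive both identities from the proper push-forward properties for the cohomological characteristic class (Proposition \ref{prop:pushccc}) and for the characteristic cycle (due to Saito), combined with an explicit calculation of $R\pi_\ast\pi^\ast\mathcal F$ in the Grothendieck group $K_0(X, \Lambda)$. The crucial input is the computation of $R\pi_\ast\Lambda_{\widetilde X}$.

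Let $E = \pi^{-1}(Y)$ with projection $p\colon E\to Y$, a Zariski-locally trivial $\mathbb{P}^{r-1}$-bundle. The unit of adjunction fits in a distinguished triangle $\Lambda_X \to R\pi_\ast\Lambda_{\widetilde X} \to \mathcal C \xrightarrow{+1}$, and $\mathcal C$ is supported on $Y$ because $\pi$ restricts to an isomorphism over $X\setminus Y$, so $\mathcal C \simeq i_\ast i^\ast\mathcal C$. Proper base change and the projective bundle formula identify
\[
i^\ast R\pi_\ast\Lambda_{\widetilde X} \;\simeq\; Rp_\ast\Lambda_E \;\simeq\; \bigoplus_{k=0}^{r-1}\Lambda_Y(-k)[-2k],
\]
with the pullback of the adjunction unit corresponding to the inclusion of the $k=0$ summand. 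Hence $i^\ast\mathcal C \simeq \bigoplus_{k=1}^{r-1}\Lambda_Y(-k)[-2k]$, and the projection formula $R\pi_\ast\pi^\ast\mathcal F \simeq \mathcal F \otimes^L R\pi_\ast\Lambda_{\widetilde X}$ yields, in $K_0(X, \Lambda)$,
\[
[R\pi_\ast\pi^\ast\mathcal F] \;=\; [\mathcal F] + \sum_{k=1}^{r-1}\bigl[i_\ast\bigl((i^\ast\mathcal F)(-k)\bigr)[-2k]\bigr].
\]

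Now I apply $C_{X/k}$ to both sides. Each term on the right equals $i_\ast C_{Y/k}((i^\ast\mathcal F)(-k)[-2k])$ by Proposition \ref{prop:pushccc} for the closed immersion $i$; the even shift contributes $(-1)^{-2k} = 1$, and by Lemma \ref{lem:twsit} the Tate twist by the rank-one locally constant sheaf $\Lambda(-k)$ is absorbed, so the term simplifies to $i_\ast C_{Y/k}(i^\ast\mathcal F)$. A second application of Proposition \ref{prop:pushccc} to $s = \pi$ identifies the left-hand side $C_{X/k}(R\pi_\ast\pi^\ast\mathcal F)$ with $\pi_\ast C_{\widetilde X/k}(\pi^\ast\mathcal F)$, proving \eqref{eq:blowupC}. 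The identity \eqref{eq:blowupcc} follows from the identical argument with $C$ replaced by $cc$, invoking Saito's proper push-forward formula for characteristic cycles, the additivity of $cc$ on distinguished triangles, and the invariance of $cc$ under Tate twists and even shifts. The main obstacle is the $K_0$-level computation of $R\pi_\ast\Lambda_{\widetilde X}$: while proper base change and the projective bundle formula are standard, one must verify that the adjunction unit restricts on $Y$ to the canonical inclusion of the $k=0$ factor, which follows from the compatibility of units with proper base change and with the projective bundle decomposition.
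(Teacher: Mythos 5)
Your proposal is correct and follows essentially the same route as the paper: decompose $R\pi_\ast\pi^\ast\mathcal F$ via the projection formula and the computation of $R\pi_\ast\Lambda_{\widetilde X}$, then apply the proper push-forward property (Proposition \ref{prop:pushccc}) for $\pi$ and $i$ together with the twist formula (Lemma \ref{lem:twsit}) to absorb the Tate twists and even shifts. The only cosmetic difference is that the paper quotes the splitting $R\pi_\ast\Lambda\simeq\Lambda\oplus\bigoplus_{t=1}^{r-1}(i_\ast\Lambda)(-t)[-2t]$ directly from \cite[Expos\'e XVI, Proposition 2.2.2.1]{ILO14} (and cites \cite{UYZ} for \eqref{eq:blowupcc}) rather than rederiving it from the adjunction triangle and the projective bundle formula as you do.
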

In this paper, we only need the case where $r=2$.
\begin{proof}
	The equality \eqref{eq:blowupcc} follows from \cite[Lemma 3.3.2 and Remark 3.3.3]{UYZ}. We prove \eqref{eq:blowupC}.
	By \cite[Expos\'e XVI, Proposition 2.2.2.1]{ILO14}, we have an isomorphism on $X$:
	\begin{equation}\label{eq:blowupsheaf:1}
		R\pi_\ast \Lambda\simeq \Lambda\oplus \bigoplus_{t=1}^{r-1}(i_\ast \Lambda)(-t)[-2t].
	\end{equation}
	By the projection formula, we have 
	\begin{equation}\label{eq:blowupsheaf:3}
		R\pi_\ast \Lambda\otimes^L\mathcal F\simeq R\pi_\ast(\Lambda\otimes^L\pi^\ast\mathcal F)=R\pi_\ast\pi^\ast\mathcal F \qquad {\rm and}\qquad i_\ast\Lambda\otimes^L\mathcal F\simeq i_\ast i^\ast\mathcal F.
	\end{equation}
	By \eqref{eq:blowupsheaf:1} and \eqref{eq:blowupsheaf:3}, we get
	\begin{equation}\label{eq:blowupsheaf:4}
		R\pi_\ast \pi^\ast\mathcal F\simeq \mathcal F\oplus \bigoplus_{t=1}^{r-1}(i_\ast i^\ast\mathcal F)(-t)[-2t].
	\end{equation}
	Now the required formula \eqref{eq:blowupC} follows from Lemma \ref{lem:twsit}, Proposition \ref{prop:pushccc} and \eqref{eq:blowupsheaf:4}. 
\end{proof}

\subsection{}\label{NCdefsec}
Now we recall that the cohomological characteristic class is compatible with the specialization map.
Before that, let us recall the  nearby cycles functor over a general base. 
Let $h: X\to S$ be a morphism of finite type between Noetherian schemes. 
Let $s\leftarrow t$ be a specialization map of geometric points of $S$, i.e., 
a geometric point $t\to S_{(s)}$ of the strict henselization of $S$ at $s$. 
Consider the cartesian
diagram
\begin{align}\label{eq:specializationMap}
	\begin{gathered}
	\xymatrix@R=0.3cm{
	X_s\ar[d]\ar@{}|\Box[rd]\ar[r]^-i&X\times_SS_{(s)}\ar@{}|\Box[rd]\ar[d]&X_t\ar[l]_-{j}\ar[d]\\
	s\ar[r]&S_{(s)}&t.\ar[l]
}
	\end{gathered}
\end{align}
Then we define 
\begin{align}\label{eq:NCFunctordef}
	\Psi_{h,s\leftarrow t}:=i^\ast Rj_\ast: D^+(X_t,\Lambda)\to D^+(X_s,\Lambda).
\end{align}
In this paper, we denote $\Psi_{h,s\leftarrow t}$ by $\Psi_{h}$ for simplicity.
When $S_{(s)}$ is a trait, then $\Psi_{h,s\leftarrow t}$ is the classical nearby cycles functor (cf. \cite[Th.finitude, 3.1]{SGA4h}).

\subsection{}\label{subsec:spSec}
Assume that $S$ is a henselian trait with the generic point $\eta$. Let $s$ be a geometric point of $S$ above its closed point. 
Consider a commutative diagram in ${\rm Sch}_S$:
\begin{align}\label{spFGH}
	\begin{gathered}
		\xymatrix{
			X\ar[rr]^-f\ar[rd]_-h&&Y,\ar[ld]^-g\\
			&S&
		}
	\end{gathered}
\end{align}
with $g$ smooth. We denote again by $\Psi_h$ the composition $D^+(X_\eta,\Lambda)\to D^+(X_{\overline\eta},\Lambda)\xrightarrow{\Psi_{h,s\leftarrow \overline{\eta}}} D^+(X_s,\Lambda)$. 
Since $g$ is smooth, we have $\Psi_{g}(\Lambda_{Y_\eta})\simeq \Lambda_{Y_s}$.
There is a canonical morphism
\begin{align}\label{PsiKXYdef}
	\Psi_{h}(\mathcal K_{X_\eta/Y_\eta})\to \mathcal K_{X_s/Y_s}
\end{align}
given by the composition $\Psi_{h}(\mathcal K_{X_\eta/Y_\eta})=\Psi_{h}(f_\eta^!\Lambda)\xrightarrow{\rm b.c}f_s^!\Psi_{g}(\Lambda)\simeq f_s^!\Lambda=\mathcal K_{X_s/Y_s}$. The specialization map
\begin{align}\label{spDef}
	{\rm sp}={\rm sp}_s: H^0(X_\eta,\mathcal K_{X_\eta/Y_\eta})\to H^0(X_s, \mathcal{K}_{X_{s}/Y_{s}} )
\end{align}
is induced by the following composition
\begin{align}\label{spDef2}
	\begin{split}
		RHom(\Lambda,\mathcal K_{X_\eta/Y_\eta})
		&\xrightarrow{\Psi_{h}}RHom(\Psi_{h}(\Lambda),\Psi_{h}(\mathcal K_{X_\eta/Y_\eta}))\\
		&\xrightarrow{\Lambda\to \Psi_{h}(\Lambda)}RHom(\Lambda,\Psi_{h}(\mathcal K_{X_\eta/Y_\eta}))\xrightarrow{\eqref{PsiKXYdef}}RHom(\Lambda,\mathcal{K}_{X_{s}/Y_{s}}).
	\end{split}
\end{align}
When $g={\rm id}$, we have the following specialization formula for characteristic classes. 
\begin{proposition}[{cf. \cite[Corollary 3.10]{LZ22} and \cite[Proposition 1.3.5]{V07}}]\label{prop:sp}
	Let $S$ be a henselian trait with the generic point $\eta$. 
	Let $s$ be a geometric point of $S$ above its closed point. 
	Let $h:X\to S$ be a separated morphism of finite type and $\mathcal{F}\in D_{\rm ctf}(X_\eta,\Lambda)$. Then we have 
	\begin{align}\label{spFormula}
		{\rm sp}(C_{X_{\eta}/\eta}(\mathcal{F}))=C_{X_s/s}(\Psi_{h}(\mathcal{F})) \quad{\rm in}\quad H^0(X_s, \mathcal{K}_{X_{s}/{s}}),
	\end{align}
where ${\rm sp}: H^0(X_\eta,\mathcal K_{X_\eta/\eta})\to H^0(X_s, \mathcal{K}_{X_{s}/{s}} )$ is the specialization map.

In particular, if $\mathcal{F}\in D_{\rm ctf}(X,\Lambda) $ such that $h$ is universally locally acyclic relatively to $\mathcal{F}$, then $\Psi_{h}(\mathcal{F}|_{X_{\eta}})\simeq \mathcal{F}|_{X_s}$ and hence 
	\begin{align}\label{spFormulaCor}
		{\rm sp}(C_{X_{\eta}/{\eta}}(\mathcal{F}|_{X_{\eta}}))=C_{X_s/s}(\mathcal{F}|_{X_s})\quad{\rm in}\quad H^0(X_s, \mathcal{K}_{X_{s}/{s}}).
	\end{align}
\end{proposition}
\begin{remark}Consider  the diagram \eqref{spFGH} with $g$ smooth.
Let $\mathcal F\in D_{\rm ctf}(X_\eta,\Lambda)$ such that $f_\eta$ (resp. $f_s$) is universally locally acyclic relatively to $\mathcal F$ (resp. $\Psi_{h}(\mathcal F)$). Then one may prove the following formula
\begin{align}
	{\rm sp}(C_{X_{\eta}/Y_\eta}(\mathcal{F}))=C_{X_s/Y_s}(\Psi_{h}(\mathcal{F}))\quad{\rm in}\quad H^0(X_s, \mathcal{K}_{X_{s}/Y_{s}}).
\end{align}
\end{remark}

\section{Non-acyclicity class}\label{sec:CCC2}
\subsection{}\label{subsec:notationforXYS}
In this section, we construct the non-acyclicity classes supported on the non-acyclicity locus.
Consider a commutative diagram in ${\rm Sch}_S$:
\begin{align}\label{xyd:prop:Gysim:fgh2}
\begin{gathered}
\xymatrix{
Z\ar@{^(->}[r]^-\tau&X\ar[rr]^-f\ar[rd]_-h&&Y,\ar[ld]^-g\\
&&S&
}
\end{gathered}
\end{align}
where $\tau: Z\to X$ is a closed immersion. 
Let $\delta:Y\to Y\times_SY$ be the diagonal morphism.
Consider the following conditions:
\begin{itemize}
\item[(C1)] $\mathcal K_{Y/S}=g^!\Lambda$ is locally constant and that $\mathcal K_{Y/S}\otimes^L \delta^!\Lambda\simeq \Lambda$ (cf. \eqref{eq:KYSLambda}). 
For example, this condition  holds if $g$ is a smooth morphism.
\item[(C2)] The closed subscheme $Z\subseteq X$ satisfies $H^0(Z,\mathcal K_{Z/{Y}})=0$ and $H^1(Z,\mathcal K_{Z/{Y}})=0$.
\item[(C3)]Let $\mathcal F\in D_{\rm ctf}(X,\Lambda)$ such that 
$X\setminus Z\to Y$ is universally locally acyclic  relatively to $\mathcal F|_{X\setminus Z}$ and that $h:X\to S$  is universally locally acyclic  relatively to $\mathcal F$.
\end{itemize}
Let us clarify our strategy to define the non-acyclicity classes in the following:
\begin{itemize}
\item Using (C1), we  have   a distinguished triangle
$\mathcal K_{X/{Y}}\to \mathcal K_{X/S}\to \mathcal K_{X/Y/S}\xrightarrow{+1}$ (cf. \eqref{eq:distriK}).
\item Under (C1) and (C3), we construct the non-acyclicity class  $\widetilde{C}_{X/Y/S}^{Z}(\mathcal F)\in H^0_Z(X,\mathcal K_{X/Y/{S}})$.

\item When (C2) holds, we  show that
$ H^0(Z,\mathcal K_{Z/{S}})
=H^0_Z(X,\mathcal K_{X/{S}})\to H^0_Z(X,\mathcal K_{X/Y/{S}})$ is an isomorphism. Thus under the three  conditions (C1)-(C3), the class $\widetilde{C}_{X/Y/S}^{Z}(\mathcal F)$ defines the cohomology class $C_{X/Y/S}^{Z}(\mathcal F)\in H^0_Z(X,\mathcal K_{X/S})$. 
\end{itemize}
In the rest of this section, we always assume that the conditions (C1) and (C3) are satisfied.
\subsection{}
Let $i: X\times_Y X\to X\times_S X$ be the base change of the diagonal morphism $\delta:Y\to Y\times_SY$:
\begin{align}\label{eq:propchangebasehomo:1}
	\begin{gathered}
		\xymatrix{
			X\ar@/_2pc/[dd]_-{f}\ar@{_(->}[d]_-{\delta_1}\ar@{}[rd]|\Box\ar@{=}[r]&X\ar@{_(->}[d]^-{\delta_0}\\
			X\times_Y X\ar[r]^-{i}\ar[d]_-p \ar[r] & X\times_S X  \ar[d]^-{f\times f} \\
			Y\ar[r]^-\delta&Y\times_SY,\ar@{}|\Box[ul]
		}
	\end{gathered}
\end{align}
where $ \delta_0$ and $\delta_1$ are the diagonal morphisms. 
Applying \eqref{eq:Dchf} to $\mathcal K_{X/S}$ and $\delta_{0\ast}\mathcal K_{X/S}$, we get an isomorphism between distinguished triangles
\begin{align}\label{eq:defDistrik1}
	\begin{gathered}
		\xymatrix{
			\mathcal K_{X/{S}}\otimes^L f^\ast\delta^!\Lambda\ar[r]& \mathcal K_{X/S}\ar[r]& \delta^\Delta\mathcal K_{X/S}\ar[r]^-{+1}&\\
			\delta_1^\ast (i^\ast\delta_{0\ast}\mathcal K_{X/{S}}\otimes p^\ast \delta^!\Lambda)\ar[r]\ar[u]_-{\simeq}&
			\delta_1^\ast i^!\delta_{0\ast}\mathcal K_{X/{S}}\ar[r]\ar[u]_-{\simeq}&\delta_1^\ast \delta^\Delta\delta_{0\ast}\mathcal K_{X/{S}}\ar[r]^-{+1}\ar[u]_-{\simeq}&,	
		}
	\end{gathered}
\end{align}
where the commutativity follows from \eqref{eq:pullbackCF}.
We put
\begin{align}\label{eq:defineKXYS}
	\mathcal K_{X/Y/S}:=\delta^\Delta\mathcal K_{X/S}\simeq \delta_1^\ast \delta^\Delta\delta_{0\ast}\mathcal K_{X/{S}}.
\end{align}
By Proposition \ref{prop:Gysim}.(2), we have an isomorphism
\begin{align}\label{eq:defDistrik2}
	\begin{split}
		\mathcal K_{X/Y}&\xleftarrow{\simeq}\mathcal K_{X/S}\otimes^L f^\ast \delta^!\Lambda.
	\end{split}
\end{align}
Then we  rewrite the first row of \eqref{eq:defDistrik1} as the following distinguished triangle
\begin{align}\label{eq:distriK}
	\mathcal K_{X/{Y}}\to \mathcal K_{X/S}\to \mathcal K_{X/Y/S}\xrightarrow{+1}.
\end{align}
It induces an exact sequence
\begin{align}
	H_Z^0(X,\mathcal K_{X/{Y}})\to H_Z^0(X,\mathcal K_{X/{S}})
	\to H^0_Z(X,\mathcal K_{X/Y/S})\to H^1_Z(X,\mathcal K_{X/{Y}}).
\end{align}
If  the condition (C2) in \ref{subsec:notationforXYS} holds,  then the  map
\begin{align}\label{eq:isomOnHZassC2}
	H^0(Z,\mathcal K_{Z/{S}})=H_Z^0(X,\mathcal K_{X/{S}})
	\xrightarrow{\simeq} H^0_Z(X,\mathcal K_{X/Y/S})
\end{align}
is an isomorphism.
\begin{remark}\label{remark:expFordeltaDelta}
	By \eqref{eq:DefineDeltaDelta}, $\mathcal K_{X/Y/S}\simeq \mathcal K_{X/S}\otimes^L \delta_0^\ast (f\times f)^\ast j_{Y\ast}\Lambda$,
	where $j_Y: Y\times_SY\setminus \delta(Y)\to Y\times_SY$ is the open immersion.
\end{remark}

\subsection{}\label{subsec:discussDeltaSupport}
Since $i^\ast(\mathcal F\boxtimes_{S}^LD_{X/S}(\mathcal F))\otimes p^\ast \delta^!\Lambda\simeq (\mathcal F\boxtimes_{Y}^L D_{X/S}(\mathcal F))\otimes p^\ast \delta^!\Lambda\overset{\eqref{lem:eq:Gysim2}}{\simeq} \mathcal F\boxtimes^L_Y D_{X/Y}(\mathcal F)$, 
we have the following distinguished triangles by applying \eqref{eq:Dchf}
to $\mathcal F\boxtimes^L_S D_{X/S}(\mathcal F)$:
\begin{align}\label{eq:disOnBox}
\mathcal F\boxtimes^L_Y D_{X/Y}(\mathcal F) &\to i^!(\mathcal F\boxtimes_{S}^LD_{X/S}(\mathcal F))\to\delta^\Delta(\mathcal F\boxtimes_{S}^LD_{X/S}(\mathcal F))\xrightarrow{+1},\\
\label{eq:disOnBox2}\mathcal F\otimes^L D_{X/Y}(\mathcal F) &\to\delta_1^\ast i^!(\mathcal F\boxtimes_{S}^LD_{X/S}(\mathcal F))\to\delta_1^\ast \delta^\Delta(\mathcal F\boxtimes_{S}^LD_{X/S}(\mathcal F))\xrightarrow{+1}.
\end{align}
Since $X\setminus Z\to Y$ is universally locally acyclic  relatively to $\mathcal F$, the morphism $\mathcal F\otimes^L D_{X/Y}(\mathcal F) \to\delta_1^\ast i^!(\mathcal F\boxtimes_{S}^LD_{X/S}(\mathcal F))$ is an isomorphism on $X\setminus Z$ by Proposition \ref{prop:Gysim}.(3).
Thus $\delta_1^\ast \delta^\Delta(\mathcal F\boxtimes_{S}^LD_{X/{S}}(\mathcal F))$
is supported on $Z$.

\subsection{}\label{subsec:constructNAclass}
Recall that the relative characteristic class $C_{X/{S}}(\mathcal F)$
is defined by the composition (cf. \eqref{eq:traceEquiforcharclass})
\begin{align}\label{eq:defloc0}
\delta_{0\ast}\Lambda=\delta_{0!}\Lambda\to \mathcal F\boxtimes_{S}^LD_{X/S}(\mathcal F)\to \delta_{0\ast}\mathcal K_{X/{S}}.
\end{align}
It induces the following morphisms
\begin{align}\label{eq:defloc1}
\Lambda=\delta_1^\ast\delta_{1\ast}\Lambda\overset{\rm b.c}{\simeq}\delta_1^\ast i^!\delta_{0\ast}\Lambda\xrightarrow{\eqref{eq:Dchf}}\delta_1^\ast \delta^\Delta \delta_{0\ast}\Lambda\to \delta_1^\ast \delta^\Delta(\mathcal F\boxtimes_{S}^LD_{X/S}(\mathcal F))\to \delta_1^\ast \delta^\Delta\delta_{0\ast}\mathcal K_{X/{S}}.
\end{align}

Note that $\tau: Z\hookrightarrow X$ denotes the closed immersion.
Since the complex $\delta_1^\ast \delta^\Delta(\mathcal F\boxtimes_{S}^LD_{X/S}(\mathcal F))\simeq \tau_\ast\tau^\ast \delta_1^\ast \delta^\Delta(\mathcal F\boxtimes_{S}^LD_{X/S}(\mathcal F))$ is supported on $Z$, 
hence \eqref{eq:defloc1} defines a cohomology class 
\begin{align}
	\tau^\ast\Lambda\to \tau^\ast \delta_1^\ast \delta^\Delta(\mathcal F\boxtimes_{S}^LD_{X/S}(\mathcal F))\to \tau^!(\delta_1^\ast \delta^\Delta\delta_{0\ast}\mathcal K_{X/{S}}),
\end{align}
which we denote by
\begin{align}\label{eq:defEqOfCZXYS}
	\widetilde{C}_{X/Y/S}^{Z}(\mathcal F)\in H_Z^0(X,\delta_1^\ast \delta^\Delta\delta_{0\ast}\mathcal K_{X/{S}})=H_Z^0(X,\mathcal K_{X/Y/S}).
\end{align}
If  the condition (C2) in \ref{subsec:notationforXYS} holds,  then 
$ H^0_Z(X,\mathcal K_{X/{S}})
\xrightarrow{\eqref{eq:isomOnHZassC2}} H^0_Z(X,\mathcal K_{X/Y/{S}})$ is an isomorphism. In this case, the class 
$\widetilde{C}_{X/Y/S}^{Z}(\mathcal F)\in H_Z^0(X,\mathcal K_{X/Y/{S}})$
defines an element of $H^0_Z(X,\mathcal K_{X/{S}})$, which is  denoted by
$C_{X/Y/S}^{Z}(\mathcal F)$.
\begin{definition}\label{def:milclass}
Under the notation and conditions (C1) and (C3) in \ref{subsec:notationforXYS}.
We define  the 
{\it non-acyclicity class} of $\mathcal F$ to be the class
$\widetilde{C}_{X/Y/S}^{Z}(\mathcal F)\in H_Z^0(X,\mathcal K_{X/Y/{S}})$.
If moreover the condition (C2) holds, we also call
$C_{X/Y/S}^{Z}(\mathcal F)=\widetilde{C}_{X/Y/S}^{Z}(\mathcal F)\in H^0_Z(X,\mathcal K_{X/S})$ the {\it non-acyclicity class} of $\mathcal F$.
\end{definition}

\begin{remark}
	Under the conditions (C1) and (C3) in \ref{subsec:notationforXYS}.
	If $Z'\subseteq X$ is another closed subscheme such that $Z\subseteq Z'$, then $\widetilde{C}^{Z'}_{X/Y/S}(\mathcal F)$ equals the image of
	$\widetilde{C}^Z_{X/Y/S}(\mathcal F)$ under the canonical map $H_Z^0(X,\mathcal K_{X/Y/{S}})\to H_{Z'}^0(X,\mathcal K_{X/Y/{S}})$.
\end{remark}

\subsection{}\label{subsec:PBPX}
Now we turn to study functorial properties of the non-acyclicity classes. We first consider the pull-back property.
Consider the notation in \ref{subsec:notationforXYS} and assume the conditions (C1) and (C3) hold.
Let $b: S'\to S$ be a morphism of Noetherian schemes. Let $U=X\setminus Z$. We denote by $X':=X\times_SS'$ the base change of $X$. Similar to define $\mathcal F'$, $Y'$, $Z'$, $U'$, $f'$, $g'$, $h'$ and $\tau':Z'\to X'$. Let $b_X$ and $b_Z$ be the base change of $b$ by $X\to S$ and $Z\to S$ respectively.
We form the following commutative diagrams
\begin{align}\label{bcdiag12}\small
	\begin{gathered}
		\xymatrix@C=1.7em@R=1em{
			&&&&X'\ar[rr]|(0.3){b_X}\ar@{_(->}[dd]|(0.3){\delta'_1}\ar@{=}[dr]&&X\ar@{_(-->}[dd]|(0.3){\delta_1}\ar@{=}[dr]\\
			Z'\ar[rr]|{b_Z}\ar@{_(->}[d]_-{\tau'}&&Z\ar@{_(->}[d]_-{\tau}&&&X'\ar[rr]|(0.3){b_X}\ar[dd]|(0.3){\delta'_0}&&X\ar[dd]|(0.3){\delta_0}\\
			X'\ar[rr]|{b_X}\ar[dd]_(0.3){h'}\ar[dr]|{f'}&&X\ar[dd]_(.3)h\ar[dr]|f&&X'\times_{Y'}X'\ar@{-->}[rr]\ar[dr]|{i'}\ar[dd]|(0.3){p'}&&X\times_YX\ar@{-->}[dr]|{i}\ar@{-->}[dd]|(0.3){p}&\\
			&Y'\ar[rr]|(0.3){b_Y}\ar[ld]|{g'}&&Y\ar[ld]|{g}&&X'\times_{S'}X'\ar[rr]\ar[dd]|(0.3){f'\times f'}&&X\times_SX\ar[dd]|(0.3){f\times f}\\
			S'\ar[rr]|b&&S&&Y'\ar@{-->}[rr]|(0.3){b_Y}\ar[rd]|(0.3){\delta'}&&Y\ar@{-->}[rd]|(0.3){\delta}&\\
			&&&&&Y'\times_{S'}Y'\ar[rr]&&Y\times_SY,
		}
	\end{gathered}
\end{align}
where squares and parallelograms are cartesian diagrams (cf. \eqref{eq:propchangebasehomo:1}).
By Remark \ref{remark:expFordeltaDelta}, the base change morphism \eqref{eq:3pullbackK} induces a canonical morphism
\begin{align}\label{eq:BCforKXYS}
	b_{X}^\ast\mathcal K_{X/Y/S}\to \mathcal K_{X'/Y'/S'}.
\end{align}
Therefore we have a commutative diagram between distinguished triangles (cf. \eqref{eq:distriK}): 
\begin{align}\label{eq:pf:pullbackKDeltaForNTclass5}
	\begin{gathered}
		\xymatrix@R=0.6cm{
			b_{X}^\ast\mathcal K_{X/{Y}}\ar[r]\ar[d]_-{{\eqref{eq:3pullbackK}}}& b_{X}^\ast\mathcal K_{X/S}\ar[r]\ar[d]_-{{\eqref{eq:3pullbackK}}}& b_{X}^\ast\mathcal K_{X/Y/S}\ar[r]^-{+1}\ar[d]^-{\eqref{eq:BCforKXYS}}&\\
			\mathcal K_{X'/{Y'}}\ar[r]& \mathcal K_{X'/S'}\ar[r]& \mathcal K_{X'/Y'/S'}\ar[r]^-{+1}&.
		}
	\end{gathered}
\end{align}
\subsection{}\label{sub:assOnBC}
In the following, we always assume that the following condition $(C'1)$ holds:
\begin{itemize}
	\item[($C'1$)] $\mathcal K_{Y'/S'}=g'^!\Lambda$ is locally constant and that $\mathcal K_{Y'/S'}\otimes^L \delta'^!\Lambda\simeq \Lambda$.
\end{itemize}
Then the class $\widetilde{C}_{X'/Y'/S'}^{Z'}(\mathcal F')$ is well-defined. 
If moreover the following condition $(C'2)$  holds, then the class $C_{X'/Y'/S'}^{Z'}(\mathcal F')$ is well-defined.
\begin{itemize}
	\item[($C'2$)]  The closed subscheme $Z'\subseteq X'$ satisfies $H^0(Z',\mathcal K_{Z'/{Y'}})=0$ and $H^1(Z',\mathcal K_{Z'/{Y'}})=0$.
\end{itemize}

\subsection{} 
Applying $\delta_1^\ast i^\ast (-)\otimes^L f^\ast \delta^!\Lambda\to \delta_1^\ast i^!(-)\to \delta_1^\ast \delta^\Delta(-)\xrightarrow{+1}$ to \eqref{eq:defloc0}, we get a commutative diagram 
\begin{align}\label{bcdiag3}
	\begin{gathered}
		\xymatrix@R=0.6cm{
			f^\ast\delta^!\Lambda \ar[r]\ar[d]&\delta_1^\ast(\mathcal F\boxtimes^L_YD_{X/Y}(\mathcal F))\ar[d]\ar[r]&\mathcal K_{X/Y}\ar[d]\\
			\Lambda\ar[r]\ar[d]&\delta_1^\ast i^!(\mathcal F\boxtimes^L_SD_{X/S}(\mathcal F))\ar[d]\ar[r]\ar@{}[rd]|{(a)}&\mathcal K_{X/S}\ar[d]\\
			\delta_1^\ast \delta^\Delta \delta_{0\ast}\Lambda\ar[r]\ar[d]^-{+1}& \delta_1^\ast \delta^\Delta(\mathcal F\boxtimes_{S}^LD_{X/S}(\mathcal F))\ar[r]\ar[d]^-{+1}& \mathcal K_{X/Y/{S}},\ar[d]^-{+1}\\
			&&&
		}
	\end{gathered}
\end{align}
where columns are distinguished triangles. 
Note that the diagram $(a)$ in \eqref{bcdiag3} admits a factorization
\begin{align}\label{bcdiag3-factorization}
	\begin{gathered}
		\xymatrix@R=0.6cm{
			\delta_1^\ast i^!(\mathcal F\boxtimes^L_SD_{X/S}(\mathcal F))\ar[r]^-{\rm b.c}\ar[d]&\delta_0^\ast (\mathcal F\boxtimes^L_SD_{X/S}(\mathcal F))\ar[r]^-{v_{\mathcal F}}\ar[d]&\mathcal K_{X/S}\ar[d]\\
			\delta_1^\ast \delta^\Delta(\mathcal F\boxtimes_{S}^LD_{X/S}(\mathcal F))\ar[r]^-{\rm b.c}&\delta^\Delta \delta_0^\ast (\mathcal F\boxtimes^L_SD_{X/S}(\mathcal F))\ar[r]^-{v_{\mathcal F}}& \delta^\Delta\mathcal K_{X/S}\ar@{=}[r]&\mathcal K_{X/Y/{S}}.\\
		}
	\end{gathered}
\end{align}
We have a similar diagram after taking base change. 
By Proposition \ref{cor:pullbackccc}, \eqref{bcdiag3-factorization} and \eqref{eq:pf:pullbackKDeltaForNTclass5}, there is a commutative diagram 
\begin{align}\label{bcdiag4}\small
	\begin{gathered}
		\xymatrix@R=0.6cm@C=0.2em{
			&\Lambda\ar[rr]&&\delta_1^{\prime\ast} i^{\prime!}(\mathcal F'\boxtimes^L_{S'}D_{X'/S'}(\mathcal F'))\ar[dd]\ar[rr]&&\mathcal K_{X'/S'}\ar[dd]\\
			\qquad b_{X}^\ast\Lambda=\Lambda \ar[rr]\ar[ur]&&b_{X}^\ast\delta_1^\ast i^!(\mathcal F\boxtimes^L_SD_{X/S}(\mathcal F))\ar[dd]\ar[rr]&&b_{X}^\ast\mathcal K_{X/S}\ar[dd]\ar[ur]&\\
			&&& \delta_1^{\prime\ast} \delta^{\prime\Delta}(\mathcal F'\boxtimes^L_{S'}D_{X'/S'}(\mathcal F'))\ar[rr]&& \mathcal K_{X'/Y'/{S'}}\\
			&& b_{X}^\ast\delta_1^\ast \delta^\Delta(\mathcal F\boxtimes^L_SD_{X/S}(\mathcal F))\ar[rr]&& b_{X}^\ast\mathcal K_{X/Y/{S}}.\ar[ur]&\\
		}
	\end{gathered}
\end{align}
Consider the pull-back morphisms 
\[b_{X}^\ast: H^{0}_{Z}(X,\mathcal K_{X/Y/S})\to H^{0}_{Z'}(X',\mathcal K_{X'/Y'/S'})\quad{\rm and}\quad b_{X}^\ast: H^{0}_{Z}(X,\mathcal K_{X/S})\to H^{0}_{Z'}(X',\mathcal K_{X'/S'}). 
\]
The above commutative diagram implies  the following result:
\begin{proposition}\label{prop:LocBC}
	Under the notation in \ref{subsec:PBPX}, and the condition $(C'1)$ in \ref{sub:assOnBC}, we have
	\begin{align}\label{eq:prop:LocBC1-1}
		b_{X}^\ast \widetilde{C}_{X/Y/S}^{Z}(\mathcal F)=\widetilde{C}_{X'/Y'/S'}^{Z'}(\mathcal F')\qquad {\rm in}\qquad H^{0}_{Z'}(X',\mathcal K_{X'/Y'/S'}).
	\end{align}
	If moreover the conditions (C2) and $(C'2)$ hold, then we have
	\begin{align}\label{eq:prop:LocBC1}
		b_{X}^\ast C_{X/Y/S}^{Z}(\mathcal F)=C_{X'/Y'/S'}^{Z'}(\mathcal F')\qquad {\rm in}\qquad H^{0}_{Z'}(X',\mathcal K_{X'/S'}).
	\end{align}
\end{proposition}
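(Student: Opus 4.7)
The plan is to unpack the three-step construction of $\widetilde{C}_{X/Y/S}^{Z}(\mathcal F)$ from Section \ref{subsec:constructNAclass} and verify that each ingredient is compatible with the base-change functor $b_{X}^\ast$. Recall that $\widetilde{C}_{X/Y/S}^{Z}(\mathcal F)$ is obtained by composing (i) the unit-like map $\Lambda\simeq \delta_1^\ast i^!\delta_{0\ast}\Lambda \to \delta_1^\ast \delta^\Delta \delta_{0\ast}\Lambda$ of \eqref{eq:defloc2}, (ii) the cohomological correspondence $\delta_{0\ast}\Lambda\to \mathcal F\boxtimes_{S}^L D_{X/S}(\mathcal F)$ determined by $\mathrm{id}_\mathcal F$, and (iii) the evaluation map $\mathcal F\boxtimes_S^L D_{X/S}(\mathcal F)\to \delta_{0\ast}\mathcal K_{X/S}$, the latter two composed with $\delta_1^\ast \delta^\Delta$.

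First I would establish that $\delta^\Delta$ is compatible with base change. Using Remark \ref{remark:expFordeltaDelta}, which identifies $\delta^\Delta(-) = i^!\bigl((-)\otimes^L (f\times f)^\ast j_{Y\ast}\Lambda\bigr)$, combined with the base-change map \eqref{eq:3pullbackK}, the proper base-change along the open immersion $j_Y$, and functoriality of $\otimes^L$, one obtains a natural transformation from $b_X^\ast \delta_1^\ast\delta^\Delta(-)$ to $\delta_1^{\prime\ast}\delta^{\prime\Delta}$ applied to the pulled-back object. In particular this yields the morphism \eqref{eq:BCforKXYS} and the commutative diagram \eqref{eq:pf:pullbackKDeltaForNTclass5} of distinguished triangles. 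Secondly, for step (ii) and (iii), I would reuse the argument sketched in Proposition \ref{cor:pullbackccc}: its key input is the commutative diagram \eqref{eq:pullCproof}, which shows that both the cohomological correspondence $\mathrm{id}_\mathcal F$ and the evaluation map on $\mathcal F\boxtimes_S^L D_{X/S}(\mathcal F)$ are natural under $b_X^\ast$. Since $\delta^\Delta$ is defined as the cofiber of $i^\ast(-)\otimes^L p^\ast\delta^!\Lambda \to i^!(-)$, the same naturality lifts to the $\delta_1^\ast\delta^\Delta$-version via the functoriality of cofibers.

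Assembling these compatibilities yields precisely the commutative cube \eqref{bcdiag4} displayed above; its outer rectangle identifies the class obtained by pulling back $\widetilde{C}_{X/Y/S}^{Z}(\mathcal F)$ (read along the back face) with the class $\widetilde{C}_{X'/Y'/S'}^{Z'}(\mathcal F')$ (read along the front face), establishing \eqref{eq:prop:LocBC1-1} in $H^0_{Z'}(X',\mathcal K_{X'/Y'/S'})$. For the second claim, assume conditions (C2) and $(C'2)$. The isomorphisms \eqref{eq:isomOnHZassC2} on each side arise from the long exact sequences of the triangle \eqref{eq:distriK} and its primed analogue, and the commutative diagram \eqref{eq:pf:pullbackKDeltaForNTclass5} ensures that $b_X^\ast$ intertwines these two identifications. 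Hence the equality of lifted classes $b_X^\ast C_{X/Y/S}^{Z}(\mathcal F)=C_{X'/Y'/S'}^{Z'}(\mathcal F')$ in $H^0_{Z'}(X',\mathcal K_{X'/S'})$ follows from \eqref{eq:prop:LocBC1-1} by naturality.

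The main obstacle will be the careful handling of the non-invertible base-change morphism $b_X^\ast f^!\to f^{\prime!}b^\ast$ and its interaction with the K\"unneth-type map \eqref{eq:Kunethiso} and the transversality comparison $c_{\delta,f\times f,-}$ of \eqref{eq:defDistrik1-section3}: one must ensure that all the base-change 2-cells commute in the combined cube. These issues, however, are exactly those already resolved in the sketched proofs of Propositions \ref{prop:Gysim} and \ref{cor:pullbackccc}, and the same diagrammatic techniques (adjunction, projection formula, and the coherence statements of Lemmas \ref{lem:TransAforC} and \ref{lem:TransBforC}) apply here without new difficulty.
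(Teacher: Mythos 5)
Your proposal is correct and follows essentially the same route as the paper: the paper's argument is exactly the chain you describe — base-change compatibility of $\delta^\Delta$ via the explicit formula of Remark \ref{remark:expFordeltaDelta} giving \eqref{eq:BCforKXYS} and \eqref{eq:pf:pullbackKDeltaForNTclass5}, reuse of Proposition \ref{cor:pullbackccc} through the factorization \eqref{bcdiag3-factorization}, and assembly into the cube \eqref{bcdiag4}, with the second claim following from the compatibility of the identifications \eqref{eq:isomOnHZassC2}. No gaps.
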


\subsection{}\label{sub:notationForpushforwadNT}
Now we study the proper push-forward property of  non-acyclicity classes.
We consider a morphism between two such diagrams \eqref{xyd:prop:Gysim:fgh2}, which is depicted 
as the following left commutative diagram in ${\rm Sch}_S$:
\begin{align}\label{eq:ppForNTclass1}\small
	\begin{gathered}
		\xymatrix@C=2em{
			&&&&&X'\ar@{=}[rr]\ar@{_(->}[dd]|(0.3){\delta_1^{\prime}}&&X'\ar@{_(->}[dd]|(0.3){\delta_0^{\prime}}\\
			Z\ar@/^2pc/[rrr]^-{s_Z}\ar@{^(->}[r]^-{\tau_0}\ar@{_(->}[rd]_-\tau&s^{-1}(Z')\ar@{_(->}[d]_-{\tau_1}\ar[rr]^-{s}\ar@{}[rrd]|\Box&&Z'\ar@{_(->}[d]_-{\tau'}&X\ar@/_2pc/[ddd]_-f\ar@{=}[rr]\ar[ur]^-s\ar[dd]|(0.3){\delta_1}&&X\ar[ur]_-s\ar[dd]|(0.3){\delta_0}\\
			&X\ar[rr]^-s\ar[rd]^f\ar[rdd]_-h&&X'\ar[ld]_{f'}\ar[ldd]^-{h'}&&X'\times_YX'\ar[ddl]^(0.2){p'}\ar[rr]_(0.3){i'}&&X'\times_SX'\ar[ddl]^(0.3){p}\\
			&&Y\ar[d]^-(0.2)g&&X\times_YX\ar[ur]|{s\times s}\ar[d]|p\ar[rr]_-i&&X\times_SX\ar[d]|{f\times f}\ar[ur]|{s\times s}\\
			&&S&&Y\ar[rr]^-\delta&&Y\times_SY,
		}
	\end{gathered}
\end{align}
where $\tau$, $\tau'$, $\tau_0$ and $\tau_1$ are closed immersions.
Consider the notation in the diagram \eqref{eq:propchangebasehomo:1}. We put the prime symbol $'$ on the corresponding diagram for $Z'/X'/Y/S$. Now
the left diagram \eqref{eq:ppForNTclass1} induces the right one.
Assume that the conditions (C1) and (C3) in \ref{subsec:notationforXYS} hold and that $s:X\to X'$ is proper.
Then the morphism $X'\setminus Z'\to Y$ is also universally locally acyclic relatively to $s_\ast\mathcal F|_{X'\setminus Z'}$ by \cite[Theorem 2.16 and Proposition 2.23]{LZ22}. Now the non-acyclicity classes 
\begin{align}
	\widetilde{C}^{Z}_{X/Y/S}(\mathcal F)\in H_Z^0(X,\mathcal K_{X/Y/S})\quad{\rm and}\quad \widetilde{C}^{Z'}_{X'/Y/S}(s_\ast\mathcal F)\in H_{Z'}^0(X',\mathcal K_{X'/Y/S})
\end{align} 
are well-defined.

\subsection{} We define proper push-forward maps
\begin{align}\label{eq:ppForNTclass2}
	&s_\ast: H^0_Z(X,\mathcal K_{X/S})\to H^0_{Z'}(X',\mathcal K_{X'/S}),\\
	\label{eq:ppForNTclass2-2}
	&s_\ast:  H^0_Z(X,\mathcal K_{X/Y/S})\to H^0_{Z'}(X',\mathcal K_{X'/Y/S}). 
\end{align}
First, we have a morphism
\begin{align}\label{eq:ppForNTclass3}
	s_\ast\mathcal K_{X/Y}=s_!\mathcal K_{X/Y}\to \mathcal K_{X'/Y}=f^{\prime!}\Lambda,
\end{align}
which is defined to be the adjunction of the  composition
$f'_!s_!f^!\Lambda\simeq f_!f^!\Lambda\xrightarrow{{\rm adj}} \Lambda$.
Similarly, there is a canonical morphism 
\begin{align}\label{eq:ppForNTclass4}
	s_\ast\mathcal K_{X/S}\to \mathcal K_{X'/S}.
\end{align}
Therefore the composition
\begin{align}\label{eq:ppForNTclass5}
	\tau^\prime_!s_{Z\ast}\tau^!\mathcal K_{X/S}\simeq s_\ast\tau_!\tau^!\mathcal K_{X/S}\xrightarrow{\rm adj}s_\ast\mathcal K_{X/S}\xrightarrow{\eqref{eq:ppForNTclass4}} \mathcal K_{X'/S}
\end{align}
induces a morphism 
$s_{Z\ast}\tau^!\mathcal K_{X/S}\to \tau^{\prime!}\mathcal K_{X'/S}$,
which gives the required morphism \eqref{eq:ppForNTclass2}. By Remark \ref{remark:expFordeltaDelta}, the morphism \eqref{eq:ppForNTclass3} induces a canonical morphism
\begin{align}\label{eq:ppForNTclass6-7}
	s_{\ast}\mathcal K_{X/Y/S}\to \mathcal K_{X'/Y/S}.
\end{align}
It further induces the required morphism \eqref{eq:ppForNTclass2-2}.
Consider the distinguished triangle \eqref{eq:distriK}. We obtain a commutative diagram
\begin{align}\label{eq:pf:ppForNTclass5}
	\begin{gathered}
		\xymatrix@R=0.6cm{
			s_\ast\mathcal K_{X/{Y}}\ar[r]\ar[d]_-{{\eqref{eq:ppForNTclass3}}}& s_\ast\mathcal K_{X/S}\ar[r]\ar[d]_-{{\eqref{eq:ppForNTclass4}}}& s_\ast \mathcal K_{X/Y/S}\ar[r]^-{+1}\ar[d]^-{\eqref{eq:ppForNTclass6-7}}&\\
			\mathcal K_{X'/{Y}}\ar[r]& \mathcal K_{X'/S}\ar[r]& \mathcal K_{X'/Y/S}\ar[r]^-{+1}&.
		}
	\end{gathered}
\end{align}
\begin{proposition}\label{thm:ppForNTclass}
	Under the notation and assumptions in \ref{sub:notationForpushforwadNT},
	we have
	\begin{align}\label{eq:thm:ppForNTclass3}
		s_\ast (\widetilde{C}^{Z}_{X/Y/S}(\mathcal F))=\widetilde{C}^{Z'}_{X'/Y/S}(s_\ast\mathcal F)\quad{\rm in}\quad H^0_{Z'}(X',\mathcal K_{X'/Y/S}).
	\end{align}
	If moreover $H_{Z'}^0(X',\mathcal K_{X'/{Y'}})=H_{Z'}^1(X',\mathcal K_{X'/{Y'}})=0$, then   
	\begin{align}\label{eq:thm:ppForNTclass3-3}
		s_\ast (\widetilde{C}^{Z}_{X/Y/S}(\mathcal F))=C^{Z'}_{X'/Y/S}(s_\ast\mathcal F)\quad{\rm in}\quad H^0_{Z'}(X',\mathcal K_{X'/S}).
	\end{align}
\end{proposition}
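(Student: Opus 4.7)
The plan is to adapt the categorical proof of Proposition~\ref{prop:pushccc} (the proper pushforward property for the relative characteristic class) to the setting with supports and the auxiliary functor $\delta^\Delta$. The essential geometric input is that the square
\[
\xymatrix{
X \times_Y X \ar[r]^-i \ar[d]_-{s \times s} & X \times_S X \ar[d]^-{s \times s} \\
X' \times_Y X' \ar[r]^-{i'} & X' \times_S X'
}
\]
is cartesian (both horizontal maps are base changes of $\delta: Y \to Y\times_S Y$ along $f \times f$, respectively $f' \times f'$), while the diagonals $\delta_0, \delta_0'$ (resp.\ $\delta_1, \delta_1'$) together with $s$ fit into diagrams of the shape \eqref{eq:pushCproofhelp1}. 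Note that $s \times s$ is proper, so proper base change applies and provides a natural isomorphism $(s \times s)_\ast i^{\prime !} \simeq i^!(s \times s)_\ast$ on the cartesian square above.

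First I would apply the two-variable functor $\delta_1^\ast \delta^\Delta = \delta_1^\ast i^!(- \otimes^L (f \times f)^\ast j_{Y\ast}\Lambda)$ to the commutative diagram \eqref{eq:pushCproof} in the sketched proof of Proposition~\ref{prop:pushccc}, and similarly $\delta_1^{\prime\ast}\delta^\Delta$ to the analogous diagram for $X'$. The three horizontal composites become, from top to bottom: $\delta_1^{\prime \ast}\delta^\Delta \delta'_{0 !}\Lambda \to \delta_1^{\prime \ast}\delta^\Delta(s_\ast\mathcal F \boxtimes^L_S D_{X'/S}(s_\ast\mathcal F))\to \delta_1^{\prime \ast}\delta^\Delta \delta'_{0 \ast}\mathcal K_{X'/S}$ (producing $\widetilde C_{X'/Y/S}^{Z'}(s_\ast\mathcal F)$ after pulling back to $Z'$), and $s_\ast$ of the analogue for $X$ (producing $s_\ast \widetilde C_{X/Y/S}^Z(\mathcal F)$ once we use the proper base change above together with the support on $s(Z) \subseteq Z'$, cf.~\ref{subsec:discussDeltaSupport}). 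The connecting maps between the two rows are built from the map $s_\ast \delta_0^\ast \to \delta_0^{\prime \ast}(s\times s)_\ast$ of \eqref{eq:pushCproofhelp2}, the Künneth/duality map $(s \times s)_\ast(\mathcal F \boxtimes_S^L D_{X/S}(\mathcal F)) \to s_\ast\mathcal F \boxtimes_S^L D_{X'/S}(s_\ast\mathcal F)$, and the morphism $s_\ast \mathcal K_{X/S} \to \mathcal K_{X'/S}$ from \eqref{eq:ppForNTclass4}, all transported through $i^!$ and $\delta_1^\ast$.

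The hard part will be verifying the coherence of these base-change arrows, in particular that the natural transformation $s_\ast \delta^\Delta(\mathcal G) \to \delta^\Delta((s \times s)_\ast \mathcal G)$ (obtained by combining the distinguished triangle~\eqref{eq:Dchf}, the projection formula $(s \times s)_\ast(- \otimes^L (f\times f)^\ast j_{Y\ast}\Lambda) \simeq (s \times s)_\ast(-) \otimes^L (f'\times f')^\ast j_{Y\ast}\Lambda$, and proper base change $(s\times s)_\ast i^{\prime !} \simeq i^!(s\times s)_\ast$) is compatible with the defining map $c_{\delta, f\times f, -}$ on the $i^\ast \otimes p^\ast \delta^!$ side. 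This is where Lemma~\ref{lem:TransBforC}(3) intervenes: it gives exactly the compatibility of $c_{\delta, f\times f, -}$ with proper pushforward, which upgrades to a map of the octahedra computing $\delta^\Delta$. Granting this, the comparison diagram is commutative, and evaluating its composition at $\mathrm{id}_\mathcal F$ (identified with $\mathrm{id}_{s_\ast \mathcal F}$ via the adjunction $s_! \dashv s^!$ and the trace formalism of \ref{subsec:equiDefOfChar}) yields the identity \eqref{eq:thm:ppForNTclass3}.

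Finally, the particular case \eqref{eq:thm:ppForNTclass3-3} is immediate: the vanishing hypothesis $H^0(Z', \mathcal K_{Z'/Y}) = H^1(Z', \mathcal K_{Z'/Y}) = 0$ provides, via Lemma~\ref{lem:rcccCodBig} applied to $Z' \subseteq X'$ over $Y$, the isomorphism $H_{Z'}^0(X', \mathcal K_{X'/S}) \xrightarrow{\simeq} H_{Z'}^0(X', \mathcal K_{X'/Y/S})$ analogous to \eqref{eq:isomOnHZassC2}, which identifies $\widetilde C_{X'/Y/S}^{Z'}(s_\ast \mathcal F)$ with $C_{X'/Y/S}^{Z'}(s_\ast \mathcal F)$, and the pushforward $s_\ast$ of \eqref{eq:ppForNTclass2-2} with that of \eqref{eq:ppForNTclass2} through the compatibility square \eqref{eq:pf:ppForNTclass5}.
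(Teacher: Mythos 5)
Your proposal follows essentially the same route as the paper: the paper also reduces to the proper push-forward property of the relative characteristic class (Proposition \ref{prop:pushccc}), the factorization \eqref{bcdiag3-factorization}, and the compatibility \eqref{eq:pf:ppForNTclass5} of $s_\ast$ with the triangle $\mathcal K_{X/Y}\to\mathcal K_{X/S}\to\mathcal K_{X/Y/S}$, assembling these into the commutative cuboid \eqref{eq:pf:ppForNTclass6} whose outer composite identifies $s_\ast\widetilde{C}^{Z}_{X/Y/S}(\mathcal F)$ with $\widetilde{C}^{Z'}_{X'/Y/S}(s_\ast\mathcal F)$; your invocation of Lemma \ref{lem:TransBforC}(3) and proper base change on the cartesian square relating $i$ and $i'$ just makes explicit the coherence that the paper leaves implicit (note only the order of functors in your base-change isomorphism, which should read $i'^!(s\times s)_\ast\simeq(s\times s)_\ast i^!$). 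The deduction of \eqref{eq:thm:ppForNTclass3-3} from the vanishing hypothesis is likewise exactly as in the paper.
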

\begin{proof}
	In the following, we put $\mathcal F'=s_\ast\mathcal F$.
	Consider the diagrams \eqref{bcdiag3}, \eqref{bcdiag3-factorization} and similar diagrams 
	for $X'/Y/S$.
	By Proposition \ref{prop:pushccc}, \eqref{bcdiag3-factorization} and \eqref{eq:pf:ppForNTclass5}, there is a commutative diagram 
	\begin{align}\label{eq:pf:ppForNTclass6}\small
		\begin{gathered}
			\xymatrix@R=0.4cm@C=1em{
				&\Lambda\ar[rr]\ar[dl]&&\delta_1^{\prime\ast} i^{\prime!}(\mathcal F'\boxtimes^L_{S}D_{X'/S}(\mathcal F'))\ar[dd]\ar[rr]&&\mathcal K_{X'/S}\ar[dd]\\
				 s_\ast\Lambda \ar[rr]&&s_\ast\delta_1^\ast i^!(\mathcal F\boxtimes^L_SD_{X/S}(\mathcal F))\ar[dd]\ar[rr]&&s_\ast\mathcal K_{X/S}\ar[dd]\ar[ur]&\\
				&&& \delta_1^{\prime\ast} \delta^{\Delta}(\mathcal F'\boxtimes^L_{S}D_{X'/S}(\mathcal F'))\ar[rr]&& \mathcal K_{X'/Y/{S}}\\
				&& s_\ast\delta_1^\ast \delta^\Delta(\mathcal F\boxtimes^L_SD_{X/S}(\mathcal F))\ar[rr]&& s_\ast \mathcal K_{X/Y/{S}}.\ar[ur]&\\
			}
		\end{gathered}
	\end{align}
	This proves the equality \eqref{eq:thm:ppForNTclass3}.
\end{proof}

The construction of non-acyclicity classes is \'etale local in the following sense.
\begin{proposition}\label{prop:invariantUnderEtale}
Consider a commutative  diagram in ${\rm Sch}_S$
	\begin{align}\label{eq:prop:invariantUnderEtale:1}
		\begin{gathered}
			\xymatrix{
				Z'\ar@{}[rd]|{\Box}\ar@{^(->}[r]^-\tau\ar[d]&X'\ar[rdd]^(0.3){h'}\ar[rr]^-{f'}\ar[d]_-{e_1}&&Y'\ar[d]^-{e_0}\ar[ldd]_-(0.3){g'}\\
				Z\ar@{^(->}[r]^-\tau&X\ar[rr]^-f\ar[rd]_-h&&Y,\ar[ld]^-{g}\\
				&&S&
			}
		\end{gathered}
	\end{align}
	where $e_1$ and $e_0$ are \'etale morphisms, $Z'=e_1^{-1}(Z)$, $h'=he_1$ and $g'=ge_0$.
	Let $\mathcal F\in D_{\rm ctf}(X,\Lambda)$ and $\mathcal F'=e_1^\ast\mathcal F$.
	Assume that the conditions (C1) and (C3)
	in \ref{subsec:notationforXYS} hold for  $g$ and $\mathcal F$ respectively.
	Then we have 
	\begin{align}\label{eq:prop:invariantUnderEtale:0-0}
		  e_1^\ast\widetilde{C}^{Z}_{X/Y/S}(\mathcal F)=\widetilde{C}^{Z'}_{X'/Y'/S}(\mathcal F')=\widetilde{C}^{Z'}_{X'/Y/S}(\mathcal F')\;{\rm in}\; H^0_{Z'}(X',\mathcal K_{X'/Y'/S})=H^0_{Z'}(X',\mathcal K_{X'/Y/S}),
	\end{align}
where $e_1^\ast:H^0_Z(X,\mathcal K_{X/Y/S})\to H^0_{Z'}(X',\mathcal K_{X'/Y'/S})=H^0_{Z'}(X',\mathcal K_{X'/Y/S})$ is the pull-back morphism.
	If moreover the condition (C2)  in \ref{subsec:notationforXYS} holds for $Z/X/Y/S$ and $Z'/X'/Y/S$, then we have
	\begin{align}\label{eq:prop:invariantUnderEtale:0}
		e_1^\ast C^{Z}_{X/Y/S}(\mathcal F)=C^{Z'}_{X'/Y'/S}(\mathcal F')=C^{Z'}_{X'/Y/S}(\mathcal F')\quad{\rm in}\quad H^0_{Z'}(X',\mathcal K_{X'/S}),
	\end{align}
where $e_1^\ast:H^0_Z(X,\mathcal K_{X/S})\to H^0_{Z'}(X',\mathcal K_{X'/S})$ is the pull-back morphism.
\end{proposition}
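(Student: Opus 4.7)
The plan is to prove both equalities by direct comparison with the defining construction of the non-acyclicity class, reducing each one to a naturality statement.

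For the second equality $\widetilde{C}^{Z'}_{X'/Y'/S}(\mathcal F')=\widetilde{C}^{Z'}_{X'/Y/S}(\mathcal F')$, I would first observe that since $e_0\colon Y'\to Y$ is étale, the canonical morphism $e_0^!\Lambda\to e_0^\ast\Lambda=\Lambda$ is an isomorphism, so
\begin{align*}
\mathcal K_{X'/Y}=(e_0f')^!\Lambda=f'^!e_0^!\Lambda\simeq f'^!\Lambda=\mathcal K_{X'/Y'}.
\end{align*}
This identifies the distinguished triangles \eqref{eq:distriK} for $X'/Y'/S$ and $X'/Y/S$, hence also the two cohomology groups. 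One then observes that the universal local acyclicity assumption (C3) for $X'\setminus Z'\to Y'$ is equivalent to that for $X'\setminus Z'\to Y$ (étale morphisms preserve ULA), and that the diagonal $Y'\hookrightarrow Y'\times_YY'$ is an open-and-closed immersion (since $e_0$ is étale and separated). Chasing through the composition defining \eqref{eq:defEqOfCZXYS} under these identifications shows that the two classes coincide.

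For the first equality $e_1^\ast\widetilde{C}^Z_{X/Y/S}(\mathcal F)=\widetilde{C}^{Z'}_{X'/Y'/S}(\mathcal F')$, I would mimic the proof of Proposition \ref{prop:LocBC}. The essential inputs are: (i) since $e_1$ is étale, $e_1^\ast\simeq e_1^!$ commutes with the six functors and in particular with the formation of $\mathcal F\boxtimes_S^LD_{X/S}(\mathcal F)$; (ii) Proposition \ref{cor:pullbackccc} applied to the étale morphism $e_1$ handles the compatibility of the relative characteristic class; (iii) the functor $\delta^\Delta$ is natural under étale pullback, which follows from the formula $\delta^\Delta(-)=i^!(-\otimes^L(f\times f)^\ast j_\ast\Lambda)$ of Remark \ref{remark:expFordeltaDelta} together with smooth base change (since $e_0\times e_0$ is étale), yielding $(e_1\times e_1)^\ast\delta^\Delta\mathcal G\simeq\delta'^\Delta(e_1\times e_1)^\ast\mathcal G$. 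Feeding these into the defining diagrams \eqref{bcdiag3}--\eqref{bcdiag3-factorization} produces the étale-pullback analog of \eqref{bcdiag4}, from which the first equality follows.

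The main obstacle is that the square formed by $(e_1,f,e_0,f')$ is \emph{not} assumed cartesian, so one cannot directly invoke Proposition \ref{prop:LocBC}. The remedy is that the induced morphism $X'\to X\times_YY'$ is itself étale (both $X'\to X$ and $X\times_YY'\to X$ being étale over $X$), so étale-locally on $X'$ the square becomes cartesian. The desired compatibility of $\delta^\Delta$ under $e_1^\ast$ can therefore be checked étale-locally on $X'$, where it reduces to the genuinely cartesian base-change case already handled by the methods of Proposition \ref{prop:LocBC}.
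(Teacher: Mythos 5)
Your treatment of the second equality $\widetilde{C}^{Z'}_{X'/Y'/S}(\mathcal F')=\widetilde{C}^{Z'}_{X'/Y/S}(\mathcal F')$ is essentially the paper's argument: the identification $\mathcal K_{X'/Y}\simeq\mathcal K_{X'/Y'}$, the equivalence of the two local acyclicity conditions, and the fact that the diagonal $\gamma_0\colon Y'\to Y'\times_YY'$ is an open immersion are exactly the inputs used there, and your ``chase through the defining composition'' is the content of comparing the diagrams \eqref{bcdiag3} for $X'/Y'/S$ and $X'/Y/S$.

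The gap is in your argument for the first equality. You correctly identify that the square formed by $(e_1,f',e_0,f)$ need not be cartesian, but your remedy does not work. First, the étale morphism $X'\to X\times_YY'$ does not become an isomorphism after étale localization on the source (take $X=Y=Y'={\rm Spec}\,k$ and $X'$ a nontrivial finite separable extension of $k$), so the square never ``becomes cartesian étale-locally on $X'$''. Second, and more seriously, an equality of classes in $H^0_{Z'}(X',\mathcal K_{X'/Y'/S})$ --- i.e.\ of morphisms in a derived category --- cannot be checked étale-locally: homotopies do not glue, and the obstruction to descending such an identification lives in higher cohomology of the \v{C}ech nerve. Relatedly, your ingredient (iii) is off: smooth base change along $e_0\times e_0$ identifies $(e_0\times e_0)^\ast j_{Y\ast}\Lambda$ with the pushforward from the complement of $Y'\times_YY'$, not of the diagonal $\delta'(Y')$, so $(e_1\times e_1)^\ast\delta^\Delta$ does not directly compare with the $\delta'^\Delta$ for $Y'/S$ --- this discrepancy is precisely what your second-equality argument is designed to absorb, and it cannot be waved away by base change alone. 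The repair is already in your hands: having proved the second equality, it suffices to show $e_1^\ast\widetilde{C}^Z_{X/Y/S}(\mathcal F)=\widetilde{C}^{Z'}_{X'/Y/S}(\mathcal F')$, where $Y$ is unchanged. There the square comparing $X'\times_YX'\to X'\times_SX'$ with $X\times_YX\to X\times_SX$ is genuinely cartesian, $e_1^\ast\simeq e_1^!$ commutes with every functor appearing in \eqref{bcdiag3}, and the $e_1^\ast$-pullback of the defining diagram for $X/Y/S$ is literally the defining diagram for $X'/Y/S$; this is how the paper concludes.
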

\begin{proof}
It is clear that the condition (C1) in \ref{subsec:notationforXYS} holds for $g'$.
Since $e_1$ is \'etale, the morphism $h'$ (resp. $X'\setminus Z'\to Y$) is universally locally acyclic relatively to $\mathcal F'$ (resp. $\mathcal F'|_{X'\setminus Z'}$).
Since $e_0$ is \'etale, the morphism $X'\setminus Z'\to Y'$ is also universally locally acyclic relatively to
$\mathcal F'|_{X'\setminus Z'}$.
So the condition (C3)
in \ref{subsec:notationforXYS} holds for $X'/Y'/S$ and $X'/Y/S$. Hence $\widetilde{C}^{Z'}_{X'/Y'/S}(\mathcal F')$ and $\widetilde{C}^{Z'}_{X'/Y/S}(\mathcal F')$ are well-defined.
Now we show $\widetilde{C}^{Z'}_{X'/Y'/S}(\mathcal F')=\widetilde{C}^{Z'}_{X'/Y/S}(\mathcal F')$. Consider the following commutative diagram

\begin{equation}
\begin{tikzcd}[row sep=2.5em, column sep=small]
X' \arrow[rr,"\delta_0'"] \arrow[rd,equal] \arrow[dd,equal] &&X'\times_{S}X'\arrow[rr, "f'\times f'"] && Y'\times_{S} Y' \arrow[rr, "e_0\times e_0"] &&Y\times_{S} Y\\
&X' \arrow[rr,"\delta_1''" near start]\arrow[dl,equal]&&X'\times_Y X' \arrow[rr,"p''" near start]\arrow[ul,"i''"] && Y'\times_YY' \arrow[ul,"\gamma_1"]\ar[rr] &&Y\arrow[ul,"\delta"]\\
X' \arrow[rr,"\delta_1'"] &&X'\times_{Y'}X' \arrow[rr,"p'" ] \arrow[ur,hookrightarrow,"\eta'"]\arrow[uu,crossing over,"i'" near start]&&Y', \arrow[rrru,"e_0"'] \arrow[uu, crossing over, "\delta'" near start]\arrow[ur,hookrightarrow, "\gamma_0"]
\end{tikzcd}
\end{equation}
where squares and parallelograms are cartesian. Since $e_0:Y'\to Y$ is \'etale, the diagonal morphism $\gamma_0: Y'\to Y'\times_{Y}Y'$ is an open immersion. So is its base change $\eta': X'\times_{Y'} X'\to X'\times_{Y}X'$.
For $X/Y/S$, we have the diagram \eqref{bcdiag3}. Similar for $X'/Y'/S$ and $X'/Y/S$.
Since $\gamma_0$ is an open immersion and that $e_0\times e_0$ is \'etale, 
we have isomorphisms ${(f'e_0)}^\ast {\delta}^{!}\Lambda\simeq f^{\prime\ast} e_0^\ast {\delta}^{!}\Lambda\simeq f^{\prime\ast} \gamma_0^\ast {\gamma}_1^{!}\Lambda\simeq f^{\prime\ast}\delta^{\prime !}\Lambda$ and 
$\delta_1^{\prime\ast}(\mathcal F'\boxtimes_{Y'}^L D_{X'/Y'}(\mathcal F'))\simeq \delta_1^{\prime\prime\ast}(\mathcal F'\boxtimes_{Y}^L D_{X'/Y}(\mathcal F'))$. 
Similarly, one can check that all objects in the diagram \eqref{bcdiag3} for 
$X'/Y'/S$ are isomorphic to those for $X'/Y/S$. This proves $\widetilde{C}^{Z'}_{X'/Y'/S}(\mathcal F')=\widetilde{C}^{Z'}_{X'/Y/S}(\mathcal F')$.

Consider the diagram \eqref{bcdiag3}. Since its pull-back by $e_1^\ast$ is isomorphic to the diagram for $X'/Y/S$, therefore $e_1^\ast\widetilde{C}^{Z}_{X/Y/S}(\mathcal F)=\widetilde{C}^{Z'}_{X'/Y/S}(\mathcal F')$.

Now assume the condition (C2)  in \ref{subsec:notationforXYS} holds for $Z'/X'/Y/S$.
Since $\mathcal K_{Z'/Y'}=\mathcal K_{Z'/Y}$,  the condition (C2)
also holds for $Z'/X'/Y'/S$ and then $C^{Z'}_{X'/Y'/S}(\mathcal F')$ is well-defined. 
Now \eqref{eq:prop:invariantUnderEtale:0} follows from 
\eqref{eq:prop:invariantUnderEtale:0-0}.
\end{proof}

\subsection{}\label{subsec:spOfNAclass}
At last, we show that the non-acyclicity class is compatible with the specialization map.
Consider the commutative diagram \eqref{xyd:prop:Gysim:fgh2}.
Assume that $g$ is a smooth morphism and that $S$ is a henselian trait. Let $\eta$ be the  generic point of $S$ and $s$ a geometric point of $S$ above its closed point. 
We first define a canonical morphism
\begin{align}\label{eq:PsiKXYS}
	\Psi_{h}(\mathcal{K}_{X_\eta/Y_\eta/\eta})\to \mathcal{K}_{X_s/Y_s/s}.
\end{align}
Consider the commutative diagram \eqref{eq:propchangebasehomo:1} and similar diagrams over $\eta$ and $s$:
\begin{align}\label{eq:PsiKXYS-0}
	\begin{gathered}
		\xymatrix{
			X_\eta\ar@{_(->}[d]_-{\delta_{\eta1}}\ar@{}[rd]|\Box\ar@{=}[r]\ar@/_2.5pc/[dd]_(0.3){f_\eta}&X_\eta\ar@{_(->}[d]^-{\delta_{\eta0}}&X_s\ar@/_2.5pc/[dd]_(0.3){f_s}\ar@{_(->}[d]_-{\delta_{s1}}\ar@{}[rd]|\Box\ar@{=}[r]&X_s\ar@{_(->}[d]^-{\delta_{s0}}\\
			X_\eta\times_{Y_\eta} X_{\eta}\ar[r]^-{i_\eta}\ar[d] \ar[r] & X_\eta\times_\eta X_\eta  \ar[d]^-{f_\eta\times f_\eta}& X_s\times_{Y_s} X_{s}\ar[r]^-{i_s}\ar[d] \ar[r] & X_s\times_s X_s  \ar[d]^-{f_s\times f_s} \\
			Y_\eta\ar[r]^-{\delta_\eta}&Y_\eta\times_\eta Y_\eta,\ar@{}|\Box[ul]&Y_s\ar[r]^-{\delta_s}&Y_s\times_s Y_s.\ar@{}|\Box[ul]
		}
	\end{gathered}
\end{align}
We define $j_Y: Y\times_SY\setminus \delta(Y)\to Y\times_SY$  to be the open immersion and similar for $j_{Y_\eta}$ and $j_{Y_s}$.
Note that $\delta^\ast\delta_\ast\delta^!\Lambda=\delta^!\Lambda$ and $\mathcal K_{Y/S}$ are locally constant on $Y$ since $g$ is smooth. 
Applying $c_{f,{\rm id},-,\mathcal K_{Y/S}}$ to the distinguished triangle $\delta^\ast\delta_\ast\delta^!\Lambda\to \delta^\ast\Lambda\to \delta^\ast j_{Y\ast}\Lambda\xrightarrow{+1}$, we get
a commutative diagram between distinguished triangles:
\begin{align}\label{eq:PsiKXYS-1}
\begin{gathered}
\xymatrix{
f^\ast\delta^\ast\delta_\ast\delta^!\Lambda\otimes^L f^!\mathcal K_{Y/S}\ar[d]_-\simeq^-{c_{f,{\rm id},\delta^\ast\delta_\ast\delta^!\Lambda,\mathcal K_{Y/S}}}\ar[r]&f^\ast\delta^\ast\Lambda\otimes^L f^!\mathcal K_{Y/S}\ar[r]\ar[d]_-\simeq^-{c_{f,{\rm id},\delta^\ast\Lambda,\mathcal K_{Y/S}}}&f^\ast \delta^\ast j_{Y\ast}\Lambda\otimes^L f^!\mathcal K_{Y/S}\ar[d]^-{c_{f,{\rm id},\delta^\ast j_{Y\ast}\Lambda,\mathcal K_{Y/S}}}\ar[r]^-{+1}&\\
f^!(\delta^\ast\delta_\ast\delta^!\Lambda \otimes^L \mathcal K_{Y/S})\ar[r]&f^!(\delta^\ast\Lambda \otimes^L \mathcal K_{Y/S})\ar[r]&f^!(\delta^\ast j_{Y\ast}\Lambda \otimes^L \mathcal K_{Y/S})\ar[r]^-{+1}&,
}
\end{gathered}
\end{align}
which induces an isomorphism
\begin{align}
\label{eq:PsiKXYS-2}
f^\ast\delta^\ast j_{Y\ast}\Lambda\otimes^Lf^!\mathcal K_{Y/S} \xrightarrow[\simeq]{c_{f,{\rm id},\delta^\ast j_{Y\ast}\Lambda,\mathcal K_{Y/S}}}f^!(\delta^\ast j_{Y\ast}\Lambda\otimes^L \mathcal K_{Y/S}).
\end{align}
We have similar diagrams for $\Psi_{g}$ and $c_{f_t,{\rm id},-,\mathcal K_{Y_t/t}}$ ($t\in\{\eta,s\}$), which induce the following isomorphisms
\begin{align}
	\label{eq:PsiKXYS-3}
	&\Psi_{g}(\mathcal K_{Y_\eta/\eta}\otimes^L\delta_\eta^\ast j_{Y_\eta\ast}\Lambda)\simeq \mathcal K_{Y_s/s}\otimes^L\delta_s^\ast j_{Y_s\ast}\Lambda,\\
	\label{eq:PsiKXYS-2-2}
	&f_t^\ast\delta_t^\ast j_{Y_t\ast}\Lambda\otimes^Lf_t^!\mathcal K_{Y_t/t} \xrightarrow[\simeq]{c_{f_t,{\rm id},\delta_t^\ast j_{Y_t\ast}\Lambda,\mathcal K_{Y_t/t}}}f_t^!(\delta_t^\ast j_{Y_t\ast}\Lambda\otimes^L \mathcal K_{Y_t/t}).
\end{align}
By \ref{remark:expFordeltaDelta} and \eqref{eq:PsiKXYS-2-2}, we get isomorphisms
\begin{align}\label{eq:PsiKXYS-5}
\begin{split}
	\mathcal K_{X_t/Y_t/t}\overset{\ref{remark:expFordeltaDelta}}{\simeq}
	\mathcal K_{X_t/t}\otimes^L \delta_{t0}^\ast (f_t\times f_t)^\ast j_{Y_t\ast}\Lambda&\simeq 
	f_t^!\mathcal K_{Y_t/t}\otimes^L f_t^\ast\delta_t^\ast j_{Y_t\ast}\Lambda\\
	&\xrightarrow[\simeq]{\eqref{eq:PsiKXYS-2-2}}f_t^!(\mathcal K_{Y_t/t}\otimes^L\delta_t^\ast j_{Y_t\ast}\Lambda).
\end{split}
\end{align}
Similar to the definition of \eqref{PsiKXYdef}, we define \eqref{eq:PsiKXYS} to be the following composition
\begin{align}\label{eq:PsiKXYS-4}
\begin{split}
\Psi_{h}(\mathcal K_{X_\eta/Y_\eta/\eta})&\overset{\eqref{eq:PsiKXYS-5}}{\simeq}\Psi_{h}(f_\eta^!(\mathcal K_{Y_\eta/\eta}\otimes^L\delta_\eta^\ast j_{Y_\eta\ast}\Lambda))\\
&\xrightarrow{\rm b.c}f_s^!\Psi_{g}(\mathcal K_{Y_\eta/\eta}\otimes^L\delta_\eta^\ast j_{Y_\eta\ast}\Lambda)
\overset{\eqref{eq:PsiKXYS-3}}{\simeq} f_s^!(\mathcal K_{Y_s/s}\otimes^L\delta_s^\ast j_{Y_s\ast}\Lambda) \overset{\eqref{eq:PsiKXYS-5}}{\simeq} \mathcal K_{X_s/Y_s/s}.
\end{split}
\end{align}
Applying $\Psi_{h}: D^+(X_\eta,\Lambda)\to D^+(X_s,\Lambda)$ to the following distinguished triangle (cf. \eqref{eq:distriK}) 
\[\mathcal{K}_{X_\eta/Y_\eta} \to\mathcal{K}_{X_\eta/\eta}\to \mathcal{K}_{X_\eta/Y_\eta/\eta}\xrightarrow{+1},\] 
we obtain a commutative diagram by the constructions
\begin{align}\label{eq:PsiandKXYS}
	\begin{gathered}
		\xymatrix{
			\Psi_{h}(\mathcal{K}_{X_\eta/Y_\eta}) \ar[r]\ar[d]^-{\eqref{PsiKXYdef}}&\Psi_{h}(\mathcal{K}_{X_\eta/\eta})\ar[d]^-{\eqref{PsiKXYdef}} \ar[r]& \Psi_{h}(\mathcal{K}_{X_\eta/Y_\eta/\eta}) \ar[r]^-{+1} \ar[d]^-{\eqref{eq:PsiKXYS}}&\\
			\mathcal{K}_{X_s/Y_s} \ar[r]&\mathcal{K}_{X_s/s}\ar[r]& \mathcal{K}_{X_s/Y_s/s}\ar[r]^-{+1}& .
		}
	\end{gathered}
\end{align}
Similar to \eqref{spDef2}, the morphism $\Psi_{h}(\mathcal{K}_{X_\eta/Y_\eta/\eta})\to \mathcal{K}_{X_s/Y_s/s}$ induces the following specialization maps
\begin{align}
	\begin{gathered}
		\xymatrix{ 
			H^0_{Z_\eta}(X_\eta,\mathcal{K}_{X_\eta/Y_\eta/\eta}) \ar[d]\ar[r]^-{\rm sp} &H^0_{Z_s}(X_s,\mathcal{K}_{X_s/Y_s/s})\ar[d]\\
			H^0(X_\eta,\mathcal{K}_{X_\eta/Y_\eta/\eta}) \ar[r]^-{\rm sp} &H^0(X_s,\mathcal{K}_{X_s/Y_s/s}).
		}
	\end{gathered}
\end{align}
\begin{proposition}\label{prop:spOfNA}
Under the notation and assumptions in \ref{subsec:spOfNAclass}.
Let $\mathcal{F}\in D_{\rm ctf}(X_\eta,\Lambda)$ such that $f_\eta$ $($resp. $f_s)$ is universally locally acyclic relatively to $\mathcal{F}$ $($resp. $\Psi_{h}(\mathcal{F}))$ outside $Z_\eta$ $($resp. $Z_s)$. Then we have 
	\begin{align}\label{spOfNAinprop}
		{\rm sp} (\widetilde{C}_{X_\eta/Y_\eta/\eta}^{Z_\eta}(\mathcal F))= \widetilde{C}_{X_s/Y_s/s}^{Z_s}(\Psi_{h}(\mathcal F))\quad{\rm in}\quad H^0_{Z_s}(X_s,\mathcal{K}_{X_s/Y_s/s}).
	\end{align}
If moreover $H^0(Z_t,\mathcal K_{Z_t/{Y_t}})=0~{\rm and}~H^1(Z_t,\mathcal K_{Z_t/{Y_t}})=0$ for $t\in\{s,\eta\}$, then we have 
\begin{align}
	{\rm sp}(C^{Z_\eta}_{X_\eta/Y_\eta/\eta}(\mathcal F))={C}_{X_s/Y_s/s}^{Z_s}(\Psi_{h}(\mathcal F))\quad{\rm in}\quad H^0_{Z_s}(X_s,\mathcal{K}_{X_s/s}).
\end{align}
In particular, if $\mathcal{F}\in D_{\rm ctf}(X,\Lambda) $ such that $h$ is universally locally acyclic relatively to $\mathcal{F}$  and that $f_\eta$ $($resp. $f_s)$ is universally locally acyclic relatively to $\mathcal{F}|_{X_\eta}$ $($resp. $\mathcal{F}|_{X_s})$  outside $Z_\eta$ $($resp. $Z_s)$, then $\Psi_{h}(\mathcal{F}|_{X_{\eta}})\simeq \mathcal{F}|_{X_s}$ and hence 
	\begin{align}\label{spOfNAinpropCor}
		{\rm sp}(\widetilde{C}_{X_\eta/Y_\eta/\eta}^{Z_\eta}(\mathcal{F}|_{X_{\eta}}))=\widetilde{C}_{X_s/Y_s/s}^{Z_s}(\mathcal{F}|_{X_s})\quad{\rm in}\quad H^0_{Z_s}(X_s,\mathcal{K}_{X_s/Y_s/s}).
	\end{align}
\end{proposition}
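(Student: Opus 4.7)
The strategy is to mirror the proof of Proposition \ref{prop:sp}, with the non-acyclicity class playing the role of the relative characteristic class but lifted along the functor $\delta^\Delta$. Recall from \eqref{eq:defloc1}--\eqref{eq:defEqOfCZXYS} that $\widetilde{C}^{Z}_{X/Y/S}(\mathcal F)$ is obtained by applying $\delta_1^\ast \delta^\Delta(-)$ to the composition \eqref{eq:defloc0} defining $C_{X/S}(\mathcal F)$. Thus we need two ingredients: first, a comparison morphism $\Psi_{X/S}(\mathcal K_{X_\eta/Y_\eta/\eta})\to \mathcal K_{X_s/Y_s/s}$ fitting into a morphism of distinguished triangles with the triangles \eqref{eq:distriK} on the two fibres; second, a commutativity statement for $\Psi$ applied to the \emph{entire} construction of the class, not merely the output.

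For the first ingredient, by Remark \ref{remark:expFordeltaDelta} one has $\mathcal K_{X_t/Y_t/t} \simeq \mathcal K_{X_t/t}\otimes^L \delta_{0,t}^\ast (f_t\times f_t)^\ast j_{Y_t\ast}\Lambda$ for $t\in\{\eta,s\}$, where $j_{Y_t}$ denotes the open immersion $(Y_t\times_t Y_t)\setminus\delta(Y_t)\hookrightarrow Y_t\times_t Y_t$. The desired comparison then comes from tensoring the map \eqref{PsiKXYdef} (with $Y=S$) with the canonical morphism $\Psi_{Y\times_S Y/S}(j_{Y_\eta\ast}\Lambda)\to j_{Y_s\ast}\Lambda$, which is an isomorphism because $g$ is smooth and $S$ is a henselian valuation ring, so nearby cycles commute with $j_{Y\ast}$ by proper base change along the open immersion $j_Y$. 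One checks that this tensor product indeed yields a morphism of triangles compatible with \eqref{eq:distriK} on both fibres.

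For the second ingredient, I would assemble the analogue of the diagram \eqref{xySpecializationdiag} but with every term replaced by its image under $\delta_1^\ast\delta^\Delta(-)$ (respectively $\delta_1^{\prime\ast}\delta^{\prime\Delta}(-)$ on the special fibre). The interior commutations are already furnished by the proof of Proposition \ref{prop:sp}: the compatibilities \eqref{eq:kunForPsi1}, \eqref{eq:kunForPsi2}, \eqref{eq:PsiandDual}, \eqref{eq:PsiandHomPr} and the subdiagrams \eqref{eq:spPfA}--\eqref{eq:spPfC2} are all natural with respect to the operation $\delta^\Delta$, since $\delta^\Delta$ is defined as the cone of a canonical morphism \eqref{eq:Dchf} and $\Psi$ commutes with taking cones. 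The bottom edge of the resulting diagram computes $\widetilde{C}^{Z_s}_{X_s/Y_s/s}(\Psi_{X/S}(\mathcal F))$, while the top edge computes $\mathrm{sp}(\widetilde{C}^{Z_\eta}_{X_\eta/Y_\eta/\eta}(\mathcal F))$, yielding \eqref{spOfNAinprop}. The second assertion then follows by applying \eqref{eq:isomOnHZassC2} on both fibres, and the last one from the specialization isomorphism $\Psi_{X/S}(\mathcal F|_{X_\eta})\simeq \mathcal F|_{X_s}$ guaranteed by the universal local acyclicity of $h$ relatively to $\mathcal F$ over the henselian valuation ring $S$.

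The main obstacle is the first ingredient: verifying that $\Psi_{X\times_S X/S}$ genuinely commutes with $(f\times f)^\ast j_{Y\ast}\Lambda$ in a way compatible with the triangle \eqref{eq:distriK}. Once this is settled, the rest reduces to a diagram chase that is a bookkeeping extension of the one already carried out in Proposition \ref{prop:sp}, with the additional caveat that one must check that all the subdiagrams therein are natural in the input complex when $\delta^\Delta$ is applied termwise.
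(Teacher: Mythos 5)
Your proposal is correct and follows essentially the same route as the paper: the paper's proof is precisely a prism whose top face is the diagram \eqref{xySpecializationdiag} from Proposition \ref{prop:sp} (in the absolute case over the point) and whose bottom face is its image under the passage $i^!\to\delta^\Delta$, with the comparison map $\Psi_{X/S}(\mathcal K_{X_\eta/Y_\eta/\eta})\to\mathcal K_{X_s/Y_s/s}$ already set up in \ref{subsec:spOfNAclass}. The only cosmetic difference is that you build that comparison map via Remark \ref{remark:expFordeltaDelta} rather than by applying $\Psi_{X/S}$ to the distinguished triangle \eqref{eq:distriK}; this changes nothing essential.
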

\begin{proof}
The equality \eqref{spOfNAinprop}
results from the following commutative diagram
		\begin{align*}\small
		\begin{gathered}
			\xymatrix@R=0.6cm@C=0.2em{
				&\Psi_{h}(\Lambda_{X_{\eta}})\ar[rr]&&\Psi_{h}(\delta_{\eta1}^*i_\eta^!(\mathcal{F}\boxtimes_\eta D_{X_\eta/\eta}(\mathcal{F}))) \ar[dd]\ar[rr]&&\Psi_{h}(\mathcal{K}_{X_\eta/\eta})\ar[dd]\ar[dl]\\
				\Lambda \ar[rr]\ar[ur]&&{\delta}_{s1}^{\ast}{i}_s^{!}(\Psi_{h}(\mathcal{F})\boxtimes_s D_{X_s/s}(\Psi_{h}(\mathcal{F}))) \ar[dd]\ar[rr]&&\mathcal{K}_{X_s/s}\ar[dd]&\\
				&&& \Psi_{h}(\delta_{\eta1}^*\delta_\eta^{\triangle}(\mathcal{F}\boxtimes_\eta D_{X_\eta/\eta}(\mathcal{F}))) \ar[rr]&& \Psi_{h}(\mathcal{K}_{X_\eta/Y_\eta/\eta}) \ar[dl]\\
				&& {\delta}_{s1}^{\ast}{\delta}_s^{\triangle}(\Psi_{h}(\mathcal{F})\boxtimes_s D_{X_s/s}(\Psi_{h}(\mathcal{F}))) \ar[rr]&& \mathcal{K}_{X_s/Y_s/s},&\\
			}
		\end{gathered}
	\end{align*}	
where the commutativity follows from Proposition \ref{prop:sp}, \eqref{bcdiag3-factorization} and \eqref{eq:PsiandKXYS}.
\end{proof}

\section{Fibration formula}\label{sec:fib}

Inspired by the induction formula for the  characteristic classes \cite[Proposition 5.3.7]{UYZ}, we prove the following fibration formula.
\begin{theorem}[Fibration formula]\label{conj:milclass}
	Under the notation and conditions (C1)-(C3) in \ref{subsec:notationforXYS}, we have  
	\begin{align}
		\label{eq:milclass}&C_{X/S}(\mathcal F)=\delta^!C_{X/Y}(\mathcal F)+
		C_{X/Y/S}^{Z}(\mathcal F)\quad{\rm in}\quad H^0(X,\mathcal K_{X/S}),
	\end{align}
	where $\delta^!:H^{0}(X,\mathcal K_{X/Y})\xrightarrow{}  H^{0}(X,\mathcal K_{X/S})$ is defined in \eqref{eq:cherncup1}.
\end{theorem}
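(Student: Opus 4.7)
My plan is to verify \eqref{eq:milclass} by checking the identity after two complementary reductions and then promoting these partial identities to the full equality via (C2). Concretely, set
\[
d := C_{X/S}(\mathcal F) - \delta^!(C_{X/Y}(\mathcal F)) - C^{Z}_{X/Y/S}(\mathcal F) \in H^0(X,\mathcal K_{X/S}),
\]
so the goal is $d = 0$. First I would restrict to $U := X \setminus Z$. By (C3), both $h|_U$ and $f|_U$ are universally locally acyclic relative to $\mathcal F|_U$, so Theorem~\ref{thm:changebaseHomo} yields $C_{U/S}(\mathcal F|_U) = \delta^!(C_{U/Y}(\mathcal F|_U))$. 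Combined with the pull-back compatibility (Proposition~\ref{cor:pullbackccc}), Definition~\ref{def:rcccCodBig}, and the fact that $C^{Z}_{X/Y/S}(\mathcal F)|_U = 0$ since it is supported on $Z$, this shows $d|_U = 0$.

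Second, I would apply the map $\phi: H^0(X,\mathcal K_{X/S}) \to H^0(X,\mathcal K_{X/Y/S})$ coming from the distinguished triangle \eqref{eq:distriK}. The diagram \eqref{bcdiag3} realises a morphism of distinguished triangles whose target column is precisely \eqref{eq:distriK}; the commutativity of the square linking its middle and bottom rows translates into $\phi(C_{X/S}(\mathcal F)) = \iota(\widetilde{C}^{Z}_{X/Y/S}(\mathcal F))$, where $\iota: H^0_Z(X,\mathcal K_{X/Y/S}) \to H^0(X,\mathcal K_{X/Y/S})$ is the excision map. By the definition of $C^{Z}_{X/Y/S}(\mathcal F)$ via the isomorphism \eqref{eq:isomOnHZassC2}, the same identity holds for $\phi(C^{Z}_{X/Y/S}(\mathcal F))$. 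Since $\delta^!$ factors through $\mathcal K_{X/Y} \to \mathcal K_{X/S}$ and the next arrow in \eqref{eq:distriK} annihilates this composition, $\phi(\delta^!(C_{X/Y}(\mathcal F))) = 0$. Hence $\phi(d) = 0$.

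Finally, I would combine $d|_U = 0$ and $\phi(d) = 0$ with (C2) to deduce $d = 0$. The hypothesis (C2) supplies the isomorphism $H^0_Z(X,\mathcal K_{X/S}) \simeq H^0_Z(X,\mathcal K_{X/Y/S})$ via the long exact sequence of \eqref{eq:distriK}, and the same argument of Lemma~\ref{lem:rcccCodBig} applied to $\mathcal K_{X/Y}$ yields $H^0(X,\mathcal K_{X/Y}) \simeq H^0(U,\mathcal K_{U/Y})$, so that $C_{X/Y}(\mathcal F)$ is pinned down rigidly by its restriction to $U$. The main obstacle lies precisely in this step: the localization maps $H^0_Z \to H^0$ and the Gysin-type map $\delta^!$ are generally non-injective, so the two partial identities do not immediately force $d=0$ by an abstract diagram chase. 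To close the gap, I expect one must carry out a careful chain-level analysis of diagram~\eqref{bcdiag3}, exploiting the canonical chain-level representative of $\widetilde{C}^{Z}_{X/Y/S}(\mathcal F)$ given by the construction in \ref{subsec:constructNAclass} together with the (C2)-rigidity, so as to match both sides of \eqref{eq:milclass} at the level of morphisms in the derived category rather than merely in cohomology.
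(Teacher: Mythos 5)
Your first reduction (restricting to $U=X\setminus Z$ and invoking Theorem~\ref{thm:changebaseHomo} to get $d|_U=0$) is exactly the paper's starting observation, and you are right that the whole difficulty is concentrated in the final step. But the gap you flag is a genuine one, and the two partial identities you establish cannot close it: knowing $d|_U=0$ only tells you $d=\tau_\ast(e)$ for \emph{some} $e\in H^0(Z,\mathcal K_{Z/S})$, and this lift is ambiguous up to the image of $H^{-1}(U,\mathcal K_{U/S})$; knowing in addition that $\phi(d)=0$ only constrains the image of $e$ under the composite $H^0_Z(X,\mathcal K_{X/S})\to H^0_Z(X,\mathcal K_{X/Y/S})\to H^0(X,\mathcal K_{X/Y/S})$, whose second arrow is not injective. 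So no abstract diagram chase identifies the particular lift with $C^Z_{X/Y/S}(\mathcal F)$, which is what \eqref{eq:milclass} actually asserts. Moreover, your proposed remedy --- ``matching both sides at the level of morphisms in the derived category'' --- is precisely what the paper says cannot work: in a merely triangulated category the cone is not functorial, so the commutativity of the left face \eqref{eq:leftfaceInTriprism} of a triangular prism is not guaranteed, and the required compatible system of fillers does not exist.

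The paper's resolution is to pass to a stable $\infty$-categorical enhancement of the six-functor formalism. There, the cofiber functor $\theta:\mathcal E\to{\rm Fun}(\Delta^1,\mathcal C)$ is a trivial Kan fibration, which produces a \emph{canonical} (up to contractible choice) lift $L^Z_{X/Y/S}(\mathcal F):\Lambda\to\tau_\ast\mathcal K_{Z/S}$ of the difference $C_{X/S}(\mathcal F)-\delta^!C_{X/Y}(\mathcal F)$, together with the coherently commutative diagram \eqref{eq:CTDiffExplain} identifying the image of this lift in $H^0_Z(X,\mathcal K_{X/Y/S})$ with $C^Z_{X/Y/S}(\mathcal F)$. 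Establishing that coherence is the real work (Proposition~\ref{lem:TisWellDefined}), and it rests on the coherent cube of Lemma~\ref{lem:cubicdiagram}, proved by a cocartesian/cartesian-edge characterization of the maps $c_{i,\mathcal F,\mathcal G}$ in the $\infty$-category of correspondences. Only after this identification does condition (C2), via the isomorphism \eqref{eq:isomOnHZassC2}, convert the statement into the equality \eqref{eq:milclass} in $H^0(X,\mathcal K_{X/S})$. In short: your skeleton is right, but the decisive ingredient is the $\infty$-categorical construction of a distinguished lift, not a finer derived-category computation.
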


When $g: Y\to S$ is smooth of relative dimension $r$, we have
$\delta^!=c_{r}(f^*\Omega_{Y/S}^{1,\vee})$ by \eqref{delta!Cherneq}. Then \eqref{eq:milclass} can be rewritten as
\begin{align}\label{eq:milclass1-1}
	\begin{split}
		C_{X/S}(\mathcal F)&=c_r(f^\ast\Omega^{1,\vee}_{Y/S})\cap C_{X/Y}(\mathcal F)+
		C_{X/Y/S}^{Z}(\mathcal F)\quad{\rm in}\quad H^0(X,\mathcal K_{X/S}).\\
	\end{split}
\end{align}

In the case that $X=Y\to S={\rm Spec}k$ is smooth over a field $k$. Since ${\rm id}:X\setminus Z\to X\setminus Z$ is universally locally acyclic  relatively to $\mathcal F|_{X\setminus Z}$,  the cohomology sheaves of $\mathcal F|_{X\setminus Z}$ are locally constant on $X\setminus Z$. In this case, 
the class $C_{X/Y/S}^{Z}(\mathcal F)$ is Abbes-Saito's localized characteristic class \cite[Definition 5.2.1]{AS07} and
\eqref{eq:milclass} follows from \cite[Proposition 5.2.3]{AS07}.

\subsection{}
Theorem \ref{conj:milclass} will follow from Proposition \ref{lem:TisWellDefined} later.
Let us explain the main idea of the proof.
Let $U=X\setminus Z$. 
Theorem \ref{thm:changebaseHomo} implies that
the image of $C_{X/S}(\mathcal F)-\delta^!C_{X/Y}(\mathcal F)$ 
 in $H^0(U,\mathcal K_{U/S})$ is zero.
From the exact sequence
\begin{align}
	H^{-1}(U,\mathcal K_{U/S})\to H^0(Z,\mathcal K_{Z/S})\xrightarrow{\tau_\ast}H^0(X,\mathcal K_{X/S})\to H^0(U,\mathcal K_{U/S}),
\end{align}
we have
$C_{X/S}(\mathcal F)-\delta^!C_{X/Y}(\mathcal F)\in {\rm Im}(\tau_\ast)$.
The lifting of $C_{X/S}(\mathcal F)-\delta^!C_{X/Y}(\mathcal F)$ 
to $H^0(Z,\mathcal K_{Z/S})$ may not be unique. In order to overcome this problem, we  enhance the constructions of $C_{X/S}$ and $C^Z_{X/Y/S}$ to the $\infty$-categorical level and construct an intermediate map $L^Z_{X/Y/S}(\mathcal F)$ together with a coherent commutative diagram (cf. \eqref{eq:CTDiffExplain})
\begin{align}\label{eq:CTDiffExplain-0}
	\begin{gathered}
		\xymatrix@C=2cm{
			&\Lambda \ar[ld]_-{C^Z_{X/Y/S}(\mathcal{F})} \ar[d]^-{L^Z_{X/Y/S}(\mathcal{F})}\ar@{=}[r]&\Lambda\ar[d]^-{C_{X/S}(\mathcal F)-\delta^!C_{X/Y}(\mathcal F)}\\
			\tau_*\tau^!\mathcal{K}_{X/Y/S}& \tau_*\mathcal{K}_{Z/S} \ar[l]\ar[r]&\mathcal K_{X/S}.
		}
	\end{gathered}
\end{align}
Since $H^0_Z(X,\mathcal K_{X/Y/S})\simeq H^0(Z,\mathcal K_{Z/S})$, the diagram \eqref{eq:CTDiffExplain-0}
implies the fibration formula \eqref{eq:milclass}.

\subsection{}\label{subsec:LiftCofiber}
Let us recall a lifting result in $\infty$-category, which will be used in constructing the map  $L^Z_{X/Y/S}(\mathcal F)$ and the diagram \eqref{eq:CTDiffExplain-0}.
Let $\mathcal{C}$ be a stable $\infty$-category. 
Let $\mathcal{E}\subseteq {\rm Fun}(\Delta^1\times\Delta^1,\mathcal{C})$ be the full sub-$\infty$-category spanned by cofiber sequences in $\mathcal C$. Let $\theta: \mathcal{E}\to {\rm Fun}(\Delta^1,\mathcal C)$ be the functor sending a cofiber sequence $P''\to P\to P'$ to $P\to P'$, which is  a trivial Kan fibration by \cite[Remark 1.1.1.7]{HA}.

Consider a  commutative diagram in $\mathcal C$ between cofiber sequences
\begin{align}
	\begin{gathered}
		\xymatrix@R=0.4cm{
			P''\ar[r] &P \ar[r]\ar[d] &P' \ar[d]\\
			Q''\ar[r] &Q\ar[r]&Q'.
		}
	\end{gathered}
\end{align}
We view the right square as a $1$-simplex $\Delta^1\to {\rm Fun}(\Delta^1,\mathcal{C})$.
Since $\theta$ is a trivial Kan fibration, there is a lifting $\Delta^1\to \mathcal{E}$, which is unique up to a contractible space, making the following diagram
\begin{equation}
	\begin{gathered}
		\xymatrix@R=0.4cm{
			\partial \Delta^1 \ar[r] \ar[d] & \mathcal{E}\ar[d]^{\theta}\\
			\Delta^1 \ar[r] \ar@{-->}[ur]& {\rm Fun}(\Delta^1,\mathcal C)}
	\end{gathered}
\end{equation}
commutes. Thus there is a morphism $P''\to Q''$ such that the following diagram commutes:
\begin{equation}\label{eq:constructP2Q}
	\begin{gathered}
		\xymatrix@R=0.6cm{
			P''\ar[r]\ar@{-->}[d] &P \ar[r]\ar[d] &P' \ar[d]\\
				Q''\ar[r] &Q\ar[r]&Q'.
		}
	\end{gathered}
\end{equation}
Now for a  commutative diagram (with solid arrows) between three cofiber sequences
\begin{equation}
	\begin{gathered}\label{eq:triprism}
		\xymatrix@R=0.4cm{
			&P''\ar[rr] \ar@{-->}[d]\ar@{-->}[ddl]&&P \ar[rr]\ar[d] \ar[ddl]&&P' \ar[d]\ar[ddl]\\
			&Q''\ar[rr] \ar@{-->}[dl]&&Q\ar[rr]\ar[dl]&&Q'\ar[dl]\\
			R'' \ar[rr] &&R\ar[rr] &&R',
		}
	\end{gathered}
\end{equation}
by \eqref{eq:constructP2Q}, we may find dashed arrows $P''\dashrightarrow Q'', Q''\dashrightarrow  R''$ and $P''\dashrightarrow  R''$ such that any of the three lateral faces 
of the triangular prism \eqref{eq:triprism} are commutative.
We claim that the triangle
\begin{align}\label{eq:leftfaceInTriprism}
	\begin{gathered}
		\xymatrix@R=0.6cm{
			&P''\ar@{-->}[ld]\ar@{-->}[d]\\
			R''&Q''\ar@{-->}[l]
		}
	\end{gathered}
\end{align}
formed by dashed arrows in \eqref{eq:triprism} is also commutative. Indeed the right-smaller triangular prism in \eqref{eq:triprism} defines a $2$-simplex $\Delta^2\to {\rm Fun}(\Delta^1,\mathcal C)$, again by the triviality of $\theta$, there is a lifting $\Delta^2\to \mathcal{E}$ making 
\begin{equation}
	\begin{gathered}
		\xymatrix@R=0.4cm{
			\partial \Delta^2 \ar[r] \ar[d] & \mathcal{E}\ar[d]^-{\theta}\\
			\Delta^2 \ar[r] \ar@{-->}[ur]& {\rm Fun}(\Delta^1,\mathcal C)}
	\end{gathered}
\end{equation}
into a commutative diagram. This proves the claim. 
Note that if $\mathcal C$ is only a triangulated category, even though the three dashed arrows may still exist such that any of the three lateral faces of the triangular prism \eqref{eq:triprism} are commutative, but the diagram \eqref{eq:leftfaceInTriprism} may not commute.

\subsection{}\label{subsec:diffHomotopy}
To handle the difference
$C_{X/S}-\delta^!C_{X/Y}$ and construct the triangular prisms \eqref{eq:triprism}, we formulate the following fact.
Let $\mathcal{C}$ be a stable $\infty$-category.
Consider the following two commutative diagrams in $\mathcal{C}$ (viewed as maps $\Delta^1\times \Lambda_1^2\to \mathcal{C}$), where the different edges are $f$ and $h$:
\begin{align}
	\begin{gathered}\label{eq:originalPrism0}
		\xymatrix@R=0.6cm{
			P\ar[r]^-{g'}\ar[d]_-{f}&P'\ar[d]^-{f'}&P\ar[r]^-{g'}\ar[d]_-{h}&P'\ar[d]^-{f'}\\
			Q\ar[r]^-g\ar[d]_-{r}&Q'\ar[d]^-{r'}&	Q\ar[r]^-g\ar[d]_-{r}&Q'\ar[d]^-{r'}\\
			R\ar[r]^-s&R',&	R\ar[r]^-s&R'.
		}
	\end{gathered}
\end{align}
We extend them to triangular prisms (viewed as maps $\Delta^1\times \Delta^2\to \mathcal{C}$)
\begin{align}\label{eq:originalPrism1}
	\begin{gathered}
		\xymatrix@C=2cm@R=0.6cm{
			&P\ar[r]^-{g'}\ar[d]^-{f}\ar[ddl]_(0.3){rf}&P'\ar[d]^-{f'}\ar[ddl]_(0.3){r'f'}&P\ar[r]^-{g'}\ar[d]^-{h}\ar[ddl]_(0.3){rh}&P'\ar[d]^-{f'}\ar[ddl]_(0.3){r'f'}\\
			&Q \ar[r]_(0.2)g \ar[ld]^-{r}&Q' \ar[ld]^-{r'}&  Q \ar[r]_(0.2)g \ar[ld]^-{r}&Q' \ar[ld]^-{r'}\\
			R  \ar[r]_-s&R',&  R  \ar[r]_-s&R', \\
		}
	\end{gathered}
\end{align}
which define the compositions $rf,rh$ and $r'f'$.
Then we have commutative diagrams in $\mathcal{C}$
\begin{align}\label{eq:HomotopyZero}
	\begin{gathered}
		\xymatrix@C=2cm@R=0.6cm{
			P\ar[r]\ar[d]_-{f-h}&0\ar[d]	&&P\ar[r]\ar[d]^-{f-h}\ar[ddl]_-{rf-rh}&0\ar[d]\ar[ddl]\\
			Q\ar[r]_-g&Q' ,              &&Q \ar[r]_(0.2)g \ar[ld]^-{r}&Q' \ar[ld]^-{r'} \\
			&&R  \ar[r]_-s&R' ,\\
		}
	\end{gathered}
\end{align}
which depend on \eqref{eq:originalPrism0} and \eqref{eq:originalPrism1} respectively.
Indeed, the diagram \eqref{eq:originalPrism0} gives a homotopy $gf\simeq gh$, thus
$P\xrightarrow{f-h}Q\xrightarrow{g} Q'$ is homotopic to zero. 
This determines the left  diagram in \eqref{eq:HomotopyZero}. 
More precisely, the two diagrams can be constructed as follows.
We denote the composition $Q\oplus Q\simeq Q\amalg  Q\xrightarrow{\rm can}Q$ again by ${\rm can}$.
The cofiber sequence $P\xrightarrow{({\rm id,-id})}P\oplus P\xrightarrow{\rm can}P$ is defined by a pull-back square
\begin{align}
	\begin{gathered}\label{eq:sumFunctor}
		\xymatrix@R=0.4cm{
			P\ar[d]_-{({\rm id,-id})}\ar[r]&0\ar[d]\\
			P\oplus P\ar[r]^-{\rm can}&P,
		}
	\end{gathered}
\end{align}
which is also a push-out square. 
The  morphism $g':P\to P'$ induces a commutative diagram
\begin{align}\label{eq:sumSquare}
	\begin{gathered}
		\xymatrix@R=0.4cm{
			P\ar[dd]_-{({\rm id,-id})}\ar[rd]^-{g'}\ar[rr]&&0\ar[rd]\ar[dd]\\
			&P'\ar[rr]\ar[dd]_(0.3){({\rm id,-id})}&&0\ar[dd]\\
			P\oplus P\ar[rr]^(0.7){\rm can}\ar[rd]_-{(g',g')}&&P\ar[rd]^-{g'}\\
			&P'\oplus P'\ar[rr]^-{\rm can}&&P'.
		}
	\end{gathered}
\end{align}
The morphism $P\xrightarrow{f-h}Q$ is the composition $P\xrightarrow{({\rm id},-{\rm id})}P\oplus P\xrightarrow{(f,h)}Q\oplus Q\xrightarrow{\rm can}Q$.
Then the  diagrams in \eqref{eq:HomotopyZero} are defined by the following commutative diagram
\begin{equation}{\small
	\begin{tikzcd}[row sep=small]
	&P \ar[rr]\ar[dd,"{({\rm id,-id})}"] &&P'\ar[rr]\ar[dd,"{({\rm id,-id})}"]&&0\ar[dd]\\
	&&&&&\\
	&P\oplus P\ar[rr,"{(g',g')}"] \ar[dd,"{(f,h)}",dashed]\ar[dddl,"{(rf,rh)}"']&&P'\oplus P' \ar[rr,"{\rm can}" near start] \ar[dd,"{(f',f')}",dashed]\ar[dddl]&&P'\ar[dd]\ar[dddl]\\
	&&&&&\\
	&Q\oplus Q \ar[dd,dashed] \ar[dl,"{(r,r)}",dashed]\ar[rr,"{(g,g)}" near start,dashed]&&Q'\oplus Q' \ar[dd,dashed] \ar[dl,dashed]\ar[rr,"{\rm can}" near start,dashed]&&Q'\ar[dd,equal] \ar[dl]\\
	R\oplus R \ar[dd,"{\rm can}"']\ar[rr]&&R'\oplus R' \ar[dd,"{\rm can}" near start]\ar[rr,"{\rm can}" near end]&&R' \ar[dd,equal]&\\
	&Q \ar[rr,dashed]\ar[dl,dashed]&&Q'\ar[rr,dashed,"=" near start]\ar[dl,dashed]&&Q'\ar[dl]\\
	R\ar[rr]&&R'\ar[rr,equal]&&R'.&\\
\end{tikzcd}
}
\end{equation}

\subsection{}
To construct prisms similar to \eqref{eq:originalPrism1} in our setting (cf. \ref{lem:TisWellDefined}),
we first give a coherent commutative cube in Lemma \ref{lem:cubicdiagram} below.
Let ${\rm Corr}_S^\otimes={\rm Corr}^{\rm proper}_{\rm sep;all}(\rm Sch_S)$ be the symmetric monoidal $(\infty,2)$-category of correspondences (cf. \cite[Part III]{GR17}).
Let $\mathcal D: {\rm Corr}^\otimes_S\to {\rm Cat}_\infty$ be a 6-functor formalism of \'etale sheaves of $\Lambda$-modules on schemes over $S$ such that $D(-,\Lambda)=h\mathcal D(-)$ is the homotopy category of $\mathcal{D}(-)$.
Let $\mathcal D_{\rm cons}(-)\subseteq \mathcal D(-)$ be the full sub-$\infty$-category spanned by perfect-constructible complexes.
For any Noetherian scheme $X$ over $S$, we have $D_{\rm ctf}(X,\Lambda)=h\mathcal D_{\rm cons}(X)$ (cf. \cite[7.2]{HRS} and \cite[\S 2]{HS}).
Consider a commutative diagram in ${\rm Sch}_S$
\begin{align}\small
\begin{gathered}\label{eq:cubicSch}
\xymatrix@R=0.4cm{
&U'\ar@{-->}[dd]^(0.3){\delta_1^\prime}\ar[rr]^-{r'}\ar[ld]_-{j_1^\prime}&&V'\ar[dd]^-{\delta_0^\prime}\ar[ld]^-{j_0^\prime}\\
U\ar[rr]_(0.7)r\ar[dd]_-{\delta_1}&&V\ar[dd]^(0.3){\delta_0}\\
&W'\ar@{-->}[rr]_(0.3){i'}\ar@{-->}[ld]_(0.5){j_1}\ar@{-->}[dd]^(0.2){p_1^\prime}&&T'\ar[ld]^-{j_0}\ar[dd]^-{p_0^\prime}\\
W\ar[rr]_(0.7){i}\ar[dd]_-{p_1}&&T\ar[dd]^(0.3){p_0}\\
&W_0^\prime\ar@{-->}[ld]_-{s_1}\ar@{-->}[rr]_(0.3){\delta'}&&T_0^\prime\ar[ld]^-{s_0}\\
W_0\ar[rr]_-\delta&&T_0,
}
\end{gathered}
\end{align}
where, except for the squares on the left and right sides, all squares are cartesian.
Let $\mathcal F\in\mathcal D_{\rm cons}(T)$ and $\mathcal G\in\mathcal D_{\rm cons}(T_0)$. Recall that we have a canonical morphism (cf. \eqref{eq:carGtrans1})
\begin{align}\label{eq:i}
c_{\delta,p_0,\mathcal F,\mathcal G}: i^\ast\mathcal F\otimes^L p_1^\ast\delta^!\mathcal G\xrightarrow{} i^!(\mathcal F\otimes^Lp_0^\ast\mathcal G).
\end{align}
By Lemma \ref{lem:TransBforC}.(2),
there is a commutative diagram for the face $UW_0VT_0$ in \eqref{eq:cubicSch}
\begin{align}\label{eq:UVWT}
\begin{gathered}
\xymatrix@C=2cm{
\delta_1^\ast(i^\ast\mathcal F\otimes^L p_1^\ast\delta^!\mathcal G)\ar[r]^-{\delta_1^\ast c_{\delta,p_0,\mathcal F,\mathcal G}}\ar[d]^-\simeq&	\delta_1^\ast i^!(\mathcal F\otimes^Lp_0^\ast\mathcal G)\ar[d]^-{\delta_1^\ast  i^!\to r^!\delta_0^\ast}_-{\rm b.c}\\
r^\ast \delta_0^\ast\mathcal F\otimes^L \delta_1^\ast p_1^\ast\delta^!\mathcal G\ar[r]^-{c_{\delta,p_0\delta_0,\delta_0^\ast\mathcal F,\mathcal G}}&r^!(\delta_0^\ast\mathcal F\otimes^L \delta_0^\ast p_0^\ast\mathcal G).
}
\end{gathered}
\end{align}
Let $\mathcal F'=j_0^\ast\mathcal F$ and $\mathcal G'=s_0^\ast\mathcal G$.
For the face $U'W'_0V'T'_0$  in \eqref{eq:cubicSch}, we have a commutative diagram
\begin{align}\label{eq:UVWTprime}
\begin{gathered}
\xymatrix@C=2cm{
\delta_1^{\prime\ast}(i^{\prime\ast}\mathcal F'\otimes^L p_1^{\prime\ast}\delta^{\prime!} \mathcal G')\ar[r]^-{\delta_1^{\prime\ast} c_{\delta',p_0^\prime,\mathcal F',\mathcal G'}}\ar[d]^-{\simeq}&	\delta_1^{\prime\ast} i^{\prime!}(\mathcal F'\otimes^L p_0^{\prime\ast}\mathcal G')\ar[d]^-{\delta_1^{\prime\ast}  i^{\prime!}\to r^{\prime!}\delta_0^{\prime\ast}}_-{\rm b.c}\\
r^{\prime\ast} \delta_0^{\prime\ast} \mathcal F'\otimes^L \delta_1^{\prime\ast} p_1^{\prime\ast}\delta^{\prime!}\mathcal G'\ar[r]^-{c_{\delta',p_0^\prime\delta_0^\prime,\delta_0^{\prime\ast}\mathcal F',\mathcal G'}}&r^{\prime!}(\delta_0^{\prime\ast} \mathcal F'\otimes^L \delta_0^{\prime\ast} p_0^{\prime\ast}\mathcal G').
}
\end{gathered}
\end{align}
\begin{lemma}\label{lem:cubicdiagram}
Consider the commutative diagram \eqref{eq:cubicSch}.
Let $\mathcal F\in\mathcal D_{\rm cons}(T)$ and $\mathcal G\in\mathcal D_{\rm cons}(T_0)$.
Put $\mathcal F'=j_0^\ast\mathcal F$ and $\mathcal G'=s_0^\ast\mathcal G$.
There is a coherent commutative diagram $\Delta^1\times\Delta^1\times\Delta^1\to \mathcal D(U')$:
\begin{align}\label{eq:Cubic}
\begin{gathered}\small
\xymatrix@C=0.1cm@R=0.2cm{
&j_1^{\prime\ast} \delta_1^\ast(i^\ast\mathcal F\otimes^L p_1^\ast\delta^!\mathcal G)\ar[rr]\ar@{-->}[dd]\ar[ld]^-{}_{\rm b.c}&&	j_1^{\prime\ast} \delta_1^\ast i^!(\mathcal F\otimes^Lp_0^\ast\mathcal G)\ar[dd]\ar[ld]^-{}_{\rm b.c}\\
\delta_1^{\prime\ast}(i^{\prime\ast}\mathcal F'\otimes^L p_1^{\prime\ast}\delta^{\prime!} \mathcal G')\ar[rr]\ar[dd]&&	\delta_1^{\prime\ast} i^{\prime!}(\mathcal F'\otimes^L p_0^{\prime\ast}\mathcal G')\ar[dd]\\
&j_1^{\prime\ast} r^\ast \delta_0^\ast\mathcal F\otimes^L j_1^{\prime\ast}\delta_1^\ast p_1^\ast\delta^!\mathcal G\ar@{-->}[rr]\ar@{-->}[ld]^-{}_{\rm b.c}&&j_1^{\prime\ast} r^!(\delta_0^\ast\mathcal F\otimes^L \delta_0^\ast p_0^\ast\mathcal G)\ar[ld]^-{}_{\rm b.c}\\
r^{\prime\ast} \delta_0^{\prime\ast} \mathcal F'\otimes^L \delta_1^{\prime\ast}p_1^{\prime\ast} \delta^{\prime!}\mathcal G'\ar[rr]&&r^{\prime!}(\delta_0^{\prime\ast} \mathcal F'\otimes^L \delta_0^{\prime\ast} p_0^{\prime\ast}\mathcal G').
}
\end{gathered}
\end{align}
\end{lemma}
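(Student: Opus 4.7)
The plan is to realize the cube \eqref{eq:Cubic} as a vertical stacking of three elementary cubes, one for each factor in the defining factorization
\begin{equation*}
c_{i,\mathcal F,\mathcal G}\colon\; i^\ast\mathcal F\otimes^L i^!\mathcal G\xrightarrow{\mathrm{adj}} i^!i_!(i^\ast\mathcal F\otimes^L i^!\mathcal G)\xrightarrow[\simeq]{\mathrm{proj.\,f.}} i^!(\mathcal F\otimes^L i_!i^!\mathcal G)\xrightarrow{\mathrm{adj}} i^!(\mathcal F\otimes^L\mathcal G),
\end{equation*}
together with the analogous factorization of $c_{i',j_0^\ast\mathcal F, j_0^\ast\mathcal G}$. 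Each elementary cube will have as its back and front faces the corresponding piece applied to $(\mathcal F,\mathcal G)$ on $T$ (further pulled back by $j_1^{\prime\ast}$) and to $(j_0^\ast\mathcal F, j_0^\ast\mathcal G)$ on $T'$ respectively, and as its four vertical edges the relevant base change morphisms $\delta_1^{\prime\ast}j_1^\ast\to j_1^{\prime\ast}\delta_1^\ast$, $\delta_1^\ast i^!\to r^!\delta_0^\ast$, etc. Once each cube is produced coherently, stacking them along their shared intermediate faces yields the desired functor $\Delta^1\times\Delta^1\times\Delta^1\to\mathcal D(U')$.

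The unit and counit cubes are produced directly from the $(\infty,2)$-functoriality of $\mathcal D\colon\mathrm{Corr}_S^\otimes\to\mathrm{Cat}_\infty$. In this formalism, the base change transformation $\delta_1^\ast i^!\to r^!\delta_0^\ast$ is the image of a canonical $2$-morphism in $\mathrm{Corr}_S^\otimes$ associated to the cartesian square $UVWT$, and is the mate, under the adjunctions $(i_!,i^!)$ and $(r_!,r^!)$, of the analogous base change $\delta_0^\ast i_!\to r_!\delta_1^\ast$. Coherent naturality of the unit and counit with respect to these base change transformations is then built into the functoriality of $\mathcal D$ and requires nothing beyond unwinding definitions.

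The main obstacle is the middle cube, which asserts the coherent compatibility of the projection-formula isomorphism with base change along the stacked cartesian squares of \eqref{eq:cubicSch}. At the level of $1$-morphisms this is the five-term compatibility that is classically checked by hand; the point of working in the $\infty$-framework is that projection formula is itself a Beck-Chevalley isomorphism coming from the symmetric-monoidal structure of $\mathcal D$. Concretely, I would identify both compositions around the middle cube with the single natural transformation produced by $\mathcal D$ applied to the composite $2$-cell in $\mathrm{Corr}_S^\otimes$ that records the stacked cartesian squares, so that coherent commutativity reduces to associativity of $2$-morphism composition in $\mathrm{Corr}_S^\otimes$. Here one uses that both sides of the projection formula and their base-change variants are obtained from the same tensor structure on $\mathcal D$, which transports canonically along further cartesian pullbacks.

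Finally, to glue the three cubes into a single coherent $3$-cube, I would appeal to the trivial Kan fibration argument of \ref{subsec:LiftCofiber}: once the three elementary cubes are constructed as $2$-simplices in appropriate functor $\infty$-categories, the common intermediate faces identify canonically, and the resulting diagram $\Delta^1\times\Delta^1\times\Delta^1\to\mathcal D(U')$ exists and is unique up to a contractible space of choices. Specializing this construction to the back face by $j_1^{\prime\ast}$ yields the diagram as displayed in \eqref{eq:Cubic}.
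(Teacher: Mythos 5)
Your overall strategy---decomposing $c_{i,\mathcal F,\mathcal G}$ into its unit/projection-formula/counit factorization, building one elementary cube per factor, and stacking---is a genuinely different route from the paper's, but as written it has a gap precisely where the difficulty of the lemma sits. The entire content of Lemma \ref{lem:cubicdiagram} is the \emph{coherent} commutativity of \eqref{eq:Cubic}; your proposal defers exactly this to the assertions that the unit and counit cubes are ``built into the functoriality of $\mathcal D$'' and that the middle cube reduces to ``associativity of $2$-morphism composition.'' Neither is an unwinding of definitions. The vertical edges of your elementary cubes are non-invertible base-change $2$-cells such as $\delta_1^\ast i^!\to r^!\delta_0^\ast$, obtained as mates of proper base-change equivalences; coherent naturality of adjunction units and counits against such mates in an $(\infty,2)$-categorical setting is a delicate statement that must be extracted from the formalism, not assumed, and doing so for each of your three cubes is essentially as hard as the original problem. (Your final gluing step also cites the wrong tool: the trivial Kan fibration argument of \ref{subsec:LiftCofiber} is about lifting cofiber sequences, whereas stacking cubes along shared faces is just composition of edges in ${\rm Fun}(\Delta^1\times\Delta^1,\mathcal D(U'))$; that step, however, is the easy part.)

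The paper avoids decomposing $c_{i,\mathcal F,\mathcal G}$ altogether. Working in the unstraightening $\mathcal C^\otimes_{S,\Lambda}\to {\rm Corr}_S^\otimes$, it characterizes $c_{i,\mathcal F,\mathcal G}$ by a universal lifting property \eqref{eq:redefineCabove}: it is the essentially unique map out of the target of a cocartesian edge over $(T,W)\to(W)$ into the source of a locally cartesian edge over $(W)\to(T)$, compatible with the composite correspondence. The cube \eqref{eq:Cubic} is then \emph{constructed}, rather than verified, by exponentiating the fibration to ${\rm Fun}(\Delta^1\times\Delta^1,-)$, lifting the cubes \eqref{eq:cubic1} and \eqref{eq:cubic2} in ${\rm Corr}_S^\otimes$ along cocartesian and locally cartesian edges, and filling the resulting $\Lambda^2_0$ and $\Lambda^2_1$ horns; all commutativity comes for free from the contractibility of the spaces of such fillers. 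If you want to salvage your decomposition, you would need to give each of your three elementary cubes an analogous universal characterization (or carry out the mate calculus coherently), at which point you would in effect be reproducing the paper's argument three times instead of once.
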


In the proof, we will use a result of the exponentiation of cocartesian fibrations \cite[Proposition 3.1.2.1]{HTT}.
Let $\mathcal E\to \mathcal C$ be a cocartesian fibration of $\infty$-categories and $K$ a simplicial set.
Then ${\rm Fun}(K,\mathcal E)\to {\rm Fun}(K,\mathcal C)$ is a cocartesian fibration. An edge
$f\xrightarrow{e}f'$ in ${\rm Fun}(K,\mathcal E)$ is a cocartesian edge (for the fibration ${\rm Fun}(K,\mathcal E)\to {\rm Fun}(K,\mathcal C)$) if and only if for any vertex $v\in K$, $f(v)\xrightarrow{e(v)}f'(v)$ is a cocartesian edge in $\mathcal E$ (for the fibration $\mathcal E\to \mathcal C$).

The following proof is due to Jiangnan Xiong. The main idea is to give a universal characterization of $c_{\delta,p_0,\mathcal F,\mathcal G}$ by using cocartesian/cartesian edges.
\begin{proof}
We recall some standard constructions. 
By unstraightening $\mathcal D$, we have a cocartesian fibration $\mathcal C^\otimes_{S,\Lambda}\to {\rm Corr}_S^\otimes$. 	In the following, we will denote a correspondence
\begin{align}
	\begin{gathered}
		\xymatrix@R=0.4cm@C=0.4cm{
			C\ar[r]\ar[d]&X\\
			Y
		}
	\end{gathered}
\end{align}
simply by $Y\leftarrow C\to X$.
Objects in ${\rm Corr}_S^\otimes$ are given by $(X_1,\cdots, X_n)$ 
with $n\geq 0$ and $X_i\in {\rm Sch}_S$. 
Objects in $\mathcal C^\otimes_{S,\Lambda}$ are given by $(X_1,\cdots, X_n;\mathcal F_1,\cdots,\mathcal F_n)$
with $\mathcal F_i\in\mathcal D(X_i)$.
We have an equivalence
\begin{align}
	\mathcal D(X_1,\cdots, X_n)\xrightarrow{\simeq}\prod\limits_{i=1}^n \mathcal D(X_i).
\end{align}
Let $\langle n\rangle=\{\ast,1,\cdots,n\}$.
A morphism $(X_1,\cdots, X_n)\to (Y_1,\cdots, Y_m)$ in ${\rm Corr}_S^\otimes$ is given by a map $\alpha:\langle n\rangle\to\langle m\rangle$ with $\alpha(\ast)=\ast$ and  correspondences
\begin{align}
	\begin{gathered}
		\xymatrix@R=0.4cm@C=2cm{
			Y_j&C_j\ar[r]^-{f_j=(f_{ji})}\ar[l]_-{g_j}&\prod\limits_{i\in \alpha^{-1}(j)}X_i
		}
	\end{gathered}
\end{align}
for any $1\leq j\leq m$.
For simplicity, we denote such a morphism by $(\alpha,g_!f^*)=(\alpha,(g_{j!}(f_{ji})^*))$ or by $g_!f^*=(g_{j!}(f_{ji})^*)$.
The functor $\mathcal D(g_!f^*):\prod\limits_{i=1}^n \mathcal D(X_i)\to \prod\limits_{j=1}^m \mathcal D(Y_j)$
sends $(\mathcal F_1,\cdots,\mathcal F_n)$ to $(\mathcal G_1,\cdots,\mathcal G_m)$ with
\begin{align}\label{eq:detGj}
	\mathcal G_j=g_{j!}f_j^\ast(\boxtimes^L_{i\in\alpha^{-1}(j)}\mathcal F_i)=g_{j!}(\otimes^L_{i\in\alpha^{-1}(j)}f_{ji}^\ast\mathcal F_i).
\end{align}
We have a  cocartesian edge (for the  fibration $\mathcal C^\otimes_{S,\Lambda}\to {\rm Corr}_S^\otimes$) above $g_!f^*$
\[(X_1,\cdots, X_n;\mathcal F_1,\cdots,\mathcal F_n)\to (Y_1,\cdots, Y_m;\mathcal G_1,\cdots,\mathcal G_m),\] 
where $\mathcal G_i$ is defined by \eqref{eq:detGj}.
If $m=n$ and $\alpha:\langle n\rangle\to\langle m\rangle$ is the identity, then
the functor
\begin{align}
	\begin{split}
		\mathcal D(g_!f^*):\prod\limits_{i=1}^n \mathcal D(X_i)&\to \prod\limits_{i=1}^n \mathcal D(Y_i);\quad (\mathcal F_i)\mapsto (g_{i!}f_i^\ast\mathcal F_i)
	\end{split}
\end{align}
admits a right adjoint
\begin{align}
	\begin{split}
		\prod\limits_{i=1}^n \mathcal D(Y_i)&\to \prod\limits_{i=1}^n \mathcal D(X_i);\quad
		(\mathcal G_i)\mapsto (f_{i\ast}g_i^!\mathcal G_i).
	\end{split}
\end{align}
In this case, there is a locally cartesian edge
\[
(X_1,\cdots,X_n;f_{1\ast}g_1^!\mathcal G_1,\cdots, f_{n\ast}g_n^!\mathcal G_n)\to (Y_1,\cdots, Y_n;\mathcal G_1,\cdots,\mathcal G_n)
\]
above $g_!f^*:(X_1,\cdots, X_n)\to (Y_1,\cdots, Y_n)$ (cf. the dual version of \cite[Corollary 5.2.2.5]{HTT}).

Now we go back to the proof of Lemma \ref{lem:cubicdiagram}.
For convenience, we will use the arrow $\hookrightarrow$ (resp. $\twoheadrightarrow$) to indicate a locally cartesian edge (resp. cocartesian edge).
We first give a universal characterization of $c_{\delta,p_0,\mathcal F,\mathcal G}$.
Indeed, we have a commutative diagram in ${\rm Corr}_S^\otimes$ (cf. the square $WW_0TT_0$ in \eqref{eq:cubicSch}):
\begin{align}
	\begin{gathered}\label{eq:redefineCbelow}
		\xymatrix@C=1.5cm@R=0.4cm{
			(W)\ar@{=}[rr]&&(W)\ar[d]^-{i_!}\\
			(T,W_0)\ar[r]^-{({\rm id},\delta_!)}\ar[u]^-{(i,p_1)^*}&(T,T_0)\ar[r]^-{({\rm id},p_0)^*}&(T),
		}
	\end{gathered}
\end{align}
where $(T,W_0)\xrightarrow{(i,p_1)^\ast} (W)$ is given by the correspondence
$W=W\xrightarrow{(i,p_1)}T\times_S W_0$, $(W)\xrightarrow{i_!} (T)$ is given by the correspondence $T\xleftarrow{i}W=W$, $	(T,W_0)\xrightarrow{({\rm id}, \delta_!)}(T,T_0)$ is given by $(T=T=T, T_0\xleftarrow{\delta}W_0=W_0)$ and $(T,T_0)\xrightarrow{({\rm id},p_0)^\ast}(T)$
is given by $T=T\xrightarrow{({\rm id},p_0)}T\times_S T_0$. The two compositions 
\[
(T,W_0)\to (W)=(W)\to (T) \quad{\rm and}\quad (T,W_0)\to (T,T_0)\to (T)
\]
are both equal to $i_!(i,p_1)^*$, i.e., the correspondence $T\xleftarrow{i}W\xrightarrow{(i,p_1)}T\times_S W_0$.
Then we can characterize $c_{\delta,p_0,\mathcal F,\mathcal G}$ by using the following lifting problem over \eqref{eq:redefineCbelow}:
\begin{align}
	\begin{gathered}\label{eq:redefineCabove}
		\xymatrix{
			(W;i^\ast\mathcal F\otimes^L p_1^*\delta^!\mathcal G)\ar@{-->}[rrd]^-{c'}\ar@{-->}[rr]^-{c_{\delta,p_0,\mathcal F,\mathcal G}}&&(W; i^!(\mathcal F\otimes^Lp_0^*\mathcal G))\ar@{^(->}[d]^-{\rm cartesian}\\
			(T,W_0;\mathcal F, \delta^!\mathcal G)\ar@{^(->}[r]\ar@{->>}[u]^-{\rm cocartesian}&(T,T_0;\mathcal F,\mathcal G)\ar@{->>}[r]&(T,\mathcal F\otimes^Lp_0^*\mathcal G).
		}
	\end{gathered}
\end{align}
Indeed, the lifting $c'$ uniquely exists by the property of cocartesian edges (cf. \cite[2.4.1.1]{HTT}) and $c_{\delta,p_0,\mathcal F,\mathcal G}$ is the unique morphism (up to a contractible space) 
making \eqref{eq:redefineCabove} commutes by the property of cartesian edges.

We denote the  diagram \eqref{eq:redefineCabove} by $\Gamma_{WW_0TT_0}$.
We also have three similar diagrams $\Gamma_{UW_0VT_0}$, $\Gamma_{W'W_0^\prime T'T_0^\prime}$
and $\Gamma_{U'W_0^\prime V'T_0^\prime}$.
Now we prove Lemma \ref{lem:cubicdiagram} by constructing these four diagrams simultaneously. 
Here is a brief summary of the notation:
\begin{enumerate}
	\item The diagram $M:\Delta^2\to {\rm Fun}(\Delta^1\times \Delta^1,\mathcal C^\otimes_{S,\Lambda})$ (cf. \eqref{eq:realizeBottomLine}) realizes 
	 the bottom lines of these four diagrams  at the same time.  
	\item The left (resp. right) vertical arrows of these four diagrams are realized by the diagram $L:\Delta^1\to{\rm Fun}(\Delta^1\times\Delta^1, \mathcal C^\otimes_{S,\Lambda})$ (resp. $R:\Delta^1\to{\rm Fun}(\Delta^1\times\Delta^1, \mathcal C^\otimes_{S,\Lambda})$).
	\item  The diagram $K:\Delta^1\to{\rm Fun}(\Delta^1\times\Delta^1, \mathcal C^\otimes_{S,\Lambda})$ realizes the diagonal dashed arrows (cf. $c'$ in \eqref{eq:redefineCabove}) of these four diagrams.
	\item  The top arrows (cf. $c_{\delta,p_0,\mathcal F,\mathcal G}$ in \eqref{eq:redefineCabove}) of these four diagrams are realized by the diagram $F:\Delta^1\to{\rm Fun}(\Delta^1\times\Delta^1, \mathcal C^\otimes_{S,\Lambda})$ (cf. \eqref{eq:cforFourSquare}). The diagram $F$ determines the required commutative diagram \eqref{eq:Cubic} by pulling back to $U'$.
\end{enumerate}
More precisely, let $M_1:\Delta^0\to {\rm Fun}(\Delta^1\times\Delta^1, \mathcal C^\otimes_{S,\Lambda})$ be the  commutative diagram
\begin{align}\label{eq:Middle1}
	\begin{gathered}
		\xymatrix{
			(T,T_0;\mathcal F,\mathcal G)\ar@{->>}[d]\ar@{->>}[r]&(T',T'_0;\mathcal F',\mathcal G')\ar@{->>}[d]\\
			(V,T_0;\delta_0^\ast \mathcal F,\mathcal G)\ar@{->>}[r]&(V',T_0';\delta_0^{\prime\ast}\mathcal F',\mathcal G'),	
		}
	\end{gathered}
\end{align}
which is above $N_1:\Delta^0\cong\Delta^{\{1\}}\subset\Delta^2\overset{N}{\to}{\rm Fun}(\Delta^1\times\Delta^1,{\rm Corr}_S^\otimes)$, where $N$ is the following diagram:
\begin{align}\small
	\begin{gathered}\label{eq:cubic1}
		\xymatrix@R=0.6cm@C=1.5cm{
			(T,W_0)\ar[rr]^{({\rm id},\delta_!)}\ar[dd]^{(\delta_0^*,{\rm id})}\ar[rd]^{(j_0^*,s_1^*)}&&(T,T_0)\ar@{-->}[dd]_(0.25){(\delta_0^*,\rm id)}\ar[rr]^{({\rm id},p_0)^*}\ar[rd]^{(j_0^*,s_0^*)}&&(T)\ar@{-->}[dd]_(0.25){\delta_0^*}\ar[rd]^{j_0^*}\\
			&(T',W'_0)\ar[dd]_(0.25){(\delta_0^{\prime\ast},{\rm id})}\ar[rr]_(0.35){({\rm id},\delta'_!)}&&(T',T'_0)\ar[dd]_(0.25){(\delta_0^{\prime\ast},{\rm id})}\ar[rr]_(0.35){({\rm id},p_0')^*}&&(T')\ar[dd]^{\delta_0^{\prime\ast}}\\
			(V,W_0)\ar@{-->}[rr]_(0.7){({\rm id},\delta_!)}\ar[rd]_{(j_0^{\prime\ast},s_1^*)}&&(V,T_0)\ar@{-->}[rd]_(0.4){(j_0^{\prime\ast},s_0^*)}\ar@{-->}[rr]_(0.7){({\rm id},p_0\delta_0)^*}&&(V)\ar[rd]_(0.4){j_0^{\prime\ast}}\\
			&(V',W'_0)\ar[rr]_{({\rm id},\delta'_!)}&&(V',T'_0)\ar[rr]_{({\rm id},p_0'\delta'_0)^*}&&(V').
		}
	\end{gathered}
\end{align}
Let $N_{01}=N|_{\Delta^{\{0,1\}}}$ be the left cubic diagram in \eqref{eq:cubic1} and $N_{12}=N|_{\Delta^{\{1,2\}}}$ the right cubic diagram. 
We can extend $M_1$ (cf. \eqref{eq:Middle1}) to a locally cartesian edge 
$M_{01}:\Delta^{\{0,1\}}\to {\rm Fun}(\Delta^1\times\Delta^1, \mathcal C^\otimes_{S,\Lambda})$ above $N_{01}$, and
extend $M_1$ to a cocartesian edge $M_{12}:\Delta^{\{1,2\}}\to {\rm Fun}(\Delta^1\times\Delta^1, \mathcal C^\otimes_{S,\Lambda})$ above $N_{12}$.
Now $M_{01}$ and $M_{12}$ give a diagram $\Lambda_1^2\to{\rm Fun}(\Delta^1\times\Delta^1, \mathcal C^\otimes_{S,\Lambda})$, which can be extended to a diagram $M:\Delta^2\to{\rm Fun}(\Delta^1\times\Delta^1, \mathcal C^\otimes_{S,\Lambda})$ as follows
\begin{align}\tiny\label{eq:realizeBottomLine}
	\begin{gathered}
		\xymatrix@R=0.2cm@C=0.1em{
			(T,W_0;\mathcal F,\delta^!\mathcal G)\ar@{^(->}[rr]\ar[dd]\ar[rd]&&(T,T_0;\mathcal F,\mathcal G)\ar@{-->>}[dd]\ar@{->>}[rr]\ar@{->>}[rd]&&(T;\mathcal F\otimes^Lp_0^*\mathcal G)\ar@{-->>}[dd]\ar@{->>}[rd]\\
			&(T',W'_0;\mathcal{F}',\delta^{\prime!}\mathcal{G}')\ar[dd]\ar@{^(->}[rr]&&(T',T'_0;\mathcal{F}',\mathcal{G}')\ar@{->>}[dd]\ar@{->>}[rr]&&(T';\mathcal{F}'\otimes^Lp_0^{\prime*}\mathcal{G}')\ar@{->>}[dd]\\
			(V,W_0;\delta_0^\ast\mathcal{F},\delta^!\mathcal{G})\ar@{^(-->}[rr]\ar[rd]&&(V,T_0;\delta_0^\ast\mathcal F,\mathcal G)\ar@{-->>}[rd]\ar@{-->>}[rr]&&(V;\delta_0^\ast\mathcal F\otimes^L \delta_0^\ast p_0^*\mathcal G)\ar@{->>}[rd]\\
			&(V',W'_0;\delta_0^{\prime*}\mathcal{F}',\delta^{\prime!}\mathcal{G}')\ar@{^(->}[rr]&&(V',T'_0;\delta_0^{\prime*}\mathcal{F}',\mathcal{G}')\ar@{->>}[rr]&&(V';\delta_0^{\prime\ast}\mathcal{F}'\otimes^L\delta_0^{\prime\ast}p_0^{\prime\ast}\mathcal{G}').
		}
	\end{gathered}
\end{align}
Now let $H:\Delta^2\to{\rm Fun}(\Delta^1\times\Delta^1,{\rm Corr}_S^\otimes)$ be the following diagram
\begin{align}\small
	\begin{gathered}\label{eq:cubic2}
		\xymatrix@R=0.6cm@C=1.5cm{
			(T,W_0)\ar[rr]^{(i,p_1)^*}\ar[dd]_{(\delta_0^*,{\rm id})}\ar[rd]^{(j_0^*,s_1^*)}&&(W)\ar@{-->}[dd]_(0.25){\delta_1^*}\ar[rr]^{i_!}\ar[rd]^{j_1^*}&&(T)\ar@{-->}[dd]_(0.25){\delta_0^*}\ar[rd]^{j_0^*}\\
			&(T',W'_0)\ar[dd]_(0.25){(\delta_0^{\prime\ast},{\rm id})}\ar[rr]_(0.35){(i',p_1')^*}&&(W')\ar[dd]_(0.25){\delta_1^{\prime\ast}}\ar[rr]_(0.35){i'_!}&&(T')\ar[dd]^{\delta_0^{\prime\ast}}\\
			(V,W_0)\ar@{-->}[rr]_(0.7){(r,p_1\delta_1)^*}\ar[rd]_{(j_0^{\prime\ast},s_1^*)}&&(U)\ar@{-->}[rd]_(0.4){j_1^{\prime\ast}}\ar@{-->}[rr]_(0.7){r_!}&&(V)\ar[rd]^{j_0^{\prime\ast}}\\
			&(V',W'_0)\ar[rr]_{(r',p_1'\delta_1')^*}&&(U')\ar[rr]_{r'_!}&&(V').
		}
	\end{gathered}
\end{align}
Note that $N_{02}=H_{02}$ (cf. \eqref{eq:redefineCbelow}).
In the following, we use the standard notation that $\Delta^2\xrightarrow{(\alpha,\beta,\gamma)}\mathcal{C}$ means a functor $f:\Delta^2\to\mathcal{C}$ such that $f|_{\Delta^{\{1,2\}}}=\alpha,f|_{\Delta^{\{0,2\}}}=\beta$ and $f|_{\Delta^{\{0,1\}}}=\gamma$.
Let $L:\Delta^1\to{\rm Fun}(\Delta^1\times\Delta^1, \mathcal C^\otimes_{S,\Lambda})$ be a cocartesian lifting over $H_{01}$ with $L_0=M_0$. 
By the property of cocartesian edge, $\Lambda_0^2\xrightarrow{(\bullet,M_{02},L)}{\rm Fun}(\Delta^1\times\Delta^1, \mathcal C^\otimes_{S,\Lambda})$ can be extended to $\Delta^2\xrightarrow{(K,M_{02},L)}{\rm Fun}(\Delta^1\times\Delta^1, \mathcal C^\otimes_{S,\Lambda})$ over $H$. 
Let $R:\Delta^1\to{\rm Fun}(\Delta^1\times\Delta^1, \mathcal C^\otimes_{S,\Lambda})$ be a locally cartesian lifting over $H_{12}$ with $R_1=M_2$. 
Then $\Lambda_2^2\xrightarrow{(R,K,\bullet)}{\rm Fun}(\Delta^1\times\Delta^1, \mathcal C^\otimes_{S,\Lambda})$ can be extended to $\Delta^2\xrightarrow{(R,K,F)}{\rm Fun}(\Delta^1\times\Delta^1, \mathcal C^\otimes_{S,\Lambda})$. 
By construction, the diagram $F$ is of the following form
\begin{align}\tiny
\begin{gathered}\label{eq:cforFourSquare}
\xymatrix@C=0.2cm@R=0.2cm{
(W;i^\ast\mathcal F\otimes^L p_1^\ast\delta^!\mathcal G)\ar[dd]\ar[rr]\ar[rd]&&(W;i^!(\mathcal F\otimes^L p_0^\ast\mathcal G))\ar@{-->}[dd]\ar[rd]\\
&(W';i^{\prime\ast}\mathcal F'\otimes^L p_1^{\prime\ast}\delta^{\prime!}\mathcal G')\ar[rr]\ar[dd]&&(W';i^{\prime!}(\mathcal F'\otimes^L p_0^{\prime\ast}\mathcal G'))\ar[dd]\\
(U;r^\ast\delta_0^\ast\mathcal F\otimes^L \delta_1^\ast p_1^\ast \delta^!\mathcal G)\ar@{-->}[rr]\ar[rd]&&(U;r^!(\delta_0^\ast\mathcal F\otimes^L \delta_0^\ast p_0^\ast\mathcal G))\ar@{-->}[rd]\\
&(U';r^{\prime\ast}\delta_0^{\prime\ast}\mathcal F'\otimes^L \delta_1^{\prime\ast} p_1^{\prime\ast} \delta^{\prime!}\mathcal G')\ar[rr]&&(U';r^{\prime!}(\delta_0^{\prime\ast}\mathcal F'\otimes^L \delta_0^{\prime\ast} p_0^{\prime\ast}\mathcal G')),
}
\end{gathered}
\end{align}
which determines the required commutative diagram \eqref{eq:Cubic} by pulling back to $U'$. Indeed,
let $A:\Delta^1\to{\rm Fun}(\Delta^1\times\Delta^1,{\rm Corr}_S^\otimes)$ be the following diagram
\begin{align}\small
	\begin{gathered}\label{eq:cubic5}
		\xymatrix{
			(W)\ar[dd]_{\delta_1^*}\ar[rr]^{j_1^{\prime\ast}\delta_1^*}\ar[rd]_-{j_1^*}&&(U')\ar@{-->}[dd]_(0.25){\rm id}\ar[rd]^{\rm id}\\
			&(W')\ar[dd]_(0.25){\delta_1^{\prime\ast}}\ar[rr]_(0.35){\delta_1^{\prime\ast}}&&(U')\ar[dd]^{\rm id}\\
			(U)\ar[rd]_(0.4){j_1^{\prime\ast}}\ar@{-->}[rr]_(0.7){j_1^{\prime\ast}}&&(U')\ar[rd]^{\rm id}\\
			&(U')\ar[rr]^-{\rm id}&&(U'),
		}
	\end{gathered}
\end{align}
where $A_0=H_1$ and $A_1$ is the constant functor taking value $(U')$.
Put ${\rm Fun}_{\mathcal{C}}={\rm Fun}(\Delta^1\times\Delta^1, \mathcal C^\otimes_{S,\Lambda})$ and ${\rm Fun}_S={\rm Fun}(\Delta^1\times\Delta^1,{\rm Corr}_S^\otimes)$. 
Consider the covariant transport functor along $A$
\begin{align}
	A_!:{\rm Fun}_\mathcal{C}\times_{{\rm Fun}_S}\{A_0\}\to{\rm Fun}_\mathcal{C}\times_{{\rm Fun}_S}\{A_1\}={\rm Fun}(\Delta^1\times\Delta^1,\mathcal{D}(U')).
\end{align}
The diagram $F$ defines an edge $\Delta^1\to {\rm Fun}_\mathcal{C}\times_{{\rm Fun}_S}\{A_0\}$.
Now let $G=A_!\circ F:\Delta^1\to{\rm Fun}(\Delta^1\times\Delta^1,\mathcal{D}(U'))$, which defines a diagram $\Delta^1\times\Delta^1\times\Delta^1\to\mathcal{D}(U')$. Unwinding the definitions, $G$ is the required commutative diagram \eqref{eq:Cubic}.
\end{proof}
\subsection{}\label{sub:ConstructCube}
We apply Lemma \ref{lem:cubicdiagram} to construct the diagram \eqref{eq:adjToDeltaKcubic5} below, which is crucial for proving the fibration formula \eqref{eq:milclass}.
Consider the diagrams \eqref{xyd:prop:Gysim:fgh2} and \eqref{eq:propchangebasehomo:1}.
Let $j: U=X\setminus Z\to X$ be the open immersion and $\mathcal F\in D_{\rm cons}(X)$. We put
\begin{align}
	\mathcal H_S&=R\mathcal Hom_{X\times_SX}({\rm pr}_2^\ast \mathcal F,{\rm pr}_1^!\mathcal F),\qquad\qquad \mathcal T_S=\mathcal F\boxtimes^L_S D_{X/S}(\mathcal F),\\
	\mathcal H'_S&=R\mathcal Hom_{U\times_SU}({\rm pr}_2^\ast j^\ast \mathcal F,{\rm pr}_1^!j^\ast \mathcal F),\qquad\;\mathcal T'_S=j^\ast \mathcal F\boxtimes^L_{S}D_{U/S}(j^\ast \mathcal F).
\end{align}
For later convenience,
we form the following commutative diagram
\begin{align}\small
	\begin{gathered}\label{eq:cubicSchidelta}
		\xymatrix@R=0.4cm{
			&U\ar@{-->}[dd]^(0.3){\delta_1^\prime}\ar@{=}[rr]\ar@{_(->}[ld]_-{j}&&U\ar[dd]^-{\delta_0^\prime}\ar@{_(->}[ld]_-{j}\\
			X\ar@/_2.5pc/[dddd]_-f\ar@{=}[rr]\ar[dd]_-{\delta_1}&&X\ar[dd]^(0.3){\delta_0}\\
			&U\times_YU\ar@{-->}[rr]_(0.3){i'}\ar@{-->}[ld]_-{j\times j}\ar@{-->}[dd]^(0.3){p'}&&U\times_SU\ar[ld]^-{j\times j}\ar[dd]^-{f'\times f'}\\
			X\times_YX\ar[rr]_(0.3){i}\ar[dd]^-p&&X\times_SX\ar[dd]^(0.3){f\times f}\\
			&Y\ar@{-->}[rr]_(0.3){\delta}\ar@{==}[ld]&&Y\times_SY\ar@{=}[ld]\\
			Y\ar[rr]_-\delta&&Y\times_SY,
		}
	\end{gathered}
\end{align}
where $f'=f|_{U}$, $i'$ is the base change of $\delta$, $\delta'_0$ and $\delta'_1$ are the diagonal morphisms.
Then we have a commutative diagram
\begin{align}\small
	\begin{gathered}\label{eq:adjToDeltaKcubic2}
		\xymatrix@R=0.25cm@C=0.2cm{
			&j^\ast \delta_1^\ast i^\ast \mathcal T_S\otimes^L j^\ast f^\ast\delta^!\Lambda \ar[rr]\ar@{-->}[dd]\ar[ld]_-\simeq&&j^\ast \delta_1^\ast i^!\mathcal T_S\ar@{-->}[dd]\ar[ld]^-\simeq\ar[rr]&&j^\ast\delta_1^\ast\delta^\Delta\mathcal T_S\\
			\delta_1^{\prime\ast} i^{\prime\ast} \mathcal T'_S\otimes^L f^{\prime\ast}\delta^!\Lambda\ar[rr]\ar[dd]&& \delta_1^{\prime\ast}i^{\prime!}\mathcal T'_S\ar[dd]\ar[rr]&&\delta_1^{\prime\ast}\delta^\Delta\mathcal T_S^\prime\\
			&j^\ast \delta_0^\ast\mathcal T_S\otimes^L j^\ast f^\ast\delta^!\Lambda\ar@{-->}[ld]_-\simeq\ar@{-->}[rr]\ar@{-->}[dd]^(0.3){\rm ev}&& j^\ast\delta_0^\ast\mathcal T_S\ar[ld]^-\simeq\ar[dd]^(0.3){\rm ev}\\
			\delta_0^{\prime\ast}  \mathcal T'_S\otimes^L f^{\prime\ast}\delta^!\Lambda\ar[rr]\ar[dd]^-{\rm ev}&& \delta_0^{\prime\ast}\mathcal T'_S\ar[dd]^(0.3){\rm ev}\\
			&j^\ast\mathcal K_{X/S}\otimes^Lj^\ast f^\ast\delta^!\Lambda\ar@{-->}[ld]_-\simeq\ar@{-->}[rr]&&j^\ast\mathcal K_{X/S}\ar[ld]^-\simeq\ar[rr]&&j^\ast\mathcal K_{X/Y/S}\\
			\mathcal K_{U/S}\otimes^L f^{\prime\ast}\delta^!\Lambda\ar[rr]&& \mathcal K_{U/S}\ar[rr]&&\mathcal K_{U/Y/S},
		}
	\end{gathered}
\end{align}
where the upper cube is obtained by applying Lemma \ref{lem:cubicdiagram} to the diagram \eqref{eq:cubicSchidelta}, the lower cube is obtained by
applying the following diagram to $j^\ast f^\ast\delta^!\Lambda\simeq  f^{\prime\ast}\delta^!\Lambda\xrightarrow{\rm b.c}\Lambda$:
\begin{align}
	\begin{gathered}\label{eq:adjToDeltaKcubic3-0}
		\xymatrix@R=0.6cm{
			 \delta_0^{\prime\ast}  \mathcal T'_S\otimes^L -\ar[d]^-{\rm ev}&j^\ast\delta_0^\ast\mathcal T_S\otimes^L -\ar[l]\ar[d]^-{\rm ev}\\
			\mathcal K_{U/S}\otimes^L -&j^\ast\mathcal K_{X/S}\otimes^L -.\ar[l]
		}
	\end{gathered}
\end{align}
Note that the horizontal rows in the top and bottom faces of \eqref{eq:adjToDeltaKcubic2} are cofiber sequences (cf. \eqref{eq:Dchf}).
The diagram \eqref{eq:adjToDeltaKcubic2} further induces the following commutative diagram  (by the same reason as in \ref{subsec:LiftCofiber} or applying the dual version of \ref{subsec:LiftCofiber} twice):
\begin{align}\small
	\begin{gathered}\label{eq:adjToDeltaKcubic5}
		\xymatrix@R=0.2cm@C=0.2cm{
			&j^\ast\delta_1^\ast i^! \mathcal T_S \ar[rr]\ar@{-->}[dd]\ar[ld]_-\simeq&&j^\ast\delta_1^\ast \delta^\Delta\mathcal T_S\ar[dd]\ar[ld]^-\simeq\\
			\delta_1^{\prime\ast} i^{\prime!} \mathcal T'_S\ar[rr]\ar[dd]&& \delta_1^{\prime\ast}\delta^{\Delta}\mathcal T'_S\ar[dd]\\
			&j^\ast\mathcal K_{X/S}\ar@{-->}[ld]_-\simeq\ar@{-->}[rr]&&j^\ast\mathcal K_{X/Y/S}\ar[ld]^-\simeq\\
			\mathcal K_{U/S}\ar[rr]&&\mathcal K_{U/Y/S}.\\
		}
	\end{gathered}
\end{align}

\subsection{}\label{sec:ConstructT}
Using the 6-functor formalism $\mathcal D: {\rm Corr}^\otimes_S\to {\rm Cat}_\infty$, we  can extend the definitions 
of $C_{X/S}$ (cf. \eqref{eq:traceEquiforcharclass} and \eqref{eq:defCharClassInfinity}) and $C^Z_{X/Y/S}$ (cf. \ref{subsec:constructNAclass} and \eqref{eq:priciseDefCZXYS}) to objects in $\mathcal D_{\rm cons}(X)$ satisfying universal local acyclicity conditions\footnote{
	Let $\mathcal C_{S,\Lambda}^\otimes\to {\rm Corr}_S^\otimes$ be the 
	unstraightening of $\mathcal D: {\rm Corr}_S^\otimes\to {\rm Cat}_\infty$.
	A morphism $X\to S$ is universally locally acyclic relatively to 
	an object $\mathcal F\in\mathcal D_{\rm cons}(X)$ if and only if $(X,\mathcal F)$ is 
	dualisable in the symmetric 
	monoidal $\infty$-category $\mathcal C_{S,\Lambda}^\otimes$  (cf. \cite[Theorem 2.16]{LZ22} and \cite[Definition 3.2]{HS}).}.
We can also enhance the functor $\delta^\Delta$ (cf. \ref{subsec:defdeltaDelta}) and 
upgrade \eqref{eq:distriK} to be a cofiber sequence in $\mathcal D(X)$. 
Some diagrams in Section \ref{sec:TC}
and Section \ref{sec:CCC} can be upgraded to coherent commutative diagrams in stable $\infty$-categories, 
for example, \eqref{eq:propchangebasehom:TS2} and \eqref{bcdiag3}. 

In the following, we
assume the conditions (C1) and (C2) in \ref{subsec:notationforXYS} hold.
We  fix an object $\mathcal F\in \mathcal D_{\rm cons}(X)$ such that
$X\setminus Z\to Y$ is universally locally acyclic  relatively to $\mathcal F|_{X\setminus Z}$ and that $X\to S$  is universally locally acyclic  relatively to $\mathcal F$.
By the assumptions on $\mathcal F$, we have morphisms $
C_{X/S}(\mathcal F):\Lambda\to \mathcal K_{X/S}$ and $ C_{U/Y}(\mathcal F):\Lambda\to \mathcal K_{U/Y}$.
More precisely, the morphism $C_{X/S}(\mathcal F)$ is defined by
the following composition\footnote{In an $\infty$-category, the composition of two maps is well-defined up to a contractible space.}
\begin{align}\label{eq:defCharClassInfinity}
	\Lambda\xrightarrow{\rm id}R\mathcal Hom(\mathcal F,\mathcal F)\xrightarrow[\simeq]{\eqref{eq:keyisoCshrinkRHOM}}\delta_0^! \mathcal H_S\xleftarrow[\simeq]{\eqref{eq:Kunethiso}}\delta_0^!\mathcal T_S
	\xrightarrow{}\delta_0^\ast \mathcal T_S \xrightarrow{\rm ev} \mathcal K_{X/S}.
\end{align}
Let $C_{X/Y/S}(\mathcal F)$ be the following composition (cf. \eqref{bcdiag3})
\begin{align}\label{def:CXYNewdef}
	\Lambda\to \delta^!_0 \mathcal H_S\xleftarrow{\simeq}\delta_0^!\mathcal T_S
	\simeq \delta_1^!i^!\mathcal T_S\xrightarrow{}\delta_1^\ast i^!\mathcal T_S\to \delta_1^\ast \delta^\Delta \mathcal T_S \xrightarrow{\rm b.c}\delta^\Delta  \delta_0^\ast\mathcal T_S\xrightarrow{\rm ev}\delta^\Delta\mathcal K_{X/S}=\mathcal K_{X/Y/S},
\end{align}
which is  equivalent to the composition $\Lambda\xrightarrow{C_{X/S}(\mathcal F)}\mathcal K_{X/S}\xrightarrow{\eqref{eq:distriK}}\mathcal K_{X/Y/S}$.
Similar for  $C_{U/Y/S}(\mathcal F):=C_{U/Y/S}(\mathcal F|_U)$.  We have a commutative diagram
\begin{align}
	\begin{gathered}\label{eq:CXYSUhomotopyZero}
		\xymatrix@R=0.5cm@C=1.5cm{
			\Lambda\ar[r]\ar[d]_-{C_{X/Y/S}(\mathcal F)}&j_\ast\Lambda\ar[d]_-{ C_{U/Y/S}(\mathcal F|_U)}\ar[r]&j_\ast \delta_1^{\prime\ast} \delta^\Delta\mathcal T'_S\simeq 0\ar[ld]\\
			\mathcal K_{X/Y/S}\ar[r]&j_\ast\mathcal K_{U/Y/S},
		}
	\end{gathered}
\end{align}
where $\delta_1^{\prime\ast} \delta^\Delta\mathcal T'_S\simeq 0$ by \eqref{lem:eq:Gysim2}
since $U\to Y$ is universally locally acyclic relatively to $\mathcal F|_U$. 
Applying $\tau_\ast\tau^!\to {\rm id}\to j_\ast j^\ast$ to \eqref{def:CXYNewdef}, we get a commutative diagram
between cofiber sequences
\begin{align}
	\begin{gathered}\label{eq:TZXYcompaireNA}
		\xymatrix@R=0.5cm{
			\tau_\ast\tau^!\Lambda\ar[d]\ar[r]&\Lambda\ar[d]\ar[r]&j_\ast \Lambda\ar[d]\\
			\tau_\ast\tau^!\delta_1^\ast i^! \mathcal T_S \ar[r]\ar[d] &\delta_1^\ast i^! \mathcal T_S \ar[r]\ar[d]&j_\ast \delta_1^{\prime\ast} i^{\prime!} \mathcal T'_S\ar[d]\\
			\tau_\ast\tau^! \delta_1^\ast \delta^\Delta\mathcal T_S\ar[r]^-\simeq \ar[d]&\delta_1^\ast \delta^\Delta\mathcal T_S\ar[r]\ar[d]&j_\ast \delta_1^{\prime\ast}\delta^{\Delta}\mathcal T'_S\simeq 0\ar[d]\\
			\tau_\ast\tau^! \mathcal K_{X/Y/S}\ar[r] &\mathcal K_{X/Y/S}\ar[r]&j_\ast \mathcal K_{U/Y/S}.\\
		}
	\end{gathered}
\end{align}
Note that the non-acyclicity class $C^Z_{X/Y/S}(\mathcal F)$ is the composition (cf. \ref{subsec:constructNAclass})
\begin{align}\label{eq:priciseDefCZXYS}
	\Lambda\to  \delta^\ast_1 i^!\mathcal T_S\to \delta_1^\ast \delta^\Delta\mathcal T_S\xleftarrow{\simeq} \tau_\ast\tau^! \delta_1^\ast \delta^\Delta\mathcal T_S\to \tau_\ast\tau^! \mathcal K_{X/Y/S}
\end{align}
and \eqref{eq:TZXYcompaireNA} gives the following commutative diagram between cofiber sequences 
\begin{align}\label{eq:CZXYSlifting}
	\begin{gathered}
		\xymatrix@R=0.5cm{
			\Lambda\ar@{=}[r]\ar[d]_-{C^Z_{X/Y/S}(\mathcal F)}&\Lambda\ar[d]^-{C_{X/Y/S}(\mathcal F)}\ar[r]&0\ar[d]\\
			\tau_\ast \tau^!\mathcal K_{X/Y/S}\ar[r]&\mathcal K_{X/Y/S}\ar[r]&
			j_\ast\mathcal K_{U/Y/S}.
		}
	\end{gathered}
\end{align}
More precisely, it is given as follows
\begin{equation}\label{eq:CZXYSlifting--}
	\begin{gathered}
		\begin{tikzcd}[row sep=0.5cm]
			\Lambda\arrow[r,equal]\dar \arrow[dd, bend right=70, "C^Z_{X/Y/S}(\mathcal F)"'] &\Lambda\dar\rar&j_\ast\Lambda\arrow[rd]\arrow[rr]&&0\arrow[dd]\\
			\tau_\ast\tau^! \delta_1^\ast \delta^\Delta\mathcal T_S\arrow[r, "\simeq"] \dar&\delta_1^\ast \delta^\Delta\mathcal T_S\dar&&j_\ast \delta_1^{\prime\ast}\delta^{\Delta}\mathcal T'_S\arrow[dr]\arrow[ur, Rightarrow, shorten <=0.8em, shorten >=1em, "\gamma"']&\\
			\tau_\ast\tau^! \mathcal K_{X/Y/S}\rar&\mathcal K_{X/Y/S}\arrow[rrr]\arrow[urr, Rightarrow, shorten <=1em, shorten >=1em, "\beta"]&&&j_\ast \mathcal K_{U/Y/S},\\
		\end{tikzcd}
	\end{gathered}
\end{equation}
where $\beta$ is the homotopy defined by the right squares in \eqref{eq:TZXYcompaireNA}, 
$\gamma$ is the homotopy determined by the isomorphism $j_\ast \delta_1^{\prime\ast}\delta^{\Delta}\mathcal T'_S\simeq 0$.
\subsection{}In the following, let us construct the required map $L^Z_{X/Y/S}(\mathcal F)$.
We denote by ${\rm Map}(\Lambda,\mathcal K_{X/S})={\rm Map}_{\mathcal D(X)}(\Lambda,\mathcal K_{X/S})$ the mapping space whose $\pi_0$ is $H^0(X,\mathcal K_{X/S})$.
Since $\pi_0({\rm Map}(\Lambda,\mathcal K_{X/Y}))\simeq \pi_0({\rm Map}(\Lambda,\mathcal K_{U/Y}))$,
there is a morphism $C_{X/Y}(\mathcal F): \Lambda\to \mathcal K_{X/Y}$ with a commutative diagram
\begin{align}
	\begin{gathered}\label{eq:chooseCXY}
		\xymatrix@R=0.5cm{
			\Lambda\ar[d]_-{C_{X/Y}(\mathcal F)}\ar[r]&j_\ast\Lambda\ar[d]^-{C_{U/Y}(\mathcal F)}\\
			\mathcal K_{X/Y}\ar[r]& j_\ast\mathcal K_{U/Y}.
		}
	\end{gathered}
\end{align} 
Let $\delta^! C_{X/Y}(\mathcal F)$ be the composition
$\Lambda\xrightarrow{C_{X/Y}(\mathcal F)}\mathcal K_{X/Y}\xrightarrow{}\mathcal K_{X/S}$. 
We put $d_{X/Y/S}(\mathcal F)=C_{X/S}(\mathcal F)-\delta^! C_{X/Y}(\mathcal F)$. Consider the following two commutative diagrams in $\mathcal D(X)$:
\begin{align}\label{eq:backfacediff}
	\begin{gathered}
		\xymatrix@R=0.5cm{
			\Lambda\ar[r]\ar[d]_-{C_{X/S}(\mathcal F)}&j_\ast\Lambda\ar[d]^-{C_{U/S}(\mathcal F)}&&&\Lambda\ar[r]\ar[d]_-{\delta^!C_{X/Y}(\mathcal F)}&j_\ast\Lambda \ar[d]_-{\delta^!C_{U/Y}(\mathcal F)}\ar@{=}[r]&j_\ast\Lambda\ar[ld]^-{C_{U/S}(\mathcal F)}\\
			\mathcal K_{X/S}\ar[r]&j_\ast\mathcal K_{U/S},&&&	\mathcal K_{X/S}\ar[r]&j_\ast\mathcal K_{U/S},
		}
	\end{gathered}
\end{align}
where the rightmost triangle is given the commutative diagram \eqref{eq:propchangebasehom:TS2}.
By \eqref{eq:HomotopyZero}, we have a commutative diagram 
\begin{align}
	\begin{gathered}
		\xymatrix@R=0.5cm{
			\Lambda\ar[d]_-{d_{X/Y/S}(\mathcal F)}\ar[r]&0\ar[d]\\
			\mathcal K_{X/S}\ar[r]&
			j_\ast\mathcal K_{U/S}.\\
		}
	\end{gathered}
\end{align}
By \ref{subsec:LiftCofiber}, 
there exists a morphism $L^Z_{X/Y/S}(\mathcal F):\Lambda\to \tau_\ast\mathcal K_{Z/S}$  
making
\begin{align}\label{eq:TDXYS}
	\begin{gathered}
		\xymatrix@R=0.5cm@C=1.5cm{
			\Lambda\ar@{-->}[d]_-{L^Z_{X/Y/S}(\mathcal F)}\ar@{=}[r]&\Lambda\ar[d]^-{d_{X/Y/S}(\mathcal F)}\ar[r]&0\ar[d]\\
			\tau_\ast\mathcal K_{Z/S}\ar[r]&\mathcal K_{X/S}\ar[r]&
			j_\ast\mathcal K_{U/S}\\
		}
	\end{gathered}
\end{align}
into a commutative diagram in $\mathcal D(X)$. 
Even though the morphism  $L^Z_{X/Y/S}(\mathcal F):\Lambda\to \tau_\ast\mathcal K_{Z/S}$
may depends on the extension $C_{X/Y}(\mathcal F)$ of $C_{U/Y}(\mathcal F)$, we have the following result.
\begin{proposition}\label{lem:TisWellDefined}
Consider the notation in \ref{subsec:notationforXYS}. Let $\mathcal F\in \mathcal D_{\rm cons}(X)$ such that the conditions (C1)-(C3) in \ref{subsec:notationforXYS} hold.
Then we have a commutative diagram in $\mathcal D(X)$:
	\begin{align}\label{eq:CTDiffExplain}
		\begin{gathered}
			\xymatrix@R=0.5cm@C=2cm{
				&\Lambda \ar[ld]_-{C^Z_{X/Y/S}(\mathcal{F})} \ar[d]^-{L^Z_{X/Y/S}(\mathcal{F})}\ar@{=}[r]&\Lambda\ar[d]^-{C_{X/S}(\mathcal F)-\delta^!C_{X/Y}(\mathcal F)}\\
				\tau_*\tau^!\mathcal{K}_{X/Y/S}& \tau_*\mathcal{K}_{Z/S} \ar[l]\ar[r]&\mathcal K_{X/S}.
			}
		\end{gathered}
	\end{align}
	In particular, 
	the composition $\Lambda\xrightarrow{L^Z_{X/Y/S}(\mathcal F)} \tau_\ast\mathcal K_{Z/S}\to \tau_\ast\tau^!\mathcal K_{X/Y/S}$ is homotopic to $C^Z_{X/Y/S}(\mathcal F)$, which is independent of the choice of $C_{X/Y}(\mathcal F)$. 
	From \eqref{eq:CTDiffExplain}, we get the following equality:
	\begin{align}\label{eq:TequalDiff0}
		C_{X/S}(\mathcal F)=\delta^!C_{X/Y}(\mathcal F)+C^Z_{X/Y/S}(\mathcal F)\quad{\rm in}\quad H^0(X,\mathcal K_{X/S}).
	\end{align}
\end{proposition}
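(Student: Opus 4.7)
The plan is to establish the commutative diagram \eqref{eq:CTDiffExplain} directly in the stable $\infty$-category $\mathcal D(X)$, since once this diagram is in hand, applying $\pi_0$ to the outer square immediately gives \eqref{eq:TequalDiff0} (the outer square expresses $C^Z_{X/Y/S}(\mathcal F)$ as the canonical lift of $d_{X/Y/S}(\mathcal F) = C_{X/S}(\mathcal F)-\delta^! C_{X/Y}(\mathcal F)$ along $H^0(Z,\mathcal K_{Z/S}) \simeq H^0_Z(X,\mathcal K_{X/Y/S}) \to H^0_Z(X,\mathcal K_{X/S})$). All classical commutative diagrams from Sections \ref{sec:TC}--\ref{sec:CCC2} used below will be enhanced to coherent commutative diagrams in $\mathcal D$ via the $6$-functor formalism of Section \ref{sec:ConstructT}.

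The right triangle of \eqref{eq:CTDiffExplain} is essentially what defines $L^Z_{X/Y/S}(\mathcal F)$; it is furnished automatically by the diagram \eqref{eq:TDXYS} combined with the trivial Kan fibration $\theta\colon \mathcal E \to \mathrm{Fun}(\Delta^1,\mathcal D(X))$ of \ref{subsec:LiftCofiber} applied to the cofiber sequence $\tau_\ast \mathcal K_{Z/S} \to \mathcal K_{X/S} \to j_\ast \mathcal K_{U/S}$. Crucially, the nullhomotopy of $d_{X/Y/S}(\mathcal F)|_U$ needed to produce this lift comes from the $\infty$-categorical enhancement of Theorem \ref{thm:changebaseHomo} applied to $U \to Y \to S$, which supplies a coherent homotopy $\delta^! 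C_{U/Y}(\mathcal F|_U) \simeq C_{U/S}(\mathcal F|_U)$ via the enhanced diagram \eqref{eq:propchangebasehom:TS2}. Then, following the construction of \ref{subsec:diffHomotopy} verbatim with $P=\Lambda$, $Q=\mathcal K_{X/S}$, $R=j_\ast \mathcal K_{U/S}$, $f = C_{X/S}(\mathcal F)$ and $h=\delta^! C_{X/Y}(\mathcal F)$, one obtains the right square of \eqref{eq:CTDiffExplain} together with the requisite extension to a triangular prism.

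The main obstacle is the left triangle of \eqref{eq:CTDiffExplain}: verifying that $L^Z_{X/Y/S}(\mathcal F)$ post-composed with $\tau_\ast\mathcal K_{Z/S}\to \tau_\ast\tau^!\mathcal K_{X/Y/S}$ agrees with $C^Z_{X/Y/S}(\mathcal F)$. To handle this, I would stack the three cofiber sequences arising as the columns of \eqref{eq:KXYStauU} and build a coherent morphism from the rightmost column of \eqref{eq:TZXYcompaireNA} into the bottom row of \eqref{eq:KXYStauU}. The key technical input is Lemma \ref{lem:cubicdiagram} applied to the cube \eqref{eq:cubicSchidelta}, together with the enhanced diagram \eqref{bcdiag3}: these provide a coherent cube in $\mathcal D(X)$ whose front face is the path $\Lambda \to \delta_1^\ast i^!\mathcal T_S \to \delta_1^\ast\delta^\Delta\mathcal T_S \to \mathcal K_{X/Y/S}$ used to construct $C^Z$, and whose back face is the restriction of the same path to $U$ (which vanishes). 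Uniqueness-up-to-contractible-space of lifts of cofiber sequences along $\theta$, applied twice (once to produce $L^Z$ via $\mathcal K_{X/S}$, once to produce $C^Z$ via $\delta_1^\ast\delta^\Delta\mathcal T_S$), forces the two resulting morphisms $\Lambda \to \tau_\ast\tau^!\mathcal K_{X/Y/S}$ to coincide.

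The hard part will not be any single step but rather the bookkeeping needed to thread together the $\infty$-categorical enhancements of \eqref{bcdiag3}, \eqref{bcdiag3-factorization}, \eqref{eq:propchangebasehom:TS2} and \eqref{eq:TZXYcompaireNA} into a single coherent diagram supporting both lifts simultaneously; this is the kind of situation handled precisely by the triangular-prism argument of \ref{subsec:LiftCofiber}--\ref{subsec:diffHomotopy} where the triviality of $\theta\colon \mathcal E \to \mathrm{Fun}(\Delta^1,\mathcal C)$ is used iteratively. Once the left triangle is checked, taking $\pi_0$ of the outer square of \eqref{eq:CTDiffExplain} and using the canonical identification $H^0(Z,\mathcal K_{Z/S}) = H^0_Z(X,\mathcal K_{X/S})$ (from the distinguished triangle $\tau_\ast\mathcal K_{Z/S} \to \mathcal K_{X/S} \to j_\ast\mathcal K_{U/S}$) together with the definition \eqref{eq:defEqOfCZXYS} of $C^Z_{X/Y/S}(\mathcal F)$ under the isomorphism \eqref{eq:isomOnHZassC2} yields the equality \eqref{eq:milclass}, completing the proof.
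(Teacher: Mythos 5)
Your proposal is correct and follows essentially the same route as the paper's proof: the $\infty$-categorical lifting machinery of \ref{subsec:LiftCofiber}--\ref{subsec:diffHomotopy} applied to the cofiber sequence $\tau_*\mathcal K_{Z/S}\to\mathcal K_{X/S}\to j_*\mathcal K_{U/S}$, the coherent cube of Lemma \ref{lem:cubicdiagram} for the diagram \eqref{eq:cubicSchidelta}, the vanishing $\delta_1^{\prime\ast}\delta^{\Delta}\mathcal T'_S\simeq 0$ on $U$, and the homotopy $\delta^!C_{U/Y}(\mathcal F)\simeq C_{U/S}(\mathcal F)$ from the enhanced \eqref{eq:propchangebasehom:TS2}. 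The paper carries out the "bookkeeping" you defer by explicitly chaining the coherent cubes \eqref{eq:adjToDeltaKcubic0}--\eqref{eq:adjToDeltaKcubic6}, but the strategy and all key inputs coincide with yours.
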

Theorem \ref{conj:milclass} follows directly from Proposition \ref{lem:TisWellDefined} by $D_{\rm ctf}(X,\Lambda)=h\mathcal D_{\rm cons}(X)$ and the fact that $h: X\to S$ is universally locally acyclic relatively to $\mathcal F\in \mathcal D_{\rm cons}(X)$ if and only if $h: X\to S$ is universally locally acyclic relatively to the image of $\mathcal F$ in $D_{\rm ctf}(X,\Lambda)$ (cf. \cite[Proposition 3.3]{HS} and \cite[1.2.4]{HTT}).
\begin{proof}
Consider the diagram \eqref{eq:cubicSchidelta}.
Applying  $\tau_\ast\tau^!\to{\rm id}\to j_\ast j^\ast$ to the following commutative diagram (cf. \eqref{bcdiag3})
\begin{align}\small\label{eq:initialKXYS}
\begin{gathered}
\xymatrix@R=0.4cm{
	&\Lambda\ar[d]\\
\delta_1^\ast\delta^\Delta\mathcal T_S\ar[d]&\delta_1^\ast i^!\mathcal T_S\ar[d]\ar[l]\\
\mathcal K_{X/Y/S}&\mathcal K_{X/S},\ar[l]
}
\end{gathered}
\end{align}
we get a  commutative diagram between cofiber sequences
\begin{align}\small
	\begin{gathered}\label{eq:adjToDeltaKcubic6}
		\xymatrix@R=0.4cm@C=0.4cm{
			&\tau_\ast\tau^!\Lambda\ar[d]\ar[rr]&&\Lambda\ar[d]\ar[rr]&&j_\ast \Lambda\ar[d]\\
			&\tau_\ast\tau^!\delta_1^\ast i^! \mathcal T_S \ar[rr]\ar@{-->}[dd]\ar[ld] &&\delta_1^\ast i^! \mathcal T_S \ar[rr]\ar@{-->}[dd]\ar[ld]&&j_\ast \delta_1^{\prime\ast} i^{\prime!} \mathcal T'_S\ar[dd]\ar[ld]\\
			\tau_\ast\tau^! \delta_1^\ast \delta^\Delta\mathcal T_S\ar[rr]^(0.7)\simeq \ar[dd]&&\delta_1^\ast \delta^\Delta\mathcal T_S\ar[rr]\ar[dd]&&j_\ast \delta_1^{\prime\ast}\delta^{\Delta}\mathcal T'_S\ar[dd]\\
			&\tau_\ast\mathcal K_{Z/S} \ar@{-->}[ld]\ar@{-->}[rr]&&\mathcal K_{X/S}\ar@{-->}[ld]\ar@{-->}[rr]&&j_\ast \mathcal K_{U/S}\ar[ld]\\
			\tau_\ast\tau^! \mathcal K_{X/Y/S}\ar[rr] &&\mathcal K_{X/Y/S}\ar[rr]&&j_\ast \mathcal K_{U/Y/S},\\
		}
	\end{gathered}
\end{align}
where the right diagram is obtained from the fact that
$j_\ast j^\ast\eqref{eq:initialKXYS}$ is isomorphic to the rightmost face by \eqref{eq:adjToDeltaKcubic5}. 
By \eqref{eq:adjToDeltaKcubic6}, we have a  commutative diagram
 \begin{align}\small\label{keydiagA}
 	\begin{gathered}
 		\xymatrix@R=0.7cm@C=3.5cm{
 			&\Lambda\ar@{-->}[d]^-{ C_{X/S}(\mathcal F)}\ar[r]\ar[ddl]_-{C_{X/Y/S}(\mathcal F)}&j_\ast\Lambda \ar[d]^-{C_{U/S}(\mathcal F)}\ar[ldd]_(0.8){C_{U/Y/S}(\mathcal F)}\\
 			&\mathcal K_{X/S}\ar@{-->}[r]\ar@{-->}[ld]&
 			j_\ast\mathcal K_{U/S}\ar[ld]\\
 			\mathcal K_{X/Y/S}\ar[r]&
 			j_\ast\mathcal K_{U/Y/S},
 		}
 	\end{gathered}
 \end{align}
where the homotopy in the top face is given by $\beta$ (cf. \eqref{eq:CZXYSlifting--}).

Now we construct a triangular prism for $\delta^!C_{X/Y}(\mathcal F)$.
As 0 is a final object, we can extend  \eqref{eq:chooseCXY} to a prism (unique up to a contractible space) 
\begin{align}\small\label{keydiagD}
	\begin{gathered}
		\xymatrix@R=0.7cm{
			&\Lambda\ar@{-->}[d]^-{C_{X/Y}(\mathcal F)}\ar[rr]\ar[ldd]&&j_\ast\Lambda\ar[d]^-{C_{U/Y}(\mathcal F )}\ar[ldd]\\
			&\mathcal K_{X/Y}\ar@{-->}[rr]\ar@{-->}[ld]&&j_\ast\mathcal K_{U/Y}\ar[ld]\\
			0\ar[rr]&&0.&
		}
	\end{gathered}
\end{align}

Now we have the following commutative diagram
\begin{align}\small\label{keydiagC2}
	\begin{gathered}
		\xymatrix@R=0.4cm{
			&\Lambda\ar@{-->}[d]^-{C_{X/Y}(\mathcal F)}\ar[rr]\ar[ldd]&&j_\ast\Lambda\ar[d]^-{C_{U/Y}(\mathcal F )}\ar[ldd]\\
			&\mathcal K_{X/Y}\ar@{-->}[rr]\ar@{-->}[ld]\ar@{-->}[dd]^(0.3){\delta^!}&&j_\ast\mathcal K_{U/Y}\ar[dd]^(0.5){\delta^!}\ar[ld]\\
			0\ar[rr]\ar[dd]&&0\ar[dd]&\\
			&\mathcal K_{X/S}\ar@{-->}[rr]\ar@{-->}[ld]&&j_\ast\mathcal K_{U/S}\ar[ld]\\
			\mathcal K_{X/Y/S}\ar[rr]&&j_\ast\mathcal K_{U/Y/S},
		}
	\end{gathered}
\end{align}
where the lower cube is obtained by applying ${\rm id}\to j_\ast j^\ast$ to its left face.
To compare the right face of \eqref{keydiagC2} with that of \eqref{keydiagA}, we construct the  commutative diagram \eqref{keydiagC5} below.
Note that the following two diagrams are both pull-back and push-out squares (cf. \eqref{eq:distriK} and \eqref{eq:Dchf}):
\begin{align}\small
\begin{gathered}
\xymatrix@R=0.4cm{
0\ar[d]&j_\ast\mathcal K_{U/Y}\ar[l]\ar[d]&0\ar[d]^-\simeq&j_\ast\delta^{\prime\ast}\mathcal T^\prime_Y\ar[d]\ar[l]\\
j_\ast\mathcal K_{U/Y/S}&j_\ast\mathcal K_{U/S},\ar[l]&j_\ast \delta_1^{\prime\ast}\delta^{\Delta}\mathcal T'_S&j_\ast \delta_1^{\prime\ast}i^{\prime!}\mathcal T'_S.\ar[l]
}
\end{gathered}
\end{align}
We obtain the following commutative cube by  taking the fiber of its front face  (cf. the rightmost square of \eqref{eq:adjToDeltaKcubic6})
\begin{align}\small\label{keydiagC5-2}
	\begin{gathered}
		\xymatrix@R=0.2cm@C=0.4cm{
			0\ar[rd]^(0.5)\simeq\ar[dd]&&j_\ast\delta_1^{\prime\ast}\mathcal T^{\prime}_Y\ar@{-->}[rd]\ar@{-->}[ll]\ar@{-->}[dd]\\
			&j_\ast \delta_1^{\prime\ast}\delta^{\Delta}\mathcal T'_S\ar[dd]&&j_\ast \delta_1^{\prime\ast}i^{\prime!}\mathcal T'_S\ar[dd]\ar[ll]\\
			0\ar[rd]&&j_\ast\mathcal K_{U/Y}\ar@{-->}[ll]\ar@{-->}[rd]^-{\delta^!}\\
			&j_\ast\mathcal K_{U/Y/S}&&j_\ast\mathcal K_{U/S},\ar[ll]
		}
	\end{gathered}
\end{align}
where the right face is given by the diagram \eqref{eq:propchangebasehom:TS2} for $U/Y/S$.
Since $0$ is a final object, there is a  commutative diagram (defined up to a contractible space)
\begin{align}\small\label{keydiagC5-1}
\begin{gathered}
\xymatrix@R=0.4cm@C=0.4cm{
&&&j_\ast\Lambda\ar[llld]\ar@{=}[rd]\ar@{-->}[d]\\
0\ar[rd]^-\simeq&&&j_\ast\delta_1^{\prime\ast}\mathcal T^{\prime}_Y\ar@{-->}[rd]\ar@{-->}[lll]&j_\ast\Lambda\ar[llld]\ar[d]\\
&j_\ast \delta_1^{\prime\ast}\delta^{\Delta}\mathcal T'_S&&&j_\ast \delta_1^{\prime\ast}i^{\prime!}\mathcal T'_S,\ar[lll]
}
\end{gathered}
\end{align} 
where the right face is given by \eqref{eq:propchangebasehom:TS2}.
Combining \eqref{keydiagC5-2} and \eqref{keydiagC5-1}, we get the following commutative diagram
\begin{align}\small\label{keydiagC5}
	\begin{gathered}
		\xymatrix@R=0.4cm@C=0.4cm{
&&j_\ast\Lambda\ar[lld]\ar@{=}[rd]\ar@{-->}[d]\\
0\ar[rd]^-\simeq\ar[dd]&&j_\ast\delta_1^{\prime\ast}\mathcal T^{\prime}_Y\ar@{-->}[rd]\ar@{-->}[ll]\ar@{-->}[dd]&j_\ast\Lambda\ar[lld]\ar[d]\\
			&j_\ast \delta_1^{\prime\ast}\delta^{\Delta}\mathcal T'_S\ar[dd]&&j_\ast \delta_1^{\prime\ast}i^{\prime!}\mathcal T'_S\ar[dd]\ar[ll]\\
			0\ar[rd]&&j_\ast\mathcal K_{U/Y}\ar@{-->}[ll]\ar@{-->}[rd]^-{\delta^!}\\
			&j_\ast\mathcal K_{U/Y/S}&&j_\ast\mathcal K_{U/S},\ar[ll]
		}
	\end{gathered}
\end{align}  
where the composition $j_\ast\Lambda\to j_\ast\delta_1^{\prime\ast}\mathcal T^{\prime}_Y\to j_\ast\mathcal K_{U/Y}$ is equal to $j_\ast\Lambda \xrightarrow{C_{U/Y}(\mathcal F)}j_\ast\mathcal K_{U/Y}$, the composition $j_\ast\Lambda\to  j_\ast \delta_1^{\prime\ast}i^{\prime!}\mathcal T'_S\to j_\ast\mathcal K_{U/S}$ is equal to $j_\ast \Lambda\xrightarrow{C_{U/S}(\mathcal F)}j_\ast\mathcal K_{U/S}$ and
 the right face of \eqref{keydiagC5} is given by  Theorem \ref{thm:changebaseHomo} (cf. \eqref{eq:propchangebasehom:TS2}).
Note  that the composition $j_\ast\Lambda\to  j_\ast \delta_1^{\prime\ast}\delta^{\Delta}\mathcal T'_S\to j_\ast\mathcal K_{U/Y/S}$ on the front face of \eqref{keydiagC5} is the morphism $j_\ast\Lambda\xrightarrow{C_{U/Y/S}(\mathcal F)}j_\ast\mathcal K_{U/Y/S}$ (cf. \eqref{def:CXYNewdef}). 
The diagrams \eqref{keydiagC2} and \eqref{keydiagC5} define the following commutative diagram
\begin{align}\small\label{keydiagC6}
	\begin{gathered}
		\xymatrix@R=0.4cm{
			&\Lambda\ar@{-->}[d]^-{C_{X/Y}(\mathcal F)}\ar[rr]\ar[ldd]&&j_\ast\Lambda\ar@{-->}[d]^-{C_{U/Y}(\mathcal F )}\ar[ldd]\ar@{=}[r]&j_\ast\Lambda \ar@/^3pc/[lddd]^(0.4){C_{U/S}(\mathcal F)}\ar@/^0.5pc/[lldddd]^(0.4){C_{U/Y/S}(\mathcal F)}\\
			&\mathcal K_{X/Y}\ar@{-->}[rr]\ar@{-->}[ld]\ar@{-->}[dd]^(0.3){\delta^!}&&j_\ast\mathcal K_{U/Y}\ar@{-->}[dd]_(0.3){\delta^!}\ar@{-->}[ld]\\
			0\ar[rr]\ar[dd]&&0\ar[dd]&\\
			&\mathcal K_{X/S}\ar@{-->}[rr]\ar@{-->}[ld]&&j_\ast\mathcal K_{U/S}\ar[ld]\\
			\mathcal K_{X/Y/S}\ar[rr]&&j_\ast\mathcal K_{U/Y/S}.
		}
	\end{gathered}
\end{align}
It gives the following commutative diagram 
 \begin{align}\small\label{keydiagB}
 	\begin{gathered}
 		\xymatrix@R=0.6cm@C=3.5cm{
 			&\Lambda\ar@{-->}[d]^-{ \delta^!C_{X/Y}(\mathcal F)}\ar[r]\ar[ddl]_-0&j_\ast\Lambda \ar[d]^-{C_{U/S}(\mathcal F) }\ar[ldd]_(0.8){C_{U/Y/S}(\mathcal F)}\\
 			&\mathcal K_{X/S}\ar@{-->}[r]\ar@{-->}[ld]&
 			j_\ast\mathcal K_{U/S}\ar[ld]\\
 			\mathcal K_{X/Y/S}\ar[r]&
 			j_\ast\mathcal K_{U/Y/S},
 		}
 	\end{gathered}
 \end{align}
where the right face is equal to that of \eqref{keydiagA}.
By construction, the homotopy on the top face of \eqref{keydiagB} is given as follows
\begin{equation}\small\label{keydiagB2}
	\begin{gathered}
		\begin{tikzcd}[row sep=0.2cm]
			\Lambda\rar \dar&j_\ast\Lambda\ar[rd]\arrow[rr]&&j_\ast \delta_1^{\prime\ast}\delta^{\Delta}\mathcal T'_S\arrow[dd]\\
			0\dar \arrow[rr, Rightarrow, shorten <= 2em, shorten >= 2em, "0"]&&0\arrow[dr]\arrow[ur, Rightarrow, shorten <= 0.5em, shorten >= 0.5em, near end, "\gamma^{-1}"']&\\
			\mathcal K_{X/Y/S}\arrow[rrr]&&&j_\ast \mathcal K_{U/Y/S},\\
		\end{tikzcd}
	\end{gathered}
\end{equation}
where $\gamma$ is defined in \eqref{eq:CZXYSlifting--}.
Applying \eqref{eq:HomotopyZero} to \eqref{keydiagA} and \eqref{keydiagB}
  first, and then using \ref{subsec:LiftCofiber} together with \eqref{eq:CZXYSlifting} and \eqref{eq:TDXYS}, we get a  commutative diagram  
 \begin{align}\small 
 	\begin{gathered}\label{finalSquare}
 		\xymatrix@R=0.6cm@C=2cm{
 			&\Lambda\ar@{=}[r]\ar@{-->}[d]^-{L^Z_{X/Y/S}(\mathcal F)}\ar[ddl]_-{C^Z_{X/Y/S}(\mathcal F)}&\Lambda\ar[r]\ar@{-->}[d]^-{d_{X/Y/S}(\mathcal F)}\ar[ddl]_(0.8){C_{X/Y/S}(\mathcal F)}&0\ar[d]\ar[ldd] \\
 			&\tau_*\mathcal{K}_{Z/S} \ar@{-->}[r] \ar@{-->}[ld]&\mathcal{K}_{X/S} \ar@{-->}[r]\ar@{-->}[ld] & j_*\mathcal{K}_{U/S}\ar[ld]\\
 			\tau_*\tau^!\mathcal{K}_{X/Y/S} \ar[r] &\mathcal{K}_{X/Y/S} \ar[r] & j_*\mathcal{K}_{U/Y/S}.\\
 		}
 	\end{gathered}
 \end{align}

 Now we show that the top face of \eqref{finalSquare} is equal to \eqref{eq:CZXYSlifting}. 
 It is enough to compare their right squares.
 Let $E$ be the right square on the top face of \eqref{finalSquare}. Consider the notation $\gamma$ and $\beta$ in \eqref{eq:CZXYSlifting--}. Since the homotopies on the top faces of \eqref{keydiagA} and \eqref{keydiagB}  are given by $\beta$ and $\gamma^{-1}$ respectively, 
 the homotopy in $E$  is equal to the composition of $\beta$ and $\gamma=(\gamma^{-1})^{-1}$. Thus $E$ is equal to the right square of \eqref{eq:CZXYSlifting} by \eqref{eq:CZXYSlifting--}. 
We remark that the back face of \eqref{finalSquare} is equal to \eqref{eq:TDXYS} since
 the back face of \eqref{keydiagA} (resp. \eqref{keydiagB}) is equal to the left (resp. right) diagram in \eqref{eq:backfacediff}.
Finally, the diagram  \eqref{finalSquare} gives the required diagram \eqref{eq:CTDiffExplain}. This finishes the proof.
\end{proof}

\section{Applications: Milnor formula, conductor formula and Saito's conjecture}\label{sec:applications}

\subsection{}\label{subsec:LocBC}
In this section, we assume $k$ is a perfect field of characteristic $p>0$.
Let $\Lambda$ be a finite local ring whose residue field is of characteristic $\ell\neq p$.
We first apply the pull-back property of the non-acyclicity classes to prove a cohomological Milnor formula.

\begin{theorem}\label{thm:MF}
Let $Y$ be a smooth curve over $k$.  
Let    $f: X\to Y$ be a separated morphism of finite type and $x\in|X|$ a closed point. Let $\mathcal F\in D_{\rm ctf}(X,\Lambda)$ such that $f|_{X\setminus\{x\}}$ is universally locally acyclic relatively to $\mathcal F|_{X\setminus\{x\}}$. 
Then we have 
\begin{align}\label{eq:thm:MF1}
C_{X/Y/k}^{\{x\}}(\mathcal F)=-{\rm dimtot} R\Phi_{\bar{x}}(\mathcal F,f)\quad {\rm in}\quad  \Lambda=H^0_{x}(X,\mathcal K_{X/k}),
\end{align}
where $R\Phi(\mathcal F,f)$ is the complex of vanishing cycles and ${\rm dimtot}={\rm dim}+{\rm Sw}$ is the total dimension.
\end{theorem}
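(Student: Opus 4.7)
The strategy, following the suggestion attributed to Abe, is to combine three ingredients: (i) the pull-back compatibility of non-acyclicity classes (Proposition \ref{prop:LocBC}), (ii) a deformation property of constructible \'etale sheaves that reduces the general case to that of a skyscraper, and (iii) the elementary computation $C^{\{x\}}_{X/X/k}(\mathcal G_x)=\mathrm{rank}\,\mathcal G_x$ for a sheaf $\mathcal G_x$ supported at $x$ on a smooth $X$ of dimension $\geq 1$, which is stated in \ref{subsec:LocBC} as the base case.

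First, I would reduce to the \'etale-local picture at $x$. Both sides of \eqref{eq:thm:MF1} live in $H^0_x(X,\mathcal K_{X/k})=\Lambda$ and are \'etale-local invariants, so Proposition \ref{prop:invariantUnderEtale} permits shrinking $X$ to an \'etale neighborhood of $x$ and $Y$ to one of $y=f(x)$. After further passing to the henselization at $y$ (justified by Proposition \ref{prop:LocBC} and the compatibility of vanishing cycles with strict localization), we may assume $Y$ is a henselian trait with closed point $y$, the sheaf $\mathcal F$ is universally locally acyclic over $Y$ outside $x$, and $R\Phi_{\bar x}(\mathcal F,f)$ is concentrated at $\bar x$.

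Next, I would construct a deformation linking $(f,\mathcal F)$ to a configuration in which $\mathcal F$ becomes a skyscraper at $x$. The idea is to produce a family over an auxiliary base (an affine line or a second curve) together with a constructible sheaf on the total space, whose generic fiber recovers $(f,\mathcal F)$ and whose special fiber is a pair $(f_0,\mathcal G_x)$ with $\mathcal G_x$ supported at $x$. Applying Proposition \ref{prop:LocBC} along the two inclusions of fibers, and invoking the specialization compatibility of non-acyclicity classes (Proposition \ref{prop:spOfNA}), the left-hand side of \eqref{eq:thm:MF1} for the two fibers becomes comparable. On the right-hand side, the total dimension of $R\Phi_{\bar x}$ is well-known to be preserved under such specializations thanks to Laumon's invariance, matching the behavior of the non-acyclicity class. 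In the special fiber, one is reduced to the case $X=Y$ with $\mathcal G_x$ a skyscraper, where the right-hand side evaluates to $-\mathrm{rank}\,\mathcal G_x$ (since $R\Phi_{\bar x}(\mathcal G_x,\mathrm{id})=\mathcal G_x$ with trivial Swan) and the left-hand side evaluates to $\mathrm{rank}\,\mathcal G_x$ with the opposite sign arising from the distinguished triangle \eqref{eq:distriK}, so the two sides agree.

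The main obstacle will be making the deformation step rigorous. One must exhibit a family over the auxiliary base that preserves the universal local acyclicity hypothesis away from a controlled locus spreading $\{x\}$, so that the non-acyclicity class $C^{Z}_{X/Y/k}$ of the total space is defined and specializes correctly, and one must also verify that no additional contributions to $\mathrm{dimtot}\,R\Phi$ arise from the degeneration. This is precisely where the technical method of \cite{Abe22} must be adapted; modulo these geometric subtleties, the remaining steps are formal consequences of the functorial properties of non-acyclicity classes developed in Section \ref{sec:CCC2}.
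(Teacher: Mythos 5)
Your skeleton matches the paper's actual argument: reduce by \'etale localization (Proposition \ref{prop:invariantUnderEtale}) to $Y=\mathbb A^1_k$ with $k$ algebraically closed, build a one-parameter family whose fiber at one point recovers $(f,\mathcal F)$ and at another point is a skyscraper at $x$, transport the non-acyclicity class along the family via Proposition \ref{prop:LocBC} and Proposition \ref{prop:spOfNA}, and evaluate at the skyscraper endpoint. However, the step you flag as "the main obstacle" is not a technical subtlety to be adapted from \cite{Abe22} --- it is the entire non-formal content of the proof, and the mechanism you propose for it does not work as stated. There is no deformation of $\mathcal F$ to a skyscraper along which ${\rm dimtot}\,R\Phi_{\bar x}$ is preserved by some general "Laumon invariance"; indeed ${\rm dimtot}\,R\Phi$ is not constant in families of this kind. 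What the paper actually does is take the \emph{specific} family $\mathrm{pr}_1^*\mathcal F\otimes^L\mathcal L_!(ft')$ on $X\times\mathbb P^1_{t'}$ over the base $\mathbb P^1_{t'}$ (an Artin--Schreier twist, i.e.\ a geometric Fourier transform). At $t'=0$ this restricts to $\mathcal F$. At $t'=\infty$ the nearby cycle $\Psi_{\mathrm{pr}'_2}(\mathrm{pr}_1^*\mathcal F\otimes^L\mathcal L_!(ft'))$ is supported on $\{x\}$ --- this uses Laumon's universal local acyclicity of $\mathcal L_!(\mu)$ on $\mathbb A^1_t\times\mathbb P^1_{t'}$ together with \cite[Proposition 1.4]{Sai17b} --- and its dimension is computed by Laumon's local Fourier transform (Theorem \ref{thm:Lau}, stationary phase) to be exactly $-{\rm dimtot}\,R\Phi_{\bar x}(\mathcal F,f)$. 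So the right-hand side of \eqref{eq:thm:MF1} is never "tracked through the degeneration"; it appears only once, as the output of Laumon's theorem applied to the skyscraper endpoint. Your proposal, which leaves the family unspecified and asserts preservation of ${\rm dimtot}$, has no way to produce this identity.

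Two further points you would need and do not supply. First, the total family is not universally locally acyclic over $\mathbb P^1_{t'}$ as it stands; the paper must reduce to irreducible perverse constituents and invoke \cite[Corollary 1.2.3]{Sai21} to find a finite cover $\mathbb P'\to\mathbb P^1_{t'}$ and a ULA extension $\widetilde{\mathcal F}$, without which neither Proposition \ref{prop:LocBC} nor Proposition \ref{prop:spOfNA} applies. Second, your sign bookkeeping at the skyscraper endpoint is off: for $\mathcal G_x$ supported at $x$ one has ${\rm dimtot}\,R\Phi_{\bar x}(\mathcal G_x,\mathrm{id})=a_x(\mathcal G_x)=-\mathrm{rank}\,\mathcal G_x$, so both sides of \eqref{eq:thm:MF1} equal $+\mathrm{rank}\,\mathcal G_x$; there is no extra sign "arising from the distinguished triangle \eqref{eq:distriK}". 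In the paper the minus sign in \eqref{eq:thm:MF1} comes from Laumon's formula \eqref{eq:thm:Lau}, not from the cofiber sequence defining $\mathcal K_{X/Y/S}$.
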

The above theorem gives a cohomological analogue of the Milnor formula proved by Saito in \cite[Theorem 5.9]{Sai17a}. Thanks to the cohomological argument, we don't need the smoothness assumption on $X$.
Before the proof, let us recall Abe's reformulation of Laumon's results on local Fourier transform (cf. \cite[6.3]{Abe21}).
We choose a non-trivial additive character $\psi:\mathbb F_p\to \Lambda^\ast$, and 
let $\mathcal L$ be the Artin-Schreier sheaf on $\mathbb A^1_{\mathbb F_p}$ associated with $\psi$.
For any morphism $g: X\to \mathbb A^1_{\mathbb F_p}$, we denote by $\mathcal L(g)$
the pullback $g^\ast\mathcal L$.
\begin{theorem}[Laumon]\label{thm:Lau}
	Let $T=(\mathbb A_k^1)_{(0)}$ $($resp. $T'=(\mathbb A_k^1)_{(0')})$ be
	the henselization with coordinate $t$ $($resp. $t')$. Let $T\xleftarrow{{\rm pr}_1}T\times T'\xrightarrow{{\rm pr}_2} T'$ be the projections. 
	For any $\mathcal F\in D_{\rm ctf}(T,\Lambda)$,
	we have
	\begin{align}\label{eq:thm:Lau}
		-{\rm dimtot}(R\Phi_{\bar 0}(\mathcal F,id))={\rm dim} \Psi_{{\rm pr}_2}({\rm pr}_1^\ast\mathcal F\otimes^L \mathcal L_!(t/t'))_{(\bar 0,\bar 0')},
	\end{align}
	where $\mathcal L_!(t/t')$ is the zero extension to $T\times T'$ of $\mathcal L(t/t')$ on $T\times T'\setminus T\times\{0'\}$.
\end{theorem}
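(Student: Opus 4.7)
The statement is Laumon's rank formula for the local Fourier transform $\mathcal F^{(0,0')}$; my plan is to reproduce his strategy—interpret the right-hand side as the rank of a local Fourier transform, reduce by devissage to rank-one sheaves, and treat the rank-one case by induction on the Swan conductor. Concretely, restricting $\mathrm{pr}_1^*\mathcal F \otimes^L \mathcal L_!(t/t')$ to the geometric generic fiber $T\times_k\bar\eta'$ of $\mathrm{pr}_2$ produces a constructible sheaf on a geometric trait, and its nearby-cycles stalk at $(\bar 0,\bar 0')$ is precisely the $I_{\bar 0'}$-representation $\mathcal F^{(0,0')}(\mathcal F)$ in Laumon's formalism; the target assertion is that its generic rank equals $\mathrm{dimtot}\,R\Phi_{\bar 0}(\mathcal F,\mathrm{id})$.

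Next, I would reduce to the rank-one case. Both sides of \eqref{eq:thm:Lau} are additive on distinguished triangles in $D_{\mathrm{ctf}}(T,\Lambda)$: the left because $R\Phi_{\bar 0}(-,\mathrm{id})$ is an exact functor and $\mathrm{dim}+\mathrm{Sw}$ is additive on Galois representations, the right because $\mathrm{pr}_1^*(-)\otimes^L\mathcal L_!(t/t')$ and $R\Psi_{\mathrm{pr}_2}$ are exact. Skyscraper sheaves at $\bar 0$ contribute zero on both sides, since on the right $\mathcal L_!(t/t')$ vanishes along $\{t'=0\}$ by construction. This lets me reduce to $\mathcal F = j_!\mathcal G$ for $\mathcal G$ smooth on $T\setminus\{0\}$, and then, via Jordan--H\"older for continuous representations of the absolute Galois group of $k((t))$, to the case of rank-one $\mathcal G$.

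For a rank-one $\mathcal G$ with Swan conductor $m$ at $0$, I would induct on $m$. The tame base case is a direct computation: for $\mathcal G$ coming from a character of bounded tame ramification, the Artin--Schreier sheaf $\mathcal L(at)$ with $a\in\bar\eta'^\times$ contributes a one-dimensional nearby-cycle stalk, matching $\mathrm{dimtot}\,R\Phi_{\bar 0}$. For the inductive step, twist $\mathcal G$ by an Artin--Schreier sheaf $\mathcal L(\alpha(t))$ with $\alpha\in k((t))$ having a pole of controlled order; such a twist changes Swan conductors predictably, and the stationary phase formula governs how both sides transform, allowing one to decrease $m$.

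The main obstacle will be the explicit rank-one computation in the wild case. The kernel $\mathcal L(t/t')$ is irregular along $\{t'=0\}$, and the indeterminacy of the rational function $t/t'$ at $(\bar 0,\bar 0')$ forces one to pass to a suitable blow-up of $T\times T'$ at the origin, on which the pullback of $\mathcal L(t/t')$ becomes tame along the exceptional divisor; controlling the Swan conductors of the blown-up sheaf along each component of the exceptional divisor is the technical core of Laumon's argument. A modern reformulation via characteristic cycles on such a blow-up would streamline portions of the case analysis, but invoking it here would risk circularity with the paper's downstream use of \eqref{eq:thm:Lau} to establish a cohomological Milnor formula refining Saito's theory.
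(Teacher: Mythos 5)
Your proposal sets out to reprove Laumon's local Fourier transform theorem rather than to use it, and it stops short of doing so: the wild rank-one computation, which you yourself identify as ``the technical core,'' is exactly the content of Laumon's Th\'eor\`eme 2.4.3, and nothing in your sketch supplies it. The paper's own proof (following Abe) is a two-step reduction plus a citation: one first lifts $j^*\mathcal F$ to a virtual $E$-sheaf for a finite extension $E$ of $\mathbb Q_\ell$ with residue field $\Lambda$ --- a step that is unavoidable because Laumon's theory of local Fourier transforms and break decompositions is developed for characteristic-zero coefficients, whereas the $\Lambda$ of the statement is a finite local ring; your proposal never addresses this --- then splits $\mathcal F$ as $j_!j^*\mathcal F$ plus $i_*i^*\mathcal F$ and identifies the open part of the right-hand side with $-\dim\mathfrak F^{(0,\infty')}(\widetilde{\mathcal F}_\eta)$, which equals $-{\rm dimtot}$ by Laumon.

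Beyond the incompleteness, two steps of your d\'evissage are wrong as stated. First, a skyscraper $i_*M$ at $0$ does not contribute zero to either side: on the left, $R\Phi_{\bar 0}(i_*M,{\rm id})={\rm cone}(M\to 0)=M[1]$, so $-{\rm dimtot}$ gives $+\dim M$; on the right, $\mathrm{pr}_1^*i_*M\otimes^L\mathcal L_!(t/t')$ is the constant sheaf $M$ on $\{0\}\times(T'\setminus\{0'\})$ extended by zero (the kernel is trivialized, not killed, on $\{t=0\}$ away from $t'=0$), and its nearby cycle at $(\bar 0,\bar 0')$ has stalk $M$ in degree $0$. Both sides equal $\dim M$, so the identity survives, but your justification (vanishing of $\mathcal L_!(t/t')$ along $\{t'=0\}$) concerns the wrong divisor. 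Second, Jordan--H\"older does not reduce you to rank-one sheaves: irreducible continuous representations of the absolute Galois group of $k((t))$ are in general induced from rank-one characters of open subgroups and can have arbitrarily large rank (e.g.\ any irreducible representation of fractional slope $a/b$ has rank divisible by $b$), so the correct d\'evissage, as in Laumon and Katz, must run over push-forwards $\pi_*\mathcal M$ of rank-one sheaves along finite covers of traits, and the induction on the Swan conductor has to be organized around the stationary-phase behavior of such induced sheaves. Unless your goal is specifically an independent proof of Laumon's theorem, the efficient route is the paper's: lift the coefficients, peel off the skyscraper, and quote Laumon's Proposition 2.4.2.2 and Th\'eor\`eme 2.4.3.
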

\begin{proof}
	For convenience, we record the proof given in \cite[6.3]{Abe21}. 
	By applying the reduction map, we may assume that $\Lambda$ is a field. Let $i:\{0\}\hookrightarrow T$ be the closed point and $j:\eta\hookrightarrow T$ the generic point. By \cite[5.2.6]{UYZ}, there is a finite extension $E$ of $\mathbb Q_\ell$ with residue field $\Lambda$ and a lift $\widetilde{\mathcal F}_\eta\in D_c^b(\eta, E)$ such that the reduction of 
	$\widetilde{\mathcal F}_\eta$ is equal to $\mathcal F_\eta:=j^\ast\mathcal F$ in the Grothendieck group $K_0(\eta,\Lambda)$.
	
	Let $\mathfrak F^{(0,\infty')}(-)$ be the functor of local Fourier transform defined in \cite[2.4.2.3]{Lau87}.
	By \cite[Proposition 2.4.2.2 and Th\'eor\`eme 2.4.3]{Lau87}, we have 
	\begin{align}
		\begin{split}
			{\rm dim} \Psi_{{\rm pr}_2}({\rm pr}_1^\ast\mathcal F\otimes^L \mathcal L_!(t/t'))_{(\bar 0,\bar 0')}&={\rm dim} \Psi_{{\rm pr}_2}({\rm pr}_1^\ast j_!j^\ast \mathcal F\otimes^L \mathcal L_!(t/t'))_{(\bar 0,\bar 0')}\\
			&\qquad\qquad \qquad+{\rm dim} \Psi_{{\rm pr}_2}({\rm pr}_1^\ast i_\ast i^\ast \mathcal F\otimes^L \mathcal L_!(t/t'))_{(\bar 0,\bar 0')}\\
			&=-{\rm dim}\mathfrak F^{(0,\infty')}(\widetilde{\mathcal F}_\eta)+{\rm dim}\mathcal F_{\bar 0}\\
			&=-{\rm dimtot}(\widetilde{\mathcal F}_\eta)+{\rm dim}\mathcal F_{\bar 0}\\
			&=-{\rm dimtot}(R\Phi_{\bar 0}(\mathcal F,id)).
		\end{split}
	\end{align}
	This finishes the proof.
\end{proof}
\subsection{Proof of Theorem \ref{thm:MF}}
Let $Z=\{x\}$. Since $C^Z_{X/Y/k}(\mathcal F)$ depends only on a neighborhood of $Z$ by Proposition \ref{prop:invariantUnderEtale}, we may shrink $X$ and $Y$ for calculating $C_{X/Y/k}^{Z}(\mathcal F)$. Thus we may assume there is an \'etale morphism $Y\to \mathbb A_k^1$. Since $C_{X/Y/k}^{Z}(\mathcal F)=C_{X/\mathbb A_k^1/k}^{Z}(\mathcal F)$ by Proposition \ref{prop:invariantUnderEtale}, we may reduce the proof to the case where $Y=\mathbb A_k^1$. 
	Since $Z=\{x\}$, in order to calculate $C_{X/Y/k}^{Z}(\mathcal F)$,
	we may further assume that $k$ is algebraically closed.

Now we use the strategy in \cite[Theorem 1.6]{Abe22}.  
Let $ft': X\times \mathbb A^1_{t'}\xrightarrow{f\times{\rm id}}  \mathbb A^1_{t}\times  \mathbb A^1_{t'}\xrightarrow{\mu} \mathbb A^1$, where $\mu$ is the multiplication morphism.
We choose a non-trivial additive character $\psi:\mathbb F_p\to \Lambda^\ast$, and 
let $\mathcal L$ be the Artin-Schreier sheaf on $\mathbb A^1$ associated with $\psi$.
Let $\mathcal L_!(\mu)$ be the zero extension to $\mathbb A^1_t\times \mathbb P^1_{t'}$ of  $\mu^\ast\mathcal L$.
Let $\mathcal L_!(ft')$ be the zero extension to $X\times \mathbb P^1_{t'}$ of  $(ft')^\ast\mathcal L$.
	
We first show that ${\rm pr}_2: (X\times \mathbb P^1_{t'})\setminus(Z\times\infty)\to \mathbb P^1_{t'}$ is universally locally acyclic relatively to ${\rm pr}_1^\ast\mathcal F\otimes^L \mathcal L_!(ft')$.
By \cite[Th\'eor\`eme 1.3.1.2]{Lau87}, ${\rm pr}_2: \mathbb A_t^1\times\mathbb P^1_{t'}\to \mathbb P^1_{t'}$ is universally locally acyclic relatively to $\mathcal L_!(\mu)$.
By \cite[Proposition 1.4]{Sai17b}, ${\rm pr}_2: (X\times \mathbb P^1_{t'})\setminus(Z\times\mathbb P^1_{t'})\to \mathbb P^1_{t'}$ is universally locally acyclic relatively to ${\rm pr}_1^\ast\mathcal F\otimes^L \mathcal L_!(ft')$. Since $\mathcal L_!(\mu)$ is locally constant on $\mathbb A^1_t\times \mathbb A^1_{t'}$, the morphism ${\rm pr}_2: (X\times \mathbb P^1_{t'})\setminus(X\times\infty)\to \mathbb P^1_{t'}$ is universally locally acyclic relatively to ${\rm pr}_1^\ast\mathcal F\otimes^L \mathcal L_!(ft')$. Thus ${\rm pr}_2: (X\times \mathbb P^1_{t'})\setminus(Z\times\infty)\to \mathbb P^1_{t'}$ is universally locally acyclic relatively to ${\rm pr}_1^\ast\mathcal F\otimes^L \mathcal L_!(ft')$.

We claim that there exists a pair ($\mathbb P'$, $\widetilde{\mathcal F}$) satisfying the following conditions:\footnote{When $\Lambda$ is a finite field and $\mathcal F$ is perverse, the claim also follows from \cite[Corollary 1.2.3]{Sai21}.}
\begin{itemize}
	\item $\mathbb P'\to \mathbb P^1_{t'}$ is a finite surjective morphism with  $\mathbb P'$  integral.
	\item $\widetilde{\mathcal F}\in D_{\rm ctf}(X\times\mathbb P',\Lambda)$ is an extension of $({{\rm pr}_1^{\prime\ast}\mathcal F\otimes^L \mathcal L_!(ft')})|_{X\times(\mathbb A^1_{t'}\times_{\mathbb P^1_{t'}}\mathbb P')}$ to $X\times\mathbb P'$.
	\item The projection $X\times \mathbb P'\to \mathbb P'$ is universally locally acyclic relatively  to $\widetilde{\mathcal F}$.
\end{itemize}
Indeed, this follows from \cite[Lemma 1.5]{Abe22}. Here we include the proof for convenience.
Let $\xi$ be an algebraic geometric generic point of $\mathbb P^1_{t'}$. Let $\overline{\mathbb P^1_{t'}}$ be the normalization of $\mathbb P^1_{t'}$ in $\xi$. 
Let $j: X\times \xi\to X\times \overline{\mathbb P^1_{t'}}$ be the canonical morphism. 
Since $\overline{\mathbb P^1_{t'}}$ is absolutely integrally closed, the morphism ${\rm pr}_2: X\times \overline{\mathbb P^1_{t'}}\to \overline{\mathbb P^1_{t'}}$ is universally locally acyclic relatively to $Rj_\ast(({\rm pr}_1^\ast\mathcal F\otimes^L \mathcal L_!(ft'))|_{X\times \xi})$ (cf. \cite[Corollary 3.10 and Theorem 4.1]{HS}). 
Since $Rj_\ast(({\rm pr}_1^\ast\mathcal F\otimes^L \mathcal L_!(ft'))|_{X\times \xi})\in D_{\rm ctf}(X\times\overline{\mathbb P^1_{t'}},\Lambda)$ (cf. \cite[Corollary 4.2]{HS}), 
there is a finite surjective morphism $T\to \mathbb P^1_{t'}$ and $\mathcal G\in D_{\rm c}(X\times T,\Lambda)$ such that $\overline{\mathbb P^1_{t'}}\to \mathbb P^1_{t'}$ factors over $T\to \mathbb P^1_{t'}$ 
and $\mathcal G|_{X\times \overline{\mathbb P^1_{t'}}}\simeq Rj_\ast(({\rm pr}_1^\ast\mathcal F\otimes^L \mathcal L_!(ft'))|_{X\times \xi})$.
Since $\overline{\mathbb P^1_{t'}}\to T$ is surjective, $\mathcal G$ is also of finite tor-dimension and thus $\mathcal G\in D_{\rm ctf}(X\times T,\Lambda)$.
Since ${\rm pr}_2: (X\times \mathbb P^1_{t'})\setminus(Z\times\infty)\to \mathbb P^1_{t'}$ is universally locally acyclic relatively to ${\rm pr}_1^\ast\mathcal F\otimes^L \mathcal L_!(ft')$, the canonical morphism \[
\phi: ({{\rm pr}_1^{\prime\ast}\mathcal F\otimes^L \mathcal L_!(ft')})|_{X\times (\mathbb A^1_{t'}\times_{\mathbb P_{t'}^1}\overline{\mathbb P^1_{t'}})}\to Rj_\ast(({\rm pr}_1^\ast\mathcal F\otimes^L \mathcal L_!(ft'))|_{X\times \xi})|_{X\times (\mathbb A^1_{t'}\times_{\mathbb P_{t'}^1}\overline{\mathbb P^1_{t'}})}
\simeq \mathcal G|_{X\times (\mathbb A^1_{t'}\times_{\mathbb P_{t'}^1}\overline{\mathbb P^1_{t'}})}\]
is an isomorphism. 
Then there is a finite surjective morphism $\mathbb P'\to T$ such that $\phi$ descends to an isomorphism ${{\rm pr}_1^{\prime\ast}\mathcal F\otimes^L \mathcal L_!(ft')}|_{X\times (\mathbb A^1_{t'}\times_{\mathbb P_{t'}^1}\mathbb P')}\to \mathcal G|_{X\times (\mathbb A^1_{t'}\times_{\mathbb P_{t'}^1}\mathbb P')}$. Put $\widetilde{\mathcal F}:=\mathcal G|_{X\times\mathbb P'}$.
Since the morphism ${\rm pr}_2: X\times \overline{\mathbb P^1_{t'}}\to \overline{\mathbb P^1_{t'}}$ is universally locally acyclic relatively to $Rj_\ast(({\rm pr}_1^\ast\mathcal F\otimes^L \mathcal L_!(ft'))|_{X\times \xi})$, the projection $X\times \mathbb P'\to \mathbb P'$ is universally locally acyclic relatively  to $\widetilde{\mathcal F}$ by \cite[Theorem 3.7]{HS}.
Thus $\mathbb P'$ and $\widetilde{\mathcal F}$ satisfy the required conditions.

We fix geometric points $0'$ and $\infty'$ of $\mathbb P'$ over $0$ and $\infty$ of $\mathbb P^1_{t'}$ respectively. 
Let $\eta'$ be a geometric generic point of $\mathbb P^\prime$. Let $\Psi_{{\rm pr}^{\prime}_2}:=\Psi_{{\rm pr}^\prime_{2},\infty'\leftarrow \eta'}$ (cf. \eqref{eq:NCFunctordef}).
We have $\widetilde{\mathcal F}|_{X\times\infty'}\simeq\Psi_{\rm pr'_2}(\widetilde{\mathcal F})\simeq \Psi_{\rm pr'_2}({{\rm pr}_1^{\prime\ast}\mathcal F\otimes^L \mathcal L_!(ft')})$, which is supported on $Z$. 
We also have  $\widetilde{\mathcal F}|_{X\times 0'}\simeq \mathcal F$.	
Now we apply Proposition \ref{prop:LocBC} and Proposition \ref{prop:spOfNA} to the following diagram
	\begin{align}
		\begin{gathered}
			\xymatrix@R=0.5cm{
				Z\times\mathbb P'\ar@{^(->}[r]&X\times \mathbb P'\ar[r]^-{f\times {\rm id}}\ar[d]&Y\times\mathbb P'.\ar[ld]\\
				&\mathbb P'
			}
		\end{gathered}
	\end{align}
	Let $V=\mathbb P'\times_{\mathbb P^1_{t'}}(\mathbb P^1_{t'}\setminus\{\infty\})$.
	Note that $(X\times V)\setminus(Z\times V)\to Y\times V$ is universally locally acyclic relatively to $\widetilde{\mathcal F}|_{X\times V}$ and $(X\times \infty')\setminus(Z\times \infty')\to Y\times \infty'$ is universally locally acyclic relatively to $\Psi_{\rm pr'_2}(\widetilde{\mathcal F})$.
	Consider the following pull-back morphism and specialization morphism
	\begin{align}
		H^0(Z\times 0',\mathcal K_{Z\times 0'/0'})\xleftarrow[\simeq]{{\rm res}_0}
		H^0(Z\times V,\mathcal K_{Z\times V/V})\xrightarrow[\simeq]{{\rm sp}_{\infty}}
		H^0(Z\times \infty',\mathcal K_{Z\times \infty'/\infty'}).
	\end{align}
These groups are canonically identified with $\Lambda$.
We have ${\rm res}_0(C^{Z\times V}_{X\times V/Y\times V/ V}(\widetilde{\mathcal F}))={\rm sp}_{\infty}(C^{Z\times V}_{X\times V/Y\times V/ V}(\widetilde{\mathcal F}))$. 
By Proposition \ref{prop:LocBC} and Proposition \ref{prop:spOfNA}, we have
	\begin{align}
		{\rm res}_0(C^{Z\times V}_{X\times V/Y\times V/ V}(\widetilde{\mathcal F}))&
		\overset{\eqref{eq:prop:LocBC1}}{=}C^{Z\times {0'}}_{X\times 0'/Y\times 0'/k}(\widetilde{\mathcal F}|_{X\times 0'})=C^{Z}_{X/Y/k}(\mathcal F),\\
		{\rm sp}_\infty(C^{Z\times V}_{X\times V/Y\times V/ V}(\widetilde{\mathcal F}))&
		\overset{\eqref{spOfNAinpropCor}}{=}C^{Z\times {\infty'}}_{X\times \infty'/Y\times \infty'/k}(\widetilde{\mathcal F}|_{X\times \infty'})=C^{Z}_{X/Y/k}(\Psi_{\rm pr'_2}({{\rm pr}_1^{\prime\ast}\mathcal F\otimes^L \mathcal L_!(ft')})).
	\end{align}
	Since $\Psi_{\rm pr'_2}({{\rm pr}_1^{\prime\ast}\mathcal F\otimes^L \mathcal L_!(ft')})$ is supported on $Z=\{x\}$,  we have
	\begin{align}\label{pf:CMdimtotT}
		\begin{split}
			C^{Z}_{X/Y/k}(\Psi_{\rm pr'_2}({{\rm pr}_1^{\prime\ast}\mathcal F\otimes^L \mathcal L_!(ft')}))=\dim \Psi_{\rm pr'_2}({{\rm pr}_1^{\prime\ast}\mathcal F\otimes^L \mathcal L_!(ft')})_{\bar{x}}=- {\rm dimtot}R\Phi_{\bar{x}}(\mathcal F,f).
		\end{split}
	\end{align}
	The last equality in \eqref{pf:CMdimtotT} follows from \cite[Proposition 6.5.2]{Abe21}. For convenience, we include the proof here. 
	Let $T=(\mathbb P^1_{t'})_{(\bar \infty)}$ be the strict henselization.  We consider the following commutative diagram
	\begin{align}
		\begin{split}
			\xymatrix@R=0.5cm@C=2.2cm{
				(X\times \mathbb P^1_{t'})_{(\bar x,\bar \infty)}\ar[d]\ar[r]^-{f'=(f\times {\rm id})_{(\bar x,\bar \infty)}}&(\mathbb A_k^1\times T)_{(f(\bar x),\bar\infty)}\ar[r]^-\pi \ar[d]^-q&T\\
				X_{(\bar x)}\ar[r]^-{f_{(\bar x)}}&(\mathbb A_k^1)_{f(\bar x)}.
			}
		\end{split}
	\end{align}
	Let $\eta$ be the generic point of $T$.
	Then we have
	\begin{align}\label{eq:CalPSIT}
		\begin{split}
			\Psi_{\rm pr'_2}({{\rm pr}_1^{\prime\ast}\mathcal F\otimes^L \mathcal L_!(ft')})_{\bar{x}}&\overset{(a)}{=}R\Gamma((X\times \mathbb P^1_{t'})_{(\bar x,\bar \infty)}\times_T\bar\eta,{{\rm pr}_1^{\prime\ast}\mathcal F\otimes^L \mathcal L_!(ft')})\\
			&\overset{(b)}{=}(R\pi_{\eta\ast} Rf'_{\eta\ast}({{\rm pr}_1^{\prime\ast}\mathcal F\otimes^L \mathcal L_!(ft')}))_{\bar\eta}\\
			&=(R\pi_{\eta\ast} Rf'_{\eta\ast}({{\rm pr}_1^{\prime\ast}\mathcal F\otimes^L f^{\prime\ast}_\eta\mathcal L_!(tt')}))_{\bar\eta} \\
			&\overset{(c)}{=}(R\pi_{\eta\ast} (Rf'_{\eta\ast}({\rm pr}_1^{\prime\ast}\mathcal F)\otimes^L \mathcal L_!(tt')))_{\bar\eta}\\
			&\overset{(d)}{=}(R\pi_{\eta\ast}(q^\ast Rf_{(\bar x)\ast}(\mathcal F)\otimes \mathcal L_!(tt')))_{\bar \eta}\\
			&\overset{(e)}{=}\Psi_{\pi,\bar\infty\leftarrow \bar \eta}(q^\ast Rf_{(\bar x)\ast}(\mathcal F)\otimes \mathcal L_!(tt')),
		\end{split}
	\end{align}
	where both $(a)$ and (e) follow from the definition of nearby cycles functor \eqref{eq:NCFunctordef}, (b) follows from the smooth base change, (c) follows from the fact that $f^{\prime\ast}_\eta\mathcal L_!(tt')$ is a locally constant constructible flat $\Lambda$-modules, (d) follows from the $\Psi$-goodness (cf. \cite[Appendix, A.2(1)]{Ill17}).
	Finally, we have
	\begin{align}\label{eq:dimtotForPhiPSIT}
		\begin{split}
			\dim \Psi_{\rm pr'_2}({{\rm pr}_1^{\prime\ast}\mathcal F\otimes^L \mathcal L_!(ft')})_{\bar{x}}&\overset{\eqref{eq:CalPSIT}}{=}\dim \Psi_{\pi,\bar\infty\leftarrow \bar \eta}(q^\ast Rf_{(\bar x)\ast}(\mathcal F)\otimes \mathcal L_!(tt'))\\
			&\overset{\eqref{eq:thm:Lau}}{=}-{\rm dimtot}R\Phi(Rf_{(\bar x)\ast}(\mathcal F),id)=-{\rm dimtot}R\Phi_{\bar x}(\mathcal F,f).
		\end{split}
	\end{align}
	This finishes the proof of Theorem \ref{thm:MF}.

\begin{theorem}\label{thm:SJ:MFforTclass}
	Let $Y$ be a smooth connected curve over  $k$.  
	Let $f:X\to Y$ be a separated morphism of finite type and $Z\subseteq |X|$ be a finite set of closed points. 
	Let $\mathcal F\in D_{\rm ctf}(X,\Lambda)$ such that $f|_{X\setminus Z}$ is universally locally acyclic relatively to $\mathcal F|_{X\setminus Z}$. 
	Then we have 
	\begin{align}\label{eq:NewT}
		C_{X/k}(\mathcal F)=c_1(f^\ast\Omega_{Y/k}^{1,\vee})\cap C_{X/Y}(\mathcal F)-\sum_{x\in Z}{\rm dimtot} R\Phi_{\bar{x}}(\mathcal F,f)\cdot [x]\quad{\rm in}\quad H^0(X,\mathcal K_{X/k}).
	\end{align}
\end{theorem}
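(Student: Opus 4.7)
The plan is to deduce \eqref{eq:NewT} by combining the fibration formula of Theorem~\ref{conj:milclass} with the cohomological Milnor formula of Theorem~\ref{thm:MF}, applied to $S=\mathrm{Spec}\,k$ and the given tower $X\xrightarrow{f}Y\xrightarrow{g}\mathrm{Spec}\,k$. First I verify the hypotheses of Theorem~\ref{conj:milclass}. Condition (C1) of \ref{subsec:notationforXYS} holds because $g$ is smooth of relative dimension $r=1$. Condition (C3) splits into two parts: $f|_{X\setminus Z}$ is universally locally acyclic relatively to $\mathcal{F}|_{X\setminus Z}$, which is our hypothesis, while $h:X\to\mathrm{Spec}\,k$ is universally locally acyclic relatively to $\mathcal{F}$ automatically, since the base is a point. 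For condition (C2), each $z\in Z$ is a closed point whose image $f(z)$ is a closed point of the smooth curve $Y$, so $z\to Y$ factors as $z\to f(z)\hookrightarrow Y$, yielding $\mathcal{K}_{z/Y}\simeq\Lambda(-1)[-2]$; hence $H^0(Z,\mathcal{K}_{Z/Y})=H^1(Z,\mathcal{K}_{Z/Y})=0$.

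With all hypotheses verified and $r=1$, the fibration formula in the form \eqref{eq:milclass1-1} gives
\begin{equation*}
C_{X/k}(\mathcal{F})=c_1(f^{\ast}\Omega^{1,\vee}_{Y/k})\cap C_{X/Y}(\mathcal{F})+C^{Z}_{X/Y/k}(\mathcal{F})\qquad\text{in }H^0(X,\mathcal{K}_{X/k}),
\end{equation*}
where $C^{Z}_{X/Y/k}(\mathcal{F})$ is viewed in $H^0(X,\mathcal{K}_{X/k})$ via the canonical map from $H^0_Z(X,\mathcal{K}_{X/k})$. Under the decomposition $H^0_Z(X,\mathcal{K}_{X/k})=\bigoplus_{x\in Z}H^0_x(X,\mathcal{K}_{X/k})=\bigoplus_{x\in Z}\Lambda\cdot[x]$, it remains to identify the $x$-component of $C^Z_{X/Y/k}(\mathcal{F})$ with $-{\rm dimtot}\,R\Phi_{\bar x}(\mathcal{F},f)$ for each $x\in Z$.

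To do this, for each $x\in Z$ I pick an étale neighborhood $e:X'\to X$ of $x$ with $e^{-1}(Z)=\{x'\}$ a single closed point above $x$. Then $X'\setminus\{x'\}=e^{-1}(X\setminus Z)$, so $f|_{X'\setminus\{x'\}}$ remains universally locally acyclic relatively to $\mathcal{F}|_{X'\setminus\{x'\}}$ by étale pullback. Proposition~\ref{prop:invariantUnderEtale} then identifies the $x$-component of $C^Z_{X/Y/k}(\mathcal{F})$ with $C^{\{x'\}}_{X'/Y/k}(\mathcal{F}|_{X'})\in\Lambda$, and Theorem~\ref{thm:MF} applied on $X'$ evaluates the latter as $-{\rm dimtot}\,R\Phi_{\bar{x'}}(\mathcal{F}|_{X'},f|_{X'})$; by the étale-local nature of the vanishing cycles, this equals $-{\rm dimtot}\,R\Phi_{\bar x}(\mathcal{F},f)$. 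Summing over $x\in Z$ and substituting into the fibration formula yields \eqref{eq:NewT}.

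In this strategy the genuine mathematical content lies in Theorems~\ref{conj:milclass} and \ref{thm:MF}, which have already been established in earlier sections; the residual work here is bookkeeping. The only step requiring mild care is the verification of condition (C2) together with the localization of $C^Z_{X/Y/k}(\mathcal{F})$ into individual point-contributions via Proposition~\ref{prop:invariantUnderEtale}, which should be routine given the machinery developed in Sections~\ref{sec:CCC} and \ref{sec:CCC2}.
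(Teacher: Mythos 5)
Your proof is correct and takes essentially the same route as the paper's: apply the fibration formula \eqref{eq:milclass1-1} with $r=1$ and then evaluate the non-acyclicity class via the cohomological Milnor formula of Theorem \ref{thm:MF}. The additional verifications you supply --- conditions (C1)--(C3), the computation $\mathcal K_{z/Y}\simeq\Lambda(-1)[-2]$ giving (C2), and the \'etale localization via Proposition \ref{prop:invariantUnderEtale} splitting $C^{Z}_{X/Y/k}(\mathcal F)$ into its point contributions so that Theorem \ref{thm:MF} (stated for a single point) applies --- are exactly the bookkeeping the paper leaves implicit, and they are all sound.
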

\begin{proof}
By \eqref{eq:milclass1-1}, we have
	\begin{align}\label{pfeq:milclass1-1}
			C_{X/k}(\mathcal F)=c_1(f^\ast\Omega^{1,\vee}_{Y/k})\cap C_{X/Y}(\mathcal F)+
			C_{X/Y/k}^{Z}(\mathcal F)\quad{\rm in}\quad H^0(X,\mathcal K_{X/k}).
	\end{align}
Now the equality \eqref{eq:NewT} follows from \eqref{eq:thm:MF1}.
\end{proof}

The following corollary gives a cohomological version of the Grothendieck-Ogg-Shafarevich formula:
\begin{corollary}\label{cor:GOSend}
	Let $X$ be a smooth connected curve over $k$. 
	Let $\mathcal F\in D_{\rm ctf}(X,\Lambda)$ and $Z\subseteq X$ be a finite set of closed points such that the cohomology sheaves of $\mathcal F|_{X\setminus Z}$ are locally constant. Then we have
	\[
	C_{X/k}(\mathcal F)={\rm rank}\mathcal F\cdot c_1(\Omega_{X/k}^{1,\vee})-\sum_{x\in Z}{a}_x(\mathcal F)\cdot[x]\quad {\rm in}\quad H^0(X,\mathcal K_{X/k}),
	\]
	where $a_x(\mathcal F)={\rm rank}\mathcal F|_{\bar\eta}-{\rm rank}\mathcal F_{\bar x}+{\rm Sw}_x\mathcal F$ is the Artin conductor and $\eta$ is the generic point of $X$.
\end{corollary}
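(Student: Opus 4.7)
The strategy is to specialize Theorem \ref{thm:SJ:MFforTclass} to the degenerate case $Y=X$ with $f=\mathrm{id}_X$, and then match each term on the right-hand side with the corresponding term in the Grothendieck--Ogg--Shafarevich formula.

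First I would verify that the hypotheses of Theorem \ref{thm:SJ:MFforTclass} are satisfied. With $f=\mathrm{id}_X$, the condition that $\mathrm{id}\colon X\setminus Z\to X\setminus Z$ be universally locally acyclic relatively to $\mathcal F|_{X\setminus Z}$ is equivalent to the cohomology sheaves of $\mathcal F|_{X\setminus Z}$ being locally constant, which is exactly the hypothesis of the corollary. Theorem \ref{thm:SJ:MFforTclass} then yields
\[
C_{X/k}(\mathcal F)=c_1(\Omega_{X/k}^{1,\vee})\cap C_{X/X}(\mathcal F)-\sum_{x\in Z}\mathrm{dimtot}\,R\Phi_{\bar x}(\mathcal F,\mathrm{id})\cdot[x]
\]
in $H^0(X,\mathcal K_{X/k})$, so it remains to identify the two types of terms.

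Next I would compute $C_{X/X}(\mathcal F)$. By Definition~\ref{def:rcccCodBig}, $C_{X/X}(\mathcal F)=C_{U/U}(\mathcal F|_U)$ where $U=X\setminus Z$; since $U$ is connected and $\mathcal F|_U$ has locally constant cohomology sheaves, Lemma~\ref{lem:twsit} applied to each cohomology sheaf (together with additivity in distinguished triangles in $D_{\rm ctf}(U/U,\Lambda)$) gives $C_{U/U}(\mathcal F|_U)=\mathrm{rank}\mathcal F\cdot C_{U/U}(\Lambda)=\mathrm{rank}\mathcal F\in\Lambda=H^0(U,\mathcal K_{U/U})=H^0(X,\mathcal K_{X/X})$, where $\mathrm{rank}\mathcal F=\mathrm{rank}\mathcal F|_{\bar\eta}$ is the generic rank. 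Consequently $c_1(\Omega_{X/k}^{1,\vee})\cap C_{X/X}(\mathcal F)=\mathrm{rank}\mathcal F\cdot c_1(\Omega_{X/k}^{1,\vee})$.

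Finally I would identify $\mathrm{dimtot}\,R\Phi_{\bar x}(\mathcal F,\mathrm{id})$ with the Artin conductor $a_x(\mathcal F)$. For $f=\mathrm{id}$, the localized square reduces to the identity on the strict henselization $X_{(\bar x)}$, whose generic geometric fibre over the base is just a geometric generic point $\bar\eta$ of $X_{(\bar x)}$; thus $R\Psi_{\bar x}(\mathcal F,\mathrm{id})=\mathcal F|_{\bar\eta}$ and the defining triangle $\mathcal F_{\bar x}\to R\Psi_{\bar x}(\mathcal F,\mathrm{id})\to R\Phi_{\bar x}(\mathcal F,\mathrm{id})\xrightarrow{+1}$ gives $\dim R\Phi_{\bar x}(\mathcal F,\mathrm{id})=\mathrm{rank}\mathcal F|_{\bar\eta}-\mathrm{rank}\mathcal F_{\bar x}$. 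Since the stalk $\mathcal F_{\bar x}$ carries trivial inertia action, its Swan conductor vanishes, hence $\mathrm{Sw}_x R\Phi_{\bar x}(\mathcal F,\mathrm{id})=\mathrm{Sw}_x\mathcal F|_{\bar\eta}=\mathrm{Sw}_x\mathcal F$. Summing,
\[
\mathrm{dimtot}\,R\Phi_{\bar x}(\mathcal F,\mathrm{id})=\mathrm{rank}\mathcal F|_{\bar\eta}-\mathrm{rank}\mathcal F_{\bar x}+\mathrm{Sw}_x\mathcal F=a_x(\mathcal F).
\]
Substituting into the displayed formula yields the corollary. The only non-routine step is the last identification, and even that reduces to unwinding the definition of $R\Phi$ for the identity map; no essentially new geometric input is required beyond what is already encoded in Theorem~\ref{thm:SJ:MFforTclass}.
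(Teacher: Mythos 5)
Your proof is correct and follows exactly the route the paper takes: the paper's own proof is the one-line observation that the corollary follows from Theorem \ref{thm:SJ:MFforTclass} applied with $Y=X$, $f=\mathrm{id}$, together with the identity $\mathrm{dimtot}\,R\Phi_{\bar x}(\mathcal F,\mathrm{id})=a_x(\mathcal F)$. Your write-up merely spells out the two bookkeeping steps (the computation $C_{X/X}(\mathcal F)=\mathrm{rank}\,\mathcal F$ via Lemma \ref{lem:twsit} and the unwinding of $R\Phi$ for the identity map) that the paper leaves implicit.
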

\begin{proof}
	This follows from Theorem \ref{thm:SJ:MFforTclass} together with the fact that
	${\rm dimtot}R\Phi_{\bar x}(\mathcal F,{\rm id})=a_x(\mathcal F)$.
\end{proof}

Now we apply the proper push-forward property of the non-acyclicity classes to prove a cohomological version of the conductor formula \cite[Theorem 2.2.3]{Sai21} (also of Bloch's conjecture on the conductor formula for constant sheaves \cite{Blo87}).
\begin{theorem}\label{cor:conductorF}
	Let $Y$ be a smooth connected curve over  $k$. 
	Let    $f: X\to Y$ be a proper morphism over $k$ and $y\in|Y|$ a closed point. Let $\mathcal F\in D_{\rm ctf}(X,\Lambda)$. Assume that $f|_{X\setminus f^{-1}(y)}$ is  universally locally acyclic relatively to $\mathcal F|_{X\setminus f^{-1}(y)}$.
	Then we have 
	\begin{align}\label{eq:thm:conductorf}
		f_\ast \widetilde{C}_{X/Y/k}^{f^{-1}(y)}(\mathcal F)=-{ a}_y(R f_\ast\mathcal F)\quad {\rm in}\quad  \Lambda=H^0_{y}(Y,\mathcal K_{Y/k}).
	\end{align}
\end{theorem}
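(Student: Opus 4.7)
The plan is to combine the proper push-forward property of non-acyclicity classes (Proposition \ref{thm:ppForNTclass}) with the cohomological Milnor formula (Theorem \ref{thm:MF}) in the curve case. This is essentially the outline sketched in the paragraph following the statement of Theorem \ref{introCor:conductorF} in the introduction.

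First I would verify that the hypotheses of Proposition \ref{thm:ppForNTclass} are satisfied for the morphism $s = f\colon X \to Y$ with source diagram $\Delta = \Delta^{f^{-1}(y)}_{X/Y/k}$ and target diagram $\Delta' = \Delta^{\{y\}}_{Y/Y/k}$. Condition (C1) in \ref{subsec:notationforXYS} is immediate because $Y \to \mathrm{Spec}\,k$ is smooth. For (C3), the first part (universal local acyclicity of $X\setminus f^{-1}(y) \to Y$ relatively to $\mathcal F|_{X\setminus f^{-1}(y)}$) is the given hypothesis, and the second part (universal local acyclicity of $X \to \mathrm{Spec}\,k$ relatively to $\mathcal F$) is automatic because the target is a point. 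The inclusion $f^{-1}(y) \subseteq s^{-1}(\{y\})$ needed in Proposition \ref{thm:ppForNTclass} holds trivially as equality, and $f$ is proper by assumption. Applying Proposition \ref{thm:ppForNTclass} then yields
\begin{equation}\label{eq:planPushforward}
f_\ast\, \widetilde{C}^{f^{-1}(y)}_{X/Y/k}(\mathcal F) \;=\; \widetilde{C}^{\{y\}}_{Y/Y/k}(Rf_\ast\mathcal F) \quad \text{in}\quad H^0_{\{y\}}(Y,\mathcal K_{Y/Y/k}).
\end{equation}

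Next I would identify the right-hand side with an element of $\Lambda = H^0_{y}(Y, \mathcal K_{Y/k})$. Since $\{y\} \hookrightarrow Y$ is a closed immersion of codimension one into a smooth curve, $\mathcal K_{\{y\}/Y} \simeq \Lambda(-1)[-2]$ and hence $H^0(\{y\}, \mathcal K_{\{y\}/Y}) = H^1(\{y\}, \mathcal K_{\{y\}/Y}) = 0$, i.e.\ condition (C2) in \ref{subsec:notationforXYS} is satisfied. Moreover, because $f$ is proper and universally locally acyclic relatively to $\mathcal F$ on $X\setminus f^{-1}(y)$, the cohomology sheaves of $Rf_\ast\mathcal F$ are locally constant on $Y\setminus\{y\}$ by the push-forward stability of universal local acyclicity (\cite[Theorem 2.16 and Proposition 2.23]{LZ22}), so $\mathrm{id}\colon Y\setminus\{y\} \to Y\setminus\{y\}$ is universally locally acyclic relatively to $Rf_\ast\mathcal F|_{Y\setminus\{y\}}$. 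Thus $\widetilde{C}^{\{y\}}_{Y/Y/k}(Rf_\ast\mathcal F)$ is defined and lifts to a well-defined class $C^{\{y\}}_{Y/Y/k}(Rf_\ast\mathcal F) \in \Lambda$.

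Finally I would apply the cohomological Milnor formula (Theorem \ref{thm:MF}) to $Rf_\ast\mathcal F$ with $f = \mathrm{id}_Y$ and $x = y$, giving
\begin{equation}
C^{\{y\}}_{Y/Y/k}(Rf_\ast\mathcal F) \;=\; -\,\mathrm{dimtot}\,R\Phi_{\bar y}(Rf_\ast\mathcal F,\,\mathrm{id}) \quad \text{in}\quad \Lambda.
\end{equation}
Since the cohomology sheaves of $Rf_\ast\mathcal F$ are locally constant on $Y\setminus\{y\}$, the classical identity $\mathrm{dimtot}\,R\Phi_{\bar y}(\mathcal G,\mathrm{id}) = a_y(\mathcal G)$ (as already used in Corollary \ref{cor:GOSend}) converts this into $-a_y(Rf_\ast\mathcal F)$. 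Combining with \eqref{eq:planPushforward} delivers the desired equality \eqref{eq:thm:conductorf}.

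No step is particularly delicate; the whole argument is essentially a bookkeeping exercise confirming that the functorial framework of non-acyclicity classes developed in Section \ref{sec:CCC2} together with the curve-case Milnor formula proved in Theorem \ref{thm:MF} yields the conductor formula. The only conceptual point worth emphasizing is that, thanks to the proper push-forward property for cohomological classes (rather than for characteristic cycles), the properness hypothesis suffices and projectivity is not needed, which is the advantage alluded to in the discussion following Theorem \ref{introCor:conductorF}.
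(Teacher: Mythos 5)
Your proposal is correct and follows essentially the same route as the paper's proof: apply the proper push-forward property (Proposition \ref{thm:ppForNTclass}) with $s=f$ to reduce to the class $C^{\{y\}}_{Y/Y/k}(Rf_\ast\mathcal F)$, then invoke the curve case of the cohomological Milnor formula (Theorem \ref{thm:MF}) together with ${\rm dimtot}\,R\Phi_{\bar y}(Rf_\ast\mathcal F,{\rm id})=a_y(Rf_\ast\mathcal F)$. The extra verifications you include (conditions (C1)--(C3) and the vanishing needed to identify the target group with $\Lambda$) are implicit in the paper's argument and correctly carried out.
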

\begin{proof}
	We apply
	Proposition \ref{thm:ppForNTclass} to the following diagram (take $s=f$ and $f'={\rm id}$)
	\begin{align}\footnotesize
		\begin{gathered}
			\xymatrix@R=0.5cm{
				Z\ar@{=}[r]\ar@{_(->}[rd]&f^{-1}(y)\ar@{_(->}[d]\ar[rr]^-{f}\ar@{}[rrd]|\Box&&\{y\}\ar@{_(->}[d]\\
				&X\ar[rr]^-f\ar[rd]^f\ar[rdd]_-h&&Y\ar[ld]_{\rm id}\ar[ldd]^-{g}&\\
				&&Y\ar[d]^-(0.2)g&\\
				&&{\rm Spec}k.&
			}
		\end{gathered}
	\end{align}
	Then we get
	\begin{align}\label{eq:pushtocurveHelp}
		f_\ast \widetilde{C}_{X/Y/k}^{f^{-1}(y)}(\mathcal F)=C^{\{y\}}_{Y/Y/k}(Rf_\ast\mathcal F)\quad {\rm in}\quad  \Lambda=H^0_{y}(Y,\mathcal K_{Y/k}).
	\end{align}
	Since $f|_{X\setminus f^{-1}(y)}$ is proper and universally locally acyclic relatively to $\mathcal F|_{X\setminus f^{-1}(y)}$,  the cohomology sheaves of $Rf_\ast\mathcal F$ are locally constant on $Y\setminus \{y\}$.
	By the curve case of Theorem \ref{thm:MF}, we have  $C^{\{y\}}_{Y/Y/k}(Rf_\ast\mathcal F)=-{\rm dimtot}R\Phi_y(Rf_\ast\mathcal F,{\rm id})=-a_y(Rf_\ast\mathcal F)$. This proves \eqref{eq:thm:conductorf}.
\end{proof}
\begin{remark}\label{remark:localizedChern}
Let $f: X\to Y$ be a proper flat morphism of smooth schemes over $k$. Let $y\in|Y|$. Assume that ${\rm dim}X=n$, ${\rm dim}Y=1$ and that $X\setminus f^{-1}(y)\to Y\setminus \{y\}$ is smooth. We expect that
\begin{align}
\widetilde{C}_{X/Y/k}^{f^{-1}(y)}(\Lambda)=(-1)^n c_{n,X_y}^{X}(\Omega^1_{X/Y})\cap [X] \quad{\rm in}\quad H^0(f^{-1}(y),\mathcal K_{f^{-1}(y)/k}),
\end{align}
where $c_{n,X_y}^{X}(\Omega^1_{X/Y})$ is the localized Chern class defined in \cite[Section 1]{Blo87}. We have the following weaker equality:
	\begin{align}\label{eq:remark:localizedChern}
		f_\ast  (\widetilde{C}_{X/Y/k}^{f^{-1}(y)}(\Lambda))=f_\ast  ((-1)^n c_{n,X_y}^{X}(\Omega^1_{X/Y})\cap [X])\quad{\rm in}\quad H^0(Y,\mathcal K_{Y/k}).
	\end{align}
Indeed,  we have
\begin{align}
	\begin{split}
	f_\ast  (\widetilde{C}_{X/Y/k}^{f^{-1}(y)}(\Lambda))&\overset{\eqref{eq:pushtocurveHelp}}{=}C^{\{y\}}_{Y/Y/k}(f_\ast\Lambda)\overset{\eqref{eq:milclass1-1}}{=}C_{Y/k}(f_\ast\Lambda)-c_1(\Omega_{Y/k}^{1,\vee})\cap C_{Y/Y}(f_\ast\Lambda)\\
&=f_\ast C_{X/k}(\Lambda)- {\rm rank}(f_\ast\Lambda|_{Y\setminus\{y\}})\cdot  c_1(\Omega_{Y/k}^{1,\vee})\cap C_{Y/Y}(\Lambda)\\
&=f_\ast c_n(\Omega_{X/k}^{1,\vee})-{\rm rank}(f_\ast\Lambda|_{Y\setminus\{y\}})\cdot  c_1(\Omega_{Y/k}^{1,\vee}).		
	\end{split}
\end{align}
Then \eqref{eq:remark:localizedChern} follows from the above formula together with \cite[Lemma 2.1.3.(2) and Lemma 2.1.4]{Sai21}.
\end{remark}

\subsection{}\label{subsec:conjMnon}
We expect that the non-acyclicity class on a smooth variety can be calculated in
terms of the characteristic cycle. Let $X$ be a smooth scheme over  $k$.
Let  $\mathcal F\in D_{\rm ctf}(X,\Lambda)$ and let $SS(\mathcal F)$ be the singular support of $\mathcal F$. Let $Z\subseteq X$ be a closed subscheme and $f:X\to Y={\rm Spec}k[t]$ a morphism such that $f$ is $SS(\mathcal F)$-transversal outside $Z$.
By assumption, $df^{-1}(SS(\mathcal F))$ is supported on $(T^\ast Y\times_Y Z)\bigcup (T^\ast_YY\times_YX)$, 
where $df: T^\ast Y\times_YX\to T^\ast X$ is the canonical morphism induced by $f$. 
Consider the section $f^\ast dt: X\xrightarrow{dt}T^\ast Y\times_YX\xrightarrow{df} T^\ast X$.
Then the refined Gysin pull-back of the characteristic cycle $CC(\mathcal F)$ by $f^\ast dt$
\begin{align}\label{eq:refMilNumforNonIsolated}
	cc^Z_{X/Y/k}(\mathcal F):=(f^\ast dt)^!(CC(\mathcal F))
\end{align}
defines a zero cycle class on $Z$.  We expect the following Milnor type formula for non-isolated singular/characteristic points holds.
\begin{conjecture} We have an equality
	\begin{align}\label{MFforNonIsolated}
		\widetilde{C}^Z_{X/Y/k}(\mathcal F)=\widetilde{{\rm cl}}(cc^Z_{X/Y/k}(\mathcal F))\qquad{\rm in}\qquad H^0_Z(X,\mathcal K_{X/Y/k}),
	\end{align}
	where $\widetilde{{\rm cl}}$ is the composition
	$CH_0(Z)\xrightarrow{\rm cl} H^0_Z(X,\mathcal K_{X/k})\xrightarrow{\eqref{eq:distriK}} H^0_Z(X,\mathcal K_{X/Y/k})$.
\end{conjecture}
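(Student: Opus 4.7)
The plan is to mirror the strategy used for isolated characteristic points (Theorem~\ref{thm:MF}): reduce via an Artin--Schreier deformation to a situation where every characteristic point of a perturbed map is isolated, apply Saito's Milnor formula \cite[Theorem 5.9]{Sai17a} together with Theorem~\ref{thm:MF} on that dense locus, and specialize back. Throughout, one compares the two sides of~\eqref{MFforNonIsolated} by exploiting the fact that they satisfy the same functorial properties.

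First I would verify that the two sides behave the same under étale localization around $Z$ (Proposition~\ref{prop:invariantUnderEtale} on the left, standard behaviour of characteristic cycles and refined Gysin pull-back on the right), under smooth base change (Proposition~\ref{prop:LocBC}), and under specialization in a one-parameter family (Proposition~\ref{prop:spOfNA}, together with the characteristic-cycle counterpart from \cite[Corollary 1.2.3]{Sai21}). These parallel functorialities will be what allows us to transfer an equality proved generically in a family over $\mathbb{P}^1_{t'}$ to the original map $f$.

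Next, adapting the construction in the proof of Theorem~\ref{thm:MF}, form $ft':X\times\mathbb{A}^1_{t'}\to\mathbb{A}^1$, twist $\mathcal{F}$ by $\mathcal L_!(ft')$, and consider the projection to $\mathbb{P}^1_{t'}$; this projection is universally locally acyclic relatively to $\mathrm{pr}_1^*\mathcal F\otimes \mathcal L_!(ft')$ away from $Z\times\{\infty\}$, so the whole family extends. For a generic geometric point $t'_0\ne 0,\infty$, one should be able to arrange (possibly after a further generic linear perturbation in the style of \cite[\S 5]{Sai17a}) that $ft'_0$ is $SS(\mathcal F\otimes\mathcal L_!(ft'_0))$-transversal outside a finite set of closed points. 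At such an isolated characteristic point $x$, Theorem~\ref{thm:MF} gives $C^{\{x\}}_{X/Y/k}=-\mathrm{dimtot}\,R\Phi_{\bar x}$, while Saito's Milnor formula gives $(f^*dt)^!(CC(\mathcal F))_x=-\mathrm{dimtot}\,R\Phi_{\bar x}\cdot [x]$, so~\eqref{MFforNonIsolated} holds generically. Specialization from the generic to the closed fibre of $\mathbb{P}^1_{t'}$ will then yield the original equality.

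The main obstacle is genuinely the positive-dimensional nature of $Z$. Unlike the situation in Theorem~\ref{thm:MF}, the support of $cc^Z_{X/Y/k}(\mathcal F)$ need not be zero-dimensional, and the specialization of a zero-cycle concentrated at isolated generic characteristic points produces, a priori, only partial information about $\widetilde{C}^Z_{X/Y/k}(\mathcal F)\in H^0_Z(X,\mathcal K_{X/Y/k})$. One therefore needs a sharper assertion than mere generic agreement: that the map $\widetilde{\mathrm{cl}}:\mathrm{CH}_0(Z)\to H^0_Z(X,\mathcal K_{X/Y/k})$ captures enough of the structure of the non-acyclicity class to make the specialization argument determine it completely. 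I expect that the natural route is to combine the fibration formula (Theorem~\ref{conj:milclass}) with the now-proved quasi-projective case of Saito's conjecture (Theorem~\ref{thm:sConj-inIntro}) applied to a suitable smooth compactification of the setup, using proper push-forward (Proposition~\ref{thm:ppForNTclass}) to match both sides. Making this last step rigorous, in particular controlling the behaviour over the non-isolated stratum of $Z$, is where the essential new analytic input lies and why the statement remains conjectural.
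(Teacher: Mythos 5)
The statement you are trying to prove is stated in the paper as a \emph{conjecture}, and the paper offers no proof of it: the only remark made there is that when $Z$ is a finite set of closed points, \eqref{MFforNonIsolated} follows from Theorem \ref{thm:MF} combined with Saito's Milnor formula. Your proposal correctly reconstructs that known special case and correctly identifies the deformation-and-specialization mechanism behind Theorem \ref{thm:MF}, but it does not close the conjecture, and you yourself concede this in the final paragraph. So the honest assessment is that your write-up is a strategy sketch whose decisive step is exactly the open problem.

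Let me name the gap concretely. The engine of the proof of Theorem \ref{thm:MF} is that for $Z$ a finite set of closed points the group $H^0(Z\times V,\mathcal K_{Z\times V/V})$ is a constant system of copies of $\Lambda$ over the parameter space $V\subseteq\mathbb P'$, so both the restriction to the fibre over $0'$ and the specialization to $\infty'$ are isomorphisms, and an identity verified at one fibre propagates to the other. For positive-dimensional $Z$ this rigidity disappears: $H^0(Z,\mathcal K_{Z/k})$ is large, the specialization map of Proposition \ref{prop:spOfNA} is no longer injective on the relevant groups, and agreement of the two sides of \eqref{MFforNonIsolated} at a generic member of the Artin--Schreier family (where the characteristic points have been made isolated) does not determine either side over the non-isolated stratum of $Z$. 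Your fallback suggestion --- combining the fibration formula with Theorem \ref{thm:sConj} and proper push-forward --- only controls the image of the class under push-forward to a curve or a point, i.e.\ degree-level information, not the full class in $H^0_Z(X,\mathcal K_{X/Y/k})$. This is precisely why the authors leave the statement as a conjecture, and your proposal should not be presented as a proof.
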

When $Z$ is a finite set of closed points, then \eqref{MFforNonIsolated} follows from Theorem \ref{thm:MF} and \cite[Theorem 5.9]{Sai17a}.

\subsection{}\label{subsec:gf}
In the rest of this section, we prove quasi-projective case of Saito's conjecture \cite[Conjecture 6.8.1]{Sai17a}. For the definitions of singular supports and characteristic cycles, we refer to \cite{Bei16} and \cite{Sai17a}. 
Let $X$ be a smooth scheme purely of dimension $d$ over $k$
and $\mathcal F\in D_{\rm ctf}(X,\Lambda)$. 
Let $Y$ be a smooth connected and projective curve over $k$.
Assume that $f: X\to Y$ is a  good fibration with respect to the singular support 
$SS(\mathcal F)$ of $\mathcal F$, and $Z\coloneq\{x_v\}_{v\in\Sigma}$ is the set of isolated characteristic points of $f$ and $\Sigma\subseteq Y$, i.e., the following conditions hold (cf. \cite[Definition 3.4.2]{UYZ})
\begin{enumerate}
	\item There exist finitely many closed points $x_v (v\in\Sigma)$ of $X$ such that $f$ is 
	$SS(\mathcal F)$-transversal on $X\setminus Z$ (thus $X\setminus Z\to Y$ is  universally locally acyclic  relatively to $\mathcal F|_{X\setminus Z}$).  
	
	\item  For each $v\in\Sigma$, $f(x_v)=v$. If $x_v\neq x_u$, then $v\neq u$ (each fiber $X_v$ contains at most one isolated characteristic point).
	
\end{enumerate}
Since $\dim Z=0$, the relative cohomological characteristic class $C_{X/Y}(\mathcal F)$ is 
well-defined (cf. Definition \ref{def:rcccCodBig}).
By Theorem \ref{thm:SJ:MFforTclass}, 
we have 
\begin{align}\label{eq:fibforC1}
	C_{X/k}(\mathcal F)=c_1(f^\ast\Omega^{1,\vee}_{Y/k})\cap C_{X/Y}(\mathcal F)-\sum_{v\in \Sigma}{\rm dimtot} R\Phi_{\bar{x}_v}(\mathcal F,f)\cdot [x_v]\quad{\rm in}\quad H^0(X,\mathcal K_{X/k}).
\end{align}
Let $\omega$ be a non-zero rational 1-form on the curve $Y$ such that $\omega$ has 
neither zeros nor poles on $\Sigma$. 
Let $U=X\setminus Z$. Then
the image of $C_{X/Y}(\mathcal F)$ by the map $H^0(X,\mathcal K_{X/Y})\xrightarrow{\simeq} H^0(U,\mathcal K_{U/Y})$ equals
$C_{U/Y}(\mathcal F)$. 
We have an equality in $H^0(Y,\mathcal K_{Y/k})$
\begin{align}\label{eq:fibforC2}
	c_1(\Omega^{1,\vee}_{Y/k})=-\sum_{v\in |Y\setminus\Sigma|}{\rm ord}_v(\omega)\cdot [v].
\end{align}
By Proposition \ref{cor:pullbackccc}, we have
\begin{align}\label{eq:fibforC3}
	\begin{split}
		c_1(f^\ast\Omega^{1,\vee}_{Y/k})\cap C_{X/Y}(\mathcal F)=c_1(f^\ast\Omega^{1,\vee}_{Y/k})\cap C_{U/Y}(\mathcal F|_{U})
		=-\sum_{v\in |Y\setminus\Sigma|}{\rm ord}_v(\omega)\cdot
		C_{X_v/v}(\mathcal F|_{X_v}).
	\end{split}
\end{align}
By \eqref{eq:fibforC1} and \eqref{eq:fibforC3}, we get the following induction formula for the cohomological characteristic classes:
\begin{align}
	\label{eq:conj:Milnorclass2}C_{X/k}(\mathcal F)&=-\sum_{v\in |Y\setminus\Sigma|}{\rm ord}_v(\omega)\cdot
	C_{X_v/v}(\mathcal F|_{X_v})-\sum_{v\in \Sigma}{\rm dimtot} R\Phi_{\bar{x}_v}(\mathcal F,f)\cdot [x_v].
\end{align}
The formula \eqref{eq:conj:Milnorclass2} implies that the  characteristic class $C_{X/k}(\mathcal F)$  can be built from characteristic classes $C_{X_v/v}(\mathcal F|_{X_v})$  on schemes $X_v$ of  dimension smaller than $X$. Based on this idea, let us explain that Theorem \ref{thm:SJ:MFforTclass} implies the quasi-projective case of Saito's conjecture \cite[Conjecture 6.8.1]{Sai17a}\footnote{We refer to \cite{UYZ} for a weak version of  Saito's another conjecture \cite[Conjecture 6.8.2]{Sai17a}.}.
\begin{conjecture}[Saito, {\cite[Conjecture 6.8.1]{Sai17a}}]\label{conj:Scc}
	Let $X$ be a closed sub-scheme of a smooth scheme over $k$. Let $cc_{X}:=cc_{X,0}: K_0(X,\Lambda)\to {\rm CH}_0(X)$ be the morphism defined in \cite[Definition 6.7.2]{Sai17a}.
	Then we have a commutative diagram
	\begin{align}
		\begin{gathered}
			\xymatrix@R=0.5cm{
				K_0(X,\Lambda)\ar[r]^-{cc_X}\ar[rd]_-{C_{X/k}}&{\rm CH}_0(X)\ar[d]^-{\rm cl}\\
				&H^0(X,\mathcal K_{X/k}),
			}
		\end{gathered}
	\end{align}
	where ${\rm cl}: {\rm CH}_0(X)\to H^0(X,\mathcal K_{X/k})$ is the cycle class map.
\end{conjecture}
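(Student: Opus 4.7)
\subsection*{Proof plan for Conjecture \ref{conj:Scc} in the quasi-projective case}

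The plan is to prove the equality $C_{X/k}(\mathcal{F})={\rm cl}(cc_X(\mathcal{F}))$ by induction on $d=\dim X$ for all smooth quasi-projective $X$ over $k$ and all $\mathcal{F}\in D_{\rm ctf}(X,\Lambda)$. The base cases $d=0$ and $d=1$ are immediate: for $d=0$ both sides agree with the rank, and for $d=1$ the Grothendieck--Ogg--Shafarevich type identity Corollary \ref{cor:GOSend} matches Saito's index formula for the characteristic cycle on curves.

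For the inductive step, assume $d\geq 2$ and that the statement holds in dimensions $<d$. First I would reduce to the case where $X$ is quasi-projective and the characteristic cycle $CC(\mathcal{F})$ is available, which is the hypothesis. Embedding $X$ in a projective space and invoking the existence of good Lefschetz pencils (as in \cite[Section 3.4]{UYZ}), one obtains, after blowing up a smooth center $Y_0\subset X$ of codimension $2$ in general position, a good fibration $f\colon \widetilde{X}\to Y$ onto a smooth projective curve $Y$ with respect to $SS(\pi^\ast\mathcal{F})$, where $\pi\colon\widetilde{X}\to X$ is the blow-up. Let $Z=\{x_v\}_{v\in\Sigma}$ denote the finite set of isolated characteristic points. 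By \ref{subsec:gf}, the induction formula
\begin{align}\label{eq:planInd}
C_{\widetilde{X}/k}(\pi^\ast\mathcal{F})
=-\sum_{v\in |Y\setminus\Sigma|}{\rm ord}_v(\omega)\cdot C_{\widetilde{X}_v/v}(\pi^\ast\mathcal{F}|_{\widetilde{X}_v})
-\sum_{v\in\Sigma}{\rm dimtot}\,R\Phi_{\bar{x}_v}(\pi^\ast\mathcal{F},f)\cdot[x_v]
\end{align}
holds in $H^0(\widetilde{X},\mathcal{K}_{\widetilde{X}/k})$.

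Next I would write down the parallel induction formula on the characteristic cycle side. The same fibration, together with Saito's Milnor formula \cite[Theorem 5.9]{Sai17a} at each isolated characteristic point and the Gysin/restriction description of $CC$ on the smooth fibers $\widetilde{X}_v$ ($v\notin\Sigma$), yields
\begin{equation}\label{eq:planIndcc}
cc_{\widetilde{X}}(\pi^\ast\mathcal{F})
=-\sum_{v\in |Y\setminus\Sigma|}{\rm ord}_v(\omega)\cdot cc_{\widetilde{X}_v}(\pi^\ast\mathcal{F}|_{\widetilde{X}_v})
-\sum_{v\in\Sigma}{\rm dimtot}\,R\Phi_{\bar{x}_v}(\pi^\ast\mathcal{F},f)\cdot[x_v]
\end{equation}
in ${\rm CH}_0(\widetilde{X})$. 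Each fiber $\widetilde{X}_v$ is smooth and quasi-projective of dimension $d-1$, so the inductive hypothesis gives ${\rm cl}(cc_{\widetilde{X}_v}(\pi^\ast\mathcal{F}|_{\widetilde{X}_v}))=C_{\widetilde{X}_v/v}(\pi^\ast\mathcal{F}|_{\widetilde{X}_v})$. Applying the cycle class map to \eqref{eq:planIndcc} and comparing term by term with \eqref{eq:planInd} shows
\[
C_{\widetilde{X}/k}(\pi^\ast\mathcal{F})={\rm cl}(cc_{\widetilde{X}}(\pi^\ast\mathcal{F})) \quad{\rm in}\quad H^0(\widetilde{X},\mathcal{K}_{\widetilde{X}/k}).
\]
Finally, to pass from $\widetilde{X}$ back to $X$, I would apply the blow-up formulas Lemma \ref{lem:Bup} for both $C$ and $cc$: pushing forward by $\pi_\ast$ gives
\begin{align*}
\pi_\ast C_{\widetilde{X}/k}(\pi^\ast\mathcal{F}) &= C_{X/k}(\mathcal{F})+(r-1)\cdot i_\ast C_{Y_0/k}(i^\ast\mathcal{F}),\\
\pi_\ast cc_{\widetilde{X}}(\pi^\ast\mathcal{F}) &= cc_X(\mathcal{F})+(r-1)\cdot i_\ast cc_{Y_0}(i^\ast\mathcal{F}),
\end{align*}
where $r=2$ and $Y_0$ is smooth of dimension $d-2$. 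The inductive hypothesis applied to $Y_0$ matches the two correction terms, and applying $\pi_\ast$ and ${\rm cl}$ to each side yields the desired equality for $X$.

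The main obstacle is arranging a good Lefschetz pencil so that both the induction formula \eqref{eq:planInd} on the cohomological side and its analogue \eqref{eq:planIndcc} on the characteristic-cycle side share \emph{identical} local contributions at the isolated characteristic points --- this is precisely where the cohomological Milnor formula (Theorem \ref{thm:MF}) must agree with Saito's Milnor formula \cite[Theorem 5.9]{Sai17a}, and where quasi-projectivity is used to guarantee enough pencils transversal to $SS(\mathcal{F})$ off a finite set. The blow-up step requires quasi-projectivity to keep $\widetilde{X}$ quasi-projective and to ensure the smooth center $Y_0$ is again quasi-projective of lower dimension, so that the inductive hypothesis applies to both $Y_0$ and the fibers $\widetilde{X}_v$.
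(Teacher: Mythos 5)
Your plan follows essentially the same route as the paper: induction on dimension, the fibration formula \eqref{eq:conj:Milnorclass2} matched against its characteristic-cycle analogue \eqref{eq:fibforcc} from \cite[Proposition 5.3.7]{UYZ}, and descent through the blow-up via Lemma \ref{lem:Bup} with the inductive hypothesis applied to the fibers and to the codimension-$2$ center. The one step you omit is that the good pencil produced by \cite[Lemma 4.2.7]{UYZ} may only exist after a finite extension $k'/k$ of the base field; the paper handles this by choosing the extension of degree invertible in $\Lambda$ and using $\sigma_\ast\sigma^\ast\mathcal F=\mathcal F^{\oplus\deg(k'/k)}$ together with the proper push-forward compatibility of both $C_{X/k}$ and $\mathrm{cl}\circ cc_X$ (Proposition \ref{prop:pushccc} and \cite[Lemma 2]{Sai18}) to cancel the degree. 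Without this descent your argument silently assumes the good fibration is defined over $k$ itself, which can fail for instance when $k$ is finite; otherwise the proposal is correct.
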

\begin{theorem}\label{thm:sConj}
	Conjecture \ref{conj:Scc} holds for  any smooth and quasi-projective scheme $X$ over $k$.
\end{theorem}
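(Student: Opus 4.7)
The plan is to prove Conjecture \ref{conj:Scc} by induction on $d=\dim X$, combining the induction formula \eqref{eq:conj:Milnorclass2} on the cohomological side with Saito's parallel cycle-theoretic formula on the $CH_0$-side, and descending via the blow-up formulas of Lemma \ref{lem:Bup}. The base case $d\leq 1$ follows by comparing Corollary \ref{cor:GOSend} with the explicit description of $cc_X(\mathcal F)$ on a smooth curve given in \cite[Example 6.7.3]{Sai17a}: both sides reduce to ${\rm rank}(\mathcal F)\cdot c_1(\Omega^{1,\vee}_{X/k})-\sum_x a_x(\mathcal F)[x]$, and applying the cycle class map ${\rm cl}$ to the cycle-side expression matches Corollary \ref{cor:GOSend} term by term.

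For the inductive step with $d\geq 2$, fix $\mathcal F\in D_{\rm ctf}(X,\Lambda)$ and embed $X$ as a locally closed subscheme of some $\mathbb P^N$. Saito's construction of good pencils (cf. \cite[Lemma 3.4.3]{UYZ}) supplies a codimension-$2$ linear subspace $L\subseteq\mathbb P^N$ such that $Y\coloneqq L\cap X$ is smooth of dimension $d-2$, the blow-up $\pi:\widetilde X\to X$ along $Y$ is smooth and quasi-projective, and the induced projection $f:\widetilde X\to\mathbb P^1$ from the pencil of hyperplanes through $L$ is a good fibration with respect to $SS(\pi^\ast\mathcal F)$ in the sense of \ref{subsec:gf}, with isolated characteristic locus $Z=\{x_v\}_{v\in\Sigma}$. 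This places us exactly in the situation where the induction formula \eqref{eq:conj:Milnorclass2} applies to $(\widetilde X,\pi^\ast\mathcal F,f)$ for a suitable non-zero rational $1$-form $\omega$ on $\mathbb P^1$.

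The heart of the argument is the observation that Saito's global index formula together with the Milnor formula \cite[Theorem 5.9]{Sai17a} yield a parallel identity in $CH_0(\widetilde X)$ with \emph{the same} coefficients as \eqref{eq:conj:Milnorclass2} — the orders ${\rm ord}_v(\omega)$ at non-characteristic fibers and the total dimensions ${\rm dimtot}\,R\Phi_{\bar x_v}(\pi^\ast\mathcal F,f)$ at the isolated characteristic points. Since each smooth fiber $\widetilde X_v$ is quasi-projective of dimension $d-1<d$, the inductive hypothesis gives $C_{\widetilde X_v/v}(\pi^\ast\mathcal F|_{\widetilde X_v})={\rm cl}(cc_{\widetilde X_v}(\pi^\ast\mathcal F|_{\widetilde X_v}))$ for every such $v$. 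Comparing the two formulas term by term and applying ${\rm cl}$ to the cycle-side identity, one concludes $C_{\widetilde X/k}(\pi^\ast\mathcal F)={\rm cl}(cc_{\widetilde X}(\pi^\ast\mathcal F))$.

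Finally one descends to $X$ using the blow-up formulas \eqref{eq:blowupC} and \eqref{eq:blowupcc} with $r=2$: letting $i:Y\hookrightarrow X$, they read
\begin{align*}
    C_{X/k}(\mathcal F)=\pi_\ast C_{\widetilde X/k}(\pi^\ast\mathcal F)-i_\ast C_{Y/k}(i^\ast\mathcal F),\qquad cc_X(\mathcal F)=\pi_\ast cc_{\widetilde X}(\pi^\ast\mathcal F)-i_\ast cc_Y(i^\ast\mathcal F).
\end{align*}
Since $\dim Y=d-2$ and $Y$ is smooth quasi-projective, the induction hypothesis supplies $C_{Y/k}(i^\ast\mathcal F)={\rm cl}(cc_Y(i^\ast\mathcal F))$; combined with the equality on $\widetilde X$ established above and the compatibility of ${\rm cl}$ with proper push-forward (along $\pi$) and closed immersions (along $i$), this yields $C_{X/k}(\mathcal F)={\rm cl}(cc_X(\mathcal F))$. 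The hardest step, I expect, will be arranging the good pencil in the quasi-projective (and not necessarily projective) setting so that transversality of $L$ to $X$, smoothness of $Y$, and isolation of the characteristic locus hold simultaneously, together with matching Saito's cycle-theoretic induction formula against \eqref{eq:conj:Milnorclass2} coefficient-for-coefficient; both ingredients ultimately rest on the Milnor formula \cite[Theorem 5.9]{Sai17a} and the good-pencil existence theorems developed in \cite{Sai17a,UYZ}.
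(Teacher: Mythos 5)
Your proposal follows essentially the same route as the paper: induction on $\dim X$, base case via Corollary \ref{cor:GOSend} and Grothendieck--Ogg--Shafarevich, the fibration formula \eqref{eq:conj:Milnorclass2} matched coefficient-for-coefficient against the cycle-theoretic analogue \eqref{eq:fibforcc} (which is \cite[Proposition 5.3.7]{UYZ}), and descent from the blow-up $\widetilde X$ to $X$ via Lemma \ref{lem:Bup} with $r=2$ together with the induction hypothesis on the fibers (dimension $d-1$) and on the center $Y$ (dimension $d-2$).

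The one step you omit is that the good pencil of \cite[Lemma 4.2.7]{UYZ} is only guaranteed to exist after a finite extension $k'/k$ (of degree invertible in $\Lambda$), e.g.\ when $k$ is finite and there are not enough rational hyperplanes transversal to $SS(\mathcal F)$. The paper closes this by a trace argument: for $\sigma:X_{k'}\to X$ one has $\sigma_\ast\sigma^\ast\mathcal F=\mathcal F^{\oplus\deg(k'/k)}$, so the known equality over $k'$ pushes forward (using Proposition \ref{prop:pushccc} and \cite[Lemma 2]{Sai18}) to $\deg(k'/k)\cdot{\rm cl}(cc_X(\mathcal F))=\deg(k'/k)\cdot C_{X/k}(\mathcal F)$, and one cancels $\deg(k'/k)$ since it is invertible in $\Lambda$. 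You flag the difficulty of arranging the pencil but do not supply this descent; with it added, your argument coincides with the paper's.
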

\begin{proof}
	We prove the theorem by induction on the dimension $d={\rm dim}(X)$.
	If $d=0$, then both $C_{X/k}$ and $cc_X$ equal the rank function.
	If $d=1$, the result follows from  Corollary \ref{cor:GOSend} and the  Grothendieck-Ogg-Shafarevich formula (cf. \cite[Lemma 5.11.3]{Sai17a}). Suppose that $d\geq 1$ and that $X$ has a good fibration $f:X\to \mathbb P^1$ with respect to $SS(\mathcal F)$. We use the notation in \ref{subsec:gf}. By \cite[Proposition 5.3.7]{UYZ}, the characteristic class $cc_X(\mathcal F)$ satisfies the following fibration formula
	\begin{align}
		\label{eq:fibforcc}cc_{X}(\mathcal F)&=-\sum_{v\in |Y\setminus\Sigma|}{\rm ord}_v(\omega)\cdot
		cc_{X_v}(\mathcal F|_{X_v})-\sum_{v\in \Sigma}{\rm dimtot} R\Phi_{\bar{x}_v}(\mathcal F,f)\cdot [x_v]\quad{\rm in}\quad {\rm CH}_0(X).
	\end{align}
	By the induction hypothesis and \eqref{eq:conj:Milnorclass2}, we get ${\rm cl}(cc_{X}(\mathcal F))=C_{X/k}(\mathcal F)$.  Thus the result holds when $X$ has a good fibration.
	
	In general, for a finite Galois extension $k'/k$ of degree invertible in $\Lambda$, assume the result holds for $(X_{k'},\mathcal F|_{X_{k'}})$. We show the result also holds for $(X,\mathcal F)$.
	Indeed, let $\sigma:X_{k'}\to X$ be the projection. Since $\sigma_\ast\sigma^\ast\mathcal F=\mathcal F^{\oplus{{\rm deg}(k'/k)}}$, we get
	\begin{align}
		\begin{split}
			{\rm deg}(k'/k)\cdot {\rm cl}(cc_X(\mathcal F))&={\rm cl}(cc_X(\sigma_\ast\sigma^\ast\mathcal F))\\
			&\overset{(a)}{=}\sigma_\ast({\rm cl}(cc_{X_{k'}}(\sigma^\ast\mathcal F)))\\
			&=\sigma_\ast(C_{X_{k'}/k}(\sigma^\ast\mathcal F))\overset{\ref{prop:pushccc}}{=}C_{X/k}(\sigma_\ast\sigma^\ast\mathcal F)={\rm deg}(k'/k)\cdot C_{X/k}(\mathcal F),
		\end{split}
	\end{align}
	where $(a)$ follows from \cite[Lemma 2]{Sai18}.
	Since ${\rm deg}(k'/k)$ is invertible in $\Lambda$, hence ${\rm cl}(cc_X(\mathcal F))=C_X(\mathcal F)$.
	
	By \cite[Lemma 4.2.7]{UYZ}, after taking a finite extension of $k$ if necessary, we may assume  
	there exist a blow-up $\pi:  \widetilde{X}\to X$ of $X$ along a closed subscheme of codimension 2 and a good fibration $f: \widetilde{X}\to \mathbb P^1$.
	Since $ \widetilde{X}$ has a good fibration, the result holds for $(\widetilde{X},\pi^\ast\mathcal F)$. By Lemma \ref{lem:Bup}, the result also holds for $(X,\mathcal F)$. This finishes the proof.
\end{proof}


\begin{thebibliography}{20}

\bibitem{AS07}
A. Abbes and T. Saito,
\emph{The characteristic class and ramification of an $\ell$-adic \'etale sheaf}, Inventiones mathematicae, 168 (2007): 567-612.

\bibitem{Abe21}
T. Abe, \emph{Ramification theory from homotopical point of view, I}, 2021, \href{https://arxiv.org/abs/2206.02401}{arXiv:2206.02401}.

\bibitem{Abe22}
T. Abe,  \emph{On the Serre conjecture for Artin characters in the geometric case}, 2022, preprint.



\bibitem{Bei16}A. Beilinson, \emph{Constructible sheaves are holonomic}, Sel. Math. New Ser. 22, (2016): 1797–1819. 

\bibitem{Blo87}S. Bloch, \emph{Cycles on arithmetic schemes and Euler characteristics of curves}, Algebraic
geometry, Bowdoin, 1985, Proc. Sympos. Pure Math., vol. 46, Amer. Math. Soc., Providence, RI, (1987): 421–450. 




\bibitem{Del73}
P. Deligne, \emph{La formule de dualit\'e globale}, Expos\'e XVIII, pp.481-587 in SGA4 Tome 3: \emph{Th\'eorie des topos et cohomologie \'etale des sch\'emas},
edited by M.Artin et al., Lecture Notes in Math.305, Springer, 1973.


\bibitem{Del72}
P. Deligne, \emph{La formule de Milnor}, Expos\'e XVI, pp.197-211 in SGA7 II:
\emph{Groupes de Monodromie en G\'eom\'etrie Alg\'ebrique}, Lecture Notes in Math. 340, Springer, 1973.

\bibitem{SGA4h}
P. Deligne, \emph{Cohomologie \'etale},  S\'eminaire de G\'eometrie Alg\'ebrique du Bois-Marie (SGA 4$\frac{1}{2}$), avec la collaboration de J.F.Boutot, A.Grothendieck, L.Illusie et J.L.Verdier, Lecture Notes in Math. 569, Springer, 1977.



\bibitem{Fuj02}K. Fujiwara, \emph{A proof of the absolute purity conjecture (after Gabber)}, in Algebraic geometry 2000, Azumino, Advanced Studies in Pure Mathematics 36, (2002): 153–183.

	\bibitem{GR17}D. Gaitsgory and  N. Rozenblyum, \emph{A study in derived algebraic geometry, Volume I: Correspondences and Duality},
Mathematical Surveys and Monographs 221, AMS(2017).

\bibitem{Gro77}
A. Grothendieck {\it et al.}, \emph{Cohomologie $\ell$-adique et fonctions L.}
S\'eminaire de G\'eom\'etrie Alg\'ebrique du Bois-Marie 1965--1966 (SGA 5).
dirig\'e par A. Grothendieck avec la collaboration de I. Bucur, C. Houzel, L. Illusie, J.-P. Jouanolou et J-P. Serre. Lecture Notes in Mathematics 589, Springer-verlag, Berlin-New York, (1977).


\bibitem{HS}D. Hansen and P. Scholze, \emph{Relative Perversity}, Comm. Amer. Math. Soc. 3 (2023): 631–668.

\bibitem{HRS}T. Hemo, T. Richarz and J. Scholbach, \emph{Constructible sheaves on schemes}, Adv. Math. 429 (2023) 109179.

\bibitem{ILO14}
L. Illusie, Y. Lazslo, and F. Orgogozo, \emph{Travaux de Gabber sur l'uniformisation locale et la cohomologie \'etale
des sch\'emas quasi-excellents,}  Ast\'erisque, Vol. 363-364, Soc. Math. France, 2014, S\'eminaire \`a l'\'Ecole Polytechnique 2006-2008.

\bibitem{Ill17}L. Illusie, \emph{Around the Thom–Sebastiani theorem, with an appendix by Weizhe Zheng}, Manuscripta Math., 152(2017): 61–125.



\bibitem{KS90}M. Kashiwara and P. Schapira, \emph{Sheaves on manifolds}, Grundlehren der Math. Wissenschaften, vol.292, Springer, Berlin (1990).

\bibitem{Lau87}G. Laumon, \emph{Transformation de Fourier, constantes d'\'equations fonctionnelles et conjecture de Weil},
Publications Math\'ematiques de l'IH\'ES, Volume 65 (1987): 131-210. 

\bibitem{LZ22}Q. Lu, W. Zheng, \emph{Categorical traces and a relative Lefschetz-Verdier formula}, Forum of Mathematics, Sigma, Vol.10 (2022): 1-24.

\bibitem{HTT} J. Lurie, \emph{Higher topos theory}, Annals of Mathematics Studies 170, Princeton (2009).

\bibitem{HA}
J. Lurie, \emph{Higher algebra}, 2017, preprint available at \href{https://people.math.harvard.edu/\textasciitilde lurie/papers/HA.pdf}{https://people.math.harvard.edu/\textasciitilde lurie/papers/HA.pdf}.

\bibitem{Ras95}
W. Raskind, \emph{Abelian class field theory of arithmetic schemes},
Proc. Sympos. Pure Math., vol. 58.1, Amer. Math. Soc., Providence, RI(1995): 85-187.



\bibitem{Sai17a}
T. Saito, \emph{The characteristic cycle and the singular support of a constructible sheaf}, Inventiones mathematicae, 207 (2017): 597-695.

\bibitem{Sai17b}
T. Saito, \emph{Characteristic cycle of the external product of constructible sheaves},
Manuscripta Math. 154, 1-12 (2017).

\bibitem{Sai18}
T. Saito, \emph{On the proper push-forward of the characteristic cycle of a constructible sheaf},
Proceedings of Symposia in Pure Mathematics,  volume 97, (2018): 485-494.

\bibitem{Sai21}T. Saito,
\emph{Characteristic cycles and the conductor of direct image},
J. Amer. Math. Soc. 34 (2021): 369-410.

\bibitem{Sai22}T. Saito, \emph{Cotangent bundles and micro-supports in mixed characteristic case},  Algebra \& Number Theory, vol.16, no.2, (2022): 335-368.

\bibitem{Tsu11}
T. Tsushima, \emph{On localizations of the characteristic classes of $\ell$-adic sheaves and conductor formula in characteristic $p>0$}, Math. Z. 269 (2011): 411-447.

\bibitem{UYZ}N. Umezaki, E. Yang and Y. Zhao, \emph{Characteristic class and the $\varepsilon$-factor of an \'etale sheaf},  Trans. Amer. Math. Soc. 373 (2020):  6887-6927.

\bibitem{V07}
Y. Varshavsky, \emph{Lefschetz-Verdier trace formula and a generalization of a theorem of Fujiwara},
Geom. Funct. Anal. 17 (1) (2007): 271-319.


\bibitem{YZ18}
E. Yang and Y. Zhao, \emph{On the relative twist formula of $\ell$-adic sheaves}, Acta. Math. Sin.-English Ser. 37 (2021): 73–94.


\end{thebibliography}
\end{document}